\documentclass[10pt]{article}

\usepackage[toc,page]{appendix}
\usepackage[english]{babel}
\usepackage{amsmath}
\usepackage{amssymb,amsthm,amscd}
 \usepackage[pdftex]{color}
\usepackage[latin1]{inputenc}
\usepackage[textwidth=18cm,textheight=24cm]{geometry}
\usepackage{mathrsfs}
\usepackage[thinlines]{easybmat}
\usepackage[pdftex]{hyperref}

\newcommand{\diag}{\operatorname{diag}}

\newcommand{\C}{\mathbb{C}}

\renewcommand{\H}{\mathcal{H}}

\newcommand{\R}{\mathbb{R}}

\newcommand{\Z}{\mathbb{Z}}
\newcommand{\N}{\mathbb{N}}

\newcommand{\g}{\mathfrak{g}}



\newcommand{\A}{\mathscr A}

\newcommand{\I}{\mathbb{I}}
\renewcommand{\d}{\mathrm{d}}


\newcommand{\Exp}[1]{\operatorname{e}^{#1}}

\newcommand{\rI}{\operatorname{I}}
\newcommand{\rII}{\operatorname{II}}

\newcommand{\Cs}{\mathscr C}
\newcommand{\Ds}{\mathscr D}
\newcommand{\Es}{\mathscr E}

\newtheorem{pro}{Proposition}
\newtheorem{lemma}{Lemma}
\newtheorem{definition}{Definition}
\newtheorem{theorem}{Theorem}
\newtheorem{cor}{Corollary}

\begin{document}

\title{Multiple orthogonal polynomials of mixed type:\\Gauss-Borel factorization and the \\multi-component 2D Toda hierarchy }
\author{Carlos \'{A}lvarez-Fern\'{a}ndez$^{1,3,\dag}$, Ulises Fidalgo Prieto$^{2,\ddag}$ and Manuel Ma\~{n}as$^{1,\maltese}$\\ \\
$^1$Departamento de F\'{\i}sica Te\'{o}rica II, Universidad Complutense, \\
28040-Madrid, Spain\\
$^2$Departamento de Matem\'{a}tica Aplicada, Universidad Carlos III\\ 28911-Madrid, Spain \\
$^3$Departamento de M\'{e}todos Cuantitativos, Universidad Pontificia Comillas, \\ 28015-Madrid, Spain }
\date{$^\dag$calvarez@cee.upcomillas.es,  $^\ddag$ufidalgo@math.uc3m.es, $^{\maltese}$manuel.manas@fis.ucm.es}
\maketitle
\begin{abstract}
Multiple orthogonality is considered in the realm of a Gauss--Borel factorization problem for a semi-infinite moment matrix. Perfect combinations of weights and a finite Borel measure are constructed in terms of M-Nikishin systems. These perfect combinations ensure that the problem of mixed multiple orthogonality has a unique solution, that can be obtained from the solution of a Gauss--Borel factorization problem for a semi-infinite matrix, which plays the role of a moment matrix. This leads to sequences of multiple orthogonal polynomials, their duals and second kind functions. It also gives the corresponding linear forms that are bi-orthogonal to the dual linear forms. Expressions for these objects in terms of determinants from the moment matrix are given, recursion relations are found, which imply a multi-diagonal Jacobi type matrix with \emph{snake} shape,  and results like the ABC theorem or the Christoffel--Darboux formula are re-derived in this context (using the factorization problem and the generalized Hankel symmetry of the moment matrix). The connection between this description of multiple orthogonality and the multi-component 2D Toda hierarchy, which can be also understood and studied through a Gauss--Borel factorization problem, is discussed. Deformations of the weights, natural for M-Nikishin systems, are considered and the correspondence with solutions to the integrable hierarchy, represented as a collection of Lax equations, is explored. Corresponding Lax and Zakharov--Shabat matrices as well as wave functions and their adjoints are determined. The construction of discrete flows is discussed in terms of Miwa transformations which involve Darboux transformations for the multiple orthogonality conditions. The bilinear equations are derived and the $\tau$-function representation of the multiple orthogonality is given.
\end{abstract}

\newpage

\tableofcontents
\section{Introduction}

The topic of multiple orthogonality of polynomials  is very close  to that of  simultaneous  rational approximation (simultaneous Pad\'{e} aproximants) of systems of Cauchy transforms of measures. The history of simultaneous rational approximation starts in 1873 with the well known article \cite{He} in which Ch. Hermite proved the transcendence of the Euler number e.  Later, around the years 1934-35, K. Mahler delivered  at the University of Groningen several lectures \cite{Ma} where he settled down the  foundations of this theory. Meanwhile, two of Malher's students, J. Coates and H. Jager, made important contributions in this respect (see \cite{Co} and \cite{Ja}). In the case of Cauchy transforms, the simultaneous rational approximation definition may be written  in terms of multiple orthogonality of polynomials as follows. Given an interval $\Delta \subset \R$ of the real line, let ${\mathcal{M}}(\Delta)$ denote all the finite Borel measures which have support, $\mbox{supp}(\cdot)$ with infinitely many points in $\Delta,$ where they do not change sign. Fix $\mu \in {\mathcal{M}}(\Delta)$, and let us consider  a system of weights $\vec w=(w_1,\ldots,w_p)$ on $\Delta$, with $p \in {\mathbb{N}}$. (In this paper a``weight" on an interval $\Delta$ is meant to be  a real integrable function defined on $\Delta$ which does not change its sign on $\Delta$.) Fix a multi-index $\vec \nu=(\nu_1,\ldots, \nu_p) \in {\mathbb{Z}}_+^p,$ ${\mathbb{Z}}_+=\{0,1,2,\ldots\}$, and denote $|\vec \nu|=\nu_1+\cdots+\nu_p$. There exist polynomials, $A_1,\ldots, A_p$,  not all identically equal to zero which satisfy the following orthogonality relations
\begin{align}\label{tipoI}
\int_{\Delta}  x^{j} \sum_{a=1}^{p} A_a(x)w_{a} (x)\d\mu (x)&=0,  & \deg
 A_{a}&\leq\nu_{a}-1,&   j&=0,\ldots, |\vec \nu|-2.
\end{align}
Analogously, there exists a polynomial $B$ not identically equal to zero, such that
\begin{align}\label{tipoII}
\int_{\Delta} x^{j} B (x) w_{b} (x) \d\mu (x)&=0, & \deg B &\leq|\vec \nu|,&  j&=0,\ldots, \nu_b-1, \quad b=1,\ldots,p.
\end{align}

The resulting polynomials are said to be of type I and type II, respectively, with respect to the combination $(\mu, \vec w, \vec \nu)$ of the measure $\mu$, the systems of weights $\vec w$ and the multi-index $\vec \nu.$ When $p=1$ both definitions coincide with that of the standard orthogonal polynomials on the real line. The existence of a system of polynomials  $(A_1,\ldots, A_p)$ and a polynomial $B$ defined from (\ref{tipoI}) and (\ref{tipoII}) respectively, are ensured because in both cases finding the coefficients of the polynomials is  equivalent to solving a system of $|\vec \nu|$ linear homogeneous equations with $|\vec \nu|+1$ unknown coefficients. From the  theory of orthogonal polynomials we know that when $p=1$ each polynomial $A_1 \equiv  B$ has exactly degree $|\vec \nu|=\nu_1$; unfortunately if $p>1$ that is not true in general. For instance, let us take a system of weights $\vec w=(w_1,w_1,\dots,w_1)$, in this case the solution vector space has dimension bigger than one, and we can find two solutions which are linearly independent. Hence, there is at least an $a \in \{1,\ldots,p\}$ such that $\deg A_a < \nu_a-1$  and $\deg B< |\vec \nu|$. Given a measure $\mu \in {\mathcal{M}}(\Delta)$ and a system of weights $\vec w$ on $\Delta$ a multi-index $\vec \nu$ is called type I or type II normal if $\deg A_a$ must equal to $\nu_a-1,$ $a=1,\ldots,p$, or $\deg B$ must equal to $|\vec \nu|-1$, respectively. When for a pair $(\mu, \vec w)$ all the multi-indices are type I or type II normal, then the pair is called type I perfect or type II perfect, respectively. The concepts of normality and perfectness were introduced by  Malher (see Malher's, Coates' and Jager's articles cited above).

Multiple orthogonal of polynomials have been employed in several proofs of irrationality of numbers. For example, in \cite{Be}, F. Beukers shows that Apery's proof (see \cite{Ap}) of the irrationality of $\zeta(3)$ can be placed in the context of a combination of type I and type II multiple orthogonality, which is called mixed type multiple orthogonality of polynomials. More recently, mixed type approximation has appeared in random matrix and non-intersecting Brownian motion theories (see, for example, \cite{BleKui}, \cite{daems-kuijlaars} and \cite{arno}). A formalization of this kind of orthogonality was initiated by V. N. Sorokin \cite{So}. He studied a simultaneous rational approximation construction which is closely connected with multiple orthogonal  polynomials of mixed type. Surprisingly, in  \cite{fokas} a Riemann--Hilbert problem was found for the theory of orthogonal polynomials, and later  \cite{assche}  this result was largely extended to type I and II multiple orthogonality. In \cite{daems-kuijlaars} mixed type multiple orthogonality was analyzed from this perspective.

In order to introduce  multiple orthogonal polynomials of mixed type we need two systems of weights  $\vec w_1=(w_{1,1},\dots,w_{1,p_1})$  and $\vec w_2=(w_{2,1},\dots,w_{2,p_2})$ where  $p_1,p_2\in\N$,  (as we said a set of functions which do not change their sign in $\Delta$),  and two multi-indices $\vec \nu_1=(\nu_{1,1},\dots,\nu_{1,p_1})\in{\mathbb{Z}}_+^{p_1}$ and $\vec \nu_2=(\nu_{2,1},\dots,\nu_{2,p_2})\in {\mathbb{Z}}_+^{p_2}$ with $|\vec \nu_1|=|\vec \nu_2|+1$. There exist polynomials $A_1,\ldots, A_{p_1},$ not all identically zero, such that $\deg A_s < \nu_{1,s}$ which satisfy the following relations
\begin{equation}\label{orth}
\int_{\Delta} \sum_{a=1}^{p_1} A_a(x)w_{1,a} (x)w_{2,b}(x)x^{j} d\mu (x)=0, \quad  j=0,\ldots, \nu_{2,b}-1,\quad b=1,\ldots,p_2.
\end{equation}
They are called mixed multiple-orthogonal polynomials with respect to the combination $(\mu,\vec w_1,\vec w_2,\vec \nu_1,\vec\nu_2)$ of the measure $\mu,$ the systems of weights $\vec w_{1}$ and $\vec w_2$ and the multi-indices $\vec \nu_1$ and $\vec \nu_2.$  It is easy to show that finding the polynomials $A_1,\ldots, A_{p_1}$ is equivalent to solving  a system of $|\vec \nu_2|$ homogeneous linear equations  for the $|\vec \nu_1|$ unknown coefficients of the polynomials. Since $|\vec \nu_1|=|\vec \nu_2|+1$  the system always has a nontrivial solution.   The matrix of this system of equations is the so called moment matrix, and the study of its Gauss--Borel factorization will be the cornerstone of this paper. Observe that when $p_1=1$ we are in the type II case and if $p_2=1$ in  type I case. Hence in general we can find a solution of  \eqref{orth} where there is an $a \in \{1,\ldots,p_1\}$ such that $\deg A_a< \nu_{1,a}-1.$ When given a combination $(\mu,\vec w_1,\vec w_2)$ of a measure $\mu \in {\mathcal{M}}(\Delta)$ and systems of weights $\vec w_1$ and $\vec w_2$ on $\Delta$ if for each pair of multi-indices $(\vec \nu_1,\vec \nu_2)$ the conditions \eqref{orth} determine that $\deg A_a=\nu_{1,a}-1,$ $a=1, \ldots, p_1$, then we say that the combination $(\mu,\vec w_1,\vec w_2)$ is perfect. The concept of \emph{perfectness} will be rigorously introduced in Definition \ref{pag8}.

The seminal paper of M. Sato \cite{sato}, and further developments performed  by the Kyoto school through the use of the bilinear equation and the $\tau$-function formalism \cite{date1}-\cite{date3}, settled the basis for the Lie group theoretical description of integrable hierarchies, in this direction we have the relevant contribution by M. Mulase \cite{mulase} in which the factorization problems, dressing procedure, and linear systems were the key for
integrability. In this dressing setting the multicomponent integrable hierarchies of Toda type were analyzed in depth by K. Ueno and T. Takasaki \cite{ueno-takasaki}. See also the papers \cite{bergvelt} and \cite{kac} on the multi-component KP hierarchy and \cite{manas-martinez-alvarez} on the multi-component Toda lattice hierarchy. In a series of papers M. Adler and P. van Moerbeke showed how the Gauss--Borel factorization problem appears in the theory of the 2D Toda hierarchy and what they called the discrete KP hierarchy \cite{adler}-\cite{adler-van moerbeke 2}. In these papers it becomes clear --from a group-theoretical setup-- why standard orthogonality of polynomials and integrability of nonlinear equations of Toda type where so close. In fact, the Gauss--Borel factorization of the moment matrix may be understood as the Gauss--Borel factorization of the initial condition for the integrable hierarchy. To see the connection between the work of Mulase and that of Adler and van Moerbeke see \cite{felipe}. Later on, in the recent paper \cite{adler-vanmoerbeke 5}, it is shown that the multiple orthogonal construction described in previous paragraphs was linked with the multi-component KP hierarchy. In fact, for a given set of weights $(\vec w_1,\vec w_2)$ and degrees $(\vec \nu_1,\vec \nu_2)$ the authors constructed a finite matrix that plays the role of the moment matrix and, using the Riemann-Hilbert problem of \cite{daems-kuijlaars}, where able to show that determinants constructed from the moment matrix were $\tau$-functions solving the bilinear equation for the multi-component KP hierarchy. However, there is no mention in that paper to any Gauss--Borel factorization in spite of being the multicomponent integrable hierarchies connected with different factorization problems of these type. For further developments on the Gauss--Borel factorization and multi-component 2D Toda hierarchy see \cite{cum} and \cite{manas-martinez}.

This motivated our initial research in relation with this paper; i.e., the construction of an appropriate Gauss--Borel factorization in the group of semi-infinite matrices leading to multiple orthogonality and integrability in a simultaneous manner.  The main advantage of this approach lies in the application of different techniques based on the factorization problem used frequently in the theory of integrable systems. The key finding of this paper is, therefore, the characterization of a semi-infinite moment matrix whose Gauss--Borel factorization leads directly to multiple orthogonality. This makes sense when factorization can be performed, which is the case for perfect combinations $(\mu,\vec w_1,\vec w_2)$, which allows us to consider some sets of multiple orthogonal polynomials (called ladders) very much in the same manner as in the (non multiple) orthogonal polynomial setting. The Gauss--Borel factorization of this moment matrix leads, when one takes into account the Hankel type symmetry of the moment matrix, to results like: 1. Recursion relations, 2. ABC theorems and  3. Christoffel--Darboux formulas.  The first two are new results while the third is not new, as it was derived from the Riemann--Hilbert problem in \cite{daems-kuijlaars}. However, our derivation of the Christoffel--Darboux formula is based exclusively on the Gauss--Borel factorization, and its uniqueness and  existence for the multiple orthogonality problem  are the only requirements. Thus, it is sufficient to have a perfect combination $(\mu,\vec w_1,\vec w_2)$, and there are examples of this type which do not have a well defined Riemann--Hilbert problem in the spirit of \cite{daems-kuijlaars}.

When we seek for the appropriate  integrable hierarchy linked with multiple orthogonality we are lead to the multicomponent 2D Toda lattice hierarchy which extends the construction of the multicomponent KP hierarchy considered by M. J. Bergvelt and A. P. E. ten Kroode in \cite{bergvelt}; not to the multicomponent 2D Toda lattice hierarchy as described in \cite{ueno-takasaki} or \cite{manas-martinez-alvarez}.  In the spirit of this last mentioned articles, and complementing the continuous flows of the integrable hierarchy, we also introduce discrete flows, that could be viewed as Darboux transformations, and which correspond to Miwa transformations implying the addition of a zero/pole to the set of weights. Moreover, the Hankel type symmetry is related to an invariance under a number of flows, and to string equations. Bilinear equations can be derived from the Gauss--Borel factorization problem and moreover the $\tau$-function representation is available leading to a bridge to the results of \cite{adler-vanmoerbeke 5} in which no semi-infinite matrix or Gauss--Borel factorization was used.

This paper is divided into three sections,  \S 1 is this introduction which contains \S \ref{standard} in where we review the application of the $LU$ factorization of the moment matrix to the theory of orthogonal polynomials in the real line. Next, \S 2 is devoted to the presentation of the moment matrix and the discussion of the Gauss--Borel factorization. In this form we obtain perfect systems in terms of Nikishin systems, determinantal  expressions for the multiple orthogonal polynomials, their duals and second type functions, bi-orthogonality for the associated linear forms, recursion relations, ABC type theorems and the Christoffel--Darboux formula. Flows and the integrable hierarchy are studied in \S 3 in which an integrable hierarchy  \emph{a la} Bergvelt-ten Kroode is linked with the multiple orthogonality problem. We not only derive from the Gauss--Borel factorization the Lax and Zakarov--Shabat equations, but also we introduce discrete integrable flows, described by Miwa shifts, or Darboux transformations, and also construct an appropriate bilinear equation. Finally, we find the $\tau$ functions corresponding to the multiple orthogonality and link them to those of \cite{adler-vanmoerbeke 5}. At the end of the paper, we have added  two appendices: the first one collects the more technical proofs of the results  in this paper. In Appendix \ref{III} we consider discrete flows for the case of a measure $\mu$ with unbounded support $\operatorname{supp}\mu$.

\subsection{The Gauss--Borel factorization of the moment matrix and orthogonal polynomials}\label{standard}

Here we discuss how  the $LU$ factorization of the standard moment matrix $g=(\int x^{i+j}\d\mu)$ of a constant sign  finite Borel measure $\mu$ leads to traditional results
in the theory of orthogonal polynomials, namely recursion relation and Christoffel--Darboux formula. In spite that these results are well established we repeat them here because in their derivation is encoded the set of arguments we will use in  the multiple orthogonality setting. In the forthcoming exposition it will become clear the $LU$ factorization approach is just
 a compact way of using the orthogonality relations.

 The moment matrix can be written as the following  Grammian matrix
\begin{align*}
g=\int \chi(x)\chi(x)^\top\d\mu(x)
\end{align*}
  in terms of the monomial string $\chi(x):=(1,x,x^2,\dots)^\top$.

The Borel--Gauss factorization of $g$ is
\begin{align*}
  g&=S^{-1}\bar S, & S&=\left(\begin{smallmatrix}
    1&0&0&\cdots \\
    S_{1,0}&1&0&\cdots\\
    S_{2,0}&S_{2,1}&1&\cdots\\
    \vdots&\vdots&\vdots&\ddots
    \end{smallmatrix}\right), &
   \bar S^{-1}&=\left(\begin{smallmatrix}
     \bar S_{0,0}'&\bar S_{0,1}'&\bar S_{0,2}'&\cdots\\
     0&\bar S_{1,1}'&\bar S_{1,2}'&\cdots\\
     0&0&\bar S_{2,2}'&\cdots&\\
     \vdots&\vdots&\vdots&\ddots
   \end{smallmatrix}\right).
\end{align*}
The reader should notice that
\begin{itemize}
\item It makes sense whenever the truncated moment matrix $g^{[l]}=(g_{i,j})_{0\leq i,j<l }$ is an invertible matrix for any $l=1,2,\dots.$ If the factorization exists it is unique.
\item Although the truncated matrices $g^{[l]}$ are invertible it can be shown that $g$ itself is not invertible.
\item The matrix product of $S^{-1}$ with $\bar S$ involves only finite sums, but if we reverse the order of the factors we get series (with an infinite number of summands).
\end{itemize}
Given the factors $S$ and $\bar S$ we consider the following  polynomial strings, the semi-infinite vectors,
\begin{align*}
  P&:=S\chi=(P_0,P_1,\dots)^\top,&\bar P&:=(\bar S^{-1})^\top\chi=(\bar P_0,\bar P_1,\dots)^\top.
\end{align*}
The families of polynomials $\{P_l\}_{l=0}^\infty$ and $\{\bar P_k\}_{k=0}^\infty$ are biorthogonal:
\begin{align*}
  \int P(x) \bar P(x)^\top\d\mu(x)&=\int S\chi(x)\chi(x)^\top \bar S^{-1}\d\mu(x)=S\int\chi(x)\chi(x)^\top\d\mu(x)\bar S^{-1}\\&=\I
  \Rightarrow \int P_l(x)\bar P_k(x)\d\mu(x)=\delta_{l,k}.
\end{align*}
In this simple proof relies the basic connection between orthogonality and the $LU$ factorization, which we consider as the very same thing dressed in different manners.
 From the above orthogonality we conclude that
 \begin{align}\label{ortho-standard}
 \begin{aligned}
   \int P_l(x) x^j\d \mu(x)&=0, &j&=0,\dots,l-1,\\
      \int \bar P_l(x) x^j\d \mu(x)&=0,& j&=0,\dots,l-1,
 \end{aligned}
 \end{align}
 and we also have that  $P_l(x)$ and $\bar P_l$ are $l$-th degree polynomials where $P_l$  is monic and $\bar P_l$ satisfies $\int x^l \bar P_l(x)\d\mu(x)=1$, i.e. we have type II and type I normalizations.
 Given that the moment matrix is symmetric, $g=g^\top$ and the uniqueness of the $LU$ factorization we deduce that $\bar S=H(S^{-1})^\top$, with $H=\text{diag}(h_0,h_1,\dots)$; i.e., $g=S^{-1}H (S^{-1})^\top$ and the factorization is a Cholesky factorization (but this does not extend to the multiple orthogonal case). Therefore $\bar P_l=h_l^{-1} P_l$ so that
$\int P_l(x) P_k(x)\d\mu(x)=h_l \delta_{l,k}$, and $\{P_l\}_{l=0}^\infty$ is a family of monic orthogonal polynomials with respect to the measure $\mu$.

Considering the orthogonality relations as a linear system for the coefficients of the polynomials one concludes that polynomials and their duals can be expressed as
    \begin{align*}
  P_l&=\chi^{(l)}-\begin{pmatrix}
    g_{l,0}&g_{l,1}&\cdots&g_{l,l-1}
  \end{pmatrix}(g^{[l]})^{-1}\chi^{[l]}\\
  &=\bar S_{l,l}\begin{pmatrix}
    0 &0 &\cdots &0& 1
  \end{pmatrix}
  (g^{[l+1]})^{-1}\chi^{[l+1]}
  \\&=\frac{1}{\det g^{[l]}}\det
  \left(\begin{array}{cccc|c}
    g_{0,0}&g_{0,1}&\cdots&g_{0,l-1}&1\\
     g_{1,0}&g_{1,1}&\cdots&g_{1,l-1}&x\\
     \vdots &\vdots&            &\vdots&\vdots\\
        g_{l-1,0}&g_{l-1,1}&\cdots&g_{l-1,l-1}&x^{l-1}\\\hline
          g_{l,0}&g_{l,1}&\cdots&g_{l,l-1}&x^l
\end{array}\right),& l&\geq 1,
\end{align*}
and similar expressions for the dual polynomials. We are now ready to get the recursion relations for orthogonal polynomials:
   \begin{itemize}
 \item  First, we notice that the moment matrix $g$ is a Hankel matrix, $g_{i+1,j}=g_{i,j+1}$, which in terms of the shift matrix $\Lambda:=\left(\begin{smallmatrix}
    0&1&0&0&\dots\\
    0&0&1&0&\dots\\[-6pt]
    0&0&0&1&{\tiny\ddots}\\
    \vdots&\vdots&\vdots&{\tiny\ddots}&{\tiny\ddots}
  \end{smallmatrix}\right)$ can be written as $\Lambda g=g\Lambda^\top$.\footnote{From this symmetry property it follows, by contradiction, that the moment matrix is not invertible; i.e. the assumption of the existence of $g^{-1}$ leads to $g^{-1}\Lambda =\Lambda^\top g^{-1}$, and therefore the first row and column of $g^{-1}$ are identically zero, so that $g^{-1}$ is not invertible.}
 \item Second, we observe  the eigen-value property $\Lambda\chi(x)=x\chi(x)$.
  \item Third, we introduce the $LU$ factorization to get $\Lambda S^{-1}\bar S=S^{-1} \bar S\Lambda^\top$ $\Rightarrow S\Lambda  S^{-1}=\bar S\Lambda^\top\bar S^{-1}=:J$. From this last relation we deduce that the matrix $J=\left(\begin{smallmatrix}
    a_0&1&0&0&\dots\\
    b_1&a_1&1&0&\dots\\[-6pt]
    0&b_2&a_2&1&{\tiny\ddots}\\
    \vdots&\vdots&{\tiny\ddots}&{\tiny\ddots}&{\tiny\ddots}
  \end{smallmatrix}\right)$ is a tridiagonal matrix, i.e. a Jacobi matrix.

  \item Finally, we notice that the polynomial strings are eigenvectors of the Jacobi matrix: $J P(x)=S\Lambda S^{-1}S\chi(x)=S\Lambda \chi(x)=Sx\chi(x)=xP(x)$; i.e., the recursion relations $xP_k(x)=P_{k+1}(x)+a_kP_k(x)+b_kP_{k-1}(x)$, $k> 0$, hold.
\end{itemize}

We now consider the Aitken--Berg--Collar (ABC)  theorem (here we follow the nomenclature used \cite{simon}) for orthogonal polynomials. First we introduce
the Christoffel--Darboux kernel   and therefore we consider
  \begin{align*}
\H^{[l]}&=\R\{0\dots,x^{l-1}\},&\H&=\Big\{\sum_{0\leq l \ll \infty} c_l x^{l}, c_l\in\R\Big\},&
( \H^{[l]})^\bot&=\Big\{\sum_{l\leq k \ll \infty} c_lP^l(x), c_l\in\R\Big\}
\end{align*}
and the resolution of the identity $ \H=\H^{[l]}\oplus (\H^{[ l]})^\bot$, with the corresponding orthogonal projector $\pi^{(l)}$ such that  $\text{ker } \pi^{(l)}=(\H^{[l]})^\bot$ and $\text{Ran }\pi^{(l)}=\H^{[l]}$. Then, the  Christoffel--Darboux is defined as
\begin{align*}
K^{[l]}(x,y):=\sum_{k=0}^{l-1}P_{k}(y)\bar P_{k}(x)=\sum_{k=0}^{l-1}h_k^{-1}P_{k}(y)P_{k}(x),
\end{align*}
which, according to the bi-orthogonality property, gives the following integral representation of the projection operator
  \begin{align*}
 ( \pi^{(l)}f)(y)&=\int K^{[l]}(x,y)f(x)\d \mu(x), & \forall f\in\H,
\end{align*}

Any semi-infinite vector  $v$ can be written in block form as follows
\begin{align*}v&= \left(\begin{array}{c}
v^{[l]}\\\hline\\[-10pt]
 v^{[\geq l]}
\end{array}\right)
\end{align*}
 $v^{[l]}$ is the finite vector formed with the first $l$ coefficients of $v$ and $v^{[\geq l]}$ the
semi-infinite vector formed with the remaining coefficients.
This decomposition induces the following block structure for any semi-infinite matrix.
\begin{align*}
  g= \left(\begin{array}{c|c}
   g^{[l]}&g^{[l,\geq l]}\\\hline\\[-10pt]
     g^{[\geq l,l]}& g^{[\geq l]}
\end{array}\right).
\end{align*}
 Given a factorizable  moment matrix $g$ we have
\begin{align*}
  g^{[l]}&=(S^{[l]})^{-1}\bar S^{[l]},&(S^{-1})^{[l]}&=(S^{[l]})^{-1},&(\bar S^{-1})^{[\geq l]}&=(\bar S^{[\geq l]})^{-1}.
\end{align*}

  The   Christoffel--Darboux kernel is related to the moment matrix in the
  following way
\begin{align*}
     K^{[l]}(x,y)&=(\chi^{[l]}(x))^\top (g^{[l]})^{-1}\chi^{[l]}(y)
\end{align*}
which is a consequence of the following identities
\begin{align*}
  K^{[l]}(x,y)&=(\Pi^{[l]}\bar P(x))^\top(\Pi^{[l]} P(y))\\
  &=\chi^\top(x) \bar S^{-1}\Pi^{[l]} S\chi(y)\\
  &=\chi^\top(x) (\Pi^{[l]}\bar S^{-1}\Pi^{[l]})(\Pi^{[l]} S\Pi^{[l]})\chi(y) \\
      &=(\chi^{[l]}(x))^\top (g^{[l]})^{-1}\chi^{[l]}(y).
\end{align*}

The relations
\begin{align*}
(g^{[l]})^{-1}  \Lambda^{[l]}-(\Lambda^{[l]})^\top(g^{[l]})^{-1}&=(g^{[l]})^{-1}
\Big(g^{[l,\geq l]}(\Lambda^{[l,\geq l]})^\top-\Lambda^{[l,\geq l]}g^{[\geq l,l]}\Big)(g^{[l]})^{-1}
\end{align*}
follow from the block equation
\begin{align*}
  \Lambda^{[l]}g^{[l]}+\Lambda^{[l,\geq l]}g^{[\geq l,l]}&= g^{[l]}(\Lambda^{[l]})^\top+g^{[l,\geq
  l]}(\Lambda^{[l,\geq l]})^\top.
\end{align*}
We also have
 \begin{align*}
  \Lambda^{[l]}\chi^{[l]}(x)&=x\chi^{[l]}(x)-\Lambda^{[l,\geq l]}\chi^{[\geq l]}(x),&
  \Lambda^{[l,\geq l]}&= e_{l-1}e_{0}^\top,
\end{align*}
where $\{e_i\}_{i=0}^\infty$ is the canonical linear basis of $\H$.
With all these at hand we deduce
\begin{align*}
(\chi^{[l]}(x))^\top \big((g^{[l]})^{-1}
\Lambda^{[l]}-(\Lambda^{[l]})^\top(g^{[l]})^{-1}\big)\chi^{[l]}(y)&=(\chi^{[l]}(x))^\top
(g^{[l]})^{-1}
\big(g^{[l,\geq l]}(\Lambda^{[l,\geq l]})^\top-\Lambda^{[l,\geq l]}g^{[\geq
l,l]}\big)(g^{[l]})^{-1}\chi^{[l]}(y)
\end{align*}
so that,
\begin{align*}
  \hspace*{-5pt} (x-y)K^{[l]}(x,y)=&\Big((\chi^{[\geq l]}(x))^\top- (\chi^{[l]}(x))^\top
   (g^{[l]})^{-1}  g^{[l,\geq l]}\Big) e_{0}e_{l-1}^\top (g^{[l]})^{-1}
   \chi^{[l]}(y)\\&
   -(\chi^{[l]}(x))^\top (g^{[l]})^{-1} e_{l-1}e_{0}^\top\Big(\chi^{[\geq l]}(y)-g^{[\geq
   l,l]}(g^{[l]})^{-1}\chi^{[l]}(y)\Big).
\end{align*}
That using the determinantal expressions for the polynomials presented before leads to the Christoffel--Darboux formula
\begin{align*}
(x-y)K^{[l]}(x,y)=h_{l-1}^{-1}(P_l(x)P_{l-1}(y)-P_{l-1}(x)P_l(y)).
\end{align*}

\section{Multiple orthogonal polynomials and Gauss--Borel factorization}

\subsection{The moment matrix}

In this section we define the moment matrix in terms of  a measure $\mu \in {\mathcal{M}}(\Delta)$ and two systems of weights $\vec w_1$ and $\vec w_2$ on
$\Delta \subset {\mathbb{R}}$, as well as corresponding compositions (the order matters) $\vec
n_1=(n_{1,1},\dots,n_{1,p_1})\in\N^{p_1}$ and
 $\vec n_2=(n_{2,1},\dots,n_{2,p_2})\in\N^{p_2}$ \cite{stanley}. We will consider multi-indices of positive integers $\vec n=(n_1,\dots,
n_p)$, where $p\in \N$ and $n_a\in{\mathbb{Z}}_+$, $a=1,\dots,p$ and define $|\vec n|:=n_1+\dots+n_p$.
 Following \cite{bergvelt,stanley} we observe that  any  $i\in\Z_+:=\{0,1,2,\dots\}$ determines  unique non-negative
integers $q(i),a(i),r(i)$, such that the composition
\begin{align}\label{i}
  i&=q(i)|\vec n|+n_{1}+\dots+n_{a(i)-1}+r(i), & 0&\leq r(i)<n_{a(i)},
\end{align}
holds. 
Hence, given $i$ there is a unique $k(i)$ with
\begin{align}\label{k}
  k(i)&=q(i) n_{a(i)}+r(i),& 0&\leq r(i)<n_{a(i)}.
\end{align}
Let us introduce the function integer part function $[\cdot]:{\mathbb{R}}_+ \rightarrow {\mathbb{Z}}_+$,
$[x]=\max \{ y \in \Z_+, y \leq x\}$.
Combining  \eqref{i} and \eqref{k} we can obtain a formula which expresses explicitly the dependence between the quantities $i$, $k$ and $a$
\begin{align}\label{iak}
i=\left[\frac{k}{n_a}\right] \left(|\vec n|-n_a\right)+n_1+\cdots +n_{a-1}+k .
\end{align}
Let ${\mathbb{R}}^{\infty}$ denote the vector space of all sequences with elements in ${\mathbb{R}}$. An element $\lambda \in {\mathbb{R}}^{\infty}$ may be interpreted as a column semi-infinity vector as follows
\begin{align*}
\lambda&= (\lambda^{(0)},\lambda^{(1)},\dots)^\top,&    \lambda^{(j)} &\in \R,&  j&=0,1,\dots.
\end{align*}
We consider  the set $\{e_j\}_{j\geq0} \subset {\mathbb{R}}^{\infty}$ with
\begin{align*}
e_j=(\overset{j}{\overbrace{0,0,\dots,0}},1,0,0,\dots)^{\top}.
\end{align*}
Here $(\cdot)^\top$ denotes the transposition function on vectors and matrices. Analogously, we denote by $(\R^p)^\infty$ the set of all sequences of vectors with $p$ components and observe that each sequence which belongs to $(\R^p)^\infty$ can also be understood as semi-infinity column vector: given the vector sequence $(\vec v_0,\vec v_1,\dots)$ with $\vec v_j=(v_{j,1},\dots,v_{j,p})^\top$ we have the corresponding sequence in $\R^\infty$ given by $(v_{0,1}\dots,v_{0,p},v_{1,1},\dots,v_{1,p},\dots)$; i.e., ${\mathbb{R}}^{\infty} \cong({\mathbb{R}}^p)^{\infty}$. Therefore, we consider also the set    $\{e_a(k)\}_{\substack{a=1,\dots,p\\ k=0,1,\dots}}\subset ({\mathbb{R}}^p)^{\infty}$ where for each pair $(a,k)\in \{1,\ldots,p\}\times {\mathbb{Z}}_+$ $e_{a}(k)=e_{i(k,a)}$ and the function $i(a,k) \in {\mathbb{Z}}_+$ satisfies the equality \eqref{iak}.

Now, we are ready to introduce the monomial strings
 \begin{align}\label{chion}
  \chi_a&:=\sum_{k=0}^\infty e_a(k) z^k,&
  \chi_a^{(l)}&=\begin{cases}
    z^{k(l)},& a=a(l),\\
    0, &a\neq a(l),
  \end{cases}&
  \chi^*_a:&=z^{-1}\chi_a(z^{-1}).
\end{align}
These vectors may be understood as sequences of monomials according to the
composition $\vec n$ introduced previously.
We also define the following weighted monomial string
\begin{align}
\xi&:=\sum_{a=1}^p\chi_aw_a, &
  \xi^{(l)}&=    w_{a(l)}z^{k(l)},
\end{align}
which is a sequence of weighted monomials  for each given composition $\vec n$. Sometimes, when we what to stress the
dependence in the composition we write $\chi_{\vec n,a},\chi_{\vec n}$ and $\xi_{\vec n}$.  Given the weighted monomials $\xi_{\vec n_1}$ and $\xi_{\vec n_2}$, associated to the compositions $\vec n_1$ and $\vec n_2$, we introduce the moment matrix in the following manner
\begin{definition}
  The moment matrix is given by
\begin{align}
  \label{compact.g}
  g_{\vec n_1,\vec n_2}:=\int \xi_{\vec n_1}(x)\xi_{\vec n_2}(x)^\top\d \mu(x).
\end{align}
\end{definition}
 In terms of the canonical basis $\{E_{i,j}\}$ of the linear space of semi-infinite matrices and for each pair $(i,j)\in
 {\mathbb{Z}}_+^{2}$ we consider the binary permutations or transpositions $ \pi_{i,j}=E_{i,j}+E_{j,i}$. Observe that
 $\pi_{i,j}^2=\I$ and therefore $\pi_{i,j}^{-1}=\pi_{i,j}$. Given two transpositions $\pi_{i,j}$ and $\pi_{k,l}$ the
 permutation endomorphism corresponding to its product is well defined $\pi_{i,j}\pi_{k,l}=\pi_{k,l}\pi_{i,j}$.
 Taking a sequence of pairs $I=\{(i_s,j_s)\}_{s \in {\mathbb{Z}}_+}$, $i_s,j_s\in{\mathbb{Z}}_+$, we introduce the permutation endomorphism $\pi_I$ as the
 infinite product  $\pi_I=\prod_{s \in {\mathbb{Z}}_+} \pi_{i_s,j_s}$, with $\pi_I\pi_I^\top=\pi_I^\top\pi_I=\I$.
 Given two compositions, $\vec n',\vec n$, there exists  a permutation $\pi_{\vec n',\vec
 n}$ such that $  \xi_{\vec n'}=\pi_{\vec n',\vec
 n}\xi_{\vec n}$ through a permutation semi-infinite matrix as just described.

The change in the compositions is modeled as follows
\begin{pro}
Given two set of weights $\vec w_{\ell}=(w_{\ell,1},\dots,w_{\ell,p_\ell})$
and  compositions $\vec  n_\ell$ and $\vec n'_\ell$, $\ell=1,2$, there exist permutation matrices
$\pi_{\vec n_\ell',\vec n_\ell}$ such that
\begin{align}
   \label{different moments}
   g_{\vec n_1',\vec n_2'}=\pi_{\vec n_1',\vec n_1}g_{\vec n_1,\vec n_2}\pi_{\vec n_2',\vec n_2}^\top.
 \end{align}
\end{pro}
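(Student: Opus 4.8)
The plan is to obtain \eqref{different moments} as an immediate consequence of the intertwining property of the weighted monomial strings recorded just above, namely that for $\ell=1,2$ there exist permutation matrices $\pi_{\vec n_\ell',\vec n_\ell}$ with $\xi_{\vec n_\ell'}=\pi_{\vec n_\ell',\vec n_\ell}\xi_{\vec n_\ell}$. I will treat this intertwining relation as the real content and the matrix identity \eqref{different moments} as a formal manipulation built on top of it. For completeness, the existence of $\pi_{\vec n_\ell',\vec n_\ell}$ follows by noting that \eqref{i}, \eqref{k} and \eqref{iak} make $l\mapsto(a(l),k(l))$ a bijection $\Z_+\to\{1,\dots,p_\ell\}\times\Z_+$; a second composition $\vec n_\ell'$ of the \emph{same} weights produces a second such bijection, and since both strings list exactly the collection of weighted monomials $\{w_{\ell,a}z^k\}$ in a different order, the permutation $\sigma_\ell$ of $\Z_+$ comparing the two orderings defines $\pi_{\vec n_\ell',\vec n_\ell}$ entrywise.

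First I would substitute $\xi_{\vec n_1'}=\pi_{\vec n_1',\vec n_1}\xi_{\vec n_1}$ and $\xi_{\vec n_2'}=\pi_{\vec n_2',\vec n_2}\xi_{\vec n_2}$ into the defining formula \eqref{compact.g}, obtaining
\[
g_{\vec n_1',\vec n_2'}=\int\big(\pi_{\vec n_1',\vec n_1}\xi_{\vec n_1}(x)\big)\big(\pi_{\vec n_2',\vec n_2}\xi_{\vec n_2}(x)\big)^\top\d\mu(x).
\]
Then I would use the transpose identity $(\pi_{\vec n_2',\vec n_2}\xi_{\vec n_2})^\top=\xi_{\vec n_2}^\top\pi_{\vec n_2',\vec n_2}^\top$ and, since the two permutation matrices do not depend on the integration variable $x$, factor them out of the integral to reach
\[
g_{\vec n_1',\vec n_2'}=\pi_{\vec n_1',\vec n_1}\Big(\int\xi_{\vec n_1}(x)\xi_{\vec n_2}(x)^\top\d\mu(x)\Big)\pi_{\vec n_2',\vec n_2}^\top=\pi_{\vec n_1',\vec n_1}\,g_{\vec n_1,\vec n_2}\,\pi_{\vec n_2',\vec n_2}^\top,
\]
which is exactly \eqref{different moments}.

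The one point that needs genuine care is the manipulation of the semi-infinite matrices: both factoring $\pi_{\vec n_\ell',\vec n_\ell}$ out of the integral and transposing it must be justified entrywise, since a product of semi-infinite matrices involves infinite sums in general. I expect this to be the only real obstacle, and it dissolves precisely because $\pi_{\vec n_\ell',\vec n_\ell}$ is a permutation matrix: each of its rows and columns carries a single nonzero entry equal to $1$, so every product with it is a mere relabelling of indices containing no infinite summation. Concretely, with $\sigma_1,\sigma_2$ the underlying permutations of $\Z_+$, the claim is checked term by term as $(\pi_{\vec n_1',\vec n_1}\,g_{\vec n_1,\vec n_2}\,\pi_{\vec n_2',\vec n_2}^\top)_{i,j}=(g_{\vec n_1,\vec n_2})_{\sigma_1(i),\sigma_2(j)}$, and for each fixed entry the interchange with $\int\cdots\d\mu$ is immediate. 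The orthogonality $\pi_I\pi_I^\top=\pi_I^\top\pi_I=\I$ noted above ensures these relabellings are bijective, so no convergence issue arises and the argument is complete.
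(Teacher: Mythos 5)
Your proposal is correct and follows essentially the same route as the paper: both rest on the intertwining relation $\xi_{\vec n_\ell'}=\pi_{\vec n_\ell',\vec n_\ell}\xi_{\vec n_\ell}$ followed by substitution into the Grammian definition \eqref{compact.g}. Your additional care about why products with semi-infinite permutation matrices involve no infinite sums is a sensible elaboration of a step the paper leaves implicit, but it does not change the argument.
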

\begin{proof}
  For any set of weights $\vec w={w_1,\dots,w_p}$ and two compositions $\vec  n$ and $\vec n'$ we have that the
  corresponding vectors of weighted monomials are connected,
\begin{align*}
  \xi_{\vec n'}=\pi_{\vec n',\vec n}\xi_{\vec n}
\end{align*}
trough a permutation semi-infinite matrix; i.e,  $\pi_{\vec n',\vec n}^\top=\pi_{\vec n',\vec n}^{-1}$.
 Therefore, the announced result follows.
\end{proof}

 For the sake of notation simplicity and when the context is clear enough we will drop the subindex indicating the two
 compositions and just write $g$ for the moment matrix. Let us discuss in more detail the block Hankel structure of the moment matrix. 
For each pair $(i,j) \in {\mathbb{Z}}_+^2$ there exists a unique combination of three
others pairs  $(q_1,q_2) \in {\mathbb{Z}}_+^2,$ $(a_1,a_2)\in\{1,\ldots,p_1\}\times\{1,\ldots,p_2\} $ and $
(r_1,r_2) \in \{0,\ldots, n_{1,a_1}-1\} \times \{0,\ldots,n_{2,a_2}-1\},$
such that
\[
i=q_1|\vec n_1|+n_{1,1}+\ldots+n_{1,a_1-1}+r_1 \quad \mbox{and} \quad j=q_2|\vec n_2|+n_{2,1}+\ldots+n_{2,a_2-1}+r_2.
\]
Hence taking $k_{\ell}= q_{\ell} n_{\ell,a_{\ell}}+r_{\ell},$ $\ell=1,2,$ the coefficients $g_{i,j}\in {\mathbb{R}}$
of the moment matrix $g=(g_{i,j})$ have the following explicit form
\begin{align}\label{explicit g2}
\begin{aligned}
  g_{i,j}&=\int  x^{k_1+k_2}w_{1,a_1}(x)w_{2,a_2}(x)\d \mu(x) .
\end{aligned}
\end{align}
Observe that pairs $(k_1,a_1)$ and $(k_2,a_2)$ are univocally determined by $i$ and $j$ respectively.

Before we continue with the study of this moment matrix it is necessary to introduce some auxiliary objects associated with the vector space $\R^\infty$. First,  we have the unity matrix $\I=\sum_{k=0}^\infty e_k e_k^\top$
and the shift matrix $\Lambda:=\sum_{k=0}^\infty e_ke_{k+1}^\top$. We also define the projections $  \Pi^{[l]}:=\sum_{k=0}^{l-1} e_k e_k^\top$,
and with the help of the set  $\{e_a(k)\}_{\substack{a=1,\dots,p\\ k=0,1,\dots}}$ we construct the projections
$ \Pi_a:=\sum_{k=0}^{\infty}e_a(k) e_a(k)^\top$ with $\sum_{a=1}^p\Pi_a=\I$, and
\begin{align}\label{projectors}
  P_{1}&:=\diag(\I_{n_1}, 0_{n_2},\dots,0_{n_p}),&
  P_{2}&:=\diag( 0_{n_1},\I_{n_2},\dots,0_{n_p}),&&\dots&
   P_{p}&:=\diag( 0_{n_1},0_{n_2},\dots,\I_{n_p}),
\end{align}
where $\I_{n_s}$ is the $n_s\times n_s$ identity matrix. Finally we introduce the notation
\begin{align}
  x^{\vec n}:=x^{n_1}P_{1}+\dots+x^{n_p}P_{p}=\diag(x^{n_1}\I_{n_1},\dots,x^{n_p}\I_{n_p}):\R\to\R^{|\vec
  n|\times|\vec n|}.
\end{align}

For a better insight of the moment matrix let us introduce the following $n_{1,a}\times n_{2,b}$   matrices
 \begin{align}\label{Wab}
  m_{a,b}(x)=w_{1,a}(x)w_{2,b}(x)\begin{pmatrix}
   1& x&\cdots& x^{n_{2,b}-1}\\
   x&  x^2&\cdots&  x^{n_{2,b}}\\
     \vdots&\vdots&&\vdots\\
       x^{n_{1,a}-1}&x^{n_{1,a}}&\cdots&  x^{n_{1,a}+n_{2,b}-2}
  \end{pmatrix},&&\begin{aligned}
   a&=1,\dots,p_1,\\ b&=1,\dots,p_2,
  \end{aligned}
\end{align}
in terms of which we build up the following $|\vec n_1|\times|\vec n_2|$-matrix
\begin{align}\label{W}
  m:=\begin{pmatrix}
    m_{1,1}&m_{1,2}&\cdots&m_{1,p_2}\\
       m_{2,1}&m_{2,2}&\cdots&m_{2,p_2}\\
\vdots&\vdots&&\vdots\\
            m_{p_1,1}&m_{p_1,2}&\cdots&m_{p_1,p_2}
  \end{pmatrix}:\R\to\R^{|\vec n_1|\times|\vec n_2|}.
\end{align}
Then, the moment matrix $g$ has the following block structure
\begin{align}\label{block G}
  g&:=(G_{i,j})_{i,j\geq 0}\in\R^{\infty\times\infty},& 
  G_{i,j}:=\int x^{i \vec n_1}m(x)x^{j\vec n_2}\d \mu(x)\in \R^{|\vec n_1|\times|\vec n_2|}.
\end{align}

Fix now a number $l \in {\mathbb{N}}$ and consider the pair $(l,l+1)$.  There exists a
unique combination of pairs $(q_1,q_2) \in {\mathbb{Z}}_+^2,$ $(a_1,a_2)\in\{1,\ldots,p_1\}\times\{1,\ldots,p_2\} $
and $
(r_1,r_2) \in \{0,\ldots, n_{1,a_1}-1\} \times \{0,\ldots,n_{2,a_2}-1\},$
such that
\[
l=q_1|\vec n_1|+n_{1,1}+\dots+n_{1,a_1-1}+r_1 \quad \mbox{and} \quad l+1=q_2|\vec
n_2|+n_{2,1}+\dots+n_{2,a_2-1}+r_2.
\]


Given the compositions $\vec n_1$ and $\vec n_2$ we introduce the  degree multi-indices $\vec \nu_1 \in {\mathbb{Z}}_+^{p_1}$ and $\vec  \nu_2 \in{\mathbb{Z}}_+^{p_2}$  \cite{bergvelt} where
for each $\ell=1,2,$ we have
\begin{align}\label{mult:u}
\begin{aligned}\vec \nu_\ell&=(\nu_{\ell,1},\dots,\nu_{\ell,a_\ell-1},\nu_{\ell,a_\ell},\nu_{\ell,a_\ell+1},\dots, \nu_{\ell,p_\ell})\\
&=((q_\ell+1)n_{\ell,1},\dots,(q_\ell+1)n_{\ell,a_\ell-1},q_\ell n_{\ell,a_\ell}+r_\ell,q_\ell
n_{\ell,a_\ell+1},\dots,q_\ell n_{\ell,p_\ell}),
\end{aligned}
\end{align}
which satisfy
\begin{align}\label{formulitas}
  k_\ell(i+1)&=\nu_{\ell,a_\ell(i)}(i),&|\vec\nu_\ell(i)|&=i+1,&
  \vec \nu_\ell(i+l|\vec n_\ell|)&=\vec\nu_\ell(i)+l\vec n_\ell,
\end{align}
and consider the $l \times (l+1)$ block matrix $\Gamma_{l}$ from $g$
\begin{align}\label{Gamma}
\Gamma_{l}=\begin{pmatrix}
g_{0,0} & g_{0,1} & \cdots & g_{0,l} \\
g_{1,0} & g_{1,1} & \cdots & g_{1,l} \\
\vdots   & \vdots   &  & \vdots  \\
g_{l-1,0} & g_{l-1,1} & \cdots & g_{l-1,l} \\
\end{pmatrix}.
\end{align}
Let us study the homogeneous system $\Gamma_{l}{\boldsymbol x}_{l+1}={\boldsymbol 0}_{l},$ where $\boldsymbol
x_{l+1}\in\R^{l+1}$ and ${\bf 0}_{l+1}$ is the null vector in ${\mathbb{R}}^{l+1}$. Taking into account $\Gamma_{l}$'s
structure \eqref{explicit g2}, we see that such equation is exactly the expression of the orthogonality relations
\eqref{orth}. We can see now that for each $l \in {\mathbb{N}}$ the existence of a system of mixed multiple orthogonal
polynomials $\big(A^{(l)}_1, \ldots, A^{(l)}_{p_1}\big)$ is ensured; that is because $\Gamma_{l}$ in \eqref{Gamma} is
a $l \times (l+1)$ matrix, and the homogeneous matrix equation $\Gamma_{l} {\boldsymbol x}_{l+1}=\boldsymbol{0}_{l},$
which is satisfied by the coefficients of the polynomials corresponds to a system of $l$ homogeneous linear equations
for $l+1$ unknown coefficients. Thus, the system always has a non-trivial solution.   Obviously, $\big(A^{(l)}_1,
\ldots, A^{(l)}_{p_1}\big)$ is not univocally determined by the matrix equation $\Gamma_l{\boldsymbol x}_{l+1}={\bf
0}_{l}$ or equivalently by the orthogonality relations \eqref{orth}, because its solution space has at least dimension
1.    Hence, the appropriate  question to consider  is the uniqueness question without counting constant factors, or
equivalently if the solution space has exactly dimension 1. In terms of $\Gamma_{l}$ the question becomes: Does
$\Gamma_{l}$ have rank $l$? In order to have a positive answer it is sufficient to ensure that the $l \times l$ square
matrix
 \begin{align}\label{gl}
  g^{[l]}&:=\begin{pmatrix}
    g_{0,0}&g_{0,1}&\cdots&g_{0,l-1}\\
    g_{1,0}&g_{1,1}&\cdots&g_{1,l-1}\\
    \vdots&\vdots&&\vdots\\
     g_{l-1,0}&g_{l-1,1}&\cdots&g_{l-1,l-1}
  \end{pmatrix},& l\geq 1,
  \end{align}
 is invertible, where $g^{[l]}$ results from $\Gamma_{l}$ after removing  its last column. It is easy to prove that
 such condition is equivalent to require that  all possible  solutions of \eqref{orth} satisfy $\deg A_{p_1}
 =\nu_{1,p_1}-1$. Obviously  this requirement is ensured when the polynomials $\big(A^{(l)}_1, \ldots,
 A^{(l)}_{p_1}\big)$ fulfill  $\deg A_{j} = \nu_{1,j} -1,$ $j=1,\ldots, p_1$.

 \subsection{Perfect combinations and Nikishin systems}\label{perfectness}

We introduce the concept of a perfect combination.

\begin{definition}\label{pag8}  A combination $(\mu, \vec w_1, \vec w_2)$ of a measure $\mu \in {\mathcal{M}}(\Delta)$ and two
systems of weights $\vec w_1$ and $\vec w_2$ on $\Delta \subset {\mathbb{R}}$ is said to be perfect if for each pair
of multi-indices $(\vec \nu_1,\vec \nu_2)$, with $|\vec \nu_1|=|\vec \nu_2|+1$  the orthogonality relations
\eqref{orth} imply that $\deg A_a=\nu_{1,a}-1,$ $a=1, \ldots, p_1$.
\end{definition}

For  a perfect combination $(\mu,\vec w_1, \vec, w_2)$ and any  given $l \in {\mathbb{Z}}_+$ the solution space of the  equation $\Gamma_{l} {\boldsymbol x}_{l+1}=\boldsymbol 0_{l}$ is one-dimensional.  Then, we can determine a unique system of mixed type orthogonal polynomials $\big(A_1, \ldots, A_{p_1}\big)$ satisfying \eqref{orth} requiring for  $a_1 \in \{1, \ldots p_1\}$ that $A_{a_1}$ monic. Following \cite{daems-kuijlaars} we say that we have a type II normalization and denote the corresponding system of polynomials by   $A_a^{(\rII,a_1)},$ $j=1, \ldots, p_1$. Alternatively, we can proceed as follows, since the system of weights is perfect from \eqref{orth} we deduce that
\begin{align*}
\int x^{\nu_{2,b_2}} \sum_{a=1}^{p_1} A_a(x)w_{1,a} (x)w_{2,b_2}(x) \d\mu (x)\neq 0.
\end{align*}
Then, we can determine a unique system of mixed type of multi-orthogonal polynomials
$(A_1^{(\rI,b_2)},\ldots,A_{p_2}^{(\rI,b_2)})$ imposing that
\begin{align*}
\int x^{\nu_{2,b_2}} \sum_{a=1}^{p_1} A_a^{(\rI,b_2)}(x)w_{1,a} (x)w_{2,b}(x) \d\mu (x)=1,
\end{align*}
which is a type I normalization. We will use the notation $A_{[\vec\nu_1;\vec\nu_2],a}^{(\rII,a_1)}$ and $A_{[\vec\nu_1;\vec\nu_2],a}^{(\rI,b_2)}$ to denote these multiple orthogonal polynomials with type II and I normalizations, respectively.

A known illustration of perfect combinations $(\mu, \vec w_1, \vec w_2)$ can be constructed with an arbitrary positive finite Borel measure $\mu$ and  systems of weights formed with exponentials:
\begin{align}\label{exponentials}
&  (\Exp{\gamma_1x},\ldots,\Exp{\gamma_px}),& \gamma_i &\neq \gamma_j, &i &\neq j,& i,j& = 1,\ldots,p,
\end{align}
 or by binomial functions
\begin{align}\label{binomials}
  &((1-z)^{\alpha_1},\ldots,(1-z)^{\alpha_p}),& \alpha_i -\alpha_j &\not\in {\mathbb{Z}}, &i &\neq j,& i,j& =
  1,\ldots,p.
\end{align}
or combining  both classes, see \cite{Nik}. Recently a wide class of systems of weights where proven to be perfect \cite{LF2}; these systems of functions, now called Nikishin systems, were introduced by E.M. Nikishin  \cite{Nik} and initially named MT-systems (after Markov and Tchebycheff).

Given a closed interval $\Delta$ let $\overset{\circ}{\Delta}$ be the interior set of $\Delta.$ Let us take two intervals $\Delta_{\alpha}$ and $\Delta_{\beta}$ whose interior sets are disjoint, i.e. $\overset{\circ}{\Delta}_{\alpha} \cap \overset{\circ}{\Delta}_{\beta}=\emptyset.$ Set two measures $\mu_{\alpha} \in {\mathcal{M}}(\Delta_{\alpha})$ and $\mu_{\beta}\in {\mathcal{M}}(\Delta_{\beta})$ such that the measure $\langle \mu_{\alpha},\mu_{\beta}\rangle$ with the following differential form
\[
\d \langle \mu_{\alpha},\mu_{\beta}\rangle(x)= \int \frac{ \d \mu_{\beta}(t)}{x-t} \d \mu_{\alpha}(x)=\widehat{\mu}_{\beta}(x) \d \mu_{\alpha}(x),
\]
is a finite measure, that implies that $\langle \mu_{\alpha},\mu_{\beta}\rangle \in {\mathcal{M}}(\Delta_{\alpha}).$  The function $\widehat{\mu}_{\beta}$ denotes the Cauchy transform corresponding to $\mu_{\beta}.$  Let us consider then a system of $p$ intervals $\Delta_1,\ldots, \Delta_p$ such that $\overset{\circ}{\Delta}_j \cap \overset{\circ}{\Delta}_{j+1} =\emptyset$,  $j \in \{1, \ldots,p-1\}$. Take $p$ measures $\mu_j \in {\mathcal{M}}(\Delta_j),$ which  for each $j=1, \ldots,p-1,$ the measure $\langle \mu_j,\sigma_{j+1}\rangle$ belongs to ${\mathcal{M}}(\Delta_j)$. So the system of measures $(\xi_0,\ldots,\xi_p)$ where
\[
\zeta_1 = \mu_1, \quad \zeta_2= \langle \mu_1, \mu_2\rangle, \quad \zeta_3=\langle \mu_1,\langle \mu_2,\mu_3\rangle \rangle=\langle \mu_1,\mu_2,\mu_3\rangle, \quad  \ldots, \quad \zeta_p=\langle \mu_1, \ldots, \mu_p\rangle,
\]
is the Nikishin system of measures generated  by the system $(\sigma_1,\ldots,\sigma_m)$. So we denote $(\zeta_1,\ldots,\zeta_p)={\mathcal{N}}\left(\sigma_1,\ldots,\sigma_p\right).$

Actually, in \cite{LF2} the authors shown perfectness for combinations of Nikishin systems where intervals $\Delta_1,\ldots, \Delta_p$ are bounded and for  each $j \in \{1,\ldots,p-1\}$ the intervals $\Delta_j$ and $\Delta_{j+1}$ are disjoint. The same authors have communicated to us that they were able to prove a generalization of this result to unbounded intervals such that $\Delta_j \cap \Delta_{j+1} \not = \emptyset$. Consequently, in what follows  we assume  such generalization.

As we have seen, general Nikishin systems  have an intricate structure; therefore, in order to make easy the reader we focus on a ``simple" class of Nikishin systems which we call M-Nikishin systems. Set the interval $\Delta_1=[0,1]$ and let ${\mathcal{M}}_0(\Delta_1) \subset {\mathcal{M}}(\Delta_1)$ denote the set of measures in ${\mathcal{M}}(\Delta_1)$ such that if $\sigma \in {\mathcal{M}}_0(\Delta_1)$ then, the function
\begin{align}\label{constrain}
\widetilde{\sigma}(x):= \int_{\Delta_1} \frac{\d \sigma(t)}{1-tx} \quad \mbox{satisfies } \quad \lim_{\underset{x \in\overset{\circ}{\Delta}_1}{x \to 1}} \left|\int_{\Delta_1} \frac{\d \sigma(t)}{1-t x}\right| =
\lim_{\underset{x \in \overset{\circ}{\Delta}_1}{x \to 1}} \int_{\Delta_1} \frac{\left|\d \sigma(t)\right|}{1-t x}< +\infty, \quad \mbox{where} \quad \overset{\circ}{\Delta}_1=(0,1).
\end{align}
The constraint \eqref{constrain} guarantees that the function $\widetilde{\sigma}$ is a weight in compact intervals in $(-\infty,1]$. As $(1-tx)$ does not vanish for $(t,x) \in \Delta_1\times( \C\setminus [1,+\infty))$ we deduce that  $1/(1-tx)$ is a continuous function in $x$ for $t \in \Delta_1$. Therefore, we conclude that $\widetilde{\sigma}$ is a holomorphic function on $\C \setminus (1,+\infty)$, having a continuation as continuous function in $1.$ Taking into account that $\widetilde{\sigma}$ does not vanish in ${\mathbb{C}} \setminus (1,+\infty)$ and that it takes real values on ${\mathbb{R}} \setminus (1,+\infty)=(-\infty,1]$, we deduce that it is a continuous weight on $(-\infty,1]$. Observe that
\begin{equation}\label{transform}
\widetilde{\sigma} (x)=\int_{\Delta_1} \frac{d \sigma (t)}{1-tx}=\int_{[1,+\infty)} \frac{\zeta d \sigma(1/\zeta)}{x-\zeta}=\int_{[1,+\infty)} \frac{\d \mu (\zeta)}{x-\zeta},
\end{equation}
is the Cauchy transform of another measure $\mu \in {\mathcal{M}}([1,+\infty)),$ such that $\left|\widehat{\mu} (1)\right|=\left|\widetilde{\sigma}(1)\right| < +\infty.$

Given two measures  $\sigma_{\alpha} \in {\mathcal{M}}_0(\Delta_1), \sigma_{\beta} \in {\mathcal{M}}_0(\Delta_1)$ we define a third one as follows (using the differential notation)
\begin{align*}
\d [ \sigma_{\alpha},\sigma_{\beta}] (x) &= \widetilde{\sigma}_{\beta}(x) \d\sigma_{\alpha}(x),&
\widetilde{\sigma}_\beta(x)&=\int_{\Delta_1} \frac{\d \sigma_{\beta} (\zeta)}{1-x\zeta}.
\end{align*}
As $\widetilde{\sigma}_{\beta}$ is a continuous weight on $\Delta_1$ we conclude that $[ \sigma_{\alpha},\sigma_{\beta} ] \in\mathcal M_0(\Delta_1)$. If we take a system of measures $(\sigma_1,\ldots,\sigma_p)$ such that $\sigma_j \in {\mathcal{M}}_0(\Delta_1), j=1,\ldots,p$,  we say that $(s_1,\ldots,s_p) = {\mathcal{MN}}(\sigma_1,\ldots,\sigma_p)$, where
\begin{align}\label{M-Nikishin}
  s_1& = \sigma_1,&s_2&=[\sigma_1,\sigma_2],&s_3&= [ \sigma_1,[\sigma_2,\sigma_3]]=[ \sigma_1,\sigma_2,\sigma_3],&& \dots & s_p &= [ \sigma_1,
  \sigma_2,\ldots,\sigma_p ]
\end{align}
is the M-Nikishin system of measures generated by $(\sigma_1,\ldots,\sigma_p)$, with corresponding  M-Nikishin system of functions given by $\vec w=(w_1,\ldots,w_p)=(\widetilde{s}_1,\ldots,\widetilde{s}_p)={\mathcal{M\widehat{N}}}(\sigma_1,\ldots,\sigma_p)$.

Notice that $s_i \in {\mathcal{M}}_0(\Delta_1)$ which implies that for each arbitrary compact subinterval of $(-\infty,1]$ the system of functions $\vec w$ conforms a system of continuous  weights. M-Nikihsin systems are included in the class of Nikishin systems. Taking into account the identity (\ref{transform}) we see that the M-Nikishin system defined in (\ref{M-Nikishin}) can be written as a classical Nikishin system. Let us take a system $(\mu_1, \ldots,\mu_p)$ where
\[
\mu_1=\sigma_1, \quad \d \mu_2(x)= x \d \sigma_2 (1/x), \quad \mu_3=\sigma_3, \quad \ldots, \quad \mu_{2[p/2]-1}=\sigma_{2[p/2]-1},\quad \d \mu_{2[p/2]}(x)=x \sigma_{2[p/2]}(1/x),
\]
and if $p$ is odd $\mu_p= \sigma_p.$ Notice then
 \begin{align*}
s_1& = \zeta_1= \sigma_1,& s_2&=\zeta_2= \langle \mu_1,\mu_2 \rangle,& \dots & &s_p&=\zeta_p= \langle \mu_1, \mu_2,\ldots,\mu_p \rangle.
\end{align*}
Hence $(s_1,\ldots,s_p)={\mathcal{MN}}(\sigma_1,\ldots,\sigma_p)={\mathcal{N}}(\mu_1,\ldots,\mu_p)=(\zeta_1,\ldots,\zeta_p).$ Fixing two M-Nikishin systems of functions $\vec w_\ell(x)=(\widetilde{s}_{\ell,1}(x),\ldots\widetilde{s}_{\ell,p}(x))$  whose
elements are weights on $\Delta_0=[-1,1]$,   and a  measure $\mu\in {\mathcal{M}}(\Delta_0)$ we have at our disposal the perfect combination
$(\mu,\vec w_1,\vec w_2)$. We can also obtain a perfect combination $(\mu,\vec w_1,\vec w_2)$ choosing $\vec w_1$ and $\vec w_2$  between two different of the classes mentioned in (\ref{exponentials}) and (\ref{binomials}) (not necessarily  the same).

\begin{pro}\label{ProTaylor} The Taylor series at $\zeta=0$ corresponding to the  functions $\widetilde{s}_j(\zeta)$ and  $f_j(\zeta):=\log \widetilde{s}_j(\zeta)$ converge uniformly to $\widetilde{s}_j$ and $f_j$ respectively on $\Delta_1$, i.e.
\begin{align}\label{taylor}
\widetilde{s}_j(x)= \sum_{i=0}^{\infty} \lambda_{i, j} x^{i}=e^{\sum_{i=0}^{\infty} t_{i, j} x^{i}}, \qquad x \in \Delta_0, \qquad j=1, \ldots, p.
\end{align}
 where $\lambda_{i,j}$ and $t_{i,j}$ are constants.
\end{pro}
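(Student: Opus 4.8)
The plan is to identify the Taylor coefficients of $\widetilde{s}_j$ with the moments of the generating measure, read off their absolute summability directly from the defining constraint \eqref{constrain}, and then transfer this summability to $f_j=\log\widetilde{s}_j$ by a Wiener--L\'evy argument. First I would expand the kernel geometrically: for $|x|<1$ and $t\in\Delta_1=[0,1]$ one has $1/(1-tx)=\sum_{i\ge0}t^ix^i$, so interchanging sum and integral (justified by dominated convergence on compact subsets of the disc) gives $\widetilde{s}_j(x)=\sum_{i\ge0}\lambda_{i,j}x^i$ with $\lambda_{i,j}=\int_{\Delta_1}t^i\,\d s_j(t)$, the moments of $s_j$. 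Since every $s_j$ produced by the M-Nikishin construction lies in $\mathcal{M}_0(\Delta_1)$ and so has constant sign, the $\lambda_{i,j}$ all share that sign and
\[
\sum_{i\ge0}|\lambda_{i,j}|=\int_{\Delta_1}\sum_{i\ge0}t^i\,\d|s_j|(t)=\int_{\Delta_1}\frac{\d|s_j|(t)}{1-t}=\bigl|\widetilde{s}_j(1)\bigr|<+\infty,
\]
where finiteness is exactly the constraint \eqref{constrain} evaluated at $x=1$ (via monotone convergence). The Weierstrass $M$-test then yields uniform convergence of $\sum_i\lambda_{i,j}x^i$ on the whole closed unit disc $\overline{\mathbb{D}}$, in particular on $\Delta_0=[-1,1]$, which proves the first equality.

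For $f_j=\log\widetilde{s}_j$ I would first record that the previous step places $\widetilde{s}_j$ in the Wiener algebra $A(\mathbb{D})$ of functions with absolutely summable Taylor coefficients, and that $\widetilde{s}_j$ is continuous and zero-free on all of $\overline{\mathbb{D}}$: indeed $\overline{\mathbb{D}}\subseteq\C\setminus(1,+\infty)$ (points of $(1,+\infty)$ have modulus $>1$), which is precisely the set on which $\widetilde{\sigma}$ is holomorphic, continuous at $x=1$, and non-vanishing. Because $\log$ is holomorphic on a neighbourhood of the range of $\widetilde{s}_j$, a compact set avoiding the origin, the Wiener--L\'evy theorem gives $\log\widetilde{s}_j\in A(\mathbb{D})$; and since $\widetilde{s}_j$ is holomorphic and non-vanishing on the open disc, the branch $\log\widetilde{s}_j$ is holomorphic there, so its boundary Fourier expansion carries only non-negative frequencies and is therefore a genuine power series $\sum_i t_{i,j}x^i$ with $\sum_i|t_{i,j}|<+\infty$. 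A second application of the $M$-test gives uniform convergence on $\Delta_0$, and the relation $\widetilde{s}_j=\operatorname{e}^{f_j}$ is immediate from the definition $f_j=\log\widetilde{s}_j$ (real-valued since $\widetilde{s}_j$ is a constant-sign weight).

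The routine part is the first series, where the constraint \eqref{constrain} and \eqref{transform} hand over absolute summability of the moments almost for free. The main obstacle is the logarithm: the naive expansion $\log(1+h)=\sum_{n\ge1}(-1)^{n-1}h^n/n$ need not converge in the $A(\mathbb{D})$ norm, because $\|h\|_{A(\mathbb{D})}$ can exceed $1$, so one genuinely needs the Wiener--L\'evy theorem rather than a direct Neumann-type series. The single delicate hypothesis to verify is zero-freeness of $\widetilde{s}_j$ at the boundary point $x=1$, which is supplied by the finiteness and non-vanishing of $\widetilde{s}_j(1)$ built into \eqref{constrain} and \eqref{transform}.
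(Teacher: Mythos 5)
Your argument is correct, and for the first equality in \eqref{taylor} it is essentially the paper's own proof: identify $\lambda_{i,j}$ with the moments of the constant-sign measure $s_j$, deduce $\sum_i|\lambda_{i,j}|<+\infty$ from the constraint \eqref{constrain} via monotone convergence at $x=1$, and conclude by the Weierstrass $M$-test. Where you genuinely diverge is in the second equality. The paper disposes of $f_j=\log\widetilde{s}_j$ in one line, asserting that since $\widetilde{s}_j$ does not vanish on $\Delta_0$ the series $\sum_i t_{i,j}x^i$ is ``bounded and therefore continuous'' and that one can ``proceed analogously''; but the analogous argument for the $\lambda_{i,j}$ leaned on all coefficients sharing a sign (they are moments of a one-signed measure), a property the $t_{i,j}$ have no reason to inherit, and mere boundedness of a power series on $[-1,1]$ does not give absolute summability of its coefficients. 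Your route through the Wiener--L\'evy theorem repairs exactly this: $\widetilde{s}_j$ lies in the Wiener algebra by the first step, is continuous and zero-free on the closed unit disc (which sits inside $\C\setminus(1,+\infty)$), has winding number zero on the circle since it is zero-free inside, so a continuous branch of the logarithm exists and has absolutely summable Fourier coefficients, supported on non-negative frequencies by holomorphy of the branch on the open disc. This buys a complete proof at the cost of importing a nontrivial harmonic-analysis theorem, and you correctly flag why the naive $\log(1+h)$ expansion is not available in the algebra norm. Two small points worth recording: the identity $\sum_i|\lambda_{i,j}|=|\widetilde{s}_j(1)|$ uses once more that $s_j$ has constant sign and that $1/(1-t)>0$ on $[0,1)$; and if $\widetilde{s}_j$ is a negative weight then $f_j$ is not real-valued (one needs $t_{0,j}$ complex, e.g.\ with imaginary part $\pi$), a caveat the paper also glosses over.
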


\begin{proof} For each $j\in \{1,\ldots,p\},$ $\widetilde{s}_j$ is a holomorphic function on the open unitary disc centered on the origin. That implies that
\[
\widetilde{s}_j(x)= \sum_{i=0}^{\infty} \lambda_{i, j} x^{i}=e^{\sum_{i=0}^{\infty} t_{i, j} x^{i}}, \qquad x \in  \left\{\left|\zeta\right|<1\right\}, \qquad j=1, \ldots, p.
\]
Notice that
\[
\left| \sum_{i=0}^{\infty} \lambda_{i,j} \right|=\lim_{\overset{x \to 1}{x \in [0,1)}} \left|\sum_{i=0}^{\infty} \lambda_{i,j} x^i\right|=\lim_{\overset{x \to 1}{x \in \overset{\circ}{\Delta}_1}} \left|\int \frac{\d s_{j}(t)}{1-xt}\right|< +\infty,
\]
So the first equality in (\ref{taylor}) is proved. The second one comes immediately from the fact that the functions $\widetilde{s}_j$ do not vanish on $\Delta_0.$ That implies that $\sum_{i=0}^{\infty} t_{i,j} x^{i}$ are also bounded and therefore continuous. Hence we can proceed analogously as in the first equality.
\end{proof}

\subsubsection{The inverse problem}
Given the series
\begin{align}\label{theform}
w_j(x)=\sum_{i=0}^{\infty} \lambda_{i, j} x^{i}=e^{\sum_{i=0}^{\infty} t_{i, j} x^{i}}, \qquad x \in  \Delta_0, \qquad
j=1, \ldots, p,
\end{align}
we consider the problem of finding  conditions over $\{\lambda_{i,j}\}$ such that the set of series $\{w_j\}_{j=1}^p$ form a M-Nikishin system of functions. The reader should notice that $\lambda_{i,j}=S_i(t_{i,0},t_{i,1},\dots,t_{i,j})$
where $S_i$ is the $i$-th elementary Schur polynomial. Elementary Schur polynomials $S_j(t_1,\dots,t_j)$ are defined
by the following generating relation $\exp({\sum_{j=1}^{\infty}t_{j}z^j})=\sum_{j=0}^{\infty}S_j(t_1,t_2,\dots,t_j)
z^j$, and therefore
$S_j=\sum_{p=1}^j\sum_{j_1+\cdots+j_p=j} t_{j_1}\cdots t_{j_p }$. Given a partition $\vec n=(n_1,\dots,n_r)\in\Z_+^r$
we have the Schur function $s_{\vec n}(t) = \det(S_{n_i-i+j}(t))_{1\leq i,j\leq r}$. For more on the relation of these Schur functions and those in \cite{macdonald}, see \cite{orlov}.

In order to state sufficient conditions in this direction we need some preliminary definitions and results.

\begin{definition}\label{def:mp} Given a sequence  $C=\{c_i\}_{i =0}^\infty\subset \R$ its Hausdorff moment problem consists in finding a measure $\sigma \in {\mathcal{M}}(\Delta)$ such that 
\begin{align*}
c_i=\int \zeta^i \d \sigma (\zeta),\qquad i\in{\mathbb{Z}}_+.
\end{align*}
Moreover, if we further impose the  constraint  $\sigma \in {\mathcal{M}}_0(\Delta)$ we say that we have a restricted Hausdorff moment problem.
\end{definition}

Here we have made a variation in the classical definition of a Hausdorff problem, where the solutions are positive measures. In our Hausdorff problem we look for measures in a wider class where they do not change their sign. Obviously, since ${\mathcal{M}}_0(\Delta) \subset {\mathcal{M}}(\Delta)$ each solution of a restricted Hausdorff problem is also a solution of a Hausdorff problem. In the pages 8 and 9 in  \cite{ShTm}  J. A. Shohat and  J. D. Tamarkin study Hausdorff problems and give a sufficient and necessary condition over the sequences to have solution. Using this result we deduce the following Lemma.

\begin{lemma}\label{hpc} The Hausdorff moment problem for a sequence $C=\{c_i\}_{i =0}^\infty\subset \R$ has a solution if and only  if
\begin{align}\label{hpc1}
\sum_{i=0}^n\binom{n}{i} (-1)^ic_{i+k} &\geq 0 &\forall (n,k)&\in\mathbb{Z}^2_+&&\text{or } &\sum_{i=0}^n\binom{n}{i} (-1)^ic_{i+k} &\leq 0 &\forall (n,k)&\in\mathbb{Z}^2_+.
\end{align}
When  \eqref{hpc1} holds a necessary and sufficient condition that ensures solution for the restricted Hausdorff moment problem of $C$ is
\begin{equation}\label{hpc2}
\Big|\sum_{i=0}^{\infty} c_{i}\Big| < +\infty.
\end{equation}
\end{lemma}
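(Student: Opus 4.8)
The plan is to reduce the first equivalence to the classical Hausdorff moment theorem quoted from Shohat--Tamarkin \cite{ShTm}, and to dispatch the restricted version by an elementary Abel/monotone-convergence argument applied to the transform $\widetilde\sigma$. Throughout I take $\Delta=\Delta_1=[0,1]$, as in the M-Nikishin setting where this lemma is used; this is the interval that makes \eqref{hpc1} meaningful. The first step is the algebraic identity linking the iterated differences to moments of $(1-\zeta)^n$: expanding $(1-\zeta)^n=\sum_{i=0}^n\binom{n}{i}(-1)^i\zeta^i$, multiplying by $\zeta^k$ and integrating against a representing measure $\sigma$ with $c_i=\int\zeta^i\,\d\sigma$ gives
\begin{align*}
\sum_{i=0}^n\binom{n}{i}(-1)^ic_{i+k}=\int_\Delta \zeta^k(1-\zeta)^n\,\d\sigma(\zeta).
\end{align*}
Since $\zeta^k(1-\zeta)^n\geq0$ on $[0,1]$, the sign of this integral equals the constant sign of $\sigma$; a non-sign-changing $\sigma$ therefore forces either the ``$\geq0$'' or the ``$\leq0$'' alternative of \eqref{hpc1}, which proves necessity.

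For sufficiency I would invoke the criterion recalled from \cite{ShTm}: complete monotonicity, i.e. the ``$\geq0$'' alternative, is exactly the classical condition guaranteeing a positive representing measure on $[0,1]$. If instead the ``$\leq0$'' alternative holds, I would apply the same theorem to the negated sequence $-C$, whose differences are then $\geq0$, obtain a positive measure $\tau$ with $-c_i=\int\zeta^i\,\d\tau$, and set $\sigma=-\tau$; this is a non-positive, hence still non-sign-changing, solution. This furnishes a solution of the (unrestricted) Hausdorff problem in both cases.

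For the restricted problem I would first note that the Hausdorff moment problem on the compact interval $[0,1]$ is determinate: by the Weierstrass theorem polynomials are dense in $C([0,1])$, so any two finite signed measures with identical moments coincide, and the representing measure $\sigma$ is therefore unique. Consequently, once \eqref{hpc1} is assumed, the restricted problem has a solution \emph{iff} this unique $\sigma$ additionally lies in $\mathcal M_0(\Delta_1)$, i.e. satisfies \eqref{constrain}. Assuming without loss of generality $\sigma\geq0$ (otherwise replace $\sigma$ by $-\sigma$, which only flips a global sign), I would expand the geometric series inside the transform to get $\widetilde\sigma(x)=\int_{\Delta_1}\frac{\d\sigma(t)}{1-tx}=\sum_{i=0}^\infty c_i x^i$ for $|x|<1$, with all coefficients non-negative, and then let $x\uparrow1$ by monotone convergence:
\begin{align*}
\lim_{\substack{x\to1\\ x\in\overset{\circ}{\Delta}_1}}\int_{\Delta_1}\frac{\d\sigma(t)}{1-tx}=\int_{\Delta_1}\frac{\d\sigma(t)}{1-t}=\sum_{i=0}^\infty c_i.
\end{align*}
Since for a non-sign-changing $\sigma$ one has $|\d\sigma|=\pm\,\d\sigma$ and $\tfrac1{1-tx}>0$, the two expressions appearing in \eqref{constrain} coincide, and the constraint amounts exactly to the finiteness of this limit, that is, to $\big|\sum_{i=0}^\infty c_i\big|<+\infty$, which is \eqref{hpc2}.

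The only genuinely substantial ingredient is the sufficiency in the first part, and there the work is entirely deferred to \cite{ShTm}; the remaining effort is bookkeeping around the sign of $\sigma$ and the absolute-value formulation of \eqref{constrain}, together with the interchange of sum and integral justified by the uniform convergence of $\sum_i(tx)^i$ in $t$ for $|x|<1$. I expect the one point requiring a word of care is the membership $\sigma\in\mathcal M(\Delta)$, which asks for infinite support: this is not an extra obstruction in the present context, since the sequences to which the lemma is applied arise from genuine transforms of the form \eqref{transform} and $\sigma=0$ is excluded, so the sign alternative in \eqref{hpc1} is unambiguously determined by $C$ and the representing measure is non-degenerate.
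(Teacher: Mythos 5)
Your proof is correct and follows essentially the same route as the paper's: both defer the classical existence statement to Theorem 1.5 of Shohat--Tamarkin, and both settle the restricted problem by expanding $\widetilde\sigma(x)=\sum_{i}c_ix^i$ for $|x|<1$ and passing to the limit $x\to1$ using the constant sign of the $c_i$. Your handling of the ``$\leq0$'' alternative by applying the theorem to $-C$ is a cleaner substitute for the paper's remark that one can ``follow their proof,'' and the explicit necessity computation via $\sum_{i=0}^n\binom{n}{i}(-1)^ic_{i+k}=\int\zeta^k(1-\zeta)^n\,\d\sigma(\zeta)$, together with the determinacy observation, are welcome details the paper omits.
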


\begin{proof} Theorem 1.5 in \cite{ShTm} states that the first set of inequalities in \eqref{hpc1} is a necessary and sufficient condition to have a positive measure $\sigma$ solving the classical Hausdorff problem. Following their proof it is not hard to conclude that adding the second set of inequalities leads to a solution in ${\mathcal{M}}(\Delta)$.  Let us take a measure $\sigma \in {\mathcal{M}}(\Delta)$ and observe that $\int \frac{\d \sigma(t)}{1-xt}$ is a holomorphic function on $\bar {\mathbb{C}} \setminus [1,+\infty)$, then if
 $C$ is its moment sequence we deduce
\begin{align*}
\int_{\Delta} \frac{\d \sigma(t)}{1-xt}=\sum_{i=0}^{\infty} c_i x^i, \qquad  x \in \left\{ \left| \zeta \right| < 1\right\}.
\end{align*}
Thus, since all the $c_i$'s have the same sign, by Lebesgue's dominated convergence Theorem we have
\begin{align*}
\lim_{\underset{x \in [0,1)}{x \to 1}}\Big|\int_{\Delta} \frac{\d \sigma(t)}{1-xt}\Big|= \lim_{\underset{x \in[0,1)}{x \to 1}} \Big|\sum_{i=0}^{\infty}x^ic_i \Big|=\Big|\sum_{i=0}^{\infty} c_i\Big|.
\end{align*}
Thus $\sigma \in {\mathcal{M}}_0(\Delta)$ if and only if  \eqref{hpc2} takes place.
\end{proof}

Given the series
\begin{align}\label{w1j}
w_j(x)=\sum_{i=0}^{\infty} \lambda_{i, j,1} x^{i}, \qquad x \in  \Delta_1, \qquad j=1, \ldots, p,
\end{align}
we introduce  a set of semi-infinite matrices $\Theta_k$ and semi-infinite vectors $\theta_{j,k}$, $j=k,\ldots,p,$
$k=1,\ldots,p$ in  the following recursive way.
 First, we define
\begin{align*}
 \Theta_1&:=
\begin{pmatrix}
\lambda_{0,1,1} &\lambda_{1,1,1}&\lambda_{2,1,1}&\cdots\\
\lambda_{1,1,1}&\lambda_{2,1,1}&\lambda_{3,1,1}&\cdots\\
\lambda_{2,1,1}&\lambda_{3,1,1}&\lambda_{4,1,1}&\cdots\\
\vdots        &  \vdots        &\vdots       &\ddots
\end{pmatrix},& \theta_{j,1}&:=\begin{pmatrix}
\lambda_{0,j,1}\\
\lambda_{1,j,1}\\
\lambda_{2,j,1}\\
\vdots
\end{pmatrix}, & j&=1,\dots,p.
\end{align*}
Then, we seek  solutions $\theta_{j,2}:=\left(\begin{smallmatrix}
\lambda_{0,j,2}\\
\lambda_{1,j,2}\\
\lambda_{2,j,2}\\
\vdots
\end{smallmatrix}\right)$ of $\Theta_1\theta_{j,2}=\theta_{j,1}$, for $j=2,\dots,p$, and if these solutions exist we define
$
\Theta_2:=\left(
\begin{smallmatrix}
 \lambda_{0,2,2} &\lambda_{1,2,2}&\lambda_{2,2,2}&\cdots\\
\lambda_{1,2,2}&\lambda_{2,2,2}&\lambda_{3,2,2}&\cdots\\
\lambda_{2,2,2}&\lambda_{3,2,2}&\lambda_{4,2,2}&\cdots\\
\vdots        &  \vdots        &\vdots       & \ddots
\end{smallmatrix}\right)$. Then, we look for  $\theta_{j,3}=\left(\begin{smallmatrix}
\lambda_{0,j,3}\\
\lambda_{1,j,3}\\
\lambda_{2,j,3}\\
\vdots
\end{smallmatrix}\right)$ which solves $\Theta_2\theta_{j,3}=\theta_{j,2}$, for $j=3,\dots,p$, and when such solutions exist we
introduce
$\Theta_3=
\left(\begin{smallmatrix}
\lambda_{0,3,3} &\lambda_{1,3,3}&\lambda_{2,3,3}&\cdots\\
\lambda_{1,3,3}&\lambda_{2,3,3}&\lambda_{3,3,3}&\cdots\\
\lambda_{2,3,3}&\lambda_{3,3,3}&\lambda_{4,3,3}&\cdots\\
\vdots        &  \vdots        &\vdots       &
\end{smallmatrix}\right)$. In this way we get for $k \in \{1,\ldots,p\}$ the matrices $\Theta_k$ and  vectors $\theta_{j,k},$
$j=k, \dots,p$, linked by  $\Theta_k \theta_{j,k+1}=\theta_{j,k}$ with expressions
\begin{align*}
 \Theta_{k+1}&=
\begin{pmatrix}
\lambda_{0,k+1,k+1} &\lambda_{1,k+1,k+1}&\lambda_{2,k+1,k+1}&\cdots\\
\lambda_{1,k+1,k+1}&\lambda_{2,k+1,k+1}&\lambda_{3,k+1,k+1}&\cdots\\
\lambda_{2,k+1,k+1}&\lambda_{3,k+1,k+1}&\lambda_{4,k+1,k+1}&\cdots\\
\vdots        &  \vdots        &\vdots       & \ddots
\end{pmatrix},&
\theta_{j,k+1}&=\begin{pmatrix}
 \lambda_{0,j,k+1}\\
\lambda_{1,j,k+1}\\
\lambda_{2,j,k+1}\\
\vdots
\end{pmatrix}.
\end{align*}
Here we understand $\Theta_k \theta_{j,k+1}=\theta_{j,k}$ as
\begin{align*}
  \sum_{i=0}^{\infty}\lambda_{l+i,k,k} \lambda_{i,j,k+1}=\lambda_{l,j,k}, \quad l\in {\mathbb{Z}}_+.
\end{align*}
We now consider the sequences
 \begin{align}\label{ckj}
  C_{k,k}&:=\{\lambda_{i,k,k}\}_{i=0}^\infty, &C_{j,k}&:=\{\lambda_{i,j,k}\}_{i=0}^\infty,  &j&=k,\ldots,p,&k&=1, \ldots,p.
 \end{align}
Later, we will prove that none of the semi-infinite Hankel matrices $\Theta_k$, $k=1,\dots,p$, are invertible. Hence such infinite linear systems are either undetermined or incompatible. In this last  case we say that the systems of
 sequences $(C_{k,k},\dots, C_{p,k})$, $k=1, \dots,p$, do not exist.

 First we need the following  preliminary
\begin{lemma}\label{lem:1} The series
\[
w(x)=\sum_{i=0}^{\infty} \lambda_{i} x^{i}, \qquad x \in  \Delta_0,
\]
converges uniformly on $\Delta_0$ to a function $\widetilde{\sigma}(x)=\int \d \sigma(t)/(1-tx)$ corresponding to a measure $\sigma \in {\mathcal{M}}_0(\Delta_1)$ on $\Delta_0$ if and only if the restricted Hausdorff moment problem corresponding to the sequence $\{\lambda_i: i\in {\mathbb{Z}}_+\}$ has a solution.
\end{lemma}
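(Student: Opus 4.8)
The plan is to use $\sigma$ itself as the measure linking the two conditions, the bridge being that any $\sigma\in\mathcal M(\Delta_1)$ is \emph{sign-definite}: if $\lambda_i=\int_{\Delta_1}t^i\,\d\sigma$, then, because $\sigma$ does not change sign on $\Delta_1=[0,1]$, one has $|\lambda_i|=\int_{\Delta_1}t^i\,\d|\sigma|$, so all the $\lambda_i$ share a sign and, by Tonelli's theorem together with monotone convergence,
\[
\sum_{i=0}^\infty|\lambda_i|=\int_{\Delta_1}\sum_{i=0}^\infty t^i\,\d|\sigma|(t)=\int_{\Delta_1}\frac{\d|\sigma|(t)}{1-t}=\lim_{\substack{x\to1\\ x\in(0,1)}}\int_{\Delta_1}\frac{\d|\sigma|(t)}{1-tx}.
\]
Hence, for such a $\sigma$, membership $\sigma\in\mathcal M_0(\Delta_1)$ in the sense of \eqref{constrain} is \emph{equivalent} to the absolute summability $\sum_i|\lambda_i|<+\infty$. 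This equivalence drives both implications.

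For the implication ($\Leftarrow$) I assume the restricted Hausdorff moment problem has a solution, i.e.\ there is $\sigma\in\mathcal M_0(\Delta_1)$ with $\lambda_i=\int t^i\,\d\sigma$. The displayed identity gives $\sum_i|\lambda_i|<+\infty$, whence by the Weierstrass $M$-test the series $w(x)=\sum_i\lambda_i x^i$ converges absolutely and uniformly on $\Delta_0=[-1,1]$, so $w$ is continuous there. It then remains to identify the limit with $\widetilde\sigma$: for $x\in(-1,1)$ one has $|tx|\le|x|<1$ for every $t\in\Delta_1$, so the geometric expansion $1/(1-tx)=\sum_i t^i x^i$ converges uniformly in $t$ and may be integrated term by term, giving $\widetilde\sigma(x)=\sum_i\lambda_i x^i=w(x)$. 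Since both $w$ and $\widetilde\sigma$ are continuous on $\Delta_0$ (the continuity of $\widetilde\sigma$ being already established after \eqref{constrain}) and agree on the dense subset $(-1,1)$, they coincide on all of $\Delta_0$, which is exactly the asserted uniform convergence of $w$ to $\widetilde\sigma$.

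For the implication ($\Rightarrow$) I assume $w$ converges uniformly on $\Delta_0$ to some $\widetilde\sigma(x)=\int_{\Delta_1}\d\sigma(t)/(1-tx)$ with $\sigma\in\mathcal M_0(\Delta_1)$, and I claim that this very $\sigma$ solves the restricted moment problem for $\{\lambda_i\}$. Since $\sigma\in\mathcal M_0$ is given, it suffices to check $\lambda_i=\int t^i\,\d\sigma$. Now $\widetilde\sigma$ is holomorphic on $\C\setminus(1,+\infty)$, and the term-by-term integration above (valid for complex $|z|<1$ by the same dominated-convergence estimate) shows that its Taylor coefficients at the origin are precisely $\int t^i\,\d\sigma$. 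On the other hand, $w$ converges at $x=1$, so its radius of convergence is at least $1$ and its complexification is holomorphic on the open unit disc with Taylor coefficients $\lambda_i$. As $w$ and $\widetilde\sigma$ agree on $(-1,1)$, the identity theorem forces them to agree on the whole disc, and matching Taylor coefficients yields $\lambda_i=\int t^i\,\d\sigma$, as required.

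The step I expect to be most delicate is the endpoint analysis at $x=1$ (equivalently $t=1$), where the kernel $1/(1-tx)$ degenerates and the naive interchange of sum and integral breaks down; this is exactly the obstruction that the class $\mathcal M_0(\Delta_1)$ and the constraint \eqref{constrain} are designed to control. The crux of the whole argument is therefore the first display, in which monotone convergence converts the boundary limit of \eqref{constrain} into absolute summability of the moments, and where the sign-definiteness of $\sigma$ is essential, since it is what allows $|\lambda_i|$ to be pulled inside a single integral. One could alternatively route both implications through Lemma \ref{hpc}, replacing these measure-theoretic estimates by the Hausdorff inequalities \eqref{hpc1} together with the finiteness condition \eqref{hpc2}, but the direct argument sketched above seems more self-contained.
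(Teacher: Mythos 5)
Your proof is correct and follows essentially the same route as the paper's: the sign-definiteness of $\sigma$ turns the boundary condition \eqref{constrain} into absolute summability of the moments, the Weierstrass $M$-test gives uniform convergence on $\Delta_0$, and continuity up to $x=1$ handles the converse. You merely fill in details the paper leaves implicit (the Tonelli/monotone-convergence identity and the identity-theorem matching of Taylor coefficients with moments), which is a welcome tightening rather than a different argument.
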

\begin{proof}
Let us assume that the restricted Hausdorff moment problem of a sequence $\{\lambda_i: i\in {\mathbb{Z}}_+\}$ has a solution. That means that there exists a measure $\sigma \in {\mathcal{M}}_0(\Delta)$ such that
\begin{align*}
 \lambda_i&=\int_{\Delta_1} t^i \d \sigma(t), & i&\in {\mathbb{Z}}_+,&\lim_{\underset{x \in [0,1)}{x \to 1}}\left| \int_{\Delta_1} \frac{\d \sigma (t)}{1-tx}\right|&=\left|\sum_{i=0}^{\infty} \lambda_i
\right| < +\infty.
\end{align*}
Since $|\lambda_i x^i| \leq |\lambda_i|$, $|x|\leq 1$ and $\sum_{i=0}^{\infty} |\lambda_i| < + \infty,$ by Weirestrass' Theorem $\sum_{i=0}^{\infty} \lambda_ix^i$ converges uniformly on $\Delta_0.$
 This proves the \emph{if} implication in the  Lemma \ref{lem:1}. On the other hand
 \[
 \lim_{\overset{x \to 1}{x \in [0,1)}} \left|\int_{\Delta} \frac{\d \sigma(t)}{1-tx}\right|= \lim_{\overset{x \to 1}{x \in [0,1)}}\left|\sum_{i=0}^{\infty} \lambda_i x^i\right|=\left|\sum_{i=0}^{\infty} \lambda_i \right|
 \]
 because $\left|\sum_{i=0}^{\infty} \lambda_i x^i\right|$ must be continuous on $\Delta_0$. $\lambda_{i}$ coincides  with the $i$-th moment corresponding to the measure $\sigma$ which completes the proof.
\end{proof}

\begin{theorem}\label{suffcondnik}
The system of weights  $\{w_{1,j}\}_{j=1}^p$, as in \eqref{w1j}, converges uniformly in $\Delta_0$ to  an M-Nikishin system of functions $\{\widehat{s}_j\}_{j=1}^p$ if and only if  for each $k=1,\dots, p-1$, there exists a system of sequences $(C_{k,k},\dots, C_{p,k})$  as in \eqref{ckj}, such that  their restricted Hausdorff moment problems have  solutions.
\end{theorem}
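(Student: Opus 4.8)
The plan is to prove both implications by induction on the number $p$ of weights, using the recursive structure of M-Nikishin systems in \eqref{M-Nikishin} together with Lemma \ref{lem:1} and the operation $[\sigma_\alpha,\sigma_\beta]$ that multiplies $\d\sigma_\alpha$ by the Cauchy-type transform $\widetilde{\sigma}_\beta$. The key observation is that the matrices $\Theta_k$ and the vectors $\theta_{j,k}$ of \eqref{ckj} are exactly bookkeeping devices for ``dividing out'' the first generating measure at each stage: the Hankel matrix $\Theta_k$ encodes the moments of $s_{k,k}$ (the diagonal measure at level $k$), and the linear system $\Theta_k\theta_{j,k+1}=\theta_{j,k}$ expresses the statement that the series $w_{j,k}$ factors as $\widetilde{s}_{k,k}$ times the series $w_{j,k+1}$ of the \emph{reduced} system obtained after stripping off $\sigma_k$. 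So the theorem really says: $\{w_{1,j}\}$ is an M-Nikishin system generated by $(\sigma_1,\dots,\sigma_p)$ iff at every reduction step the coefficient sequences one is forced to introduce are themselves solvable restricted Hausdorff moment problems.

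First I would establish the meaning of the linear relation $\Theta_k\theta_{j,k+1}=\theta_{j,k}$ at the level of generating functions. Writing $W_{j,k}(x):=\sum_i\lambda_{i,j,k}x^i$, the Hankel-times-vector identity $\sum_i\lambda_{l+i,k,k}\lambda_{i,j,k+1}=\lambda_{l,j,k}$ is, by Proposition \ref{ProTaylor} and the Cauchy-product expansion of $\widetilde{s}_{k,k}(x)\,W_{j,k+1}(x)$ near $x=0$, precisely the assertion
\begin{align*}
W_{j,k}(x)&=\widetilde{s}_{k,k}(x)\,W_{j,k+1}(x), & j&=k+1,\dots,p.
\end{align*}
This is the analytic translation of the differential-form definition $\d[\sigma_k,\sigma_{k+1}\cdots]=\widetilde{s}_{k,k}\,\d(\text{reduced measure})$ in \eqref{M-Nikishin}. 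Granting this dictionary, the diagonal sequence $C_{k,k}$ is exactly the moment sequence of the measure $\sigma_k$ at level $k$, and asking that its restricted Hausdorff problem be solvable is, by Lemma \ref{lem:1}, equivalent to $\widetilde{s}_{k,k}$ being the transform $\widetilde{\sigma}_k$ of a measure $\sigma_k\in\mathcal M_0(\Delta_1)$ with uniformly convergent Taylor series on $\Delta_0$.

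With the dictionary in place, the proof proceeds as follows. For the \emph{only if} direction, suppose $\{w_{1,j}\}=(\widetilde{s}_1,\dots,\widetilde{s}_p)=\mathcal{M\widehat N}(\sigma_1,\dots,\sigma_p)$. By \eqref{M-Nikishin} each $s_j$ is built from $\sigma_1$ by the recursive bracket, so $w_{1,j}=\widetilde{\sigma}_1\cdot(\text{transform of }[\sigma_2,\dots,\sigma_j])$; reading this as $W_{j,1}=\widetilde{s}_{1,1}W_{j,2}$ identifies $C_{1,1}$ with the moments of $\sigma_1$ and produces the vectors $\theta_{j,2}$ as the coefficient sequences of the reduced weights $\widetilde{[\sigma_2,\dots,\sigma_j]}$, which again form an M-Nikishin system generated by $(\sigma_2,\dots,\sigma_p)$. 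The restricted Hausdorff problem for $C_{1,1}$ is solvable by Lemma \ref{lem:1} since $\widetilde{s}_{1,1}=\widetilde{\sigma}_1$ with $\sigma_1\in\mathcal M_0(\Delta_1)$; the inductive hypothesis applied to the reduced $(p-1)$-system then yields solvability of all remaining systems $(C_{k,k},\dots,C_{p,k})$. For the \emph{if} direction one runs the induction in reverse: solvability of the restricted Hausdorff problem for $C_{1,1}$ gives, via Lemma \ref{lem:1}, a measure $\sigma_1\in\mathcal M_0(\Delta_1)$ with $\widetilde{\sigma}_1=W_{1,1}$; the existence of the solutions $\theta_{j,2}$ means $W_{j,1}/\widetilde{s}_{1,1}=W_{j,2}$ are honest power series defining the reduced weights, to which the inductive hypothesis applies to produce $\sigma_2,\dots,\sigma_p\in\mathcal M_0(\Delta_1)$, and reassembling via the bracket recovers $\{w_{1,j}\}=\mathcal{M\widehat N}(\sigma_1,\dots,\sigma_p)$.

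The main obstacle I anticipate is justifying the division step analytically rather than merely formally: from the formal power-series identity $W_{j,k}=\widetilde{s}_{k,k}W_{j,k+1}$ one must argue that $W_{j,k+1}$ not only exists as a formal solution of the Hankel system but in fact converges uniformly on $\Delta_0$ and represents the transform of a measure in $\mathcal M_0(\Delta_1)$ — i.e. that dividing the convergent series $W_{j,k}$ by the non-vanishing convergent series $\widetilde{s}_{k,k}$ stays within the restricted Hausdorff class. Here the non-vanishing of $\widetilde{s}_{k,k}$ on $\Delta_0$ (established in the discussion of \eqref{constrain}, where $\widetilde{\sigma}$ was shown to be a non-vanishing continuous weight on $(-\infty,1]$) is essential to control the quotient, and the finiteness condition \eqref{hpc2} must be propagated through the division; one also needs the non-invertibility of the Hankel matrices $\Theta_k$ (asserted just before Lemma \ref{lem:1}) to know the linear systems are consistent precisely when a measure exists. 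Managing these convergence and sign-constancy bookkeeping details uniformly across all $p-1$ reduction levels is where the technical weight of the argument will lie; the combinatorial skeleton via Lemma \ref{lem:1} and the bracket recursion is otherwise straightforward.
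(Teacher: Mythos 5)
Your overall skeleton (peel off one generating measure per level, invoke Lemma \ref{lem:1} at each stage, induct on $p$) is the same as the paper's, but your central ``dictionary'' misreads the Hankel system, and the argument built on it would fail. You claim that $\Theta_k\theta_{j,k+1}=\theta_{j,k}$ is the Cauchy-product identity $W_{j,k}(x)=\widetilde{s}_{k,k}(x)\,W_{j,k+1}(x)$. It is not: a Cauchy product would read $\sum_{i=0}^{l}\lambda_{l-i,k,k}\lambda_{i,j,k+1}=\lambda_{l,j,k}$ (finite sum, indices adding to $l$), whereas the system in the paper is $\sum_{i=0}^{\infty}\lambda_{l+i,k,k}\,\lambda_{i,j,k+1}=\lambda_{l,j,k}$. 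Since $\lambda_{l+i,k,k}=\int t^{\,l+i}\,\d s_{k,k}(t)$, the latter says
\begin{align*}
\lambda_{l,j,k}=\int t^{\,l}\Big(\sum_{i=0}^{\infty}\lambda_{i,j,k+1}t^{i}\Big)\d s_{k,k}(t),
\end{align*}
i.e.\ that the power series $\sum_i\lambda_{i,j,k+1}t^i$ is the density of $s_{k,j}$ with respect to $s_{k,k}$ --- exactly the bracket relation $\d s_{k,j}=\widetilde{s}_{k+1,j}\,\d s_{k,k}$ of \eqref{M-Nikishin}. At the level of transforms this gives $\widetilde{s}_{k,j}(x)=\int\widetilde{s}_{k+1,j}(t)\,\d s_{k,k}(t)/(1-tx)$, which is a nested Cauchy transform, \emph{not} the product $\widetilde{s}_{k,k}(x)\widetilde{s}_{k+1,j}(x)$. (If Nikishin functions were plain products of Cauchy transforms the whole theory would trivialize.)

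Consequently the ``division step'' you identify as the main obstacle --- dividing the convergent series $W_{j,k}$ by the non-vanishing series $\widetilde{s}_{k,k}$ and propagating \eqref{hpc2} through the quotient --- is not the operation the theorem requires, and the non-vanishing of $\widetilde{s}_{k,k}$ on $\Delta_0$ plays no role here. What is actually needed (and what the paper does) is: for the necessity, expand $1/(1-tx)$ and $\widetilde{s}_{k+1,j}(t)$ in the integral above and exchange sums to read off the Hankel relation, with $C_{k,k}$ the moment sequence of $s_{k,k}$, solvable by Lemma \ref{lem:1}; for the sufficiency, use Lemma \ref{lem:1} at each level to produce the measures and reassemble the brackets. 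If you replace your multiplicative dictionary by this integral one, the rest of your induction goes through and coincides with the paper's proof.
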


\begin{proof}
The proof of this Theorem goes as follows. From Lemma \ref{lem:1} we have that for each $j=1,\ldots,p,$
\[
w_{j,1}(x)=\sum_{i=0}^{\infty} \lambda_{i, j,1} x^{i}, \qquad x \in  \Delta_0,
\]
converges in $\Delta_0$ to a function $\widetilde{s}_{j}(x)=\int \d s_{j}(t)/(1-tx)$ if and only if the restricted Hausdorff moment problem corresponding to  $\{\lambda_{i,j,1}: i\in
{\mathbb{Z}}_+\}$ has a solution. We assume that $w_{j,1}$ converges uniformly on $\Delta_0$ to the function $\widetilde{s}_j$ corresponding to the $s_j \in {\mathcal{M}}(\Delta_1).$ In order to prove the necessity in Theorem's statement we suppose that $(s_1, \ldots, s_p)={\mathcal{MN}}(\sigma_1, \ldots, \sigma_p)$ is an M-Nikishin system of measures as it was defined in the \S \ref{perfectness}. Fixed $k \in \{1, \ldots,p\}$ we define another M-Nikishin system $(s_{k,k},\ldots,s_{k,p})={\mathcal{MN}}(\sigma_k,\ldots,\sigma_p).$ Let us observe that $(s_{1,1},\ldots, s_{1,p})=(s_{1},\ldots, s_{p}).$

By construction for each $k\in \{1,\ldots,p\},$ we have that $\d s_{k,j}= \widetilde{s}_{k+1,j} \d s_{k,k},$ $j=k,\ldots,p.$ When $j=k$ we understand $\widetilde{s}_{k+1,k}\equiv 1.$ Fixed $j\in \{k+1, \ldots,p\},$ $\widetilde{s}_{k+1,j}$ is  a holomorphic function on $\bar{\mathbb{C}} \setminus (-\infty,1)$;  hence, its Taylor's series
\[
w_{j,k+1}(t)=\sum_{i=0}^{\infty} \lambda_{i, j,k+1} t^{i}, \qquad t \in  \Delta_1 \subset \Delta_0
\]
converges uniformly to $\widetilde{s}_{k+1,j}$ on $\Delta_1$. Then, for each $x \in \Delta_1$
\[
\widetilde{s}_{k,j}(x)=\sum_{l=0}^{\infty} \lambda_{l, j,k} x^{l}=\int_\R  \widetilde{s}_{k+1,j}(t) \frac{\d
s_{k,k}(t)}{1-tx}=\int_\R \sum_{i=0}^{\infty} \lambda_{i, j,k+1} t^{i} \frac{\d s_{k,k}(\zeta)}{1-tx}=
\]
\[
=\sum_{l=0}^{\infty} \sum_{i=0}^{\infty} \lambda_{i, j,k+1}  \int_\R  t^{i+l} \d s_{k,k}(t)=\sum_{l=0}^{\infty}
\sum_{i=0}^{\infty} \lambda_{i, j,k+1}  \lambda_{l+i,k,k} x^{l},
\]
which proves one implication of the equivalence. The other implication comes immediately from Lemma \ref{lem:1}.

\end{proof}

We remark from the statements of Lemma \ref{hpc} that the conditions in Theorem \ref{suffcondnik} are equivalent to the inequalities in (\ref{hpc1}). Hence, by continuity criteria, such conditions are stable under perturbations of the coefficients $\lambda_{i,1,1},$ $i \in {\mathbb{Z}}_+.$ We will come to this later in \S 3, when we consider deformations of the weights leading to the multicomponent 2D Toda flows in the precise form discussed in this section.

\subsection{The Gauss--Borel factorization and multiple orthogonal polynomials}
Given a perfect combination  $(\mu, \vec w_1,\vec w_2)$ we consider
\cite{adler-van moerbeke}
\begin{definition}
  The  Gauss--Borel factorization  (also known as   $LU$  factorization) of a semi-infinite moment matrix $g$, determined by $(\mu, \vec w_1,\vec w_2)$,  is the
  problem of finding the solution of
\begin{align}\label{facto}
  g&=S^{-1}\bar S, & S&=\begin{pmatrix}
    1&0&0&\cdots \\
    S_{1,0}&1&0&\cdots\\
    S_{2,0}&S_{2,1}&1&\cdots\\
    \vdots&\vdots&\vdots&\ddots
    \end{pmatrix}, &
   \bar S^{-1}&=\begin{pmatrix}
     \bar S_{0,0}'&\bar S_{0,1}'&\bar S_{0,2}'&\cdots\\
     0&\bar S_{1,1}'&\bar S_{1,2}'&\cdots\\
     0&0&\bar S_{2,2}'&\cdots&\\
     \vdots&\vdots&\vdots&\ddots
   \end{pmatrix}, & S_{i,j},\bar S'_{i,j}&\in\R.
\end{align}
In terms of these matrices we construct the polynomials
    \begin{align} \label{defmops}
A^{(l)}_a:={\sum}'_iS_{l,i}x^{k_1(i)},
\end{align}
where the sum $\sum'$ is taken for a fixed $a=1,\dots,p_1$ over those $i$
such that $a=a_1(i)$ and $i\leq l$. We also construct the dual polynomials
\begin{align} \label{defdualmops}
\bar A^{(l)}_b:={\sum}'_jx^{k_2(j)}\bar S_{j,l}',
\end{align}
where the sum $\sum'$ is taken for a given $b$ over those $j$ such that $b=a_2(j)$ and $j\leq l$.
\end{definition}

This factorization makes sense whenever all the principal minors of $g$ do not vanish, i.e., if $\det g^{[l]}\neq 0$ $l=1,2,\dots,$ and in our case it is true because $(\mu, \vec w_1,\vec w_2)$  is a perfect
 combination. It can be shown that the following sets
 \begin{align*}
   G_-&:=\Big\{S=\left(\begin{smallmatrix}
    1&0&0&\cdots \\
    S_{1,0}&1&0&\cdots\\
    S_{2,0}&S_{2,1}&1&\\
    \vdots&\vdots&\vdots&\;\ddots
    \end{smallmatrix}\right), S_{i,j}\in\R  \Big\}, & G_+:=\Big\{ \bar S=\left(\begin{smallmatrix}
     \bar S_{0,0}&\bar S_{0,1}&\bar S_{0,2}&\cdots\\
     0&\bar S_{1,1}&\bar S_{1,2}&\cdots\\
     0&0&\bar S_{2,2}&\cdots&\\
     \vdots&\vdots&\vdots&\;\ddots
   \end{smallmatrix}\right), \bar S_{i,j}\in\R,\bar S_{i,i}\neq 0 \Big\}
 \end{align*}
 are groups. Indeed, the multiplication  of two arbitrary semi-infinite matrices is, in general, not well defined as it involves, for each coefficient of the product,  a series; however if the two matrices lie on $G_-$, the mentioned series collapses into a finite sum, and the same holds for $G_+$. Moreover, the inverse of a matrix in $S\in G_-$ can be found to be in $G_-$ in a recursive way: first we express $S=\I+\sum_{i>0} S_i( \Lambda^\top)^i$ with $S_i=\diag (S_i(0),S_i(1),\dots)$ a diagonal matrix, then we assume $S^{-1}=\I+\sum_{i>0} \tilde S_i( \Lambda^\top)^i$ to have the same form, and finally  we find that the diagonal matrix unknown coefficients $\tilde S_i$ are expressed in terms of $S_0,\dots, S_i$ in a unique way; the same holds in $G_+$. Given, two elements $S\in G_-$ and $\bar S\in G_+$ the coefficients of the product $S\bar S$ are  finite sums. However,  this is not the case for $\bar S S$,  where the coefficients are  series. Therefore, given an  $LU$ factorizable element $g=S^{-1}\bar S$ we can not ensure that $g$ has an inverse, observe  that in spite of the existence of $S$ and $\bar S^{-1}$, the existence of $\bar S^{-1} S=g^{-1}$ is not ensured as this product involves the evaluation of series instead of finite sums.

 With the use of the coefficients of the matrices $S$ and $\bar S$ we construct multiple orthogonal polynomials of mixed type with  normalizations of type I and II
  \begin{pro}
    We have the following identifications
    \begin{align*}
A^{(l)}_a&=A_{[\vec\nu_{1}(l);\vec\nu_{2}(l-1)],a}^{(\rII,a_1(l))}, &
\bar A^{(l)}_b&= A^{(\rI,a_1(l))}_{[\vec\nu_{2}(l);\vec\nu_{1}(l-1)],b},
\end{align*}
in terms of multiple orthogonal polynomials of mixed type with two  normalizations $\rI$ and $\rII$, respectively.
  \end{pro}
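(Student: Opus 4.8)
The plan is to mirror, term by term, the argument for standard orthogonal polynomials reviewed in \S\ref{standard}, with the bare monomial string $\chi$ replaced by the two weighted strings $\xi_{\vec n_1}$ and $\xi_{\vec n_2}$. First I would introduce the dressed polynomial strings $P:=S\xi_{\vec n_1}$ and $\bar P:=(\bar S^{-1})^\top\xi_{\vec n_2}$ and read off their components. Using $\xi^{(i)}_{\vec n_1}=w_{1,a_1(i)}z^{k_1(i)}$ together with the lower-triangular, unit-diagonal shape of $S$, one has $(S\xi_{\vec n_1})_l=\sum_i S_{l,i}\,w_{1,a_1(i)}z^{k_1(i)}$; grouping this sum by the common value $a=a_1(i)$ (and noting $S_{l,i}=0$ for $i>l$) reproduces exactly the polynomials \eqref{defmops}, so that $P_l=\sum_{a=1}^{p_1}A^{(l)}_a w_{1,a}$. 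The analogous computation with the upper-triangular $\bar S^{-1}$ and \eqref{defdualmops} gives $\bar P_l=\sum_{b=1}^{p_2}\bar A^{(l)}_b w_{2,b}$. These are precisely the linear forms entering \eqref{orth} and the type I normalization.

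Next I would extract the orthogonality relations directly from the factorization $\int\xi_{\vec n_1}\xi_{\vec n_2}^\top\d\mu=g=S^{-1}\bar S$, which yields
\begin{align*}
\int P\,\xi_{\vec n_2}^\top\,\d\mu &= Sg = \bar S, & \int \xi_{\vec n_1}\,\bar P^\top\,\d\mu &= g\bar S^{-1}=S^{-1},
\end{align*}
and, as a corollary, the biorthogonality $\int P\,\bar P^\top\,\d\mu=S g\bar S^{-1}=\I$. Reading off matrix entries and using that $\bar S$ is upper triangular while $S^{-1}$ is lower triangular with unit diagonal gives $\int P_l\,\xi^{(m)}_{\vec n_2}\,\d\mu=\bar S_{l,m}=0$ for $m<l$, and $\int \xi^{(m)}_{\vec n_1}\,\bar P_l\,\d\mu=(S^{-1})_{m,l}=\delta_{m,l}$ for $m\le l$. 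The triangularity of $S$ forces $\deg A^{(l)}_a\le\nu_{1,a}(l)-1$, while $S_{l,l}=1$ makes $A^{(l)}_{a_1(l)}$ monic, i.e.\ the type II normalization $(\rII,a_1(l))$; dually, $(S^{-1})_{l,l}=1$ produces the type I normalization $(\rI,a_1(l))$.

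The combinatorial heart of the argument is to recognize $\{\xi^{(m)}_{\vec n_2}\}_{0\le m<l}$ as exactly the weighted monomials $\{w_{2,b}\,x^{j}:0\le j<\nu_{2,b}(l-1),\ b=1,\dots,p_2\}$, which follows from the bijective encoding \eqref{i}--\eqref{iak} and the identities \eqref{formulitas}; the latter also fix $|\vec\nu_2(l-1)|=l$ and $|\vec\nu_1(l)|=l+1$, so the constraint $|\vec\nu_1|=|\vec\nu_2|+1$ is automatic. With this dictionary, $\int P_l\,\xi^{(m)}_{\vec n_2}\,\d\mu=0$ for $m<l$ becomes precisely the relations \eqref{orth} for the combination $[\vec\nu_1(l);\vec\nu_2(l-1)]$, and likewise $\int\xi^{(m)}_{\vec n_1}\,\bar P_l\,\d\mu=\delta_{m,l}$ yields the orthogonality and normalization for the dual forms with the swapped combination $[\vec\nu_2(l);\vec\nu_1(l-1)]$. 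Finally I would close the identification by uniqueness: since $(\mu,\vec w_1,\vec w_2)$ is perfect, Definition \ref{pag8} guarantees a one-dimensional solution space with saturated degrees, so the normalized solution is unique; having matched orthogonality relations, degrees, and normalization, the strings coincide with $A^{(\rII,a_1(l))}_{[\vec\nu_{1}(l);\vec\nu_{2}(l-1)],a}$ and $A^{(\rI,a_1(l))}_{[\vec\nu_{2}(l);\vec\nu_{1}(l-1)],b}$ respectively.

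The main obstacle I anticipate is not the linear algebra, which is routine once $P$ and $\bar P$ are set up, but the careful combinatorial bookkeeping of the third step: verifying the exact correspondence between the matrix-index range $0\le m<l$ and the degree multi-index $\vec\nu_2(l-1)$ (respectively $\vec\nu_1(l-1)$), the asymmetric appearance of $l$ versus $l-1$ in the two normalizations, and in particular pinning down via \eqref{mult:u}--\eqref{formulitas} that the normalizing power $k_1(l)$ equals the required $\nu$-value so that the $(\rI,a_1(l))$ normalization genuinely agrees with the one in Definition \ref{pag8}. This is the only place where the encoding must be used with real care.
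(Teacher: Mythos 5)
Your proposal is correct and is essentially the paper's own argument: the paper likewise reads the orthogonality relations off the entrywise identities $(Sg)_{l,j}=\bar S_{l,j}=0$ for $j<l$ and $(g\bar S^{-1})_{i,l}=(S^{-1})_{i,l}=\delta_{i,l}$ for $i\le l$, translates the index ranges into the degree multi-indices via \eqref{mult:u}--\eqref{formulitas}, and extracts the type II and type I normalizations from $S_{l,l}=1$ and $(S^{-1})_{l,l}=1$ together with $k_1(l)=\nu_{1,a_1(l)}(l-1)$. Your packaging via the strings $S\xi_{\vec n_1}$ and $(\bar S^{-1})^\top\xi_{\vec n_2}$ (the linear forms the paper only introduces in the next subsection) and the explicit closing appeal to perfectness are presentational refinements, not a different route.
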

 \begin{proof}
  From the $LU$ factorization we deduce
\begin{align}\label{orth-1}
  \sum_{i=0}^{l}S_{l,i}g_{i,j}&=0,& j&=0,1,\dots,l-1,& S_{ii}&:=1.
\end{align}
With the aid of \eqref{mult:u}  and \eqref{defmops} 
 we express \eqref{orth-1} as follows
\begin{align}\label{orthogonality}
 \int \Big(\sum_{a=1}^{p_1}A^{(l)}_{a}(x)w_{1,a}(x)\Big)w_{2,b}(x)x^{k}\d \mu(x)&=0,& \deg
 A^{(l)}_{a}&\leq\nu_{1,a}(l)-1,\\ & &0\leq k&\leq
 \nu_{2,b}(l-1)-1.\notag
\end{align}
    We recognize these equations as those defining a set of multiple orthogonal polynomials of mixed type as
  discussed in \cite{daems-kuijlaars}. This fact leads to
   $A_{[\vec\nu_{1};\vec\nu_{2}],a} := A^{(l)}_{a}$ where $\vec\nu_1=\vec\nu_1(l)$ and $\vec\nu_2=\vec\nu_2(l-1)$.
   Observe that for a given $l$ each polynomial $A_{[\vec\nu_{1};\vec\nu_{2}],a}$ has at much  $\nu_{1,a}(l)$
   coefficients, and therefore we have
   $|\vec\nu_{1}(l)|=l+1$ unknowns, while we have $|\vec \nu_{2}(l-1)|=l$ equations.
 Moreover, from the normalization condition $S_{ii}=1$ we get that the
    polynomial $A_{[\vec\nu_{1};\vec\nu_{2}],a_1(l)}$ is monic with $\deg
    A_{[\vec\nu_{1};\vec\nu_{2}],a_1(l)}=\nu_{1,a_1(l)}(l)-1=k_1(l+1)-1$, so that we are
    dealing with a type II normalization and therefore we can write
    $A^{(l)}_{a}=A_{[\vec\nu_{1};\vec\nu_{2}],a}^{(\rII,a_1(l))}$.

Dual equations to \eqref{orth-1} are
\begin{align}\label{orth-2-1}
  \sum_{j=0}^{l}g_{i,j}\bar S_{j,l}'&=0,& i&=0,1,\dots,l-1,\\\label{orth-2-2}   \sum_{j=0}^{l}g_{l,j}\bar S_{j,l}'&=1.
\end{align}
Now, using again \eqref{mult:u} and \eqref{defdualmops}, \eqref{orth-2-1} becomes
\begin{align}\label{orthogonality dual}
\int \Big(\sum_{b=1}^{p_2}\bar  A^{(l)}_{b}
(x)w_{2,b}(x)\Big)w_{1,a}(x)x^{k}\d \mu(x)&=0,& \deg \bar A^{(l)}_{b}&\leq\nu_{2,b}(l)-1,\\ & &0\leq k&\leq
\nu_{1,a}(l-1)-1,\notag
\end{align}
while \eqref{orth-2-2} reads
\begin{align}
\int_{\R} \Big(\sum_{b=1}^{p_2}\bar A^{(l)}_{b}
(x)w_{2,b}(x)\Big)w_{1,a_1(l)}(x)x^{k_1(l)}\d \mu(x)&=1,&
\end{align}
where using \eqref{formulitas} we obtain
\begin{align}
  k_1(l)=\nu_{1,a_1(l)}(l-1).
\end{align}
As above we are dealing with multiple orthogonal polynomials and therefore  $\bar
A^{(l)}_{b}=\bar A_{[\vec\nu_{2};\vec\nu_{1}],b}$, with
$\vec\nu_1=\vec\nu_1(l-1)$ and $\vec\nu_2=\vec\nu_2(l)$, which now happens to have a  normalization  of type I and
consequently we  write
$\bar A^{(l)}_{b}=\bar A^{(\rI,a_1(l))}_{[\vec\nu_{2};\vec\nu_{1}],b}$.
\end{proof}

Given a definite sign finite Borel measure the corresponding set of monic orthogonal polynomials $\{p_l\}_{l=0}^\infty$, $\deg p_l=l$, can be viewed as a ladder of polynomials, in which to get up to  a
given degree one needs to ascend $l$ steps in the ladder. For multiple orthogonality the situation is different  as we have, instead of a chain, a multi-dimensional lattice of
degrees.
Let us consider a perfect combination $(\mu,\vec w_1,\vec w_2)$ and the corresponding  set of multiple orthogonal
polynomials $\{A_{[\vec \nu_1;\vec \nu_2],a}\}_{a=1}^{p_1}$, with degree vectors such that $|\vec\nu_1|=|\vec
\nu_2|+1$.
  There always exists compositions $\vec n_1,\vec n_2$  and an integer $l$ with $|\vec \nu_1|=l+1$  and $|\vec \nu_2|=l$
  such that the polynomials  $\{A_a^{(l)}\}_{a=1}^{p_1}$ coincides with $\{A_{[\vec \nu_1;\vec
  \nu_2],a}\}_{a=1}^{p_1}$. Therefore, the set of sets of multiple orthogonal polynomials
  $\big\{\{A_a^{(k)}\}_{a=1}^{p_1}, k=0,\dots,l\big\}$, can be understood as a ladder leading to the desired set of
  multiple orthogonal polynomials $\{A_{[\vec \nu_1;\vec \nu_2],a}\}_{a=1}^{p_1}$ after ascending $l$ steps in the
  ladder, very much in same style as in standard orthogonality (non multiple) setting. The ladder can be identified with the
  compositions $(\vec n_1,\vec n_2)$.
     However, by no means there is always a unique ladder to achieve this, in general there are several compositions
     that do the job. A particular ladder, which we refer to as the simplest $[\vec \nu_1;\vec\nu_2]$  ladder, is
     given by the choice $ \vec n_1=\vec \nu_1$ and $\vec n_2=\vec \nu_2+\vec e_{2,p_2}$. Many of the  expressions that will be
     derived later on in this paper for multiple orthogonal polynomials and second kind functions only depend on the
     integers $(\vec \nu_1,\vec \nu_2)$ and not on the particular ladder chosen, and therefore compositions, one uses to reach to it.

\subsection{Linear forms and multiple bi-orthogonality}

We introduce  linear forms associated with multiple orthogonal polynomials as follows
\begin{definition}
  \label{def:linear forms}
  Strings of linear forms and dual linear forms associated with  multiple ortogonal polynomials and their duals  are
  defined by
  \begin{align}\label{linear forms S}
Q:=  \begin{pmatrix}
    Q^{(0)}\\
    Q^{(1)}\\
    \vdots
  \end{pmatrix}&=S\xi_{1},&
  \bar Q:=\begin{pmatrix}
    \bar Q^{(0)}\\
    \bar Q^{(1)}\\
    \vdots
  \end{pmatrix}&=(\bar S^{-1})^\top\xi_{2},
\end{align}
\end{definition}
It can be immediately checked that
\begin{pro}
  \label{proposition: linear forms and mops}
  The linear  forms and their duals, introduced in Definition \ref{def:linear forms}, are given by
\begin{align}\label{linear.forms}
 Q^{(l)}(x)&:= \sum_{a=1}^{p_1}A^{(l)}_{a}(x)w_{1,a}(x),&
\bar Q^{(l)}(x)&:= \sum_{b=1}^{p_2}\bar  A^{(l)}_{b}
(x)w_{2,b}(x).
\end{align}
\end{pro}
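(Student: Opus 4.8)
The plan is to compute the components of the two matrix--vector products in \eqref{linear forms S} directly and to regroup the resulting sums according to the weight label, matching the definitions \eqref{defmops} and \eqref{defdualmops}. Since Proposition \ref{proposition: linear forms and mops} only asks to unpack Definition \ref{def:linear forms}, the argument is a bookkeeping computation rather than a conceptual one.

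First I would write out the $l$-th component of $Q=S\xi_1$. The entries of $\xi_1=\xi_{\vec n_1}$ are $\xi_1^{(i)}(x)=w_{1,a_1(i)}(x)\,x^{k_1(i)}$ by the definition of $\xi$ specialized to the first system of weights $\vec w_1$ and composition $\vec n_1$, so that
\[
Q^{(l)}(x)=(S\xi_1)^{(l)}(x)=\sum_{i\geq 0}S_{l,i}\,\xi_1^{(i)}(x)=\sum_{i\geq 0}S_{l,i}\,w_{1,a_1(i)}(x)\,x^{k_1(i)}.
\]
Because $S\in G_-$ is lower triangular with $S_{l,i}=0$ for $i>l$, this sum collapses to the finite range $0\leq i\leq l$, so no convergence question arises. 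I would then partition the index set $\{i:0\leq i\leq l\}$ according to the value $a=a_1(i)\in\{1,\dots,p_1\}$, rewriting the single sum as a double sum over $a$ and over those $i\leq l$ with $a_1(i)=a$, and factoring out the common weight $w_{1,a}(x)$, which does not depend on $i$ within each block. The inner sum $\sum'_i S_{l,i}x^{k_1(i)}$ is precisely $A_a^{(l)}(x)$ by \eqref{defmops}, which yields $Q^{(l)}=\sum_{a=1}^{p_1}A_a^{(l)}w_{1,a}$.

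The dual identity is handled in exactly the same way, now with $(\bar S^{-1})^\top$ acting on $\xi_2=\xi_{\vec n_2}$. Writing $\bar S'_{i,j}$ for the entries of $\bar S^{-1}$, I get
\[
\bar Q^{(l)}(x)=\big((\bar S^{-1})^\top\xi_2\big)^{(l)}(x)=\sum_{j\geq 0}\bar S'_{j,l}\,w_{2,a_2(j)}(x)\,x^{k_2(j)},
\]
and since $\bar S^{-1}\in G_+$ is upper triangular the sum truncates to $0\leq j\leq l$. Regrouping over $b=a_2(j)$ and recognizing $\sum'_j x^{k_2(j)}\bar S'_{j,l}=\bar A_b^{(l)}(x)$ from \eqref{defdualmops} gives $\bar Q^{(l)}=\sum_{b=1}^{p_2}\bar A_b^{(l)}w_{2,b}$.

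The only point requiring care---and hence the sole potential obstacle---is the index bookkeeping: one must check that the primed sums in \eqref{defmops} and \eqref{defdualmops} range over exactly the indices produced by partitioning the full range by the weight label $a_1(i)$ (respectively $a_2(j)$), and that the triangularity of $S$ and $\bar S^{-1}$ indeed renders every sum finite, so that the formal component-wise manipulation is legitimate. Once the partitions are set up correctly this is immediate, which is why the statement can be checked directly.
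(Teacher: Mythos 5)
Your proof is correct and is precisely the component-wise computation the paper has in mind when it states the proposition ``can be immediately checked'': expand $(S\xi_1)^{(l)}$ and $((\bar S^{-1})^\top\xi_2)^{(l)}$, use the triangularity of $S$ and $\bar S^{-1}$ to truncate the sums at $i,j\leq l$, and regroup by the weight label $a_1(i)$ (resp.\ $a_2(j)$) to recover \eqref{defmops} and \eqref{defdualmops}. Nothing further is needed.
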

Sometimes we use the alternative   notation $Q^{(l)}=Q_{[\vec\nu_{1};\vec\nu_{2}]}$ and
 $\bar Q^{(l)}=\bar Q_{[\vec\nu_{2};\vec\nu_{1}]}$.  It is also trivial to check the following
 \begin{pro}\label{proposition: mo and linear forms}
The orthogonality relations
 \begin{align}\label{linear.form.orthogonality}
 \begin{aligned}
  \int  Q^{(l)}(x)w_{2,b}(x)x^k\d \mu(x)&=0,&0&\leq k\leq \nu_{2,b}(l-1)-1,&b&=1,\dots,p_2,\\
      \int  \bar Q^{(l)}(x)w_{1,a}(x)x^k\d \mu(x)&=0,&0&\leq k\leq \nu_{1,a}(l-1)-1,&a&=1,\dots,p_1,
 \end{aligned}
   \end{align}
   are fulfilled.
   \end{pro}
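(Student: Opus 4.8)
The plan is to reduce both families of relations to the triangular structure of the Gauss--Borel factors $S$ and $\bar S$, mirroring the classical computation of \S\ref{standard}. Recall from the definitions $Q=S\xi_1$ and $\bar Q=(\bar S^{-1})^\top\xi_2$, together with $\xi_1^{(i)}(x)=w_{1,a_1(i)}(x)x^{k_1(i)}$ and $\xi_2^{(j)}(x)=w_{2,a_2(j)}(x)x^{k_2(j)}$. Since the map $j\mapsto(k_2(j),a_2(j))$ is a bijection, for each pair $(b,k)$ there is a unique index $j=j(b,k)$ with $a_2(j)=b$ and $k_2(j)=k$, so that $w_{2,b}(x)x^k=\xi_2^{(j)}(x)$. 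First I would expand $Q^{(l)}=\sum_i S_{l,i}\xi_1^{(i)}$ and compute, using \eqref{compact.g} and $Sg=\bar S$ from \eqref{facto},
\begin{align*}
\int Q^{(l)}(x)w_{2,b}(x)x^k\d\mu(x)=\sum_i S_{l,i}\int \xi_1^{(i)}(x)\xi_2^{(j)}(x)\d\mu(x)=\sum_i S_{l,i}g_{i,j}=(Sg)_{l,j}=\bar S_{l,j}.
\end{align*}
Because $\bar S\in G_+$ is upper triangular, $\bar S_{l,j}=0$ whenever $j<l$.

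Dually, writing $\bar Q^{(l)}=\sum_j \bar S'_{j,l}\xi_2^{(j)}$ and taking the unique $i=i(a,k)$ with $w_{1,a}(x)x^k=\xi_1^{(i)}(x)$, the same manipulation together with $g\bar S^{-1}=S^{-1}S^{-1}\cdot\ldots$, more precisely $g\bar S^{-1}=S^{-1}\bar S\bar S^{-1}=S^{-1}$, gives
\begin{align*}
\int \bar Q^{(l)}(x)w_{1,a}(x)x^k\d\mu(x)=\sum_j g_{i,j}\bar S'_{j,l}=(g\bar S^{-1})_{i,l}=(S^{-1})_{i,l}.
\end{align*}
Since $S^{-1}\in G_-$ is lower triangular, $(S^{-1})_{i,l}=0$ whenever $i<l$. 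At this point both relations have been established in their ``index'' form, the content being nothing more than $SgS^{-1}$-type triangularity applied to the two factorizations; this part I expect to be entirely routine.

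It then remains to translate the triangularity conditions $j<l$ and $i<l$ into the asserted ranges of $k$, and this is the only place where the combinatorics of the degree multi-indices \eqref{mult:u}--\eqref{formulitas} enters; I regard it as the main (though still elementary) obstacle. Using $|\vec\nu_2(l-1)|=l$ from \eqref{formulitas}, the $l$ indices $j\in\{0,\dots,l-1\}$ are distributed among the components so that exactly $\nu_{2,b}(l-1)$ of them satisfy $a_2(j)=b$, and for these $k_2(j)$ runs consecutively through $\{0,\dots,\nu_{2,b}(l-1)-1\}$; hence $j(b,k)<l$ is equivalent to $0\le k\le \nu_{2,b}(l-1)-1$, which is precisely the first range, and the dual count with $\vec\nu_1$ gives $0\le k\le \nu_{1,a}(l-1)-1$ for the second. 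I note finally that these very relations were already extracted as \eqref{orthogonality} and \eqref{orthogonality dual} in the proof of the identification proposition, so substituting \eqref{linear.forms} into the left-hand sides yields an immediate alternative proof, consistent with the statement being ``trivial to check''.
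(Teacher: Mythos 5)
Your proof is correct and is essentially the paper's own argument: the paper declares the proposition ``trivial to check'' because the two relations are exactly \eqref{orthogonality} and \eqref{orthogonality dual}, which were obtained in the preceding proof from the same triangularity identities $Sg=\bar S$ (i.e.\ \eqref{orth-1}) and $g\bar S^{-1}=S^{-1}$ (i.e.\ \eqref{orth-2-1}), combined with the index bookkeeping of \eqref{mult:u}--\eqref{formulitas}. Your direct computation in terms of the linear forms, and the final translation of $j<l$ and $i<l$ into the stated ranges of $k$, match that route step for step.
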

   Moreover, we have that these linear forms are bi-orthogonal
   \begin{pro}\label{proposition: biorthogonality}
     The following multiple bi-orthogonality relations among linear forms and their duals
\begin{align}\label{biotrhoganility}
  \int  Q^{(l)}(x)\bar Q^{(k)}(x)\d \mu(x)&=\delta_{l,k},& l,k\geq 0,
\end{align}
hold.
   \end{pro}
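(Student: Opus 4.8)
The plan is to reproduce, at the level of semi-infinite matrices, the Grammian argument used in the standard (non-multiple) case in \S\ref{standard}, now with the weighted monomial strings $\xi_1,\xi_2$ in place of the monomial string $\chi$. First I would assemble all the scalar relations \eqref{biotrhoganility} into a single matrix identity. Using the definitions $Q=S\xi_1$ and $\bar Q=(\bar S^{-1})^\top\xi_2$, the Grammian of the two strings is
\begin{align*}
\int Q(x)\bar Q(x)^\top \d\mu(x) &= \int \big(S\xi_1(x)\big)\big((\bar S^{-1})^\top\xi_2(x)\big)^\top \d\mu(x)\\
&= S\Big(\int \xi_1(x)\xi_2(x)^\top\d\mu(x)\Big)\bar S^{-1}\\
&= S\, g\, \bar S^{-1},
\end{align*}
where I have used $\big((\bar S^{-1})^\top\xi_2\big)^\top=\xi_2^\top\bar S^{-1}$ and pulled the constant matrices $S$ and $\bar S^{-1}$ outside the integral. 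Substituting the Gauss--Borel factorization $g=S^{-1}\bar S$ from \eqref{facto} collapses the right-hand side to $S S^{-1}\bar S\bar S^{-1}=\I$, and reading off the $(l,k)$ entry yields exactly $\int Q^{(l)}(x)\bar Q^{(k)}(x)\d\mu(x)=\delta_{l,k}$, which is \eqref{biotrhoganility}.

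The single point that requires care, and which I regard as the main obstacle, is justifying that all manipulations with these semi-infinite objects are legitimate: that $S\,g\,\bar S^{-1}$ is well defined and that the rearrangement $S(S^{-1}\bar S)\bar S^{-1}=\I$ is a valid (associative) one, since in general products of semi-infinite matrices involve series. This is where the triangular structure of the factors, recorded in the discussion of the groups $G_-$ and $G_+$, enters. Because $S\in G_-$ has only finitely many nonzero entries in each row and $\bar S^{-1}\in G_+$ only finitely many nonzero entries in each column, the $(l,k)$ entry $(S g\bar S^{-1})_{l,k}=\sum_{i\le l}\sum_{j\le k}S_{l,i}g_{i,j}\bar S'_{j,k}$ is a \emph{finite} double sum; hence no series occur, associativity is unambiguous, and the partial products $Sg=\bar S$ and $g\bar S^{-1}=S^{-1}$ are precisely the finite-sum products discussed there. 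Interchanging the matrix multiplication with the integral sign is equally harmless, because each entry of $Q^{(l)}$ and of $\bar Q^{(k)}$ is a finite linear combination of the weighted monomials $w_{\ell,a}(x)x^k$; thus every matrix entry on both sides is the integral of a single integrable function and the identity reduces to linearity of the integral over finite sums.

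As a cross-check I would note the fully entrywise route, which relies only on the orthogonality relations \eqref{linear.form.orthogonality} together with the normalizations $S_{ii}=1$ and \eqref{orth-2-2}. For $k<l$, the form $\bar Q^{(k)}$ is a combination of the $w_{2,b}(x)x^j$ with $j\le \nu_{2,b}(k)-1\le \nu_{2,b}(l-1)-1$, so the first line of \eqref{linear.form.orthogonality} annihilates $\int Q^{(l)}\bar Q^{(k)}\d\mu$; for $k>l$ one argues symmetrically using $Q^{(l)}$ and the second line of \eqref{linear.form.orthogonality}; and for $k=l$ the normalization \eqref{orth-2-2} supplies the value $1$. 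This version works but requires the monotonicity $\nu_{\ell,a}(k)\le \nu_{\ell,a}(l-1)$ for $k\le l-1$ extracted from \eqref{formulitas}, so I would present the compact matrix argument as the main proof and invoke the entrywise computation only if the bookkeeping of the degree functions is to be made fully explicit.
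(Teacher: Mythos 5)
Your main argument is exactly the paper's proof of Proposition \ref{proposition: biorthogonality}: compute the Grammian $\int Q\bar Q^\top\d\mu=Sg\bar S^{-1}$ and collapse it to $\I$ via the factorization \eqref{facto}. The extra remarks on finiteness of the sums and the entrywise cross-check are correct but not needed beyond what the paper already records in its discussion of $G_-$ and $G_+$.
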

   \begin{proof}
Observe that
\begin{align*}
  \int_R Q(x)\bar Q(x)^\top \d \mu(x)&=\int  S\xi_1(x)\xi_2(x)^\top\bar S^{-1}\d \mu(x)&&\text{from \eqref{linear
  forms S}} \\
  &=S\Big(\int  \xi_1(x)\xi_2(x)^\top\d \mu(x)\Big)\bar S^{-1} \\
  &=Sg\bar S^{-1}&&\text{from \eqref{compact.g}} \\
  &=\I.&&\text{from \eqref{facto}}
  \end{align*}
   \end{proof}
   \begin{definition}
     \label{definiton: truncated}
   Denote by $\xi_i^{[l]}$, $i=1,2$  the truncated  vector formed with the first $l$ components of $\xi_i$.
   \end{definition}
We are ready to give different expressions for these linear forms and their duals
\begin{pro}
  \label{proposition: expressions for linear forms}
  The linear forms can be expressed  in terms of the moment matrix in the following  different ways
  \begin{align}
  Q^{(l)}&=\xi_{1}^{(l)}-\begin{pmatrix}
    g_{l,0}&g_{l,1}&\cdots&g_{l,l-1}
  \end{pmatrix}(g^{[l]})^{-1}\xi_1^{[l]}\notag\\
  &=\bar S_{l,l}\begin{pmatrix}
    0 &0 &\cdots &0& 1
  \end{pmatrix}
  (g^{[l+1]})^{-1}\xi^{[l+1]}_1 \notag \\&=\frac{1}{\det g^{[l]}}\det
  \left(\begin{BMAT}{cccc|c}{cccc|c}
    g_{0,0}&g_{0,1}&\cdots&g_{0,l-1}&\xi_1^{(0)}\\
     g_{1,0}&g_{1,1}&\cdots&g_{1,l-1}&\xi_1^{(1)}\\
     \vdots &\vdots&            &\vdots&\vdots\\
        g_{l-1,0}&g_{l-1,1}&\cdots&g_{l-1,l-1}&\xi_1^{(l-1)}\\
          g_{l,0}&g_{l,1}&\cdots&g_{l,l-1}&\xi_1^{(l)}
\end{BMAT}\right),& l&\geq 1,
\end{align}
and the dual linear forms as
\begin{align}
\bar  Q^{(l)}&=(\bar S_{l,l})^{-1}\Big(\xi_{2}^{(l)}-(\xi_2^{[l]})^\top(g^{[l]})^{-1}\begin{pmatrix}
    g_{0,l}\\g_{1,l}\\\vdots\\g_{l-1,l}
  \end{pmatrix}\Big)\notag\\
  &=(\xi^{[l+1]}_2)^\top
  (g^{[l+1]})^{-1}\begin{pmatrix}
    0 \\0\\\vdots \\0\\ 1
  \end{pmatrix} \notag\\&=\frac{1}{\det g^{[l+1]}}\det
  \left(\begin{BMAT}{cccc|c}{cccc|c}
    g_{0,0}&g_{0,1}&\cdots&g_{0,l-1}& g_{0,l}\\
     g_{1,0}&g_{1,1}&\cdots&g_{1,l-1}& g_{1,l} \\
     \vdots &\vdots&        &\vdots&\vdots\\
        g_{l-1,0}&g_{l-1,1}&\cdots&g_{l-1,l-1}&g _{l-1,l}\\
        \xi_2^{(0)}& \xi_2^{(1)} & \cdots & \xi_2^{(l-1)} & \xi_2^{(l)}
\end{BMAT}\right),& l\geq 0.
\end{align}
\end{pro}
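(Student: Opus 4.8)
The plan is to read off all three expressions directly from the Gauss--Borel factorization \eqref{facto}, exactly as was done for the classical orthogonal polynomials in \S\ref{standard}. The starting point for the linear forms is the defining relation $Q=S\xi_1$ from \eqref{linear forms S}, whose $l$-th component reads $Q^{(l)}=\sum_{i=0}^{l}S_{l,i}\xi_1^{(i)}$ because $S$ is lower unitriangular, so that $S_{l,l}=1$ and $S_{l,i}=0$ for $i>l$. To pin down the remaining entries $S_{l,i}$, $i<l$, I would rewrite $g=S^{-1}\bar S$ as $Sg=\bar S$ and use that $\bar S$ is upper triangular: the $(l,j)$ entry of $Sg$ vanishes for $j<l$ and equals $\bar S_{l,l}$ for $j=l$. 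The first batch of these relations, $\sum_{i=0}^{l}S_{l,i}g_{i,j}=0$ for $j=0,\dots,l-1$, is precisely \eqref{orth-1}; written as the row identity $(S_{l,0},\dots,S_{l,l-1})g^{[l]}=-(g_{l,0},\dots,g_{l,l-1})$ and inverted against the nonsingular $g^{[l]}$ (invertible because $(\mu,\vec w_1,\vec w_2)$ is perfect) it yields the first displayed expression for $Q^{(l)}$.

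For the second expression I would retain the extra equation $j=l$ as well, so that the full row $(S_{l,0},\dots,S_{l,l})$ satisfies $(S_{l,0},\dots,S_{l,l})g^{[l+1]}=\bar S_{l,l}(0,\dots,0,1)$; solving against $g^{[l+1]}$ and contracting with $\xi_1^{[l+1]}$ gives $\bar S_{l,l}(0,\dots,0,1)(g^{[l+1]})^{-1}\xi_1^{[l+1]}$. The determinantal form then follows from the first expression by the Schur-complement identity
\[
\det\begin{pmatrix} g^{[l]}& \xi_1^{[l]}\\ (g_{l,0},\dots,g_{l,l-1})&\xi_1^{(l)}\end{pmatrix}=\det(g^{[l]})\Big(\xi_1^{(l)}-(g_{l,0},\dots,g_{l,l-1})(g^{[l]})^{-1}\xi_1^{[l]}\Big),
\]
which, divided by $\det g^{[l]}$, reproduces the bordered determinant verbatim.

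The dual forms are handled by the mirror argument, starting from $\bar Q=(\bar S^{-1})^\top\xi_2$, i.e. $\bar Q^{(l)}=\sum_{j=0}^{l}\bar S'_{j,l}\xi_2^{(j)}$ by upper triangularity of $\bar S^{-1}$, and now using $g\bar S^{-1}=S^{-1}$ with $S^{-1}$ lower unitriangular. The column relations $\sum_{j}g_{i,j}\bar S'_{j,l}=0$ for $i<l$ and $\sum_{j}g_{l,j}\bar S'_{j,l}=1$ are exactly \eqref{orth-2-1}--\eqref{orth-2-2}; reading them over $i=0,\dots,l-1$ gives $(\bar S'_{0,l},\dots,\bar S'_{l-1,l})^\top=-\bar S'_{l,l}(g^{[l]})^{-1}(g_{0,l},\dots,g_{l-1,l})^\top$, hence the first dual expression after recalling $\bar S'_{l,l}=(\bar S_{l,l})^{-1}$, while reading them over $i=0,\dots,l$ gives $(\bar S'_{0,l},\dots,\bar S'_{l,l})^\top=(g^{[l+1]})^{-1}(0,\dots,0,1)^\top$, hence the second expression. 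The bordered determinant comes once more from Cramer's rule applied to this last system, expanding along the row carrying the $\xi_2$ components.

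None of these steps is genuinely hard, so I expect no real obstacle; the derivation is a transcription of the factorization into the orthogonality relations and then into Schur/Cramer determinants. The only things to watch are the bookkeeping of which triangular structure forces which zero pattern (lower-unitriangular $S$ and $S^{-1}$ versus upper-triangular $\bar S$ and $\bar S^{-1}$, with $\bar S'_{l,l}=(\bar S_{l,l})^{-1}$) and the transpose in the cofactor formula for the last column of $(g^{[l+1]})^{-1}$, which is what correctly orients the bottom-row expansion of the bordered determinant.
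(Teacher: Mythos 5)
Your proposal is correct and follows essentially the same route as the paper's proof in Appendix \ref{II}: both read the coefficient relations $(S_{l,0},\dots,S_{l,l-1})g^{[l]}=-(g_{l,0},\dots,g_{l,l-1})$ and $(S_{l,0},\dots,S_{l,l})g^{[l+1]}=(0,\dots,0,\bar S_{l,l})$ off the factorization $Sg=\bar S$ (and their duals off $g\bar S^{-1}=S^{-1}$), and then convert them into the three displayed forms. The only cosmetic difference is that you invoke the Schur-complement block-determinant identity where the paper solves for $S_{l,i}$ and $\bar S'_{j,l}$ by Cramer's rule as cofactors and recognizes the Laplace expansion of the bordered determinant; these are the same computation.
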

\begin{proof}
See Appendix \ref{II}.
\end{proof}
As a consequence we get different expressions for the multiple orthogonal polynomials and their duals
\begin{cor}\label{cormops}
The multiple orthogonal polynomials and their duals have the following alternative expressions
  \begin{align}
  A^{(l)}_a&=\chi_{1,a}^{(l)}-\begin{pmatrix}
    g_{l,0}&g_{l,1}&\cdots &g_{l,l-1}
  \end{pmatrix}(g^{[l]})^{-1}\chi_{1,a}^{[l]}\notag\\&=\bar S_{l,l}\begin{pmatrix}
    0 &0 &\dots &0& 1
  \end{pmatrix}  (g^{[l+1]})^{-1}\chi_{1,a}^{[l+1]}\notag\\
    \label{detmops} &=\frac{1}{\det g^{[l]}}\det
  \left(\begin{BMAT}{cccc|c}{cccc|c}
    g_{0,0}&g_{0,1}&\cdots&g_{0,l-1}&\chi_{1,a}^{(0)}\\
     g_{1,0}&g_{1,1}&\cdots&g_{1,l-1}&\chi_{1,a}^{(1)}\\
     \vdots &\vdots&            &\vdots&\vdots\\
        g_{l-1,0}&g_{l-1,1}&\cdots&g_{l-1,l-1}&\chi_{1,a}^{(l-1)}\\
          g_{l,0}&g_{l,1}&\cdots&g_{l,l-1}&\chi_{1,a}^{(l)},
\end{BMAT}\right),& l&\geq 1.
\end{align}
and
\begin{align} \label{dualmops.2}
  \bar A^{(l)}_b&=
  (\bar S_{l,l})^{-1}\Big(\chi_{2,b}^{(l)}-(\chi_{2,b}^{[l]})^\top(g^{[l]})^{-1}\begin{pmatrix}
    g_{0,l}\\g_{1,l}\\\vdots\\g_{l-1,l}
  \end{pmatrix}\Big)  \\
  &=(\chi^{[l+1]}_{2,b})^\top
  (g^{[l+1]})^{-1}\begin{pmatrix}
    0 \\0\\\vdots \\0\\ 1
  \end{pmatrix}\notag\\
\label{detdualmops} &=\frac{1}{\det g^{[l+1]}}\det
  \left(\begin{BMAT}{cccc|c}{cccc|c}
    g_{0,0}&g_{0,1}&\cdots&g_{0,l-1}& g_{0,l}\\
     g_{1,0}&g_{1,1}&\cdots&g_{1,l-1}& g_{1,l} \\
     \vdots &\vdots&   &\vdots&\vdots\\
        g_{l-1,0}&g_{l-1,1}&\cdots&g_{l-1,l-1}&g _{l-1,l}\\
        \chi_{2,b}^{(0)}& \chi_{2,b}^{(1)} & \dots & \chi_{2,b}^{(l-1)} & \chi_{2,b}^{(l)}
\end{BMAT}\right), & l\geq 0.
\end{align}
\end{cor}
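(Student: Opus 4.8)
The plan is to deduce Corollary~\ref{cormops} directly from Proposition~\ref{proposition: expressions for linear forms} by a single substitution, with essentially no new computation; this is precisely why it is stated as a corollary. The key observation is that the multiple orthogonal polynomials and the linear forms are components of the \emph{same} dressing matrices applied to different strings. Indeed, from Definition~\ref{def:linear forms} we have $Q=S\xi_1$, so $Q^{(l)}=\sum_{i=0}^{l}S_{l,i}\xi_1^{(i)}$ is the $l$-th entry of $S\xi_1$; while by \eqref{defmops} together with the description in \eqref{chion} (namely $\chi_{1,a}^{(i)}=x^{k_1(i)}$ when $a=a_1(i)$ and $0$ otherwise), the polynomial $A^{(l)}_a=\sum'_i S_{l,i}x^{k_1(i)}$ is exactly $\sum_{i=0}^{l}S_{l,i}\chi_{1,a}^{(i)}$, i.e. the $l$-th entry of $S\chi_{1,a}$. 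The two are tied by $\xi_1=\sum_{a=1}^{p_1}w_{1,a}\chi_{1,a}$, consistent with Proposition~\ref{proposition: linear forms and mops}.

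First I would note that every expression for $Q^{(l)}$ in Proposition~\ref{proposition: expressions for linear forms} is assembled purely from the factorization data of $g$ (the row $(S_{l,0},\dots,S_{l,l-1})=-(g_{l,0},\dots,g_{l,l-1})(g^{[l]})^{-1}$, the normalization $S_{l,l}=1$, and the diagonal coefficient $\bar S_{l,l}$) acting linearly on the components of the input string, which enters only as the column $\xi_1^{[l]}$, $\xi_1^{[l+1]}$, or as the last-column entries $\xi_1^{(0)},\dots,\xi_1^{(l)}$. Hence the derivation carried out in Appendix~\ref{II} for the $l$-th entry of $S\xi_1$ applies verbatim to the $l$-th entry of $S\chi_{1,a}$: one replaces $\xi_1^{(i)}$ by $\chi_{1,a}^{(i)}$ throughout, leaving all $g$-dependent coefficients untouched. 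This gives the first two expressions for $A^{(l)}_a$ and, after Cramer's rule together with a cofactor expansion along the last column, the determinantal form \eqref{detmops}. The dual case is handled symmetrically: $\bar Q=(\bar S^{-1})^\top\xi_2$ and, by \eqref{defdualmops}, $\bar A^{(l)}_b$ is the $l$-th entry of $(\bar S^{-1})^\top\chi_{2,b}$, so the substitution $\xi_2^{(j)}\mapsto\chi_{2,b}^{(j)}$ in the dual half of Proposition~\ref{proposition: expressions for linear forms} produces \eqref{dualmops.2} and \eqref{detdualmops}.

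The only point requiring genuine care — and the step I would verify explicitly — is that $\xi_1$ (respectively $\xi_2$) plays no role in the proof of Proposition~\ref{proposition: expressions for linear forms} beyond being the argument of $S$ (respectively $(\bar S^{-1})^\top$); once this is confirmed, the substitution is legitimate precisely because the coefficients $S_{l,i}$, $\bar S'_{j,l}$ and $\bar S_{l,l}$ are determined by $g$ alone and are shared by both the forms and the polynomials. I expect no deeper obstacle, only this bookkeeping, and in particular checking that the Cramer/determinantal step is insensitive to whether the last column carries the $\xi$-monomials or the $\chi$-monomials. One could alternatively argue by linearity, decomposing each expression via $\xi_1=\sum_a w_{1,a}\chi_{1,a}$ and then invoking the linear independence of the weights of a perfect combination to isolate the $w_{1,a}$-contributions; but the direct substitution avoids relying on that independence and is cleaner.
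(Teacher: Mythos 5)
Your proposal is correct and matches the paper's (implicit) argument: the paper states Corollary \ref{cormops} without a separate proof precisely because, as you observe, the coefficients $S_{l,i}$, $\bar S'_{j,l}$, $\bar S_{l,l}$ in Proposition \ref{proposition: expressions for linear forms} are determined by $g$ alone, so replacing the string $\xi_\ell$ by $\chi_{\ell,a}$ (i.e.\ reading off the $l$-th entries of $S\chi_{1,a}$ and $(\bar S^{-1})^\top\chi_{2,b}$ instead of $S\xi_1$ and $(\bar S^{-1})^\top\xi_2$) carries the Appendix \ref{II} derivation over verbatim. Your side remark that the direct substitution avoids invoking linear independence of the weights is a fair and accurate observation.
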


Observe that \eqref{s'}, Appendix \ref{II}, implies
\begin{align}
  \bar S_{l,l}=\frac{\det g^{[l+1]}}{\det g ^{[l]}}.
\end{align}

\subsection{Functions of the second kind}

The Cauchy transforms of the linear forms \eqref{linear.forms} play a crucial role in the
Riemann--Hilbert problem associated with the multiple orthogonal polynomials of mixed type
\cite{daems-kuijlaars}. Following  the approach of  Adler and van Moerbeke we will show that these Cauchy transforms are also related to the $LU$ factorization
considered in this paper.

Observe that the construction of multiple orthogonal polynomials  performed so far is synthesized in the following
strings of multiple orthogonal polynomials and their duals
\begin{align}\label{mop-s}
\begin{aligned}
\A_{a}&:=  \begin{pmatrix}
    A^{(0)}_{a}\\
    A^{(1)}_{a}\\
    \vdots
  \end{pmatrix}=S\chi_{1,a},&
  \bar\A_{b}&:= \begin{pmatrix}
 \bar A^{(0)}_{b}\\
    \bar A^{(1)}_{b}\\
    \vdots
  \end{pmatrix}=(\bar S^{-1})^\top\chi_{2,b},& a&=1,\dots,p_1,& b&=1,\dots,p_2.
\end{aligned}
\end{align}
In order to complete these formulae and in terms of $\chi^*$ as in \eqref{chion}  we consider
\begin{definition}
Let us introduce the  following formal semi-infinite vectors
\begin{align}\label{cauchy-S}
\begin{aligned}
  \Cs_b&=\begin{pmatrix}
    C_b^{(0)}\\C_b^{(1)}\\\vdots
  \end{pmatrix}=\bar S\chi_{2,b}^*(z),&
  \bar\Cs_a&=\begin{pmatrix}
    \bar C_a^{(0)}\\\bar C_a^{(1)}\\\vdots
  \end{pmatrix}=(S^{-1})^\top\chi_{1,a}^*(z),& b&=1,\dots,p_2,& a&=1,\dots,p_1,
\end{aligned}\end{align}
that we call strings of second kind functions.
\end{definition}
These objects are actually Cauchy transforms of the linear forms $Q^{(l)}$, $l \in {\mathbb{Z}}_+$, whenever the series converge and outside the support of the measures involved.
Notice that fixed $z \in {\mathbb{C}}$ the entries in each string $\bar\Cs_a$ and $\Cs_b$ are series not necessarily convergent. In the non-convergent case we obviously understand  the definition only formally. For each $l \in {\mathbb{Z}}_+$ we
denote by $\bar D_a^{(l)}$ and $D_b^{(l)}$ on $\mathbb{C}$ the domains  where the series $\bar C_a^{(l)}$ and $C_b^{(l)}$ are uniform convergent, respectively,  and we understand them as their corresponding limits. From properties of Taylor's series, we know that uniform convergence of these series hops only on  $\bar D_a^{(l)}$ and $D_b^{(l)}$ when they are the biggest open disks around $z=\infty$ which do not contain the respectively supports, $\operatorname{supp} (w_{2,a}\d\mu)$ and $\operatorname{supp} (w_{2,b}\d\mu)$. Outside the sets $\bar D_a^{(l)}$ and $D_b^{(l)}$ the series diverges at every point. Hence to have non-empty sets  in $\bar D_a^{(l)}$ and $D_b^{(l)}$ the corresponding supports $\operatorname{supp} (w_{2,a}\d\mu)$ and $\operatorname{supp} (w_{2,b}\d\mu)$ must be bounded.
\begin{pro}\label{pro:cauchy tr}
For each  $l\in {\mathbb{Z}}_+$ the second kind functions can be expressed as follows
   \begin{align}
   \begin{aligned}
C_b^{(l)}(z)&=\int \frac{Q^{(l)}(x)w_{2,b}(x)}{z-x}\d \mu(x),& z \in D_b^{(l)} \setminus \operatorname{supp}
(w_{1,b}\d \mu(x)),\\
\bar C_a^{(l)}(z)&=\int  \frac{\bar Q^{(l)}(x)w_{1,a}(x)}{z-x}\d \mu(x),&  z \in \bar D_a^{(l)} \setminus
\operatorname{supp} (w_{2,a}\d \mu(x)).
\end{aligned}
\end{align}
\end{pro}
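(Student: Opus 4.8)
The plan is to reduce both formulas to a single generating-function identity for the Cauchy kernel $1/(z-x)$ and then to let the factorization $g=S^{-1}\bar S$ do the work. This is the second-kind-function analogue of the one-line computation behind the bi-orthogonality \eqref{biotrhoganility}: once the kernel is expanded in the right monomial string, the integral turns into a matrix product that collapses because $Sg=\bar S$.

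First I would establish the key identity
\begin{align*}
\frac{w_{2,b}(x)}{z-x}=\xi_2(x)^\top\chi^*_{2,b}(z),
\end{align*}
valid for $z$ in the disk around $\infty$ described before the statement. To see it, expand $1/(z-x)=\sum_{k\geq0}x^kz^{-k-1}$ (convergent for $|x|<|z|$), multiply by $w_{2,b}(x)$, and read off components: by \eqref{chion} the only nonzero entries of $\chi^*_{2,b}(z)$ are $(\chi^*_{2,b})^{(j)}(z)=z^{-k_2(j)-1}$ for those $j$ with $a_2(j)=b$, while the matching entries of $\xi_2(x)$ are $\xi_2^{(j)}(x)=w_{2,b}(x)x^{k_2(j)}$. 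Since \eqref{k} shows that $j\mapsto k_2(j)$ restricts to a bijection from $\{\,j:a_2(j)=b\,\}$ onto $\Z_+$, the sum $\sum_j\xi_2^{(j)}(x)(\chi^*_{2,b})^{(j)}(z)$ reproduces exactly $\sum_{k\geq0}w_{2,b}(x)x^kz^{-k-1}$, the kernel.

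With this in hand the first formula is immediate:
\begin{align*}
\int\frac{Q(x)w_{2,b}(x)}{z-x}\,\d\mu(x)&=\Big(\int Q(x)\,\xi_2(x)^\top\,\d\mu(x)\Big)\chi^*_{2,b}(z)\\
&=S\Big(\int\xi_1(x)\,\xi_2(x)^\top\,\d\mu(x)\Big)\chi^*_{2,b}(z)\\
&=Sg\,\chi^*_{2,b}(z)=\bar S\,\chi^*_{2,b}(z)=\Cs_b(z),
\end{align*}
using $Q=S\xi_1$, the definition \eqref{compact.g} of $g$, and $Sg=\bar S$ from \eqref{facto}; reading off the $l$-th entry gives $C_b^{(l)}$. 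The dual case runs in parallel: from $\bar Q=(\bar S^{-1})^\top\xi_2$ and the mirror kernel expansion $w_{1,a}(x)/(z-x)=\xi_1(x)^\top\chi^*_{1,a}(z)$ one gets $\int\bar Q(x)\,\xi_1(x)^\top\,\d\mu(x)=(\bar S^{-1})^\top g^\top=(S^{-1})^\top$, where $g^\top=\bar S^\top(S^{-1})^\top$ and $(\bar S^{-1})^\top\bar S^\top=\I$; hence the transform equals $(S^{-1})^\top\chi^*_{1,a}(z)=\bar\Cs_a(z)$, giving $\bar C_a^{(l)}$.

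The only delicate point, and the main obstacle, is the justification of the term-by-term integration, i.e. interchanging $\sum_k$ with $\int\d\mu$. This is legitimate precisely on the domains $D_b^{(l)}$ and $\bar D_a^{(l)}$ singled out before the statement: there $|z|$ exceeds the radius of the smallest disk about the origin carrying the relevant support, so the geometric expansion of $1/(z-x)$ converges uniformly on $\operatorname{supp}(w_{2,b}\d\mu)$ (respectively on $\operatorname{supp}(w_{1,a}\d\mu)$) and dominated convergence applies; excising the support of the integrand removes the pole at $z=x$. Outside those disks the defining series diverges and the identity is to be understood only as the formal equality of the corresponding expansions.
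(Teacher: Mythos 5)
Your proposal is correct and follows essentially the same route as the paper: both arguments use the factorization $Sg=\bar S$ (and its transpose for the dual case) to identify the Cauchy transform of the linear form with the formal series $\bar S\chi^*_{2,b}$, respectively $(S^{-1})^\top\chi^*_{1,a}$, defining the second kind functions, and both justify the exchange of sum and integral only for $z$ in the stated domains. The only cosmetic difference is direction and packaging: you expand the kernel as $w_{2,b}(x)/(z-x)=\xi_2(x)^\top\chi^*_{2,b}(z)$ and argue by dominated convergence, while the paper starts from the defining series $\sum_n z^{-n-1}\int x^nQ^{(l)}w_{2,b}\,\d\mu$ and bounds the tail explicitly on compact subsets of $D_b^{(l)}\setminus\operatorname{supp}(w_{2,b}\d\mu)$ --- the content is the same.
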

\begin{proof}
The Gauss--Borel factorization leads to
\begin{align*}
\notag C_b^{(l)}(z)&=\sum_{n=0}^\infty\sum_{k=0}^lS_{lk}g_{kn}(\Pi_{2,b}\chi_2^*(z))_n\\
&=\sum_{n=0}^{\infty}\int  \sum_{k=0}^l S_{lk} x^{k_1(k)}w_{1,a_1(k)}(x)w_{2,b}(x)\frac{x^{n}}{z^{n+1}}\d
\mu(x)\notag&&\text{use \eqref{explicit g2}}\\
&=\sum_{n=0}^{\infty}\frac{1}{z^{n+1}}\int  x^{n} Q^{(l)}(x)w_{2,b}(x)\d \mu(x).\notag&&\text{use
\eqref{linear.forms}}
\end{align*}
When $D_b^{(l)} \setminus \operatorname{supp} (w_{2,b}\d\mu)= \emptyset$ the proof is trivial. Given a non empty compact set $\mathcal K\subset D_b^{(l)} \setminus \operatorname{supp} (w_{2,b}\d\mu)\not = \emptyset$ and recalling the closed character of $\operatorname{supp} (w_{2,b}\d\mu)$,  we have that the distance between them $d^{(l)}_b(K):=\text{distance}(
{\mathcal{K}}, \operatorname{supp} (w_{2,b}\d\mu)) > 0$ is positive and that $\sup \{|z|:z \in \mathcal K\}=:M_{\mathcal K} < +\infty$. Taking into account that the series
\begin{align*}
C_b^{(l)}(z)=\sum_{n=0}^{\infty}\frac{1}{z^{n+1}}\int  x^{n} Q^{(l)}(x)w_{2,b}(x)\d \mu(x)
\end{align*}
converges uniformly on ${\mathcal{K}}$ we can ensure
\begin{equation}\label{connec}
\lim_{n \to \infty} \sup_{|z| \in {\mathcal{K}}}\left\{\left|\frac{1}{z^{n+1}}\int  x^{n} Q^{(l)}(x)w_{2,b}(x)\d
\mu(x)\right|\right\} =0.
\end{equation}
Hence, we have the bound
\begin{align}\label{ct1}
\notag \left|\sum_{i=0}^{n}\frac{1}{z^{i+1}}\int  x^{i} Q^{(l)}(x)w_{2,b}(x)\d \mu(x)-\int  Q^{(l)}(x)w_{2,b}(x)\frac{1}{z-x}\d \mu(x)\right|&=\left|z \frac{1}{z^{n+1}}\int  x^n
Q^{(l)}(x)w_{2,b}(x)\frac{\d \mu(x) }{z-x}\right| \leq\\
\leq \frac{M_{\mathcal K}}{d^{(l)}_b({\mathcal{K}})}\sup_{|z| \in {\mathcal{K}}}\left\{\left|\frac{1}{z^{n+1}}\int  x^{n}
Q^{(l)}(x)w_{2,b}(x)\d \mu(x)\right|\right\},& \forall z \in {\mathcal{K}}.
\end{align}
Taking into account  \eqref{connec} we deduce from \eqref{ct1}  the first equality for any compact set $\mathcal K$. Therefore, we get the first claim of the Proposition; the second equality can be proved analogously.
\end{proof}

 Given $l\geq 1$ and  $a=1,\cdots, p$ the $+$ ($-$) associated integer is  the smallest (largest) integer
$ l_{+a}$ ($l_{-a}$) such that  $l_{+a} \geq l$ ($l_{-a} \leq l$ ) and $a( l_{+a})=a$ ($a( l_{-a})=a$).
It can be shown that
\begin{align}\label{ai}
\begin{aligned}
  l_{-a}&:=\begin{cases} q(l)|\vec n|+\sum_{i=1}^an_{i}-1, & a<a(l), \\
  l, &a=a(l),\\
  q(l)|\vec n|-\sum_{i=a+1}^{p}n_{i}-1, & a>a(l-1), \end{cases}\\
  l_{+a}&:=\begin{cases}
    (  q(l)+1)|\vec n|+\sum_{i=1}^{a-1}n_{i},& a<a(l),\\
  l,&a=a(l),\\
 ( q(l)+1)|\vec n|-\sum_{i=a}^{p}n_{i},& a>a(l).
 \end{cases}
 \end{aligned}
\end{align}
 To give a determinantal expression for these second kind formal series we need
 \begin{definition}
We introduce
\begin{align}
\begin{aligned}
  \Gamma^{(l)}_{k,a}&:=\sum_{k'=l_{+a}}^{\infty}g_{k',k
}z^{-k_1(k')-1}\delta_{a_1(k'),a},&
   \bar\Gamma^{(l)}_{k,b}&:=\sum_{k'=\bar
l_{+b}}^{\infty}g_{k,k'}z^{-k_2(k')-1}\delta_{a_2(k'),b}.
\end{aligned}
\end{align}
Here $l_{+a}$ is the $+$ associated integer within the $\vec n_1$ composition, while $\bar l_{+b}$ is the $+$ associated
integer for the $\vec n_2$ composition.
 \end{definition}
 With these definitions we can state
 \begin{pro}\label{proposition: expressions for ct}
   The following determinantal expressions for the functions of the second kind hold
  \begin{align}
  C_b^{(l)}&=\frac{1}{\det g^{[l]}}\det
  \left(\begin{BMAT}{cccc|c}{cccc|c}
    g_{0,0}&g_{0,1}&\cdots&g_{0,l-1}&\bar\Gamma^{(l)}_{0,b}\\
     g_{1,0}&g_{1,1}&\cdots&g_{1,l-1}&\bar\Gamma^{(l)}_{1,b}\\
     \vdots &\vdots&            &\vdots&\vdots\\
        g_{l-1,0}&g_{l-1,1}&\cdots&g_{l-1,l-1}&\bar\Gamma^{(l)}_{l-1,b}\\
          g_{l,0}&g_{l,1}&\cdots&g_{l,l-1}&\bar\Gamma^{(l)}_{l,b},
\end{BMAT}\right),& l&\geq 1,&&
\\
\bar C_a^{(l)}&=\frac{1}{\det g^{[l+1]}}\det
  \left(\begin{BMAT}{cccc|c}{cccc|c}
    g_{0,0}&g_{0,1}&\cdots&g_{0,l-1}&g_{0,l}\\
     g_{1,0}&g_{1,1}&\cdots&g_{1,l-1}&g_{1,l}\\
     \vdots &\vdots&            &\vdots&\vdots\\
        g_{l-1,0}&g_{l-1,1}&\cdots&g_{l-1,l-1}&g_{l-1,l}\\
          \Gamma^{(l)}_{0,a}& \Gamma^{(l)}_{1,a}&\cdots& \Gamma^{(l)}_{l-1,a}& \Gamma^{(l)}_{l,a}
\end{BMAT}\right),& l&\geq 1.&&
  \end{align}
 \end{pro}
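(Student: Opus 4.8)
The plan is to reduce both determinantal identities to the same Cramer's-rule argument already used for the linear forms in Proposition \ref{proposition: expressions for linear forms} (proved in Appendix \ref{II}), after first rewriting each second kind function as a \emph{finite} combination of the $\Gamma$'s (resp.\ $\bar\Gamma$'s) dictated by the Gauss--Borel factors. Throughout, the series in $z^{-1}$ are manipulated formally, or on the convergence domains $D_b^{(l)}$ and $\bar D_a^{(l)}$ identified earlier.

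First, for $C_b^{(l)}$ I would start from the definition $\Cs_b=\bar S\chi_{2,b}^*$ in \eqref{cauchy-S}, which componentwise reads $C_b^{(l)}=\sum_{k'}\bar S_{l,k'}(\chi_{2,b}^*)^{(k')}$, and use \eqref{chion} to get $(\chi_{2,b}^*)^{(k')}=z^{-k_2(k')-1}\delta_{a_2(k'),b}$. Since $\bar S\in G_+$ is upper triangular, $\bar S_{l,k'}=0$ for $k'<l$, while the Kronecker delta forces $a_2(k')=b$; hence the effective summation runs over $k'\ge \bar l_{+b}$, the smallest index $\ge l$ with $a_2(\cdot)=b$. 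Substituting $\bar S=Sg$, i.e.\ $\bar S_{l,k'}=\sum_{k=0}^{l}S_{l,k}g_{k,k'}$ (here $S$ is lower triangular with $S_{l,l}=1$), and exchanging the order of summation yields the key relation $C_b^{(l)}=\sum_{k=0}^{l}S_{l,k}\bar\Gamma^{(l)}_{k,b}$, the exact analogue of $Q^{(l)}=\sum_{k=0}^{l}S_{l,k}\xi_1^{(k)}$ with $\xi_1^{(k)}$ replaced by $\bar\Gamma^{(l)}_{k,b}$.

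Next I would feed this into the $(l+1)\times(l+1)$ determinant whose first $l$ columns are $(g_{i,0},\dots,g_{i,l-1})$ and whose last column is $(\bar\Gamma^{(l)}_{0,b},\dots,\bar\Gamma^{(l)}_{l,b})^\top$. The orthogonality relations \eqref{orth-1}, namely $\sum_{i=0}^{l}S_{l,i}g_{i,j}=0$ for $j<l$ together with $S_{l,l}=1$, say precisely that adding $\sum_{i<l}S_{l,i}(\text{row }i)$ to the last row turns it into $(0,\dots,0,C_b^{(l)})$ without changing the determinant; expanding along that row gives $\det=C_b^{(l)}\det g^{[l]}$, which is the claimed formula. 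For $\bar C_a^{(l)}$ the argument is dual: from $\bar\Cs_a=(S^{-1})^\top\chi_{1,a}^*$ and $(\chi_{1,a}^*)^{(k')}=z^{-k_1(k')-1}\delta_{a_1(k'),a}$, using that $S^{-1}$ is lower triangular the sum collapses to $k'\ge l_{+a}$; then substituting $S^{-1}=g\bar S^{-1}$ and using that $\bar S^{-1}$ is upper triangular gives $\bar C_a^{(l)}=\sum_{m=0}^{l}\bar S'_{m,l}\Gamma^{(l)}_{m,a}$. Now the dual relations \eqref{orth-2-1}--\eqref{orth-2-2}, $\sum_j g_{i,j}\bar S'_{j,l}=\delta_{i,l}$ for $i\le l$, let me perform the corresponding column operation on the $(l+1)\times(l+1)$ determinant $M$ whose last row is $(\Gamma^{(l)}_{0,a},\dots,\Gamma^{(l)}_{l,a})$; by multilinearity in the last column this produces $\bar S'_{l,l}\det M=\bar C_a^{(l)}\det g^{[l]}$, and substituting $\bar S'_{l,l}=\bar S_{l,l}^{-1}=\det g^{[l]}/\det g^{[l+1]}$ (the identity following Corollary \ref{cormops}) gives $\bar C_a^{(l)}=\det M/\det g^{[l+1]}$.

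The steps are all routine linear algebra once the bookkeeping is fixed; the one place that requires genuine care---and which I expect to be the main obstacle---is verifying that the summation limits collapse correctly to $\bar l_{+b}$ and $l_{+a}$. This hinges on combining the triangularity of the Gauss--Borel factors with the support of the deltas $\delta_{a_2(k'),b}$ and $\delta_{a_1(k'),a}$, and on checking that no index $k'$ with $l\le k'<\bar l_{+b}$ (resp.\ $l\le k'<l_{+a}$) carries a nonzero contribution, which is exactly the content of the definition of the $+$ associated integers in \eqref{ai}. The formal interchange of summation used to pass from $\bar S_{l,k'}=\sum_k S_{l,k}g_{k,k'}$ to the objects $\bar\Gamma^{(l)}_{k,b}$ (and its dual) must likewise be justified on the stated convergence domains.
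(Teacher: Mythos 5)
Your proposal is correct and follows essentially the same route as the paper: both rest on the identity $C_b^{(l)}=\sum_{k=0}^{l}S_{l,k}\bar\Gamma^{(l)}_{k,b}$ (and its dual $\bar C_a^{(l)}=\sum_{m=0}^{l}\bar S'_{m,l}\Gamma^{(l)}_{m,a}$), obtained exactly as you describe from the factorization and the triangularity of the Gauss--Borel factors, which is what fixes the lower summation limits at $\bar l_{+b}$ and $l_{+a}$. The only cosmetic difference is that the paper substitutes the explicit cofactor formulae $S_{l,k}=(-1)^{k+l}M^{(l+1)}_{k,l}/\det g^{[l]}$ and $\bar S'_{m,l}=(-1)^{l+m}M^{(l+1)}_{l,m}/\det g^{[l+1]}$ and recognizes the resulting sums as Laplace expansions along the last column (resp.\ row), whereas you reach the same determinants by row and column operations justified by the orthogonality relations; these are two presentations of the same Cramer's-rule argument.
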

\begin{proof}
 See Appendix \ref{II}.
\end{proof}

Following \cite{LF2} we consider the Markov--Stieltjes functions and polynomials of the second type.
\begin{definition}
  The Markov--Stieltjes functions are defined by
  \begin{align}
    \hat\mu_{a,b}(z):=\int
\frac{w_{1,a}(x)w_{2,b}(x)}{z-x}\d \mu(x),
  \end{align}
in terms of which we define
\begin{align}
\begin{aligned}
  H_b^{(l)}(z)&:=\sum_{a=1}^{p_1}A^{(l)}_a(z)\hat\mu_{a,b}(z)-C^{(l)}_b(z),\\
  \bar H_a^{(l)}(z)&:=\sum_{b=1}^{p_2}\hat\mu_{a,b}(z)\bar A^{(l)}_b(z)-  \bar C^{(l)}_a(z).
  \end{aligned}
\end{align}\end{definition}
\begin{pro}
  The functions  $H_b^{(l)}$ and $\bar H_a^{(l)}$ are polynomials in $z$.
\end{pro}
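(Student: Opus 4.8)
The plan is to combine the integral (Cauchy transform) representation of the second kind functions furnished by Proposition \ref{pro:cauchy tr} with the expression of the linear forms in Proposition \ref{proposition: linear forms and mops}, and then to exploit the elementary fact that the divided difference of a polynomial is again a polynomial.

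First I would use $Q^{(l)}(x)=\sum_{a=1}^{p_1}A^{(l)}_a(x)w_{1,a}(x)$ from Proposition \ref{proposition: linear forms and mops} to rewrite the representation in Proposition \ref{pro:cauchy tr} as
\begin{align*}
C_b^{(l)}(z)=\sum_{a=1}^{p_1}\int\frac{A^{(l)}_a(x)w_{1,a}(x)w_{2,b}(x)}{z-x}\d\mu(x),
\end{align*}
valid for $z$ in the domain $D_b^{(l)}\setminus\operatorname{supp}(w_{2,b}\d\mu)$. On the other hand, the definition of the Markov--Stieltjes functions gives directly
\begin{align*}
\sum_{a=1}^{p_1}A^{(l)}_a(z)\hat\mu_{a,b}(z)=\sum_{a=1}^{p_1}\int\frac{A^{(l)}_a(z)w_{1,a}(x)w_{2,b}(x)}{z-x}\d\mu(x),
\end{align*}
since both integrals carry the same weighted measure $w_{1,a}(x)w_{2,b}(x)\d\mu(x)$. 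Subtracting, the common denominator $z-x$ combines the two numerators, and on the common domain of definition one obtains
\begin{align*}
H_b^{(l)}(z)=\sum_{a=1}^{p_1}\int\frac{A^{(l)}_a(z)-A^{(l)}_a(x)}{z-x}\,w_{1,a}(x)w_{2,b}(x)\d\mu(x).
\end{align*}

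The key step is then the observation that for a polynomial $A^{(l)}_a(y)=\sum_k c_{a,k}y^k$ the divided difference $\tfrac{A^{(l)}_a(z)-A^{(l)}_a(x)}{z-x}=\sum_k c_{a,k}\sum_{j=0}^{k-1}z^jx^{k-1-j}$ is a genuine polynomial in $z$ (and in $x$) of degree at most $\deg A^{(l)}_a-1$, with the apparent pole at $z=x$ removed. Integrating the finitely many powers $z^j$ term by term, the $x$-integrals reduce to moments $\int x^{m}w_{1,a}(x)w_{2,b}(x)\d\mu(x)$, which are finite (they are, up to indexing, entries of the moment matrix $g$ via \eqref{explicit g2}). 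Thus $H_b^{(l)}$ coincides, on its domain, with a polynomial in $z$ of degree at most $\max_a\deg A^{(l)}_a-1$. The argument for $\bar H_a^{(l)}$ is entirely symmetric: using $\bar Q^{(l)}(x)=\sum_{b=1}^{p_2}\bar A^{(l)}_b(x)w_{2,b}(x)$ and the second representation in Proposition \ref{pro:cauchy tr} yields
\begin{align*}
\bar H_a^{(l)}(z)=\sum_{b=1}^{p_2}\int\frac{\bar A^{(l)}_b(z)-\bar A^{(l)}_b(x)}{z-x}\,w_{1,a}(x)w_{2,b}(x)\d\mu(x),
\end{align*}
and the same divided-difference computation applies.

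The main obstacle I anticipate is not the algebra but the bookkeeping of the domains of convergence: the representations for $C_b^{(l)}$ and $\bar C_a^{(l)}$ in Proposition \ref{pro:cauchy tr} are valid only on the sets $D_b^{(l)}\setminus\operatorname{supp}(w_{2,b}\d\mu)$ and $\bar D_a^{(l)}\setminus\operatorname{supp}(w_{1,a}\d\mu)$, which are nonempty precisely when the relevant supports are bounded. I would handle this by noting that the manipulation producing the divided difference is legitimate wherever both Cauchy transforms converge, that the resulting expression is manifestly an entire (indeed polynomial) function of $z$, and that analytic continuation therefore forces $H_b^{(l)}$ and $\bar H_a^{(l)}$ to be polynomials throughout their domains. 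This also pins down their degrees in terms of $\deg A^{(l)}_a$ and $\deg \bar A^{(l)}_b$, i.e. in terms of the multi-indices $\vec\nu_1(l)$ and $\vec\nu_2(l)$.
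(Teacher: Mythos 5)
Your proof is correct and follows essentially the same route as the paper: both rewrite $H_b^{(l)}$ and $\bar H_a^{(l)}$ as integrals of the divided differences $\frac{A^{(l)}_a(z)-A^{(l)}_a(x)}{z-x}$ and $\frac{\bar A^{(l)}_b(z)-\bar A^{(l)}_b(x)}{z-x}$ against $w_{1,a}(x)w_{2,b}(x)\d\mu(x)$ and observe that these are polynomials in $z$. Your additional care with the term-by-term integration, the finiteness of the resulting moments, and the domains of convergence of the Cauchy transforms only makes explicit what the paper's terser argument leaves implicit.
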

\begin{proof}
  The reader should notice that the functions $H^{(l)}_b$ and $\bar H^{(l)}_a$ are
\begin{align*}
\begin{aligned}
  H^{(l)}_b(z)&=\int\sum_{a=1}^{p_1}w_{1,a}(x)\frac{A^{(l)}_a(z)-A^{(l)}_a(x)}{z-x}w_{2,b}(x)\d\mu(x),&
  \bar H_a^{(l)}(z)&=\int \sum_{b=1}^{p_2}w_{1,a}(x)\frac{\bar A^{(l)}_b(z)-\bar
  A^{(l)}_b(x)}{z-x}w_{2,b}(x)\d\mu(x),
  \end{aligned}
\end{align*}
and as $z=x$ is a zero of the polynomials $A^{(l)}_a(z)-A^{(l)}_a(x)$ and $\bar A^{(l)}_b(z)-\bar A^{(l)}_b(x)$ from the above formulae we conclude that they are indeed polynomials in $z$.
\end{proof}

  \subsection{Recursion relations}\label{sec:recursion}

The moment matrix has a  Hankel type symmetry that implies the recursion relations and the Christoffel--Darboux
formula. We  consider the shift operators defined by
\begin{align}
\Lambda_a&:=\sum_{k=0}^\infty e_a(k)e_a(k+1)^\top.
\end{align}
notice that
\begin{itemize}
  \item  $\Lambda_a$ leaves invariant the subspaces $\Pi_{a'}\R^\infty$, for  $a'=1,\dots,p$, and
      $\Pi_{a'}\Lambda_a=\Lambda_a\Pi_{a'}$.
\item The  set of semi-infinite matrices $\{\Lambda_a^j\}_{\substack{a=1,\dots,p\\j=1,2,\dots}}$ is commutative.
\item We have the eigenvalue property
\begin{align}
  \Lambda_a\chi_{a'}=\delta_{a,a'}z\chi_{a}.
\end{align}
\end{itemize}

\begin{definition}\label{sym gen}
 We define the following multiple shift matrices
    \begin{align}
  \Upsilon_1&:=\sum_{a=1}^{p_1}\Lambda_{1,a},&
    \Upsilon_2&:=\sum_{b=1}^{p_2}\Lambda_{2,b},
  \end{align}
and we also introduce the integers
 \begin{align*}
   N_{1,a}&:=|\vec n_1|-n_{1,a}+1=\sum_{\substack{a'=1,\dots,p_1\\a'\neq a}}n_{1,a'}+1,& a&=1,\dots,p_1,& N_1&:=\max_{a=1,\dots,p_1}N_{1,a},\\
      N_{2,b}&:=|\vec n_2|-n_{2,b}+1=\sum_{\substack{b'=1,\dots,p_2\\b'\neq b}}n_{2,b'}+1, & b&=1,\dots,p_2,& N_2&:=\max_{b=1,\dots,p_2}N_{2,b}.
   \end{align*}
  \end{definition}
A careful but straightforward computation leads to
 \begin{pro}
 We have the following structure for $\Upsilon_1$ and $\Upsilon_2$
 \begin{align*}
   \Upsilon_1&=D_{1,0}\Lambda+D_{1,1}\Lambda^{N_{1,1}}+\dots+D_{1,p_1}\Lambda^{N_{1,p_1}},&
     \Upsilon_2&=D_{2,0}\Lambda+D_{2,1}\Lambda^{N_{2,1}}+\dots+D_{2,p_2}\Lambda^{N_{2,p_2}}.
 \end{align*}
 where $D_{1,a}$, $a=1,\dots,p_1$, and $D_{2,b}$, $b=1,\dots,p_2$, are the following semi-infinite diagonal matrices:
 \begin{align*}
    D_{1,a}&=\diag(D_{1,a}(0),D_{1,a}(1),\dots),&
   D_{1,a}(n)&:=\begin{cases}
     1,& n=k|\vec n_1|+\sum_{a'=1}^a n_{1,a'}-1,\quad k\in\Z_+,\\
       0,& n\neq k|\vec n_1|+\sum_{a'=1}^a n_{1,a'}-1,\quad k\in\Z_+,
   \end{cases}&
   D_{1,0}&=\I-\sum_{a=1}^{p_1}D_{1,a},\\
       D_{2,b}&=\diag(D_{2,b}(0),D_{2,b}(1),\dots),&
   D_{2,b}(n)&:=\begin{cases}
     1,& n=k|\vec n_2|+\sum_{b'=1}^b n_{2,b'}-1,\quad k\in\Z_+,\\
       0,& n\neq k|\vec n_2|+\sum_{b'=1}^b n_{2,b'}-1,\quad k\in\Z_+,
   \end{cases}&
   D_{2,0}&=\I-\sum_{b=1}^{p_1}D_{2,b}.
 \end{align*}
 \end{pro}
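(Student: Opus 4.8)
The plan is to compute $\Upsilon_1$ entry by entry from the explicit index formula \eqref{iak}; the argument for $\Upsilon_2$ is identical after replacing $\vec n_1$ by $\vec n_2$ and $p_1$ by $p_2$, so I would only carry out the $\Upsilon_1$ case. Writing $i_1(k,a)$ for the integer attached to the pair $(k,a)$ by \eqref{iak} for the composition $\vec n_1$, the definition $\Lambda_{1,a}=\sum_{k\geq 0}e_{1,a}(k)e_{1,a}(k+1)^\top$ together with $e_{1,a}(k)=e_{i_1(k,a)}$ says that $\Lambda_{1,a}$ carries a single entry $1$ in each column $i_1(k+1,a)$, located at row $i_1(k,a)$. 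Since every $i\in\Z_+$ equals $i_1(k,a)$ for a unique pair $(k,a)$, the sum $\Upsilon_1=\sum_{a=1}^{p_1}\Lambda_{1,a}$ has exactly one nonzero entry (a $1$) in each row $i=i_1(k,a)$, placed at column $i_1(k+1,a)$. Thus $\Upsilon_1$ is fully determined once I know, row by row, the column offset $i_1(k+1,a)-i_1(k,a)$.

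First I would read off this offset from \eqref{iak}. Subtracting the expressions for $i_1(k+1,a)$ and $i_1(k,a)$ gives
\begin{align*}
i_1(k+1,a)-i_1(k,a)=\Big(\Big[\tfrac{k+1}{n_{1,a}}\Big]-\Big[\tfrac{k}{n_{1,a}}\Big]\Big)\big(|\vec n_1|-n_{1,a}\big)+1,
\end{align*}
and the floor increment $[\tfrac{k+1}{n_{1,a}}]-[\tfrac{k}{n_{1,a}}]$ equals $1$ when $n_{1,a}\mid(k+1)$ and $0$ otherwise. Hence exactly two cases arise: if $k\not\equiv n_{1,a}-1\pmod{n_{1,a}}$ the offset is $1$, and if $k\equiv n_{1,a}-1\pmod{n_{1,a}}$ the offset is $|\vec n_1|-n_{1,a}+1=N_{1,a}$. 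This is the source of the two families of powers of $\Lambda$ appearing on the right-hand side.

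Next I would locate the rows on which the large offset $N_{1,a}$ occurs. Writing $k=mn_{1,a}-1$ with $m\geq 1$ in the second case and substituting into \eqref{iak}, the floor term collapses to $m-1$ and a short simplification yields
\begin{align*}
i_1(mn_{1,a}-1,a)=(m-1)|\vec n_1|+\textstyle\sum_{a'=1}^{a}n_{1,a'}-1.
\end{align*}
As $m-1$ ranges over $\Z_+$, this is precisely the set of indices on which $D_{1,a}$ equals $1$ by its defining prescription; since the offset there is $N_{1,a}$, these rows contribute a $1$ at each position $(i,i+N_{1,a})$ with $D_{1,a}(i)=1$, i.e. exactly the matrix $D_{1,a}\Lambda^{N_{1,a}}$. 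All remaining rows carry offset $1$, and $D_{1,0}=\I-\sum_{a=1}^{p_1}D_{1,a}$ marks exactly those rows, so their total contribution is $D_{1,0}\Lambda$. Because $D_{1,0}+\sum_{a}D_{1,a}=\I$, each row feeds exactly one of these terms; summing the pieces gives the stated expression for $\Upsilon_1$.

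The computation is entirely elementary, and I do not expect any genuine analytic or structural obstacle. The one point that requires care is the bookkeeping in the last step: checking that $i_1(mn_{1,a}-1,a)$ really simplifies to $(m-1)|\vec n_1|+\sum_{a'=1}^{a}n_{1,a'}-1$, so that the support of the large-offset entries matches the prescribed support of $D_{1,a}$. This is exactly where the definitions of $N_{1,a}$ and of the diagonal matrices $D_{1,a}$ get pinned down, and it is the only place where the explicit form of \eqref{iak} is used in full.
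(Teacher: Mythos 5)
Your computation is correct and is precisely the ``careful but straightforward computation'' that the paper invokes without writing out: the offset $i_1(k+1,a)-i_1(k,a)$ read off from \eqref{iak} is $1$ except when $n_{1,a}\mid(k+1)$, where it equals $N_{1,a}$, and your simplification $i_1(mn_{1,a}-1,a)=(m-1)|\vec n_1|+\sum_{a'=1}^{a}n_{1,a'}-1$ correctly identifies the support of $D_{1,a}$. Nothing is missing.
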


In terms of these shift matrices we can describe the particular Hankel symmetries for the moment matrix
\begin{pro}
  The moment matrix $g$ satisfies the Hankel type symmetry
      \begin{align}\label{sym2}
 \Upsilon_1g  =
 g\Upsilon_2^\top.
  \end{align}
\end{pro}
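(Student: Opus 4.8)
The plan is to reduce the matrix identity \eqref{sym2} to a pair of scalar eigenvalue relations for the weighted monomial strings and then to move the spectral factor through the integral sign, where it behaves as a scalar.

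First I would record the fundamental consequence of the eigenvalue property $\Lambda_a\chi_{a'}=\delta_{a,a'}z\chi_a$: summing over $a$ and weighting by $w_{1,a'}$ gives
\begin{align*}
\Upsilon_1\xi_{\vec n_1}=\sum_{a=1}^{p_1}\Lambda_{1,a}\sum_{a'=1}^{p_1}\chi_{1,a'}w_{1,a'}=\sum_{a,a'}\delta_{a,a'}z\,\chi_{1,a}w_{1,a'}=z\sum_{a=1}^{p_1}\chi_{1,a}w_{1,a}=z\,\xi_{\vec n_1},
\end{align*}
and in exactly the same way $\Upsilon_2\xi_{\vec n_2}=z\,\xi_{\vec n_2}$. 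Thus $\Upsilon_1$ and $\Upsilon_2$ act as multiplication by the spectral variable on the respective weighted strings.

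Next I would substitute the definition \eqref{compact.g} of $g$ and commute $\Upsilon_1$ past the integral, so that evaluating the eigenrelation at the integration point $x$ yields
\begin{align*}
\Upsilon_1 g=\int\big(\Upsilon_1\xi_{\vec n_1}(x)\big)\,\xi_{\vec n_2}(x)^\top\,\d\mu(x)=\int x\,\xi_{\vec n_1}(x)\,\xi_{\vec n_2}(x)^\top\,\d\mu(x).
\end{align*}
On the other side $\Upsilon_2^\top$ multiplies $g$ from the right and therefore acts on the transposed factor $\xi_{\vec n_2}(x)^\top$; using $\big(\Upsilon_2\xi_{\vec n_2}(x)\big)^\top=x\,\xi_{\vec n_2}(x)^\top$ gives
\begin{align*}
g\,\Upsilon_2^\top=\int\xi_{\vec n_1}(x)\,\big(\Upsilon_2\xi_{\vec n_2}(x)\big)^\top\,\d\mu(x)=\int\xi_{\vec n_1}(x)\,x\,\xi_{\vec n_2}(x)^\top\,\d\mu(x).
\end{align*}
Since $x$ is a scalar it may be freely moved between the two factors, so the two displayed integrals coincide and \eqref{sym2} follows.

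The only point requiring care, and hence the main obstacle, is the legitimacy of interchanging the semi-infinite matrix action with the integral: one must check that for each fixed index pair the relevant entry of $\Upsilon_1 g$ (respectively $g\,\Upsilon_2^\top$) is a genuinely finite sum of convergent moment integrals rather than an ill-defined series. This is guaranteed precisely by the banded structure established in the preceding proposition through the decomposition into $D_{\ell,a}\Lambda^{N_{\ell,a}}$, which ensures that in every row of $\Upsilon_1$ and every column of $\Upsilon_2^\top$ only finitely many entries are nonzero. The commutation with the integral is then justified entry by entry, and none of the series-versus-finite-sum subtleties attached to products of semi-infinite matrices (such as the non-invertibility of $g$) intervene.
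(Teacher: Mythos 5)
Your argument is correct, but it reaches \eqref{sym2} by a different route than the paper. The paper works entrywise: using the explicit moment formula \eqref{explicit g2} together with \eqref{formulitas} it first establishes the refined, per-component identity $\Lambda_{1,a}\,g\,\Pi_{2,b}=\Pi_{1,a}\,g\,\Lambda_{2,b}^\top$ (equation \eqref{sym}) and then sums over $a$ and $b$, using $\sum_a\Pi_{1,a}=\sum_b\Pi_{2,b}=\I$. You instead argue coordinate-free from the Grammian representation \eqref{compact.g} and the eigenvalue relations $\Upsilon_1\xi_{\vec n_1}=x\,\xi_{\vec n_1}$, $\Upsilon_2\xi_{\vec n_2}=x\,\xi_{\vec n_2}$, which is the exact analogue of the $\Lambda g=g\Lambda^\top$ argument sketched in \S\ref{standard} for ordinary orthogonality; your closing remark on the well-definedness of the products (each row of $\Upsilon_1$ and each column of $\Upsilon_2^\top$ has in fact exactly one nonzero entry) correctly disposes of the only analytic subtlety. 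What your version buys is brevity and conceptual transparency; what it omits is the componentwise relation \eqref{sym}, which the paper reuses later (e.g.\ to derive \eqref{symj} and the string equations \eqref{stringj}), so if you wanted full parity you should note that your same computation, applied with $\Lambda_{1,a}\xi_{\vec n_1}=x\,\Pi_{1,a}\xi_{\vec n_1}$ and $\Lambda_{2,b}\xi_{\vec n_2}=x\,\Pi_{2,b}\xi_{\vec n_2}$ in place of the summed eigenrelations, yields \eqref{sym} as well.
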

\begin{proof}
  With the use of \eqref{formulitas} and \eqref{explicit g2} we get
  \begin{align}\label{sym}
 \Lambda_{1,a}g  \Pi_{2,b}=
   \Pi_{1,a}g\Lambda_{2,b}^\top,
  \end{align}
  and summing up in $a=1,\dots,p_1$ and $b=1,\dots,p_2$ we get the desired result.
\end{proof}
Observe that from \eqref{sym2} we deduce that in spite of being all the truncated moment matrices $g^{[l]}$, $l=1,2,\dots$ invertible, the moment matrix $g=\lim_{l\to\infty}g^{[l]}$is not invertible. Suppose that the inverse $g^{-1}=(\tilde g_{i,j})_{1,j=0,1,\dots}$ of $g$ exists so that \eqref{sym2} implies $ g^{-1}\Upsilon_1  = \Upsilon_2^\top g^{-1}$, and therefore $\tilde g_{i,0}=\tilde g_{0,j}=0$ for all $i,j=0,1,\dots$, which is  contradictory with the invertibility of $g$.

\begin{pro}\label{hess}
 From the symmetry of the moment matrix one derives
  \begin{align}\label{bigraded}
  S\Upsilon_1 S^{-1}=\bar S \Upsilon_2^\top\bar S^{-1}.
 \end{align}
\end{pro}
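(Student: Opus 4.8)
The plan is to obtain the conjugation identity directly from the Gauss--Borel factorization $g=S^{-1}\bar S$ of \eqref{facto} together with the Hankel-type symmetry $\Upsilon_1 g=g\Upsilon_2^\top$ of \eqref{sym2}, mimicking exactly the scalar passage $\Lambda g=g\Lambda^\top\Rightarrow S\Lambda S^{-1}=\bar S\Lambda^\top\bar S^{-1}$ recalled in \S\ref{standard}. First I would insert $g=S^{-1}\bar S$ into \eqref{sym2} to get
\begin{align*}
\Upsilon_1 S^{-1}\bar S=S^{-1}\bar S\,\Upsilon_2^\top,
\end{align*}
and then left-multiply by $S$ and right-multiply by $\bar S^{-1}$, which formally cancels the outer factors and yields the claimed equality \eqref{bigraded}, $S\Upsilon_1 S^{-1}=\bar S\Upsilon_2^\top\bar S^{-1}$.

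The only genuine point requiring care, and the part I expect to be the main obstacle, is that products of semi-infinite matrices are neither automatically well defined nor associative, so every cancellation above must be legitimised. The decisive input is the banded structure $\Upsilon_1=D_{1,0}\Lambda+D_{1,1}\Lambda^{N_{1,1}}+\dots+D_{1,p_1}\Lambda^{N_{1,p_1}}$ (and the analogous one for $\Upsilon_2$) established in the preceding Proposition: $\Upsilon_1$ has nonzero entries only on the superdiagonals $1,\dots,N_1$, so left multiplication by $\Upsilon_1$ confines one summation index to a window of width $N_1$, and likewise $\Upsilon_2^\top$ is confined to subdiagonals $1,\dots,N_2$. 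I would combine this with the fact that $S,S^{-1}\in G_-$ are lower triangular and $\bar S,\bar S^{-1}\in G_+$ are upper triangular to verify, entry by entry, that each factor that appears, namely $S\Upsilon_1$, $\Upsilon_1 S^{-1}$, $S\Upsilon_1 S^{-1}$, $\bar S\Upsilon_2^\top$ and $\bar S\Upsilon_2^\top\bar S^{-1}$, is computed by a \emph{finite} sum.

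Once finiteness is secured, associativity follows from the standard criterion that a triple product $ABC$ is unambiguous whenever each entry $\sum_{k,m}A_{ik}B_{km}C_{mj}$ has only finitely many nonzero terms; the banded $\Upsilon$'s together with the triangularity of the $S$'s guarantee precisely this for the quadruple products $S\Upsilon_1 S^{-1}\bar S$ and $S^{-1}\bar S\,\Upsilon_2^\top\bar S^{-1}$. The formal manipulation then becomes rigorous, since $S(\Upsilon_1 S^{-1}\bar S)\bar S^{-1}=(S\Upsilon_1 S^{-1})(\bar S\bar S^{-1})=S\Upsilon_1 S^{-1}$ and, symmetrically, $S(S^{-1}\bar S\,\Upsilon_2^\top)\bar S^{-1}=\bar S\Upsilon_2^\top\bar S^{-1}$, which establishes \eqref{bigraded}. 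I would finally remark, as in the scalar case, that equating the two sides forces the common matrix to be banded, with upper bandwidth governed by $N_1$ and lower bandwidth by $N_2$, thereby exhibiting the \emph{snake}-shaped Jacobi matrix; this refinement is not, however, needed for the equality itself.
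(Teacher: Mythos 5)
Your proposal is correct and follows essentially the same route as the paper: insert the factorization $g=S^{-1}\bar S$ into the symmetry $\Upsilon_1 g=g\Upsilon_2^\top$ and conjugate by $S$ on the left and $\bar S^{-1}$ on the right. The additional care you take in justifying the finiteness and associativity of the semi-infinite matrix products via the banded structure of $\Upsilon_1,\Upsilon_2$ and the triangularity of $S,\bar S$ is a sound refinement that the paper leaves implicit, but the core argument is identical.
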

\begin{proof}
  If we introduce \eqref{facto}
 into \eqref{sym2} we get
 \begin{align*}
   \Upsilon_1S^{-1}\bar S=S^{-1}\bar S\Upsilon_2^\top \Rightarrow  S\Upsilon_1S^{-1}=\bar S\Upsilon_2^\top\bar S^{-1}.
 \end{align*}
 \end{proof}
\begin{definition}
We define the matrices
\begin{align*}
 J&:=J_++J_-,& J_+&:=(  S\Upsilon_1 S^{-1})_+, & J_-&:=(\bar S \Upsilon_2^\top\bar S^{-1})_-,
\end{align*}
where the sub-indices + and $-$denotes the upper triangular and strictly lower triangular projections.
 \end{definition}
 Thus, $J_+$ is an upper triangular matrix and $J_-$ a strictly  lower triangular matrix. Moreover, from the string equation \eqref{bigraded} we have the alternative expressions
 \begin{align*}
   J= S\Upsilon_1 S^{-1}=\bar S \Upsilon_2^\top\bar S^{-1}.
 \end{align*}
We now analyze the structure of $J_+:= (SD_{1,0}\Lambda S^{-1})_++ (SD_{1,1}\Lambda^{N_{1,1}}S^{-1})_++
\dots+(SD_{1,p_1}\Lambda^{N_{1,p_1}}S^{-1})_+$. It is clear that we need to evaluate  expressions of the form $SE_{i,j}S^{-1}$ with $i= \kappa_1(k,a)-1$ and  $j=\kappa_1(k+1,a-1)$ being $\kappa_1(k,a):= k|\vec n_1|+\sum_{a'=1}^an_{1,a'}$. Given the form of $S$, see \eqref{facto}, we have
\begin{align*}
  (SE_{i,j}S^{-1})_+&=E_{i,j}+\sum_{l,l\in \mathbb L_{i,j}}s_{l,l'}E_{l,l'},&\mathbb L_{i,j}&:=\{(l,l')\in\Z_+^2|  l<i, l'<j,l'\geq l\},
\end{align*}
for some numbers $s_{l,l'}\in\R$ depending on the coefficients of $S$ and on $i,j$; this matrix has zeroes everywhere but on a region of it that can be represented as a right triangle with hypotenuse lying on the main diagonal, this hypotenuse has its opposite vertex precisely on the $(i,j)$ position.
Therefore
\begin{align*}
  J_+&= (SD_{1,0}\Lambda S^{-1})_++\sum_{a=1}^{p_1}\sum_{k=0}^\infty\Bigg(E_{\kappa_1(k,a)-1,\kappa_1(k+1,a-1)}+\sum_{l,l'\in\mathbb L_{1,k,a}}s_{l,l'}E_{l,l'}\Bigg),& \mathbb L_{1,k,a}&:=\mathbb L_{\kappa_1(k,a)-1,\kappa_1(k+1,a-1)}
\end{align*}
We see that $J_+$ can be schematically  represented  as a staircase, the $\vec n_1$-staircase, descending over the main diagonal with steps --which are built with right triangles with hypotenuse lying on the main diagonal  and opposite vertex (and therefore  corner of the step) located at the $(\kappa_1(k,a)-1,\kappa_1(k+1,a-1))$ position of the matrix--  having width and height given by the integers in the composition $\vec n_1$.
For example, the $j$-th step has width $n_{1,\frac{j}{p_1}-[\frac{j}{p_1}]}$ and height $n_{1,\frac{j+1}{p_1}-[\frac{j+1}{p_1}]}$. A similar description holds for $J_-^\top$ but replacing the composition $\vec n_1$ by $\vec n_2$. Therefore, the matrix $J$ is a generalized Jacobi matrix and, in contrast with the non multiple case, now is multi-diagonal (having in general more than three diagonals) and has a diagonal band of length $N_{1}+N_{2}+1$. Moreover, this band has  a number of zeroes on it, according to the $\vec n_1$-stair on the upper part and to the $\vec n_2$-stair in the lower part, we refer to this as a double $(\vec n_1,\vec n_2)$-staircase shape. To illustrate this \emph{snake} shape let us write for the case $\vec n_1=(4,3,2)$ and $\vec n_2=(3,2)$ the corresponding truncated, $l=27$,  Jacobi type matrix
\begin{align}\label{J}
\small
J^{[27]}=\left(
  \begin{array}{ccccccccccccccccccccccccccc}
   \textcolor{blue}{*} & \boldsymbol{\textcolor{blue}{1}} & 0 & 0 & 0 & 0 & 0 & 0 & 0 & 0 & 0 & 0 & 0 & 0 & 0 & 0 & 0 & 0 & 0 & 0 & 0 & 0 & 0 & 0 & 0 & 0 & 0 \\
 \boldsymbol{\textcolor{blue}{*}} & \boldsymbol{\textcolor{blue}{*}} &\boldsymbol{\textcolor{blue}{1}} & 0 & 0 & 0 & 0 & 0 & 0 & 0 & 0 & 0 & 0 & 0 & 0 & 0 & 0 & 0 & 0 & 0 & 0 & 0 & 0 & 0 & 0 & 0 & 0 \\
    0 & \boldsymbol{\textcolor{blue}{*}} & \boldsymbol{\textcolor{blue}{\textcolor{blue}{*}}} &\boldsymbol{\textcolor{blue}{1}} & 0 & 0 & 0 & 0 & 0 & 0 & 0 & 0 & 0 & 0 & 0 & 0 & 0 & 0 & 0 & 0 & 0 & 0 & 0 & 0 & 0 & 0 & 0 \\
    0 & 0 & \boldsymbol{\textcolor{blue}{*}} & \boldsymbol{\textcolor{blue}{*}} & \boldsymbol{\textcolor{blue}{*}} & \boldsymbol{\textcolor{blue}{*}} &\boldsymbol{ \textcolor{blue}{*}} & \boldsymbol{\textcolor{blue}{*}} & \boldsymbol{\textcolor{blue}{*}} &\boldsymbol{\textcolor{blue}{1}} & 0 & 0 & 0 & 0 & 0 & 0 & 0 & 0 & 0 & 0 & 0 & 0 & 0 & 0 & 0 & 0 & 0 \\
    0 & 0 & \boldsymbol{\textcolor{blue}{*}} & \textcolor{blue}{*} & \textcolor{blue}{*} & \textcolor{blue}{*} &\textcolor{blue}{*} & \textcolor{blue}{*} & \textcolor{blue}{*} & \boldsymbol{\textcolor{blue}{*}} & 0 & 0 & 0 & 0 & 0 & 0 & 0 & 0 & 0 & 0 & 0 & 0 & 0 & 0 & 0 & 0 & 0 \\
    0 & 0 & \boldsymbol{ \textcolor{blue}{*}} & \boldsymbol{\textcolor{blue}{*}} & \boldsymbol{\textcolor{blue}{*}} &\textcolor{blue}{*} & \textcolor{blue}{*} & \textcolor{blue}{*} &\textcolor{blue}{*} & \boldsymbol{\textcolor{blue}{*}} & 0 & 0 & 0 & 0 & 0 & 0 & 0 & 0 & 0 & 0 & 0 & 0 & 0 & 0 & 0 & 0 & 0 \\
    0 & 0 & 0 & 0 & \boldsymbol{\textcolor{blue}{*}} &\textcolor{blue}{*} & \textcolor{blue}{*} & \textcolor{blue}{*} &\textcolor{blue}{*} & \boldsymbol{\textcolor{blue}{*}} & \boldsymbol{\textcolor{blue}{*}} & \boldsymbol{\textcolor{blue}{*}} & \boldsymbol{\textcolor{blue}{*}} &\boldsymbol{ \textcolor{blue}{1}} & 0 & 0 & 0 & 0 & 0 & 0 & 0 & 0 & 0 & 0 & 0 & 0 & 0 \\
    0 & 0 & 0 & 0 & \boldsymbol{\textcolor{blue}{*}} & \textcolor{blue}{*} & \textcolor{blue}{*} & \textcolor{blue}{*} & \textcolor{blue}{*} &\textcolor{blue}{*} & \textcolor{blue}{*} & \textcolor{blue}{*} & \textcolor{blue}{*} & \boldsymbol{\textcolor{blue}{*}} & 0 & 0 & 0 & 0 & 0 & 0 & 0 & 0 & 0 & 0 & 0 & 0 & 0 \\
    0 & 0 & 0 & 0 & \boldsymbol{\textcolor{blue}{*}} & \boldsymbol{\textcolor{blue}{*}} & \boldsymbol{\textcolor{blue}{*}} & \boldsymbol{\textcolor{blue}{*}} & \textcolor{blue}{*} & \textcolor{blue}{*} &\textcolor{blue}{*} & \textcolor{blue}{*} & \textcolor{blue}{*} & \boldsymbol{\textcolor{blue}{*}} & \boldsymbol{\textcolor{blue}{*}} & \boldsymbol{\textcolor{blue}{*}} &\boldsymbol{\textcolor{blue}{1}} & 0 & 0 & 0 & 0 & 0 & 0 & 0 & 0 & 0 & 0 \\
    0 & 0 & 0 & 0 &   0 & 0 & 0 & \boldsymbol{\textcolor{blue}{*}} & \textcolor{blue}{*} & \textcolor{blue}{*} & \textcolor{blue}{*} &\textcolor{blue}{*}& \textcolor{blue}{*} & \textcolor{blue}{*} &\textcolor{blue}{*} & \textcolor{blue}{*} & \boldsymbol{\textcolor{blue}{*}} & 0 & 0 & 0 & 0 & 0 & 0 & 0 & 0 & 0 & 0 \\
    0 & 0 & 0 & 0 &   0 & 0 & 0& \boldsymbol{\textcolor{blue}{*}} & \boldsymbol{\textcolor{blue}{*}} & \boldsymbol{\textcolor{blue}{*}} &\textcolor{blue}{*}& \textcolor{blue}{*} & \textcolor{blue}{*} &\textcolor{blue}{*}& \textcolor{blue}{*} & \textcolor{blue}{*} & \boldsymbol{\textcolor{blue}{*}} & 0 & 0 & 0 & 0 & 0 & 0 & 0 & 0 & 0 & 0 \\
    0 & 0 & 0 & 0 & 0 & 0 & 0 & 0 & 0 & \boldsymbol{\textcolor{blue}{*}} & \textcolor{blue}{*} & \textcolor{blue}{*} & \textcolor{blue}{*} & \textcolor{blue}{*} &\textcolor{blue}{*}& \textcolor{blue}{*} & \boldsymbol{\textcolor{blue}{*}} & 0 & 0 & 0 & 0 & 0 & 0 & 0 & 0 & 0 & 0 \\
    0 & 0 & 0 & 0 & 0 & 0 & 0 & 0 & 0 & \boldsymbol{\textcolor{blue}{*}} &\textcolor{blue}{*} & \textcolor{blue}{*} &\textcolor{blue}{*}& \textcolor{blue}{*} & \textcolor{blue}{*} &\textcolor{blue}{*}& \boldsymbol{\textcolor{blue}{*}}  &\boldsymbol{ \textcolor{blue}{*}} &\boldsymbol{\textcolor{blue}{1}} & 0 & 0 & 0 & 0 & 0 & 0 & 0 & 0 \\
    0 & 0 & 0 & 0 & 0 & 0 & 0 & 0 & 0 & \boldsymbol{\textcolor{blue}{*}} &\boldsymbol{\textcolor{blue}{*}} & \boldsymbol{\textcolor{blue}{*}} & \boldsymbol{\textcolor{blue}{*}} &\textcolor{blue}{*}& \textcolor{blue}{*} & \textcolor{blue}{*} &\textcolor{blue}{*} & \textcolor{blue}{*} & \boldsymbol{\textcolor{blue}{*}} & 0 & 0 & 0 & 0 & 0 & 0 & 0 & 0 \\
    0 & 0 & 0 & 0 & 0 & 0 & 0 & 0 &  0 & 0 &0 & 0 & \boldsymbol{\textcolor{blue}{*}} &\textcolor{blue}{*} &\textcolor{blue}{*} & \textcolor{blue}{*} &\textcolor{blue}{*} & \textcolor{blue}{*} &\boldsymbol{ \textcolor{blue}{*}} & 0 & 0 & 0 & 0 & 0 & 0 & 0 & 0 \\
    0 & 0 & 0 & 0 & 0 & 0 & 0 & 0 & 0 & 0 & 0 & 0 &\boldsymbol{\textcolor{blue}{*}} & \boldsymbol{ \textcolor{blue}{*}} & \boldsymbol{\textcolor{blue}{*}} & \textcolor{blue}{*} & \textcolor{blue}{*}  & \textcolor{blue}{*} & \boldsymbol{\textcolor{blue}{*}}& \boldsymbol{\textcolor{blue}{*}} & \boldsymbol{\textcolor{blue}{*}} &\boldsymbol{\textcolor{blue}{*}} &\boldsymbol{\textcolor{blue}{1}} & 0 & 0 & 0 & 0 \\
    0 & 0 & 0 & 0 & 0 & 0 & 0 & 0 & 0 & 0 & 0 & 0 &0 & 0 & \boldsymbol{\textcolor{blue}{*}} & \textcolor{blue}{*} & \textcolor{blue}{*}  & \textcolor{blue}{*} & \textcolor{blue}{*}& \textcolor{blue}{*} & \textcolor{blue}{*} &\textcolor{blue}{*} & \boldsymbol{\textcolor{blue}{*}} & 0 & 0 & 0 & 0 \\
    0 & 0 & 0 & 0 & 0 & 0 & 0 & 0 & 0 & 0 & 0 & 0 &0 & 0 & \boldsymbol{\textcolor{blue}{*}} & \textcolor{blue}{*} & \textcolor{blue}{*}  & \textcolor{blue}{*} & \textcolor{blue}{*}& \textcolor{blue}{*} & \textcolor{blue}{*} &\textcolor{blue}{*} & \boldsymbol{\textcolor{blue}{*}} & \boldsymbol{\textcolor{blue}{*}} & \boldsymbol{\textcolor{blue}{*}} &\boldsymbol{\textcolor{blue}{1}} & 0 \\
    0 & 0 & 0 & 0 & 0 & 0 & 0 & 0 & 0 & 0 & 0 & 0 & 0 & 0 &\boldsymbol{\textcolor{blue}{*}} & \boldsymbol{\textcolor{blue}{*}} & \boldsymbol{\textcolor{blue}{*}}  & \boldsymbol{\textcolor{blue}{*}} & \textcolor{blue}{*}& \textcolor{blue}{*} & \textcolor{blue}{*} &\textcolor{blue}{*} & \textcolor{blue}{*} & \textcolor{blue}{*} & \textcolor{blue}{*} &\boldsymbol{ \textcolor{blue}{*}} & 0 \\
    0 & 0 & 0 & 0 & 0 & 0 & 0 & 0 & 0 & 0 & 0 & 0 & 0 & 0 & 0 & 0 & 0 & \boldsymbol{\textcolor{blue}{*}} & \textcolor{blue}{*}& \textcolor{blue}{*} & \textcolor{blue}{*} &\textcolor{blue}{*} & \textcolor{blue}{*} & \textcolor{blue}{*} & \textcolor{blue}{*} & \boldsymbol{\textcolor{blue}{*}} &  0 \\
    0 & 0 & 0 & 0 & 0 & 0 & 0 & 0 & 0 & 0 & 0 & 0 & 0 & 0 & 0 & 0 & 0 & \boldsymbol{\textcolor{blue}{*}} & \boldsymbol{\textcolor{blue}{*}}& \boldsymbol{\textcolor{blue}{*}} & \textcolor{blue}{*} &\textcolor{blue}{*} & \textcolor{blue}{*} & \textcolor{blue}{*} & \textcolor{blue}{*} & \boldsymbol{\textcolor{blue}{*}}  & 0 \\
    0 & 0 & 0 & 0 & 0 & 0 & 0 & 0 & 0 & 0 & 0 & 0 & 0 & 0 & 0 & 0 & 0 & 0 & 0 &  \boldsymbol{\textcolor{blue}{*}} & \textcolor{blue}{*} &\textcolor{blue}{*} & \textcolor{blue}{*} & \textcolor{blue}{*} & \textcolor{blue}{*} & \boldsymbol{\textcolor{blue}{*}}   & \boldsymbol{\textcolor{blue}{*}} \\
    0 & 0 & 0 & 0 & 0 & 0 & 0 & 0 & 0 & 0 & 0 & 0 & 0 & 0 & 0 & 0 & 0 & 0 & 0 & \boldsymbol{\textcolor{blue}{*}} & \textcolor{blue}{*} &\textcolor{blue}{*} & \textcolor{blue}{*} & \textcolor{blue}{*} & \textcolor{blue}{*} &\textcolor{blue}{*} &\textcolor{blue}{*}\\
0 & 0 & 0 & 0 & 0 & 0 & 0 & 0 & 0 & 0 & 0 & 0 & 0 & 0 & 0 & 0 & 0 & 0 & 0 & \boldsymbol{\textcolor{blue}{*}} & \boldsymbol{\textcolor{blue}{*}} &\boldsymbol{\textcolor{blue}{*}} & \boldsymbol{\textcolor{blue}{*}} & \textcolor{blue}{*} & \textcolor{blue}{*} & \textcolor{blue}{*}  & \textcolor{blue}{*} \\
    0 & 0 & 0 & 0 & 0 & 0 & 0 & 0 & 0 & 0 & 0 & 0 & 0 & 0 & 0 & 0 & 0 & 0 & 0 & 0 & 0 & 0 & \boldsymbol{\textcolor{blue}{*}} & \textcolor{blue}{*} & \textcolor{blue}{*} & \textcolor{blue}{*} & \textcolor{blue}{*} \\
  0 & 0 & 0 & 0 & 0 & 0 & 0 & 0 & 0 & 0 & 0 & 0 & 0 & 0 & 0 & 0 & 0 & 0 & 0 &0 &0 &0 & \boldsymbol{\textcolor{blue}{*}} & \boldsymbol{\textcolor{blue}{*}} & \boldsymbol{\textcolor{blue}{*}} & \textcolor{blue}{*}  & \textcolor{blue}{*} \\
    0 & 0 & 0 & 0 & 0 & 0 & 0 & 0 & 0 & 0 & 0 & 0 & 0 & 0 & 0 & 0 & 0 & 0 & 0 & 0 & 0 & 0 &0 & 0 & \boldsymbol{\textcolor{blue}{*}} & \textcolor{blue}{*} & \textcolor{blue}{*} \\
  \end{array}
\right),
\end{align}
where $*$ denotes a non-necessarily null real number.
We  can write
    \begin{align}
  J=J_{N_1}\Lambda^{N_1}+\dots+J_1\Lambda+J_{0}+ J_{-1}\Lambda^\top
  +\dots+ J_{-N_2}(\Lambda^\top)^{N_2},
\end{align}
where
 $J_i=\diag(J_i(0),J_i(1),\dots)$. For convenience we extend the notation with
$  J_r(s)=0$ whenever $r+s< 0$ or $s<0$.
We introduce
  \begin{definition}
  The semi-infinite vectors $c_b$ and $\bar c_a$ are given by
  \begin{align}
  \begin{aligned}
    c_b&:=\bar S e_{2,b}(0),& b&=1,\dots,p_2,\\
    \bar c_a&:=(S^{-1})^\top e_{1,a}(0),& a&=1,\dots,p_1.
    \end{aligned}
  \end{align}
  \end{definition}
  It is not difficult to show that
  \begin{align}
    c_b&=\sum_{l=0}^{n_{2,1}+\dots+n_{2,b}-1} \bar S_{l,n_{2,1}+\dots+n_{2,b}-1}e_{l},&
   \bar  c_a&=\sum_{l=0}^{n_{1,1}+\dots+n_{1,a}-1}  (S^{-1})_{n_{1,1}+\dots+n_{1,a}-1,l}e_{l}.
  \end{align}

The semi-infinite matrices  $J$ and $J^\top$ have the following important property
  \begin{pro}\label{eigen}
    The following equations are fulfilled
    \begin{align}
    \begin{aligned}
      J\A_a(z)&=z\A_a(z),&    J^\top\bar\A_{b}(z)&=  z\bar\A_{b}(z),\\
      J\Cs_b(z)&=z\Cs_b(z)-c_b, &   J^\top\bar\Cs_a(z)&=z\Cs_a(z)-\bar c_a.
      \end{aligned}
    \end{align}
  \end{pro}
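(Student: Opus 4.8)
The plan is to exploit the two equivalent factored expressions for the generalized Jacobi matrix, $J=S\Upsilon_1S^{-1}=\bar S\Upsilon_2^\top\bar S^{-1}$, established in Proposition \ref{hess}, together with the shift/eigenvalue action of $\Upsilon_1$ and $\Upsilon_2$ on the monomial strings. The elementary inputs are the eigenvalue property $\Lambda_a\chi_{a'}=\delta_{a,a'}z\chi_a$, from which $\Upsilon_1\chi_{1,a}=z\chi_{1,a}$ and $\Upsilon_2\chi_{2,b}=z\chi_{2,b}$ follow at once, and the defining relations $\A_a=S\chi_{1,a}$, $\bar\A_b=(\bar S^{-1})^\top\chi_{2,b}$, $\Cs_b=\bar S\chi_{2,b}^*$, $\bar\Cs_a=(S^{-1})^\top\chi_{1,a}^*$. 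The strategy for each identity is to choose the factored form of $J$ that cancels the dressing factor attached to the relevant string.

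For the first identity I would write $J\A_a=S\Upsilon_1S^{-1}S\chi_{1,a}=S\Upsilon_1\chi_{1,a}=zS\chi_{1,a}=z\A_a$, the product $S^{-1}S$ collapsing and the eigenvalue property finishing the argument. For the dual polynomial string I use the second factored form: since $J^\top=(\bar S^{-1})^\top\Upsilon_2\bar S^\top$ and $\bar S^\top(\bar S^{-1})^\top=\I$, one obtains $J^\top\bar\A_b=(\bar S^{-1})^\top\Upsilon_2\chi_{2,b}=z(\bar S^{-1})^\top\chi_{2,b}=z\bar\A_b$. These two cases are essentially immediate.

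The genuinely new ingredient, and the step I expect to require the most care, is the action of the transposed shifts on the reciprocal strings $\chi^*_{a}(z)=z^{-1}\chi_a(z^{-1})=\sum_{k\geq0}e_a(k)z^{-k-1}$. Writing $\Lambda_{2,b}^\top=\sum_{k\geq0}e_{2,b}(k+1)e_{2,b}(k)^\top$ and using orthonormality of the basis vectors $e_a(k)$, a reindexing gives $\Lambda_{2,b}^\top\chi_{2,b}^*=\sum_{m\geq1}e_{2,b}(m)z^{-m}=z\chi_{2,b}^*-e_{2,b}(0)$, while the cross terms with $b'\neq b$ vanish, so that $\Upsilon_2^\top\chi_{2,b}^*=z\chi_{2,b}^*-e_{2,b}(0)$. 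The $z^0$ component that the raising shift cannot produce is precisely the source of the inhomogeneous correction. Dressing by $\bar S$ and cancelling $\bar S^{-1}\bar S$ then yields $J\Cs_b=\bar S(z\chi_{2,b}^*-e_{2,b}(0))=z\Cs_b-c_b$, with $c_b=\bar Se_{2,b}(0)$ as defined. The fourth identity follows by the mirror computation $\Upsilon_1^\top\chi_{1,a}^*=z\chi_{1,a}^*-e_{1,a}(0)$, dressing by $(S^{-1})^\top$ through $J^\top=(S^{-1})^\top\Upsilon_1^\top S^\top$ to give $J^\top\bar\Cs_a=z\bar\Cs_a-\bar c_a$ with $\bar c_a=(S^{-1})^\top e_{1,a}(0)$. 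The main obstacle is thus purely bookkeeping: selecting the correct factored form of $J$ for each string and carefully isolating the single boundary term in the $\chi^*$ computation that accounts for $c_b$ and $\bar c_a$.
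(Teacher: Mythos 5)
Your proposal is correct and follows essentially the same route as the paper: both use the two factored forms $J=S\Upsilon_1S^{-1}=\bar S\Upsilon_2^\top\bar S^{-1}$, the eigenvalue property of the shifts on $\chi_{1,a}$ and $\chi_{2,b}$, and the identity $\Upsilon_\ell^\top\chi^*=z\chi^*-e(0)$ to produce the inhomogeneous terms $c_b=\bar Se_{2,b}(0)$ and $\bar c_a=(S^{-1})^\top e_{1,a}(0)$. The only difference is that you explicitly verify the reindexing behind $\Lambda_{2,b}^\top\chi_{2,b}^*=z\chi_{2,b}^*-e_{2,b}(0)$, which the paper simply asserts.
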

  \begin{proof}
    From \eqref{mop-s} and \eqref{cauchy-S}
     \begin{align*}
 J\A_{a}(z)&=S\Upsilon_1 S^{-1} S\chi_{1,a}(z)=zS\chi_{1,a}=z\A_a(z),\\
 J\Cs_b(z)&=\bar S\Upsilon_2^\top\bar S^{-1}\bar S\chi_{2,b}^*(z)=\bar S(z\chi^*_{2,b}(z)-e_{b}(0))=z\Cs_b(z)-c_b,
     \end{align*}
   where we have taken into account that $\Upsilon_2^\top\chi_{2,b}^*(z)=z\chi^*_{2,b}(z)-e_b(0)$.
For $J^\top$ we proceed similarly:
       \begin{align*}
 J^\top\bar\A_{b}(z)&=(\bar S^{-1})^\top \Upsilon_2 \bar S^\top (\bar S^{-1})^\top\chi_{2,b}(z)=z(\bar
 S^{-1})^\top\chi_{2,a}=z\bar\A_b(z),\\
 J^\top\bar \Cs_a(z)&=( S^{-1})^\top \Upsilon_1^\top S^\top( S^{-1})^\top \chi_{1,a}^*(z)=( S^{-1})^\top
 (z\chi^*_{1,a}(z)-e_{a}(0))=z\bar\Cs_a(z)-\bar c_a.
     \end{align*}
      \end{proof}

  \begin{theorem}
  The multiple orthogonal polynomials and their associated second kind functions fulfill the following recursion relations
  \begin{align}
  \begin{aligned}
  zA^{(l)}_{a}(z)&=J_{-N_2}(l)A^{(l-N_2)}_{a}(z)+\cdots+J_{N_1}(l)
  A^{(l+N_1)}_{a}(z),\\
    zC^{(l)}_{b}(z)-c_b^{(l)}&=J_{-N_2}(l)C_b^{(l-N_2)}(z)+\cdots+J_{N_1}(l)
  C_b^{(l+N_1)}(z),
  \end{aligned}
\end{align}
while the dual relations are
\begin{align}
\begin{aligned}
  z\bar A^{(l)}_{b}(z)&=J_{-N_2}(l+N_2)\bar A^{(l+N_2)}_{b}(z)+\cdots+J_{N_1}(l-N_1)\bar A^{(l-N_1)}_{b}(z),\\
 z\bar C^{(l)}_{a}(z)-\bar c_a&=J_{-N_2}(l+N_2)\bar C^{(l+N_2)}_{a}(z)+\cdots+J_{N_1}(l-N_1)\bar C^{(l-N_1)}_{a}(z).
 \end{aligned}
\end{align}
  \end{theorem}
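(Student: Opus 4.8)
The plan is to read off each recursion relation as the $l$-th component of the corresponding spectral equation of Proposition~\ref{eigen}. The only extra ingredient needed is the precise band form of $J$, namely $J=J_{N_1}\Lambda^{N_1}+\dots+J_0+\dots+J_{-N_2}(\Lambda^\top)^{N_2}$ with diagonal matrices $J_i=\diag(J_i(0),J_i(1),\dots)$, which is exactly the double $(\vec n_1,\vec n_2)$-staircase description already obtained for $J$ (upper bandwidth $N_1$, lower bandwidth $N_2$). First I would record the matrix entries of $J$ in this band form and then substitute the strings $\A_a,\Cs_b,\bar\A_b,\bar\Cs_a$ appearing in Proposition~\ref{eigen}.

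For the type~II polynomials, since $(\Lambda^r)_{i,j}=\delta_{j,i+r}$ and $((\Lambda^\top)^s)_{i,j}=\delta_{j,i-s}$, one gets $(J)_{l,k}=J_{k-l}(l)$ for $-N_2\le k-l\le N_1$ and zero otherwise. Hence the $l$-th entry of $J\A_a$ is $\sum_{r=-N_2}^{N_1}J_r(l)A_a^{(l+r)}$, and the equation $J\A_a(z)=z\A_a(z)$ gives at once $zA^{(l)}_a=J_{-N_2}(l)A^{(l-N_2)}_a+\dots+J_{N_1}(l)A^{(l+N_1)}_a$. Reading the $l$-th entry of $J\Cs_b(z)=z\Cs_b(z)-c_b$ in the same way, and recalling that $c_b=(c_b^{(0)},c_b^{(1)},\dots)^\top$, produces the second relation $zC^{(l)}_b-c_b^{(l)}=J_{-N_2}(l)C_b^{(l-N_2)}+\dots+J_{N_1}(l)C_b^{(l+N_1)}$.

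For the dual objects I would use $J^\top$, whose entries are $(J^\top)_{l,k}=(J)_{k,l}=J_{l-k}(k)$. Thus the $l$-th entry of $J^\top\bar\A_b$ is $\sum_{k}J_{l-k}(k)\bar A_b^{(k)}$; setting $r=l-k$ and letting $k$ run from $l+N_2$ down to $l-N_1$ rewrites this as $\sum_{r=-N_2}^{N_1}J_r(l-r)\bar A_b^{(l-r)}$. Feeding this into $J^\top\bar\A_b(z)=z\bar\A_b(z)$ yields $z\bar A^{(l)}_b=J_{-N_2}(l+N_2)\bar A^{(l+N_2)}_b+\dots+J_{N_1}(l-N_1)\bar A^{(l-N_1)}_b$, and the identical reading of $J^\top\bar\Cs_a(z)=z\bar\Cs_a(z)-\bar c_a$ gives the last relation.

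The computation is essentially bookkeeping; the one point that deserves care, and which I regard as the main (mild) obstacle, is that the transpose in the dual equations forces the diagonal coefficients to be evaluated at the shifted arguments $J_r(l-r)$ rather than $J_r(l)$, so that the very same band matrix $J$ produces asymmetric index shifts in the two pairs of relations. Keeping track of this, together with the convention $J_r(s)=0$ for $r+s<0$ or $s<0$ (which automatically truncates the relations near $l=0$), completes the argument.
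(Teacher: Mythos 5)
Your proposal is correct and follows exactly the route the paper intends: the theorem is read off componentwise from the spectral relations of Proposition \ref{eigen} combined with the band decomposition $J=J_{N_1}\Lambda^{N_1}+\dots+J_0+\dots+J_{-N_2}(\Lambda^\top)^{N_2}$, and your index bookkeeping (in particular the shift $J_r(l-r)$ forced by the transpose in the dual relations, and the truncation convention $J_r(s)=0$ for $r+s<0$ or $s<0$) is exactly what is needed. The paper leaves these details implicit, so your write-up supplies precisely the omitted verification.
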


We see that given integers $(\vec \nu_1,\vec \nu_2)$ there are several recursion relations associated with $A_{[\vec
\nu_1;\vec \nu_2],a}$. In fact they are as many as different ladders exists leading to this set of degrees.  For the simplest ladder, i.e.   $\vec n_1=\vec \nu_1$ and $\vec n_2=\vec \nu_2+\vec e_{2,p_2}$, we get the longest recursion,
in the sense that we have more polynomials contributing in the recursion relation, as smaller are the integers in the compositions
shorter is the recursion. Observe also that the multiple orthogonal polynomials involved  in each case are different.

Attending to  \eqref{J} we get that  the recursion relations corresponding to $l=8$ an $l=14$ are of the form
\begin{align*}
 zA^{(8)}_a(z)&=*A_a^{(4)}(z)+\cdots+*A_a^{(15)}(z)+A_a^{(16)}(z),&a=1,2,3,\\
  zA^{(14)}_a(z)&=*A_a^{(12)}(z)+\cdots+*A_a^{(18)}(z),& a=1,2,3.
\end{align*}
We see that the first recursion has 13 terms while the second one only 7 terms.

In order to identify these polynomials with mops of the form $A_{[\vec \nu_1;\vec \nu_2,a]}$ we use following the table of degrees  for the compositions $\vec n_1=(4,3,2)$ and $\vec n_2=(3,2)$ is

\vspace*{5pt}
 \tiny
\hspace*{-10pt}\begin{tabular}{|c||c|c|c|c|c|c|c|c|c|c|c|c|c|c|c|}
  \hline
   l& 4 & 5 & 6 & 7 & 8 & 9 &10 & 11 & 12 & 13 & 14 & 15 & 16 &17 & 18
\\ \hline
  $\vec\nu_1(l)$ & (4,0,0) & (4,1,0) & (4,2,0) & (4,3,0) & (4,3,1) & (4,3,2) & (5,3,2) & (6,3,2) & (7,3,2) & (8,3,2) & (8,4,2) & (8,5,2) & (8,6,2)  & (8,6,3)& (8,6,4)\\
  $\vec\nu_2 (l-1)$& (2,0) & (3,0) & (3,1) & (3,2) & (4,2) & (5,2) & (6,2) & (6,3) & (6,4) & (7,4) &(8,4)  & (9,4) & (9,5) & (9,6)&(10,6)\\
  \hline
\end{tabular}
\vspace*{5pt}
\normalsize

\subsection{Christoffel--Darboux type formulae}
From \eqref{mop-s} and \eqref{cauchy-S} we can infer the value of the following series constructed in terms of
multiple orthogonal polynomials and  corresponding functions of the second kind.
\begin{pro}\label{proposition: CD type relations}
  The following relations hold
  \begin{align}
  \begin{aligned}
    \sum_{l=0}^\infty \bar C_a^{(l)}(z)A_{a'}^{(l)}(z')&=\frac{\delta_{a,a'}}{z-z'},& |z'|<|z|,\\
     \sum_{l=0}^\infty  C_b^{(l)}(z)\bar A_{b'}^{(l)}(z')&=\frac{\delta_{b,b'}}{z-z'},& |z'|<|z|,\\
      \sum_{l=0}^\infty \bar C_a^{(l)}(z)C_b^{(l)}(z')&=-\frac{\hat\mu_{a,b}(z)-\hat\mu_{a,b}(z')} {z-z'}, &
      |z|,|z'|>R_{a,b},
      \end{aligned}
  \end{align}
where $R_{a,b}$ is the radius of any origin centered disk containing $\operatorname{supp}(w_{1,a}w_{a,b}\d \mu)$.
\end{pro}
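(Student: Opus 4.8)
The plan is to reduce each of the three identities to a bilinear pairing between the semi-infinite vectors introduced in \eqref{mop-s} and \eqref{cauchy-S}, and then to exploit the three factorization relations $S^{-1}S=\I$, $\bar S^{-1}\bar S=\I$ and $S^{-1}\bar S=g$ to collapse the summation over $l$. The coefficientwise data I would record first come straight from \eqref{chion}: $\chi_{1,a}^{*(m)}(z)=z^{-k_1(m)-1}\delta_{a,a_1(m)}$, $\chi_{2,b}^{*(n)}(z)=z^{-k_2(n)-1}\delta_{b,a_2(n)}$, and $\chi_{1,a'}^{(m)}(z')=(z')^{k_1(m)}\delta_{a',a_1(m)}$, and the key combinatorial fact that, for fixed $a$, as $m$ ranges over the indices with $a_1(m)=a$ the integer $k_1(m)$ runs bijectively over $\Z_+$ (and likewise for $b$, $k_2(n)$).

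For the first identity, \eqref{mop-s} and \eqref{cauchy-S} give $\bar C_a^{(l)}(z)=\sum_m (S^{-1})_{m,l}\chi_{1,a}^{*(m)}(z)$ and $A_{a'}^{(l)}(z')=\sum_n S_{l,n}\chi_{1,a'}^{(n)}(z')$. Summing over $l$ and using $\sum_l (S^{-1})_{m,l}S_{l,n}=\delta_{m,n}$ (a finite sum, since both factors are triangular) collapses the series to $\sum_m \chi_{1,a}^{*(m)}(z)\chi_{1,a'}^{(m)}(z')$. The Kronecker deltas force $a=a'=a_1(m)$, so for $a\neq a'$ the sum vanishes, while for $a=a'$ it equals $\sum_{k=0}^\infty z^{-k-1}(z')^{k}=1/(z-z')$, the geometric series converging exactly on $|z'|<|z|$; this yields $\delta_{a,a'}/(z-z')$. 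The second identity is proved verbatim after interchanging the roles of $S,\chi_1$ with $\bar S,\chi_2$ and using $\sum_l (\bar S^{-1})_{n,l}\bar S_{l,m}=\delta_{n,m}$.

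For the third identity the relevant contraction is instead $\sum_l (S^{-1})_{m,l}\bar S_{l,n}=(S^{-1}\bar S)_{m,n}=g_{m,n}$, so that $\sum_l \bar C_a^{(l)}(z)C_b^{(l)}(z')=\sum_{m,n}\chi_{1,a}^{*(m)}(z)\,g_{m,n}\,\chi_{2,b}^{*(n)}(z')$. Inserting the explicit moments \eqref{explicit g2} and the monomial coefficients, the double sum factorizes over $m$ (with $a_1(m)=a$) and over $n$ (with $a_2(n)=b$), each producing under the integral a geometric series $\sum_{j\geq 0}z^{-j-1}x^{j}=1/(z-x)$ and $\sum_{k\geq 0}(z')^{-k-1}x^{k}=1/(z'-x)$, leaving $\int (z-x)^{-1}(z'-x)^{-1}w_{1,a}(x)w_{2,b}(x)\d\mu(x)$. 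Applying the partial-fraction identity $\frac{1}{(z-x)(z'-x)}=\frac{1}{z-z'}\bigl(\frac{1}{z'-x}-\frac{1}{z-x}\bigr)$ and the definition of $\hat\mu_{a,b}$ then gives $-(\hat\mu_{a,b}(z)-\hat\mu_{a,b}(z'))/(z-z')$.

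The step I expect to require the most care is the interchange of the infinite summation with the integration in this last computation; the first two identities are purely algebraic once the $l$-sum collapses, the only analytic input being convergence of a single geometric series. This is precisely where the hypothesis $|z|,|z'|>R_{a,b}$ enters: taking $R_{a,b}$ to be the radius of an origin-centered disk containing $\operatorname{supp}(w_{1,a}w_{2,b}\d\mu)$ guarantees $|x/z|<1$ and $|x/z'|<1$ uniformly for $x$ in that support, so both geometric series converge uniformly there and dominated convergence against the finite measure $|w_{1,a}w_{2,b}\d\mu|$ legitimizes exchanging $\sum$ and $\int$; the same uniform estimate $|x|<|z|$ on the support underlies the convergence region $|z'|<|z|$ in the first two identities.
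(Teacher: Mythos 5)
Your proof is correct and follows essentially the same route as the paper's: both collapse the sum over $l$ via $S^{-1}S=\I$, $\bar S^{-1}\bar S=\I$ and $S^{-1}\bar S=g$, then evaluate the resulting pairings of $\chi^*$ and $\chi$ as geometric series and finish the third identity with the Grammian form of $g$ and a partial-fraction decomposition. The only difference is cosmetic — you work coefficientwise and spell out the justification for interchanging sum and integral, which the paper leaves implicit.
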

\begin{proof}
  See Appendix \ref{II}.
\end{proof}

\subsubsection{Projection operators and the Christoffel--Darboux kernel}
To introduce the Christoffel--Darboux kernel  we need
\begin{definition}\label{CD projection}
  We will use the  following spans
\begin{align}
\H^{[l]}_1&=\R\{\xi_1^{(0)},\dots,\xi_1^{(l-1)}\},&
\H^{[l]}_2&=\R\{\xi_2^{(0)},\dots,\xi_2^{(l-1)}\},
\end{align}
and their limits
\begin{align}
\H_1&=\Big\{\sum_{0\leq l \ll \infty} c_l \xi_1^{(l)}, c_l\in\R\Big\},&
\H_2&=\Big\{\sum_{0\leq l \ll  \infty} c_l \xi_2^{(l)},c_l\in\R\Big\}.
\end{align}
The corresponding splittings
\begin{align}
 \H_1&=\H^{[l]}_1\oplus (\H^{[l]}_{1})^{\perp},&  \H_2&=\H^{[l]}_2\oplus (\H^{[l]}_{2})^{\perp},
\end{align}
induce the associated orthogonal projections
\begin{align}
  \pi^{(l)}_1&:\H_1\to\H^{[l]}_1,&  \pi^{(l)}_2&:\H_2\to\H^{[l]}_2.
\end{align}
\end{definition}
In the previous definition $l\ll \infty$ means that in the series there are only a finite number
of nonzero contributions. It is easy to realize that
\begin{pro}
  We have the following characterization of the previous linear subspaces
  \begin{align}\begin{aligned}
\H^{[l]}_1&=\R\{Q^{(0)},\dots,Q^{(l-1)}\},&
\H^{[l]}_2&=\R\{\bar Q^{(0)},\dots,\bar Q^{(l-1)}\}, \\
(\H^{[l]}_1)^{\perp}&=\Big\{\sum_{l\leq j \ll \infty} c_j Q^{(j)}, c_j\in\R\Big\},&
(\H^{[l]}_2)^{\perp}&=\Big\{\sum_{l\leq j \ll  \infty} c_j\bar Q^{(j)},c_j\in\R\Big\},\end{aligned}
\end{align}
and
\begin{align}
\H_1&=\Big\{\sum_{0\leq l \ll \infty} c_l Q^{(l)}, c_l\in\R\Big\},&
\H_2&=\Big\{\sum_{0\leq l \ll  \infty} c_l\bar Q^{(l)},c_l\in\R\Big\}.
\end{align}
\end{pro}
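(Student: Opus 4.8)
The plan is to reduce all six identities to two facts: first, that the linear forms $Q=S\xi_1$ and $\bar Q=(\bar S^{-1})^\top\xi_2$ are \emph{triangular} changes of generators with invertible diagonal, and second, the bi-orthogonality $\int Q^{(l)}\bar Q^{(k)}\d\mu=\delta_{l,k}$ established in Proposition \ref{proposition: biorthogonality}. The triangularity will handle the four span statements about $\H_1^{[l]},\H_2^{[l]},\H_1,\H_2$, while the bi-orthogonality will pin down the two orthogonal complements.

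I would begin with the finite-dimensional spans. Since $S\in G_-$ is lower triangular with unit diagonal, the component $Q^{(l)}=\sum_{i=0}^l S_{l,i}\xi_1^{(i)}$ involves only $\xi_1^{(0)},\dots,\xi_1^{(l)}$, so the truncation reads $(Q^{(0)},\dots,Q^{(l-1)})^\top=S^{[l]}(\xi_1^{(0)},\dots,\xi_1^{(l-1)})^\top$ with $S^{[l]}$ the upper-left $l\times l$ block. As $S^{[l]}$ is lower triangular with ones on the diagonal it is invertible, whence $\{Q^{(0)},\dots,Q^{(l-1)}\}$ and $\{\xi_1^{(0)},\dots,\xi_1^{(l-1)}\}$ span the same space, i.e.\ $\H_1^{[l]}=\R\{Q^{(0)},\dots,Q^{(l-1)}\}$. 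The identical argument with $(\bar S^{-1})^\top$, which is lower triangular with diagonal entries $(\bar S_{l,l})^{-1}\neq0$, gives $\H_2^{[l]}=\R\{\bar Q^{(0)},\dots,\bar Q^{(l-1)}\}$. For the full spaces I use that both $S$ and $S^{-1}$ lie in $G_-$: from $\xi_1=S^{-1}Q$ one reads $\xi_1^{(l)}=\sum_{i=0}^l(S^{-1})_{l,i}Q^{(i)}$, a \emph{finite} combination, while conversely each $Q^{(l)}$ is a finite combination of the $\xi_1^{(i)}$; hence the two sets of finite linear combinations coincide, giving $\H_1=\{\sum_{0\le l\ll\infty}c_lQ^{(l)}\}$, and likewise $\H_2$ using $\bar S^{\pm1}\in G_+$.

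For the orthogonal complements I would invoke the bi-orthogonal pairing $\langle f,h\rangle:=\int f(x)h(x)\d\mu(x)$ between $\H_1$ and $\H_2$, with respect to which the splitting $\H_1=\H_1^{[l]}\oplus(\H_1^{[l]})^\perp$ is to be understood: $(\H_1^{[l]})^\perp$ consists of those $f\in\H_1$ annihilated by $\H_2^{[l]}=\R\{\bar Q^{(0)},\dots,\bar Q^{(l-1)}\}$. Writing an arbitrary element of $\H_1$ as a finite sum $f=\sum_j c_jQ^{(j)}$, legitimate by the previous paragraph, relation \eqref{biotrhoganility} gives $\langle f,\bar Q^{(k)}\rangle=c_k$; hence $f$ is orthogonal to $\bar Q^{(0)},\dots,\bar Q^{(l-1)}$ precisely when $c_0=\dots=c_{l-1}=0$, that is, when $f=\sum_{l\le j\ll\infty}c_jQ^{(j)}$. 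This is exactly the claimed description of $(\H_1^{[l]})^\perp$, and the dual statement follows by exchanging the roles of $Q$ and $\bar Q$, of $\H_1$ and $\H_2$.

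The only genuinely delicate point, which I would flag at the outset, is the meaning of the orthogonal decomposition: since $\H_1$ and $\H_2$ are in general distinct, ``orthogonality'' here refers to the nondegenerate bilinear pairing furnished by $\mu$ together with the bi-orthonormality of the $Q^{(l)}$ against the $\bar Q^{(k)}$, rather than to an inner product internal to $\H_1$. Once this is fixed, bi-orthogonality makes the projector $\pi^{(l)}_1$ onto $\H_1^{[l]}$ (along the span of $Q^{(l)},Q^{(l+1)},\dots$) well defined via $\pi^{(l)}_1 f=\sum_{k=0}^{l-1}\langle f,\bar Q^{(k)}\rangle Q^{(k)}$, and the three pairs of identities follow as above with no further computation.
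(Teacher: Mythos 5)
Your proof is correct and is essentially the argument the paper intends: the paper states this proposition without proof ("it is easy to realize"), and the two ingredients you use --- the unitriangular/invertible-diagonal change of generators $Q=S\xi_1$, $\bar Q=(\bar S^{-1})^\top\xi_2$ for the span statements, and the bi-orthogonality \eqref{biotrhoganility} to identify the complements under the $\mu$-pairing between $\H_1$ and $\H_2$ --- are exactly what the surrounding text (in particular the integral representation of $\pi^{(l)}_1,\pi^{(l)}_2$ via the kernel $K^{[l]}$) presupposes. Your explicit remark that the "orthogonality" is the bilinear pairing between the two distinct spaces $\H_1$ and $\H_2$, not an inner product internal to either, is a worthwhile clarification that the paper leaves implicit.
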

\begin{definition}
The  Christoffel--Darboux kernel is
\begin{align}
  \label{def.CD}
  K^{[l]}(x,y)&:=\sum_{k=0}^{l-1}Q^{(k)}(y)\bar Q^{(k)}(x).
\end{align}
\end{definition}
This is the kernel of the integral representation of the projections introduced in Definition \ref{CD projection}.
\begin{pro}
The integral representation
  \begin{align}
  \begin{aligned}
 ( \pi^{(l)}_1f)(y)&=\int  K^{[l]}(x,y)f(x)\d \mu(x), & \forall f\in\H_1,\\
  ( \pi^{(l)}_2f)(y)&=\int  K^{[l]}(y,x)f(x)\d \mu(x), &\forall f\in\H_2,
  \end{aligned}
\end{align}
holds.
\end{pro}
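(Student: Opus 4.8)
The plan is to expand an arbitrary element of $\H_1$ in the basis of linear forms provided by the characterization of the subspaces stated just above Definition~\ref{CD projection}, and then to let the bi-orthogonality relations of Proposition~\ref{proposition: biorthogonality} do all the work. First I would recall that any $f\in\H_1$ can be written as a finite combination $f=\sum_{m} c_m Q^{(m)}$, $c_m\in\R$, and that under the splitting $\H_1=\H^{[l]}_1\oplus(\H^{[l]}_1)^{\perp}$, with $\H^{[l]}_1=\R\{Q^{(0)},\dots,Q^{(l-1)}\}$ and $(\H^{[l]}_1)^{\perp}=\{\sum_{l\leq j\ll\infty}c_j Q^{(j)}\}$, the projection acts simply by truncation, $(\pi^{(l)}_1 f)(y)=\sum_{m=0}^{l-1}c_m Q^{(m)}(y)$.

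Next I would substitute this $f$, together with the definition \eqref{def.CD} of the kernel, into the right-hand side and interchange the finite sum with the integral:
\begin{align*}
\int K^{[l]}(x,y)f(x)\d\mu(x)
&=\sum_{k=0}^{l-1}Q^{(k)}(y)\int \bar Q^{(k)}(x)f(x)\d\mu(x)\\
&=\sum_{k=0}^{l-1}Q^{(k)}(y)\sum_{m} c_m\int \bar Q^{(k)}(x)Q^{(m)}(x)\d\mu(x).
\end{align*}
Applying $\int Q^{(m)}(x)\bar Q^{(k)}(x)\d\mu(x)=\delta_{m,k}$ from \eqref{biotrhoganility} collapses the inner sum to $c_k$, leaving $\sum_{k=0}^{l-1}c_k Q^{(k)}(y)=(\pi^{(l)}_1 f)(y)$, which is exactly the first identity.

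The second identity follows from the same computation with the roles of the two systems of weights interchanged. For $f\in\H_2=\{\sum_m c_m\bar Q^{(m)}\}$ I would use that $K^{[l]}(y,x)=\sum_{k=0}^{l-1}Q^{(k)}(x)\bar Q^{(k)}(y)$, so that
\begin{align*}
\int K^{[l]}(y,x)f(x)\d\mu(x)
=\sum_{k=0}^{l-1}\bar Q^{(k)}(y)\int Q^{(k)}(x)f(x)\d\mu(x)
=\sum_{k=0}^{l-1}c_k\bar Q^{(k)}(y),
\end{align*}
again by bi-orthogonality, and this is precisely $(\pi^{(l)}_2 f)(y)$.

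The argument is essentially mechanical, and I do not expect any genuine obstacle: the whole content is encoded in \eqref{biotrhoganility}, which is itself an immediate consequence of the Gauss--Borel factorization through $Sg\bar S^{-1}=\I$. The only points requiring care are purely bookkeeping: I must integrate against $f$ in the correct slot of $K^{[l]}$ in each case (the $x$-slot, which carries the dual form $\bar Q$ in the first identity and the form $Q$ in the second), and the finiteness of the expansion of $f$ — guaranteed by the $l\ll\infty$ convention of Definition~\ref{CD projection} — legitimizes the interchange of summation and integration with no convergence subtleties.
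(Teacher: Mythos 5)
Your proof is correct and follows exactly the route the paper takes: the paper's own proof is the one-line remark that the result ``follows from the bi-orthogonality condition \eqref{biotrhoganility}'', and your argument simply writes out that deduction explicitly (expansion of $f$ in the $Q^{(m)}$ or $\bar Q^{(m)}$ basis, truncation as the action of the projector, and collapse of the double sum via $\int Q^{(m)}\bar Q^{(k)}\d\mu=\delta_{m,k}$). No gaps; the bookkeeping about which slot of $K^{[l]}$ carries $\bar Q$ versus $Q$ is handled correctly in both identities.
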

\begin{proof}
It follows from the bi-orthogonality condition \eqref{biotrhoganility}.
\end{proof}
This Christoffel--Darboux kernel   has the reproducing property
\begin{pro}
  The kernel $K^{[l]}(x,y)$ fulfills
  \begin{align}
K^{[l]}(x,y)=\int  K^{[l]}(x,v)K^{[l]}(v,y)\d\mu( v).
\end{align}
\end{pro}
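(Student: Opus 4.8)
The plan is to reduce the identity to the bi-orthogonality relation \eqref{biotrhoganility} established in Proposition \ref{proposition: biorthogonality}; conceptually this is nothing but the idempotency $(\pi^{(l)}_1)^2=\pi^{(l)}_1$ of the projector whose integral kernel is $K^{[l]}$. First I would insert the definition \eqref{def.CD} into both factors appearing on the right-hand side, keeping careful track of which argument is carried by $Q$ and which by $\bar Q$; explicitly,
\begin{align*}
K^{[l]}(x,v)&=\sum_{j=0}^{l-1}Q^{(j)}(v)\bar Q^{(j)}(x),&
K^{[l]}(v,y)&=\sum_{k=0}^{l-1}Q^{(k)}(y)\bar Q^{(k)}(v).
\end{align*}

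Next, I would form the product and integrate against $\d\mu(v)$. Since both sums run over the finite range $0\le j,k\le l-1$, the summations may be freely interchanged with the integral, yielding
\begin{align*}
\int K^{[l]}(x,v)K^{[l]}(v,y)\d\mu(v)=\sum_{j=0}^{l-1}\sum_{k=0}^{l-1}\bar Q^{(j)}(x)Q^{(k)}(y)\int Q^{(j)}(v)\bar Q^{(k)}(v)\d\mu(v).
\end{align*}

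Finally, applying the bi-orthogonality \eqref{biotrhoganility}, the inner integral equals $\delta_{j,k}$, which collapses the double sum to
\begin{align*}
\sum_{k=0}^{l-1}\bar Q^{(k)}(x)Q^{(k)}(y)=K^{[l]}(x,y),
\end{align*}
completing the argument. There is no genuine obstacle here: the only points worth a word are the interchange of the (finite) sums with the integral, which is automatic, and the correct bookkeeping of the arguments $x,y,v$ across the two kernels, so that the pairing supplied by \eqref{biotrhoganility} is applied precisely in the integration variable $v$.
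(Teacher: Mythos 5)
Your proof is correct and is in substance the same as the paper's: both reduce the identity to the bi-orthogonality relation \eqref{biotrhoganility}, the only cosmetic difference being that the paper routes through the previously established integral representation of the projections (applying it to $f(v)=K^{[l]}(x,v)\in\H^{[l]}_1$), whereas you expand both kernels explicitly and collapse the double sum with $\int Q^{(j)}(v)\bar Q^{(k)}(v)\d\mu(v)=\delta_{j,k}$. Your bookkeeping of which argument sits in $Q$ versus $\bar Q$ is exactly right, so nothing is missing.
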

\begin{proof}
From
\begin{align*}
 f(y)&=\int  K^{[l]}(x,y)f(x)\d \mu(x), &\forall  f\in\H^{[l]}_1,\\
f(y)&=\int  K^{[l]}(y,x)f(x)\d \mu(x), & \forall f\in\H^{[l]}_2,
\end{align*}
 and  $K^{[l]}(x,y)\in \H^{[l]}_1$ as a function of $y$ and $K^{[l]}(x,y)\in \H^{[l]}_2$ as a function of $x$ we
 conclude the reproducing property.
\end{proof}

\subsubsection{The ABC type theorem}

We also have an ABC (Aitken--Berg--Collar) type theorem ---here we follow  \cite{simon}--- for the Christoffel--Darboux kernel
\begin{definition}
  The partial  Christoffel--Darboux kernels  are defined by
  \begin{align}
  \label{pdef.CD}
  K^{[l]}_{b,a}(x,y)&:=\sum_{k=0}^{l-1} \bar A_b^{(k)}(x)A_a^{(k)}(y).
\end{align}
\end{definition}
Observe that
\begin{align}
K^{[l]}(x,y)&=\sum_{\substack{a=1,\dots,p_1\\b=1,\dots,p_2}}K^{[l]}_{b,a}(x,y)w_{1,a}(y)w_{2,b}(x).
\end{align}

We introduce the notation
\begin{definition}\label{vec-block}
Any semi-infinite vector  $v$ can be written in block form as follows
\begin{align}v&= \left(\begin{BMAT}{c}{c|c}
v^{[l]}\\
 v^{[\geq l]}
\end{BMAT}\right),
\end{align}
where $v^{[l]}$ is the finite vector formed with the first $l$ coefficients of $v$ and $v^{[\geq l]}$ the
semi-infinite vector formed with the remaining coefficients.
This decomposition induces the following block structure for any semi-infinite matrix.
\begin{align}
  g= \left(\begin{BMAT}{c|c}{c|c}
   g^{[l]}&g^{[l,\geq l]}\\
     g^{[\geq l,l]}& g^{[\geq l]}
\end{BMAT}\right).
\end{align}
\end{definition}
From \eqref{facto} we get
\begin{pro}
 Given a moment matrix $g$ satisfying \eqref{facto} we have
\begin{align}
  g^{[l]}&=(S^{[l]})^{-1}\bar S^{[l]},
\end{align}
and $(S^{-1})^{[l]}=(S^{[l]})^{-1}$, $(\bar S^{-1})^{[\geq l]}=(\bar S^{[\geq l]})^{-1}$.
\end{pro}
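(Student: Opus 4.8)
The whole statement is a bookkeeping consequence of the triangular shapes fixed in \eqref{facto}: $S\in G_-$ is lower triangular with unit diagonal and $\bar S\in G_+$ is upper triangular with non-vanishing diagonal, while their inverses $S^{-1}\in G_-$ and $\bar S^{-1}\in G_+$ share these shapes, and products of an element of $G_-$ with an element of $G_+$ (as well as products within either group) involve only finite sums, as recalled after \eqref{facto}. I would carry everything out in the block splitting of Definition \ref{vec-block}, proving the two inverse identities first since the factorization of $g^{[l]}$ depends on them.

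For the first inverse identity, both $S$ and $S^{-1}$ are lower triangular, so their upper-right blocks vanish, $S^{[l,\geq l]}=(S^{-1})^{[l,\geq l]}=0$. Reading the $(i,j)$ entry of $S S^{-1}=\I$ for $0\leq i,j<l$ gives $\sum_{k\geq 0}S_{i,k}(S^{-1})_{k,j}$, and lower-triangularity of $S$ forces $S_{i,k}=0$ for $k>i$, so the summation index never reaches $l$; hence $S^{[l]}(S^{-1})^{[l]}=\I_l$ and $(S^{-1})^{[l]}=(S^{[l]})^{-1}$. Dually, $\bar S$ and $\bar S^{-1}$ are upper triangular, so their lower-left blocks vanish, $\bar S^{[\geq l,l]}=(\bar S^{-1})^{[\geq l,l]}=0$; evaluating the bottom-right blocks of both $\bar S\bar S^{-1}=\I$ and $\bar S^{-1}\bar S=\I$ yields $\bar S^{[\geq l]}(\bar S^{-1})^{[\geq l]}=(\bar S^{-1})^{[\geq l]}\bar S^{[\geq l]}=\I$, whence $(\bar S^{-1})^{[\geq l]}=(\bar S^{[\geq l]})^{-1}$.

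For the factorization of the truncated moment matrix I would compute the top-left block of $g=S^{-1}\bar S$ entrywise. For $0\leq i,j<l$ one has $g_{i,j}=\sum_{k\geq 0}(S^{-1})_{i,k}\bar S_{k,j}$, and since $(S^{-1})_{i,k}=0$ for $k>i$ while $\bar S_{k,j}=0$ for $k>j$, the sum runs only over $k\leq\min(i,j)<l$. Thus the off-diagonal blocks contribute nothing and $g^{[l]}=(S^{-1})^{[l]}\bar S^{[l]}$; substituting the first inverse identity gives $g^{[l]}=(S^{[l]})^{-1}\bar S^{[l]}$.

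\textbf{Main point of care.} There is no genuine obstacle; the only thing requiring attention is that these are semi-infinite matrices, so one must be sure the block products above are honest finite sums and that the inverses live in $G_-$, $G_+$. Both facts are already secured by the group discussion following \eqref{facto}, and in each computation above the triangular truncation shows explicitly that the summation index stops short of the block boundary, so the argument reduces to the triangular-shape bookkeeping just described.
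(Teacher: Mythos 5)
Your argument is correct and is exactly the one the paper intends: its proof is the single line ``Use the block structure of $g,S$ and $\bar S$,'' and your entrywise triangular bookkeeping is just that argument written out in full. Nothing further is needed.
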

\begin{proof}
  Use the block structure of $g,S$ and $\bar S$.
\end{proof}
Then, we are able to conclude the following result
\begin{theorem}\label{th:abc}
  The   Christoffel--Darboux kernel is related to the moment matrix in the
  following way
\begin{align}\label{abc-p}
     K^{[l]}_{b,a}(x,y)&=(\chi_{2,b}^{[l]}(x))^\top (g^{[l]})^{-1}\chi_{1,a}^{[l]}(y).
\end{align}
\end{theorem}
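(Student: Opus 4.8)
The plan is to transcribe, almost verbatim, the computation carried out for ordinary orthogonal polynomials in \S\ref{standard}, replacing the single monomial string $\chi$ by the two strings $\chi_{1,a}$, $\chi_{2,b}$ and using the dressing relations \eqref{mop-s}. First I would rewrite the partial kernel \eqref{pdef.CD} as a single semi-infinite bilinear form. Since $\Pi^{[l]}=\sum_{k=0}^{l-1}e_ke_k^\top$, the definition together with $\A_a=S\chi_{1,a}$ and $\bar\A_b=(\bar S^{-1})^\top\chi_{2,b}$ gives
\begin{align*}
  K^{[l]}_{b,a}(x,y)=\sum_{k=0}^{l-1}\bar A_b^{(k)}(x)A_a^{(k)}(y)=\bar\A_b(x)^\top\Pi^{[l]}\A_a(y)=\chi_{2,b}(x)^\top\bar S^{-1}\Pi^{[l]} S\,\chi_{1,a}(y).
\end{align*}

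The second step is to collapse the semi-infinite product $\bar S^{-1}\Pi^{[l]} S$ onto its top-left $l\times l$ block. Because $\bar S^{-1}\in G_+$ is upper triangular, its columns $0,\dots,l-1$ have nonzero entries only in rows $0,\dots,l-1$, so that $\bar S^{-1}\Pi^{[l]}=\Pi^{[l]}\bar S^{-1}\Pi^{[l]}$; dually, since $S\in G_-$ is lower triangular, $\Pi^{[l]}S=\Pi^{[l]}S\Pi^{[l]}$. Inserting $\Pi^{[l]}\Pi^{[l]}=\Pi^{[l]}$ I would then write
\begin{align*}
  \bar S^{-1}\Pi^{[l]} S=(\Pi^{[l]}\bar S^{-1}\Pi^{[l]})(\Pi^{[l]} S\Pi^{[l]}),
\end{align*}
a matrix supported entirely in the top-left $l\times l$ block, whose block equals $(\bar S^{[l]})^{-1}S^{[l]}$. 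Here I use that $(S^{-1})^{[l]}=(S^{[l]})^{-1}$, recorded in the Proposition preceding the statement, together with its upper-triangular companion $(\bar S^{-1})^{[l]}=(\bar S^{[l]})^{-1}$ (the top-left analogue of the identity $(\bar S^{-1})^{[\geq l]}=(\bar S^{[\geq l]})^{-1}$ stated there).

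Finally, I would invoke the Gauss--Borel factorization \eqref{facto}, which yields $g^{[l]}=(S^{[l]})^{-1}\bar S^{[l]}$ and hence $(g^{[l]})^{-1}=(\bar S^{[l]})^{-1}S^{[l]}$; combining this with the previous display, and using $\Pi^{[l]}\chi_{1,a}(y)=\chi_{1,a}^{[l]}(y)$ and $\chi_{2,b}(x)^\top\Pi^{[l]}=(\chi_{2,b}^{[l]}(x))^\top$ componentwise, one obtains $K^{[l]}_{b,a}(x,y)=(\chi_{2,b}^{[l]}(x))^\top(g^{[l]})^{-1}\chi_{1,a}^{[l]}(y)$, which is \eqref{abc-p}. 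I expect the only genuinely delicate point to be the triangularity bookkeeping of the second step: one must verify that inserting the central projector $\Pi^{[l]}$ leaves the product unchanged and that truncation commutes with inversion exactly as in the preceding Proposition, so that the infinite product indeed reduces to the finite inverse moment block $(g^{[l]})^{-1}$. Everything else is a direct transcription of the standard (non-multiple) argument.
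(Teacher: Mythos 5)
Your proposal is correct and follows essentially the same route as the paper's own proof: rewrite $K^{[l]}_{b,a}$ as $\chi_{2,b}^\top\bar S^{-1}\Pi^{[l]}S\chi_{1,a}$, use the triangularity of $S$ and $\bar S^{-1}$ to insert the projectors and reduce to the top-left block $(\bar S^{[l]})^{-1}S^{[l]}$, and then apply the truncated factorization $g^{[l]}=(S^{[l]})^{-1}\bar S^{[l]}$. The one point you single out as delicate — that truncation commutes with inversion for the upper-triangular factor, $(\bar S^{-1})^{[l]}=(\bar S^{[l]})^{-1}$ — is indeed needed and holds by the same block argument as the stated identities, so nothing is missing.
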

\begin{proof}
 The ABC theorem  is a consequence of the following chain of identities
\begin{align*}
  K^{[l]}_{b,a}(x,y)&=(\Pi^{[l]}\bar \A_b(x))^\top(\Pi^{[l]} \A_a(y))& &\text{the sum is over the first $l$
  components}\\
  &=\chi_{2,b}^\top(x) \bar S^{-1}\Pi^{[l]} S\chi_{1,a}(y)& &\text{see \eqref{linear forms S}} \\
  &=\chi_{2,b}^\top(x) (\Pi^{[l]}\bar S^{-1}\Pi^{[l]})(\Pi^{[l]} S\Pi^{[l]})\chi_{1,a}(y)& &\text{lower and upper form
  of $S$ and $\bar S$}\\&=(\chi_{2,b}^{[l]}(x))^\top (\bar
  S^{[l]})^{-1}S^{[l]}\chi_{1,a}^{[l]}(y) & \\
      &=(\chi^{[l]}_{2,b}(x))^\top (g^{[l]})^{-1}\chi^{[l]}_{1,a}(y)& &\text{$LU$ factorization \eqref{facto}.}
\end{align*}
\end{proof}
We immediately deduce the
\begin{cor}
   For the  Christoffel--Darboux kernel we have
\begin{align}\label{abc}
    K^{[l]}(x,y)&=(\xi^{[l]}_2(x))^\top (g^{[l]})^{-1}\xi^{[l]}_1(y).
\end{align}
\end{cor}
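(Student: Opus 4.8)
The plan is to derive the corollary directly from the ABC theorem (Theorem \ref{th:abc}) together with the decomposition of the full Christoffel--Darboux kernel into its partial pieces recorded just before that theorem. The only ingredients needed are the identity
\begin{align*}
K^{[l]}(x,y)&=\sum_{\substack{a=1,\dots,p_1\\b=1,\dots,p_2}}K^{[l]}_{b,a}(x,y)w_{1,a}(y)w_{2,b}(x)
\end{align*}
and the partial formula \eqref{abc-p}, namely $K^{[l]}_{b,a}(x,y)=(\chi_{2,b}^{[l]}(x))^\top (g^{[l]})^{-1}\chi_{1,a}^{[l]}(y)$, both of which are already available.

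First I would recall the weighted monomial strings $\xi_1=\sum_{a=1}^{p_1}\chi_{1,a}w_{1,a}$ and $\xi_2=\sum_{b=1}^{p_2}\chi_{2,b}w_{2,b}$. Since truncation to the first $l$ components is a linear operation and each of these is a finite sum, truncation commutes with the sums, yielding $\xi_1^{[l]}=\sum_{a=1}^{p_1}\chi_{1,a}^{[l]}w_{1,a}$ and $\xi_2^{[l]}=\sum_{b=1}^{p_2}\chi_{2,b}^{[l]}w_{2,b}$. Substituting these into the right-hand side of the claimed formula and expanding the two finite sums by bilinearity of $v^\top (g^{[l]})^{-1}w$ gives
\begin{align*}
(\xi_2^{[l]}(x))^\top (g^{[l]})^{-1}\xi_1^{[l]}(y)
&=\sum_{\substack{a=1,\dots,p_1\\b=1,\dots,p_2}} w_{2,b}(x)\,(\chi_{2,b}^{[l]}(x))^\top (g^{[l]})^{-1}\chi_{1,a}^{[l]}(y)\,w_{1,a}(y).
\end{align*}
Applying \eqref{abc-p} to each summand identifies the middle factor with $K^{[l]}_{b,a}(x,y)$, so the sum is exactly the decomposition of $K^{[l]}(x,y)$ displayed above, which closes the argument.

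There is essentially no obstacle here: the computation is a one-line manipulation once Theorem \ref{th:abc} is in hand. The only point requiring a little care is the index bookkeeping, namely that the type-$2$ weights and strings must pair with the left (transposed) factor while the type-$1$ ones pair with the right factor, so that the evaluation points match ($w_{2,b}$ at $x$ and $w_{1,a}$ at $y$) and the ordering agrees with the $(b,a)$ labelling of the partial kernels $K^{[l]}_{b,a}$; keeping $\xi_2$ on the left and $\xi_1$ on the right throughout makes this automatic. Because all sums over $a$ and $b$ are finite there are no convergence issues and truncation passes through the sums freely.
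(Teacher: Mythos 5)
Your proof is correct and follows the same route the paper intends: the corollary is stated as an immediate consequence of Theorem \ref{th:abc}, and the bilinearity computation you spell out (expanding $\xi_1^{[l]}=\sum_a\chi_{1,a}^{[l]}w_{1,a}$ and $\xi_2^{[l]}=\sum_b\chi_{2,b}^{[l]}w_{2,b}$ and summing the partial kernels against the weights) is exactly the step the paper leaves implicit. No gaps.
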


\subsubsection{Christoffel--Darboux formula}
 In this subsection we derive a Christoffel--Darboux type formula from the symmetry property   \eqref{sym2}  of
 the moment matrix $g$. We need some preliminary lemmas
\begin{lemma}\label{lemma1}
The relations
\begin{align}
(g^{[l]})^{-1}  \Upsilon_1^{[l]}-(\Upsilon_2^{[l]})^\top(g^{[l]})^{-1}=(g^{[l]})^{-1}
\Big(g^{[l,\geq l]}(\Upsilon_{2}^{[l,\geq l]})^\top-\Upsilon_{1}^{[l,\geq l]}g^{[\geq l,l]}\Big)(g^{[l]})^{-1},
\end{align}
hold true.
\end{lemma}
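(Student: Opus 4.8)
The plan is to mimic exactly the argument used in the standard orthogonality review of Subsection~\ref{standard}, now driving everything from the generalized Hankel symmetry \eqref{sym2}, $\Upsilon_1 g = g\Upsilon_2^\top$, instead of the plain Hankel identity $\Lambda g = g\Lambda^\top$. The key structural point is that $\Upsilon_1$ and $\Upsilon_2$ are banded matrices, with band widths controlled by $N_1$ and $N_2$; consequently every matrix product appearing below is a locally finite (hence convergent) sum, so all the block manipulations are legitimate and no subtlety about infinite series arises.

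First I would decompose each of the three matrices $\Upsilon_1$, $g$ and $\Upsilon_2^\top$ into the $2\times 2$ block form induced by the splitting $v = (v^{[l]}, v^{[\geq l]})$ of Definition~\ref{vec-block}, so that for instance
\[
\Upsilon_1=\begin{pmatrix}\Upsilon_1^{[l]}&\Upsilon_1^{[l,\geq l]}\\ \Upsilon_1^{[\geq l,l]}&\Upsilon_1^{[\geq l]}\end{pmatrix},\qquad \Upsilon_2^\top=\begin{pmatrix}(\Upsilon_2^{[l]})^\top&(\Upsilon_2^{[\geq l,l]})^\top\\ (\Upsilon_2^{[l,\geq l]})^\top&(\Upsilon_2^{[\geq l]})^\top\end{pmatrix}.
\]
Reading off the top-left $l\times l$ block of the identity $\Upsilon_1 g = g\Upsilon_2^\top$ then yields the block equation
\[
\Upsilon_1^{[l]}g^{[l]}+\Upsilon_1^{[l,\geq l]}g^{[\geq l,l]}=g^{[l]}(\Upsilon_2^{[l]})^\top+g^{[l,\geq l]}(\Upsilon_2^{[l,\geq l]})^\top,
\]
which is the exact analogue of the block equation used for $\Lambda$ in Subsection~\ref{standard}.

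Next I would move the diagonal terms to the left and the off-diagonal terms to the right, rewriting the above as
\[
\Upsilon_1^{[l]}g^{[l]}-g^{[l]}(\Upsilon_2^{[l]})^\top=g^{[l,\geq l]}(\Upsilon_2^{[l,\geq l]})^\top-\Upsilon_1^{[l,\geq l]}g^{[\geq l,l]}.
\]
Because the combination $(\mu,\vec w_1,\vec w_2)$ is perfect, $g^{[l]}$ is invertible; multiplying this identity on the left and on the right by $(g^{[l]})^{-1}$ gives precisely the claimed relation
\[
(g^{[l]})^{-1}\Upsilon_1^{[l]}-(\Upsilon_2^{[l]})^\top(g^{[l]})^{-1}=(g^{[l]})^{-1}\Big(g^{[l,\geq l]}(\Upsilon_2^{[l,\geq l]})^\top-\Upsilon_1^{[l,\geq l]}g^{[\geq l,l]}\Big)(g^{[l]})^{-1}.
\]

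There is no genuine obstacle here; the computation is routine once the symmetry \eqref{sym2} is in hand. The only point requiring care is the bookkeeping of the block transpose of $\Upsilon_2$: one must observe that the $(1,1)$ block of $g\Upsilon_2^\top$ is $g^{[l]}(\Upsilon_2^{[l]})^\top+g^{[l,\geq l]}(\Upsilon_2^{[l,\geq l]})^\top$, the relevant off-diagonal contribution being $(\Upsilon_2^{[l,\geq l]})^\top$ (the transpose of the $(1,2)$ block of $\Upsilon_2$), and not $(\Upsilon_2^{[\geq l,l]})^\top$. With this identification the derivation matches the statement term by term.
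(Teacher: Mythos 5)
Your proof is correct and follows essentially the same route as the paper: extract the top-left $l\times l$ block of the symmetry $\Upsilon_1 g = g\Upsilon_2^\top$ to obtain $\Upsilon_1^{[l]}g^{[l]}+\Upsilon_1^{[l,\geq l]}g^{[\geq l,l]}=g^{[l]}(\Upsilon_2^{[l]})^\top+g^{[l,\geq l]}(\Upsilon_2^{[l,\geq l]})^\top$, then rearrange and conjugate by $(g^{[l]})^{-1}$. Your additional remarks on the bandedness of $\Upsilon_1,\Upsilon_2$ and the correct identification of the block transpose are accurate and only make explicit what the paper leaves implicit.
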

\begin{proof}
  The first block of \eqref{sym2} is
\begin{align*}
  \Upsilon_1^{[l]}g^{[l]}+\Upsilon_{1}^{[l,\geq l]}g^{[\geq l,l]}= g^{[l]}(\Upsilon_2^{[l]})^\top+g^{[l,\geq
  l]}(\Upsilon_{2}^{[l,\geq l]})^\top,
\end{align*}
from where the result follows immediately.
\end{proof}
\begin{lemma}
  \label{lemma2}
We have
 \begin{align}
  \Upsilon_\ell^{[l]}\xi_\ell^{[l]}(x)&=x\xi_\ell^{[l]}(x)-\Upsilon^{[l,\geq l]}_{\ell}\xi^{[\geq l]}_{\ell}(x),& \ell&=1,2.
\end{align}
\end{lemma}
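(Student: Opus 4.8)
The plan is to first establish the global eigenvalue identity $\Upsilon_\ell\xi_\ell(x)=x\xi_\ell(x)$ for the full semi-infinite weighted string, and then obtain the truncated relation simply by reading off the first block row of this identity under the block decomposition of Definition \ref{vec-block}. The truncation is the trivial part; all the content sits in the global identity, which in turn follows directly from the eigenvalue property of the single-weight shifts $\Lambda_{\ell,a}$ together with the defining expansion $\xi_\ell=\sum_a w_{\ell,a}\chi_{\ell,a}$.

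To prove the global identity I would recall that $\Upsilon_\ell=\sum_a\Lambda_{\ell,a}$ and that each weight $w_{\ell,a'}$ is a scalar function of $x$, so it commutes past the matrix $\Lambda_{\ell,a}$. Then one computes
\begin{align*}
\Upsilon_\ell\xi_\ell(x)=\sum_{a,a'}w_{\ell,a'}(x)\,\Lambda_{\ell,a}\chi_{\ell,a'}(x)=\sum_{a,a'}\delta_{a,a'}\,x\,w_{\ell,a'}(x)\chi_{\ell,a}(x)=x\sum_a w_{\ell,a}(x)\chi_{\ell,a}(x)=x\xi_\ell(x),
\end{align*}
where the middle step uses the eigenvalue property $\Lambda_{\ell,a}\chi_{\ell,a'}=\delta_{a,a'}x\chi_{\ell,a}$, so the double sum collapses onto the diagonal $a=a'$ and produces the scalar eigenvalue $x$.

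Finally I would write both $\Upsilon_\ell$ and $\xi_\ell$ in the block form of Definition \ref{vec-block}: the first $l$ components of $\Upsilon_\ell\xi_\ell$ are $\Upsilon_\ell^{[l]}\xi_\ell^{[l]}+\Upsilon_\ell^{[l,\geq l]}\xi_\ell^{[\geq l]}$, whereas the first $l$ components of $x\xi_\ell$ are just $x\xi_\ell^{[l]}$. Equating these two expressions and moving the off-diagonal term to the right-hand side yields exactly $\Upsilon_\ell^{[l]}\xi_\ell^{[l]}(x)=x\xi_\ell^{[l]}(x)-\Upsilon_\ell^{[l,\geq l]}\xi_\ell^{[\geq l]}(x)$. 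There is no genuine obstacle here; the only point requiring a little care is that $\Lambda_{\ell,a}$ acts nontrivially only on $\chi_{\ell,a}$ (the Kronecker delta in the eigenvalue property), which is precisely what makes the double sum collapse and is the reason the weighted string $\xi_\ell$, unlike an individual $\chi_{\ell,a}$, is a genuine eigenvector of $\Upsilon_\ell$.
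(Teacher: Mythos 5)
Your proof is correct and follows essentially the same route as the paper: the paper's own (one-line) proof invokes exactly the eigenvalue property of $\Upsilon_\ell$ on the weighted string together with the block decomposition of Definition \ref{vec-block}, and you have simply written out the collapse of the double sum via $\Lambda_{\ell,a}\chi_{\ell,a'}=\delta_{a,a'}x\chi_{\ell,a}$ that the paper leaves implicit. No gaps.
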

\begin{proof}
  It follows from the block decomposition of Definitions \ref{vec-block} and the eigen-value
  property of $\Upsilon_\ell$.
\end{proof}

 After a careful computation from  Definition \ref{sym gen} we get
\begin{lemma}\label{lemma3}
  If we assume that $l\geq \max(|\vec n_1|,|\vec n_2|)$ 
we can write
\begin{align}
  \Upsilon_{1}^{[l,\geq l]}&=\sum_{a =1 }^{p_1} e_{(l-1)_{-a}}e_{l_{+a}-l}^\top,
  &
    \Upsilon_{2}^{[l,\geq l]}&=\sum_{b =1}^{p_2}e_{\overline {(l-1)}_{-b}}e_{\bar l_{+b}-l}^\top.
\end{align}
Here $l_{\pm a}$ is the $\pm$ associated integer within the $\vec n_1$ composition, while $\bar l_{\pm b}$ is the $\pm$
associated integer for the $\vec n_2$ composition.
\end{lemma}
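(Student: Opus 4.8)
The plan is to expand $\Upsilon_1$ and $\Upsilon_2$ into elementary matrices and simply read off which entries fall into the off-diagonal block. By Definition \ref{sym gen} and the definition of the shift operators we have $\Upsilon_1=\sum_{a=1}^{p_1}\Lambda_{1,a}$ with $\Lambda_{1,a}=\sum_{k=0}^\infty e_{1,a}(k)e_{1,a}(k+1)^\top$. Since $e_{1,a}(k)=e_i$ where $i$ is the index determined by $a_1(i)=a$, $k_1(i)=k$ through \eqref{iak}, each summand is the elementary matrix $E_{i_1(a,k),\,i_1(a,k+1)}$, where $i_1(a,\cdot)$ denotes the strictly increasing enumeration of the set $\{i\in\Z_+ : a_1(i)=a\}$. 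Thus $\Upsilon_1=\sum_{a=1}^{p_1}\sum_{k\geq 0}E_{i_1(a,k),\,i_1(a,k+1)}$.

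First I would isolate the block $[l,\geq l]$. A term $E_{i_1(a,k),\,i_1(a,k+1)}$ contributes to $\Upsilon_1^{[l,\geq l]}$ exactly when its row index lies in $\{0,\dots,l-1\}$ and its column index is $\geq l$, i.e. $i_1(a,k)\leq l-1$ and $i_1(a,k+1)\geq l$. For fixed $a$, strict monotonicity of $k\mapsto i_1(a,k)$ shows there is \emph{at most} one such $k$. The hypothesis $l\geq\max(|\vec n_1|,|\vec n_2|)$ guarantees there is \emph{exactly} one: the smallest index with $a_1=a$ is $i_1(a,0)=n_{1,1}+\dots+n_{1,a-1}\leq|\vec n_1|-n_{1,a}<|\vec n_1|\leq l$, so the increasing chain $i_1(a,0)<i_1(a,1)<\cdots$ does cross the value $l$, producing a unique crossing index $k=k^\ast(a)$.

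Next I would translate this crossing into the $\pm$ associated integers of \eqref{ai}. By definition $i_1(a,k^\ast)$ is the largest index $\leq l-1$ with $a_1=a$, which is precisely $(l-1)_{-a}$, while the next index $i_1(a,k^\ast+1)$ is the smallest index $\geq l$ with $a_1=a$, namely $l_{+a}$ (both taken for the composition $\vec n_1$). Hence the single surviving contribution of $\Lambda_{1,a}$ is $E_{(l-1)_{-a},\,l_{+a}}$. Passing to block coordinates leaves the row index unchanged, as $(l-1)_{-a}\leq l-1$, and shifts the column index by $l$, so this contributes $e_{(l-1)_{-a}}e_{l_{+a}-l}^\top$. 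Summing over $a=1,\dots,p_1$ gives the stated formula for $\Upsilon_1^{[l,\geq l]}$, and repeating the identical argument with $\vec n_2$, $a_2$ and $b=1,\dots,p_2$ in place of $\vec n_1$ yields the expression for $\Upsilon_2^{[l,\geq l]}$.

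The only delicate point, and the main obstacle, is the counting argument in the second step: one must confirm that each $\Lambda_{\ell,a}$ contributes exactly one entry to the off-diagonal block, neither zero nor several. The possibility of several is excluded by monotonicity, while the possibility of zero is excluded only under $l\geq\max(|\vec n_1|,|\vec n_2|)$, since without it some $a$ could satisfy $i_1(a,0)\geq l$ and drop out entirely. This is exactly what the hypothesis secures, and it is the step that deserves careful verification; everything else is routine bookkeeping with the index map \eqref{iak}.
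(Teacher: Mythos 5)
Your proof is correct and supplies exactly the ``careful computation'' that the paper only asserts: expanding $\Upsilon_\ell$ into elementary matrices $E_{i_\ell(a,k),i_\ell(a,k+1)}$, using monotonicity of $k\mapsto i_\ell(a,k)$ to get at most one crossing of the block boundary per component, and identifying the crossing pair with $(l-1)_{-a}$ and $l_{+a}$. The only nitpick is your closing remark that the zero case is excluded ``only under'' $l\geq\max(|\vec n_1|,|\vec n_2|)$ --- the sharp threshold is the slightly weaker $l\geq |\vec n_1|-n_{1,p_1}+1$ (resp.\ $l\geq |\vec n_2|-n_{2,p_2}+1$), but the stated hypothesis certainly suffices, so the lemma as claimed is proved.
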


Finally, to derive a Christoffel--Darboux type  formula we need the following objects
\begin{definition}\label{associated linear forms}
 Associated polynomials are given by
\begin{align}
\begin{aligned}
A_{+a,a'}^{(l)}(y)&:=\chi_{1,a'}^{(l_{+a})}(y)-
       \begin{pmatrix}
    g_{l_{+a},0}&g_{l_{+a},1}&\cdots& g_{l_{+a},l-1}
  \end{pmatrix}(g^{[l]})^{-1}\chi_{1,a'}^{[l]}(y),\\
  \bar A^{(l)}_{-a,b'}(x)&:=(\chi_{2,b'}^{[l+1]}(x))^\top(g^{[l+1]})^{-1}e_{l_{-a}},\\
  A_{-b,a'}^{(l)}(y)&:=e_{\bar l_{-b}}^\top(g^{[l+1]})^{-1}\chi_{1,a'}^{[l+1]}(y),\\
  \bar A_{+b,b'}^{(l)}(x)&:=\Big(\chi_{2,b'}^{(\bar l_{+b})}(x)-(\chi_{2,b'}^{[l]}(x))^\top(g^{[l]})^{-1}
\begin{pmatrix}
    g_{0,\bar l_{+b}}\\g_{1,\bar l_{+b}}\\\vdots\\ g_{l-1,\bar l_{+b}}
  \end{pmatrix}\Big),
  \end{aligned}
\end{align}
with the  corresponding linear forms given by
  \begin{align}
  \begin{aligned}
    Q^{(l)}_{+a}&:=\sum_{a'=1}^{p_1}A^{(l)}_{+a,a'}w_{1,a'},&
      \bar  Q^{(l)}_{-a}&:=\sum_{b'=1}^{p_2}\bar A^{(l)}_{-a,b'}w_{2,b'},& a&=1,\dots, p_1,\\
          Q^{(l)}_{-b}&:=\sum_{a'=1}^{p_1}A^{(l)}_{-b,a'}w_{1,a'},&            \bar
          Q^{(l)}_{+b}&:=\sum_{b'=1}^{p_2}\bar A^{(l)}_{+b,b'}w_{2,b'},&b&=1,\dots,p_2.
          \end{aligned}
  \end{align}
\end{definition}

Then, we can show that
\begin{theorem}\label{th:cd}
Whenever $l\geq \max(|\vec n_1|,|\vec n_2|)$ the following Christoffel--Darboux type formulae
\begin{align*}
 (x-y)K_{a',b'}^{[l]}(x,y)&=
\sum_{b =1}^{p_2}\bar A_{+b,b'}^{(l)}(x) A_{-b,a'}^{(l-1)}(y)
-\sum_{a=1}^{p_1}\bar A^{(l-1)}_{-a,b'}(x)A^{(l)}_{+a,a'}(y),\\ (x-y)K^{[l]}(x,y)&=
\sum_{b =1}^{p_2}\bar Q_{+b}^{(l)}(x) Q_{-b}^{(l-1)}(y)
-\sum_{a=1}^{p_1}\bar Q^{(l-1)}_{-a}(x)Q^{(l)}_{+a}(y),
\end{align*}
hold.
  \end{theorem}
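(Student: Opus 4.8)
The plan is to mimic the single-measure derivation of \S\ref{standard}, replacing the shift $\Lambda$ and its Hankel symmetry $\Lambda g=g\Lambda^\top$ by the multiple-shift operators $\Upsilon_1,\Upsilon_2$ and the symmetry \eqref{sym2}, and replacing the scalar monomial string $\chi$ by the two families $\chi_{1,a'},\chi_{2,b'}$. The starting point is the ABC representation $K^{[l]}_{a',b'}(x,y)=(\chi_{2,b'}^{[l]}(x))^\top(g^{[l]})^{-1}\chi_{1,a'}^{[l]}(y)$ of Theorem \ref{th:abc}. I would sandwich the operator identity of Lemma \ref{lemma1},
\begin{align*}
(g^{[l]})^{-1}\Upsilon_1^{[l]}-(\Upsilon_2^{[l]})^\top(g^{[l]})^{-1}=(g^{[l]})^{-1}\Big(g^{[l,\geq l]}(\Upsilon_2^{[l,\geq l]})^\top-\Upsilon_1^{[l,\geq l]}g^{[\geq l,l]}\Big)(g^{[l]})^{-1},
\end{align*}
between the row vector $(\chi_{2,b'}^{[l]}(x))^\top$ on the left and the column vector $\chi_{1,a'}^{[l]}(y)$ on the right.

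On the left-hand side two terms appear, $(\chi_{2,b'}^{[l]})^\top(g^{[l]})^{-1}\Upsilon_1^{[l]}\chi_{1,a'}^{[l]}$ and $(\chi_{2,b'}^{[l]})^\top(\Upsilon_2^{[l]})^\top(g^{[l]})^{-1}\chi_{1,a'}^{[l]}$. To each I would apply the block eigenvalue relation of Lemma \ref{lemma2}, which holds verbatim for $\chi_{\ell,\cdot}$ since $\xi_\ell=\sum_a\chi_{\ell,a}w_{\ell,a}$ and $\Upsilon_\ell\chi_{\ell,a}=z\chi_{\ell,a}$, namely $\Upsilon_1^{[l]}\chi_{1,a'}^{[l]}(y)=y\chi_{1,a'}^{[l]}(y)-\Upsilon_1^{[l,\geq l]}\chi_{1,a'}^{[\geq l]}(y)$ together with the analogous statement for $\Upsilon_2^{[l]}\chi_{2,b'}^{[l]}(x)$. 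The scalar factors $y$ and $x$ reproduce $yK^{[l]}_{a',b'}$ and $xK^{[l]}_{a',b'}$, whose difference supplies the desired $(x-y)K^{[l]}_{a',b'}$ (up to an overall sign), while two remaining \emph{boundary} pieces carry the factors $\Upsilon_1^{[l,\geq l]}\chi_{1,a'}^{[\geq l]}$ and $\Upsilon_2^{[l,\geq l]}\chi_{2,b'}^{[\geq l]}$.

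The next step is to move all boundary pieces to one side and group them with the right-hand side of Lemma \ref{lemma1}. Collecting the terms containing $\Upsilon_1^{[l,\geq l]}$ produces the combination $\chi_{1,a'}^{[\geq l]}(y)-g^{[\geq l,l]}(g^{[l]})^{-1}\chi_{1,a'}^{[l]}(y)$, while collecting those containing $\Upsilon_2^{[l,\geq l]}$ produces $(\chi_{2,b'}^{[\geq l]}(x))^\top-(\chi_{2,b'}^{[l]}(x))^\top(g^{[l]})^{-1}g^{[l,\geq l]}$. Here I would invoke Lemma \ref{lemma3}, valid precisely because $l\geq\max(|\vec n_1|,|\vec n_2|)$, to write $\Upsilon_1^{[l,\geq l]}=\sum_{a}e_{(l-1)_{-a}}e_{l_{+a}-l}^\top$ and $\Upsilon_2^{[l,\geq l]}=\sum_{b}e_{\overline{(l-1)}_{-b}}e_{\bar l_{+b}-l}^\top$. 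Each rank-one term selects a single index: $e_{l_{+a}-l}^\top$ picks out $\chi_{1,a'}^{(l_{+a})}$ and the $l_{+a}$-th row $(g_{l_{+a},0},\dots,g_{l_{+a},l-1})$, so that the $\Upsilon_1$-group collapses to $-\sum_a \bar A_{-a,b'}^{(l-1)}(x)\,A_{+a,a'}^{(l)}(y)$ and, symmetrically, the $\Upsilon_2$-group collapses to $+\sum_b \bar A_{+b,b'}^{(l)}(x)\,A_{-b,a'}^{(l-1)}(y)$, upon recognising the factors as the associated polynomials of Definition \ref{associated linear forms}; the bookkeeping matches because, for instance, $\bar A_{-a,b'}^{(l-1)}(x)=(\chi_{2,b'}^{[l]}(x))^\top(g^{[l]})^{-1}e_{(l-1)_{-a}}$ and $A_{-b,a'}^{(l-1)}(y)=e_{\overline{(l-1)}_{-b}}^\top(g^{[l]})^{-1}\chi_{1,a'}^{[l]}(y)$. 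This gives the first displayed formula. The second one then follows by multiplying through by $w_{1,a'}(y)w_{2,b'}(x)$, summing over $a'=1,\dots,p_1$ and $b'=1,\dots,p_2$, and using $K^{[l]}(x,y)=\sum_{a,b}K^{[l]}_{b,a}(x,y)w_{1,a}(y)w_{2,b}(x)$ together with the definitions of $Q^{(l)}_{\pm}$ and $\bar Q^{(l)}_{\pm}$, the double sum factorizing because the $\pm$ labels are independent of the primed indices.

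The main obstacle is the index bookkeeping in the penultimate step: one must verify that the rank-one projectors supplied by Lemma \ref{lemma3} land exactly on the objects of Definition \ref{associated linear forms}, in particular that the $+$-type associated polynomials appear at level $l$ while the $-$-type ones appear at level $l-1$, and that the shifted integers $l_{+a}$, $(l-1)_{-a}$, $\bar l_{+b}$, $\overline{(l-1)}_{-b}$ are matched with the correct variable and determinantal block. Once this identification is in place, the remaining manipulations are exactly the linear-algebra steps already rehearsed in \S\ref{standard}.
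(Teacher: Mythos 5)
Your proposal is correct and follows essentially the same route as the paper's own proof: sandwiching the identity of Lemma \ref{lemma1} between $(\chi_{2,b'}^{[l]})^\top$ and $\chi_{1,a'}^{[l]}$, invoking the ABC representation of Theorem \ref{th:abc}, using the block eigenvalue relation and Lemma \ref{lemma3} to reduce the boundary terms to rank-one contributions, and matching these with the associated polynomials of Definition \ref{associated linear forms}. The index bookkeeping you flag as the main obstacle is handled exactly as you describe, so no further comment is needed.
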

\begin{proof}
From Lemma  \ref{lemma1} we deduce
\begin{multline*}
(\chi^{[l]}_{2,b'}(x))^\top \big((g^{[l]})^{-1}
\Upsilon_1^{[l]}-(\Upsilon_2^{[l]})^\top(g^{[l]})^{-1}\big)\chi^{[l]}_{1,a'}(y)=(\chi^{[l]}_{2,b'}(x))^\top
(g^{[l]})^{-1}
\big(g^{[l,\geq l]}(\Upsilon_{2}^{[l,\geq l]})^\top-\Upsilon_{1}^{[l,\geq l]}g^{[\geq
l,l]}\big)(g^{[l]})^{-1}\chi^{[l]}_{1,a'}(y),
\end{multline*}
so that, recalling Theorem \ref{th:abc}, we get
\begin{align}
   \begin{aligned}(y-x)K_{b',a'}^{[l]}(x,y)=&(\chi^{[l]}_{2,b'}(x))^\top (g^{[l]})^{-1}
\big(g^{[l,\geq l]}(\Upsilon_{2}^{[l,\geq l]})^\top-\Upsilon_{1}^{[l,\geq l]}g^{[\geq
l,l]}\big)(g^{[l]})^{-1}\chi^{[l]}_{1,a'}(y)\\&+
  (\chi^{[l]}_{2,b'}(x))^\top (g^{[l]})^{-1}\Upsilon_{1}^{[l,\geq l]}\chi^{[\geq l]}_{1,a'}(y)-(\Upsilon_{2}^{[l,\geq
  l]}\chi^{[\geq l]}_{2,b'}(x))^\top(g^{[l]})^{-1}  (\chi^{[l]}_{1,a'}(y)),
  \end{aligned}\label{cd1}
\end{align}
or
\begin{align*}
   \begin{aligned}(x-y)K_{b',a'}^{(l-1)}(x,y)=&\Big((\chi^{[\geq l]}_{2,b'}(x))^\top- (\chi^{[l]}_{2,b'}(x))^\top
   (g^{[l]})^{-1}  g^{[l,\geq l]}\Big) (\Upsilon_{2}^{[l,\geq l]})^\top (g^{[l]})^{-1}
   \chi^{[l]}_{1,a'}(y)\\&
   -(\chi^{[l]}_{2,b'}(x))^\top (g^{[l]})^{-1} \Upsilon_{1}^{[l,\geq l]}\Big(\chi^{[\geq l]}_{1,a'}(y)-g^{[\geq
   l,l]}(g^{[l]})^{-1}\chi^{[l]}_{1,a'}(y)\Big).
     \end{aligned}
\end{align*}

Finally, from Lemma \ref{lemma3} we conclude
\begin{align*}
\Upsilon_{1}^{[l,\geq l]}\chi_{1,a'}^{[\geq l]}(y)&=
\sum_{a =1}^{p_1}e_{(l-1)_{-a}}\chi_{1,a'}^{(l_{+a})}(y),\\
  \Upsilon_{1}^{[l,\geq l]}g^{[\geq l,l]}&=\sum_{a=1}^{p_1}e_{(l-1)_{-a}}\begin{pmatrix}
    g_{l_{+a},0}&g_{l_{+a},1}&\cdots& g_{l_{+a},l-1}
  \end{pmatrix},\\
(\chi_{2,b'}^{[\geq l]}(x))^\top  (\Upsilon_{2}^{[l,\geq l]})^\top&=
\sum_{b =1}^{p_2}\chi_{2,b'}^{(\bar l_{+b})}(x)e_{\overline {(l-1)}_{-b}}^\top,\\
 g^{[l,\geq l]}( \Upsilon_{2}^{[l,\geq l]})^\top&=\sum_{b =1}^{p_2}\begin{pmatrix}
    g_{0,\bar l_{+b}}\\g_{1,\bar l_{+b}}\\\vdots\\ g_{l-1,\bar l_{+b}}
  \end{pmatrix}e_{\overline {(l-1)}_{-b}}^\top,
\end{align*}
and consequently
\begin{align}
 \begin{aligned}(x-y)K_{b',a'}^{[l]}(x,y)=&
\sum_{b =1}^{p_2}\Big(\chi_{2,b'}^{(\bar l_{+b})}(x)-(\chi_{2,b'}^{[l]}(x))^\top(g^{[l]})^{-1}
\begin{pmatrix}
    g_{0,\bar l_{+b}}\\g_{1,\bar l_{+b}}\\\vdots\\ g_{l-1,\bar l_{+b}}
  \end{pmatrix}
\Big)e_{\overline {(l-1)}_{-b}}^\top(g^{[l]})^{-1}\chi_{1,a'}^{[l]}(y)\\&
 -\sum_{a=1}^{p_1}(\chi_{2,b'}^{[l]}(x))^\top(g^{[l]})^{-1}e_{(l-1)_{-a}}\big(\chi_{1,a'}^{(l_{+a})}(y)-
       \begin{pmatrix}
    g_{l_{+a},0}&g_{l_{+a},1}&\cdots& g_{l_{+a},l-1}
  \end{pmatrix}(g^{[l]})^{-1}\chi_{1,a'}^{[l]}(y)\big).
    \end{aligned}\label{cd4}
  \end{align}   Recalling  Definition \ref{associated linear forms} we get the announced result.

\end{proof}

The associated linear forms are identified with linear forms of multiple orthogonal polynomials as follows
\begin{pro}\label{associated forms as mop}
  We have the formulae
  \begin{align}
  \begin{aligned}
    Q_{+a}^{(l)}&=Q^{(\rII,a)}_{[\vec\nu_1(l-1)+\vec e_{1,a};\vec \nu_2(l-1)]},&
      Q_{-b}^{(l)}&=Q^{(\rI,b)}_{[\vec\nu_1(l);\vec \nu_2(l)-\vec e_{2,b}]},&
      \bar  Q_{+b}^{(l)}&=\bar Q^{(\rII,b)}_{[\vec\nu_2(l-1)+\vec e_{2,b};\vec \nu_1(l-1)]},&
     \bar  Q_{-a}^{(l)}&=Q^{(\rI,a)}_{[\vec\nu_2(l);\vec \nu_1(l)-\vec e_{1,a}]}.
     \end{aligned}
  \end{align}
\end{pro}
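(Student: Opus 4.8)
The plan is to show that each of the four associated forms in Definition \ref{associated linear forms}, once its polynomial components are reassembled with their weights into a single weighted string, coincides with one of the inverse/determinantal expressions for a genuine mixed multiple orthogonal linear form, and then to invoke the uniqueness of such forms for the perfect combination $(\mu,\vec w_1,\vec w_2)$. First I would reassemble: summing $A^{(l)}_{+a,a'}$ against $w_{1,a'}$ and using $\sum_{a'}\chi_{1,a'}w_{1,a'}=\xi_1$ together with $\xi_1^{(l_{+a})}=w_{1,a}z^{k_1(l_{+a})}$ (recall $a_1(l_{+a})=a$), one gets
\begin{align*}
Q^{(l)}_{+a}(x)&=\xi_1^{(l_{+a})}(x)-\begin{pmatrix}g_{l_{+a},0}&\cdots&g_{l_{+a},l-1}\end{pmatrix}(g^{[l]})^{-1}\xi_1^{[l]}(x),&
Q^{(l)}_{-b}(x)&=e_{\bar l_{-b}}^\top(g^{[l+1]})^{-1}\xi_1^{[l+1]}(x),
\end{align*}
and symmetrically
\begin{align*}
\bar Q^{(l)}_{+b}(x)&=\xi_2^{(\bar l_{+b})}(x)-(\xi_2^{[l]}(x))^\top(g^{[l]})^{-1}\begin{pmatrix}g_{0,\bar l_{+b}}\\\vdots\\g_{l-1,\bar l_{+b}}\end{pmatrix},&
\bar Q^{(l)}_{-a}(x)&=(\xi_2^{[l+1]}(x))^\top(g^{[l+1]})^{-1}e_{l_{-a}}.
\end{align*}
These are exactly the expressions of Proposition \ref{proposition: expressions for linear forms} with the running index $l$ replaced by the associated integer $l_{+a}$ or $\bar l_{+b}$ (a degree-raising shift), or with the selector $e_l$ replaced by $e_{l_{-a}}$, $e_{\bar l_{-b}}$ (a normalization-relocating shift).

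Second, I would read off the orthogonality from the single moment-matrix identity $\int\xi_1^{[m]}(x)\,\xi_2^{(j)}(x)\d\mu(x)=g^{[m]}e_j$ for $j<m$ and its transpose, which is just \eqref{compact.g} column by column. For the ``$+$'' forms this gives, for $j=0,\dots,l-1$,
\begin{align*}
\int Q^{(l)}_{+a}(x)\,\xi_2^{(j)}(x)\d\mu(x)=g_{l_{+a},j}-g_{l_{+a},j}=0,
\end{align*}
so $Q^{(l)}_{+a}$ is orthogonal to $\xi_2^{(0)},\dots,\xi_2^{(l-1)}$, i.e. to the weighted monomials $x^kw_{2,b}$ in the range $0\le k\le\nu_{2,b}(l-1)-1$; the same computation gives orthogonality of $\bar Q^{(l)}_{+b}$ to $\xi_1^{(0)},\dots,\xi_1^{(l-1)}$. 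For the ``$-$'' forms the analogous computation produces a Kronecker delta,
\begin{align*}
\int Q^{(l)}_{-b}(x)\,\xi_2^{(j)}(x)\d\mu(x)=\delta_{\bar l_{-b},\,j},\qquad
\int \xi_1^{(i)}(x)\,\bar Q^{(l)}_{-a}(x)\d\mu(x)=\delta_{i,\,l_{-a}},
\end{align*}
valid for $j\le l$ (resp. $i\le l$); since $\bar l_{-b}\le l$ and $l_{-a}\le l$, these are precisely the homogeneous conditions of a type I form together with its normalization to $1$.

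Third comes the degree bookkeeping, which I expect to be the genuinely delicate step. Here I would use the explicit values of $l_{\pm a}$, $\bar l_{\pm b}$ from \eqref{ai} and the relations \eqref{formulitas}, \eqref{mult:u}. The key observation is that for an index $i$ with $a_1(i)=a$ the power $k_1(i)$ equals the number of indices $i'\le i$ with $a_1(i')=a$ minus $1$, while $\nu_{1,a}(m)$ counts exactly the type-$a$ indices among $\{0,\dots,m\}$. Because $l_{+a}$ is the smallest integer $\ge l$ of colour $a$, there are no type-$a$ indices in $\{l,\dots,l_{+a}-1\}$, whence $k_1(l_{+a})=\nu_{1,a}(l-1)$; combined with the fact that for $a'\ne a$ the component $A^{(l)}_{+a,a'}$ only involves monomials $x^{k_1(i)}$ with $i<l$, $a_1(i)=a'$, of top degree $\nu_{1,a'}(l-1)-1$, this identifies the multi-index as $\vec\nu_1(l-1)+\vec e_{1,a}$ with $A^{(l)}_{+a,a}$ monic, i.e. a type II normalization. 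The dual and the two ``$-$'' cases follow by the same counting, using $\bar l_{+b}$, $l_{-a}$, $\bar l_{-b}$ to fix the shifted multi-indices $\vec\nu_2(l-1)+\vec e_{2,b}$, $\vec\nu_1(l)$ with $\vec\nu_2(l)-\vec e_{2,b}$, and $\vec\nu_2(l)$ with $\vec\nu_1(l)-\vec e_{1,a}$ respectively.

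Finally, since $(\mu,\vec w_1,\vec w_2)$ is perfect (Definition \ref{pag8}), a mixed multiple orthogonal linear form with prescribed multi-indices and a fixed type I or type II normalization is unique; having matched, in each case, the number of orthogonality conditions, the normalization, and the full degree vector, I would conclude that the four associated forms equal the displayed multiple orthogonal linear forms. The only real obstacle is the combinatorial degree count of the third step, as it requires translating the case split of \eqref{ai} into the colour-counting statement relating $k_\ell$ and $\nu_\ell$; everything else is the same bi-orthogonality bookkeeping already exploited throughout \S\ref{sec:recursion}.
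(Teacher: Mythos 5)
Your proposal follows essentially the same route as the paper's own proof in Appendix \ref{II}: reassemble the associated polynomials into weighted strings, integrate against the truncated $\xi_2$ (resp.\ $\xi_1$) string so that the moment-matrix identity $\int\xi_1^{[m]}(\xi_2^{[m]})^\top\d\mu=g^{[m]}$ yields the vanishing conditions for the ``$+$'' forms and the Kronecker deltas for the ``$-$'' forms, then fix the multi-indices and normalizations by the degree count $k_1(l_{+a})=\nu_{1,a}(l-1)$ (and its analogues) coming from \eqref{ai} and \eqref{formulitas}. The argument is correct; the only difference is that you make the final appeal to uniqueness via perfectness explicit, which the paper leaves implicit.
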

\begin{proof}
See Appendix \ref{II}.
\end{proof}

Proposition \ref{associated forms as mop} allows us to give the following form of the Christoffel--Darboux formula
stated in Theorem \ref{th:cd}
\begin{pro}\label{pro:cd}
 For  $l\geq \max(|\vec n_1|,|\vec n_2|)$ the following
  \begin{align}
  \begin{aligned}
 (x-y)K^{[l]}(x,y)=&
\sum_{b =1}^{p_2}\bar Q^{(\rII,b)}_{[\vec\nu_2(l-1)+\vec e_{2,b};\vec \nu_1(l-1)]}(x)
Q^{(\rI,b)}_{[\vec\nu_1(l-1);\vec \nu_2(l-1)-\vec e_{2,b}]}(y)\\
&-\sum_{a=1}^{p_1}\bar Q^{(\rI,a)}_{[\vec\nu_2(l-1);\vec \nu_1(l-1)-\vec
e_{1,a}]}(x)Q^{(\rII,a)}_{[\vec\nu_1(l-1)+\vec e_{1,a};\vec \nu_2(l-1)]}(y).
\end{aligned}
    \label{cd3}
    \\
     \begin{aligned}
 (x-y)K_{b',a'}^{[l]}(x,y)=&
\sum_{b =1}^{p_2}\bar A^{(\rII,b)}_{[\vec\nu_2(l-1)+\vec e_{2,b};\vec \nu_1(l-1)],b'}(x)
A^{(\rI,b)}_{[\vec\nu_1(l-1);\vec \nu_2(l-1)-\vec e_{2,b}],a'}(y)\\
&-\sum_{a=1}^{p_1}\bar A^{(\rI,a)}_{[\vec\nu_2(l-1);\vec \nu_1(l-1)-\vec
e_{1,a}],b'}(x)A^{(\rII,a)}_{[\vec\nu_1(l-1)+\vec e_{1,a};\vec \nu_2(l-1)],a'}(y).
\end{aligned}
\end{align}
holds.
\end{pro}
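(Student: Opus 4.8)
The plan is to read off both identities directly from the Christoffel--Darboux formulae already established in Theorem \ref{th:cd}, substituting the identifications supplied by Proposition \ref{associated forms as mop}. Theorem \ref{th:cd} expresses $(x-y)K^{[l]}_{b',a'}(x,y)$ and $(x-y)K^{[l]}(x,y)$ in terms of the associated objects $A^{(l)}_{+a,a'}$, $\bar A^{(l)}_{-a,b'}$, $A^{(l)}_{-b,a'}$, $\bar A^{(l)}_{+b,b'}$ and their linear forms $Q^{(l)}_{+a}$, $\bar Q^{(l)}_{-a}$, $Q^{(l)}_{-b}$, $\bar Q^{(l)}_{+b}$; the content of Proposition \ref{pro:cd} is merely to re-express these in the intrinsic language of multiple orthogonal polynomials with the two normalizations $\rI$ and $\rII$. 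No new analytic input is required beyond the two results just cited, so the proof is essentially a bookkeeping substitution.

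First I would take the second formula of Theorem \ref{th:cd}, namely
\begin{align*}
(x-y)K^{[l]}(x,y)=\sum_{b=1}^{p_2}\bar Q_{+b}^{(l)}(x) Q_{-b}^{(l-1)}(y)-\sum_{a=1}^{p_1}\bar Q^{(l-1)}_{-a}(x)Q^{(l)}_{+a}(y),
\end{align*}
and insert the four identities of Proposition \ref{associated forms as mop}. The crucial point is the index shift: applying the identifications for $Q^{(l)}_{+a}$ and $\bar Q^{(l)}_{+b}$ at superscript $l$ produces the degree multi-indices $\vec\nu_1(l-1)+\vec e_{1,a}$ and $\vec\nu_2(l-1)+\vec e_{2,b}$ paired with $\vec\nu_2(l-1)$ and $\vec\nu_1(l-1)$, whereas applying the identifications for $Q^{(l)}_{-b}$ and $\bar Q^{(l)}_{-a}$ at superscript $l-1$ (replacing $l$ by $l-1$ in Proposition \ref{associated forms as mop}) yields $\vec\nu_1(l-1)$ and $\vec\nu_2(l-1)$ with the shifts $-\vec e_{2,b}$ and $-\vec e_{1,a}$. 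Consequently every multi-index occurring after substitution is evaluated at $l-1$, and collecting the two sums reproduces exactly \eqref{cd3}.

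The partial-kernel identity is obtained identically, but working at the level of the associated polynomials rather than the forms: I would substitute the component versions $A^{(l)}_{+a,a'}=A^{(\rII,a)}_{[\vec\nu_1(l-1)+\vec e_{1,a};\vec\nu_2(l-1)],a'}$, together with the analogous identities for $\bar A^{(l)}_{+b,b'}$, $A^{(l)}_{-b,a'}$ and $\bar A^{(l)}_{-a,b'}$, into the first formula of Theorem \ref{th:cd}. The only genuinely delicate point is that Proposition \ref{associated forms as mop} is phrased for the linear forms, so one must confirm that the underlying component identifications hold as well; this is immediate because the associated linear forms are defined in Definition \ref{associated linear forms} precisely as $\sum_{a'} A^{(l)}_{+a,a'}w_{1,a'}$ (and correspondingly for the others), with the very coefficient polynomials that enter the partial kernel \eqref{pdef.CD}, so the splitting into $w_{1,a'}$- and $w_{2,b'}$-components is forced term by term. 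I therefore expect the sole obstacle to be this index bookkeeping, i.e. verifying that the superscript-$l$ and superscript-$(l-1)$ objects all land on the same multi-indices $\vec\nu_\ell(l-1)$; once that is checked, both displayed formulae of the Proposition follow by direct substitution.
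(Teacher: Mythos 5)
Your proposal is correct and follows exactly the route the paper takes: Proposition \ref{pro:cd} is stated there as an immediate consequence of substituting the identifications of Proposition \ref{associated forms as mop} (at superscript $l$ for the $+$ objects and $l-1$ for the $-$ objects) into the two formulae of Theorem \ref{th:cd}, which is precisely your bookkeeping argument. Your additional remark that the component-level identities behind the partial-kernel formula are secured by the way the associated linear forms are built from the polynomials in Definition \ref{associated linear forms} (and, in fact, by the appendix proof of Proposition \ref{associated forms as mop}, which works directly with the polynomials) is accurate and closes the only point the paper leaves implicit.
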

 Relation \eqref{cd3}  is precisely the Christoffel--Darboux formula derived in \cite{daems-kuijlaars}, the difference
 here is that derivation is based on the
 Gauss--Borel factorization problem for the moment matrix; i.e. only on algebraic arguments, and not in the Riemann--Hilbert problem found in
 \cite{daems-kuijlaars}, and hence the conditions on the weights are not so restrictive. However, the reader should
 notice that the Christoffel--Darboux kernel does not depend on the ladder determined by the composition vectors $\vec
 n_1,\vec n_2$, but only on the degree vectors $\vec\nu_1(l-1)$ and $\vec \nu_2(l-1)$. This was noticed in
 \cite{daems-kuijlaars0} for type I multiple orthogonality.
\begin{pro}\label{det-associated}
The associated polynomials introduced in Definition \ref{associated
linear forms} have the following  determinantal expressions
 \begin{align}\label{detmops+}
A_{+a,a'}^{(l)}&=\frac{1}{\det g^{[l]}}\det
  \left(\begin{BMAT}{cccc|c}{cccc|c}
    g_{0,0}&g_{0,1}&\cdots&g_{0,l-1}&\chi_{1,a'}^{(0)}\\
     g_{1,0}&g_{1,1}&\cdots&g_{1,l-1}&\chi_{1,a'}^{(1)}\\
     \vdots &\vdots&            &\vdots&\vdots\\
        g_{l-1,0}&g_{l-1,1}&\cdots&g_{l-1,l-1}&\chi_{1,a'}^{(l-1)}\\
          g_{l_{+a},0}&g_{l_{+a},1}&\cdots&g_{l_{+a},l-1}&\chi_{1,a'}^{(l_{+a})}
\end{BMAT}\right),
\end{align}
\begin{align}
 \label{detdualmops-}
  \bar A^{(l)}_{-a,b'}(x)&=\frac{(-1)^{l+ l_{-a}}}{\det g^{[l+1]}}\det \left(\begin{BMAT}{ccc|c}{ccc;ccc|c}
    g_{0,0}&\cdots&g_{0,l-1}&g_{0,l}\\
    \vdots&&\vdots&\vdots\\
     g_{l_{-a}-1,0}&\cdots&g_{l_{-a}-1,l-1}&g_{l_{-a}-1,l}\\
       g_{l_{-a}+1,0}&\cdots&g_{l_{-a}+1,l-1}&g_{l_{-a}+1,l}\\
      \vdots&&\vdots&\vdots\\
      g_{l-1,0}&\cdots&g_{l-1,l-1}&g_{l-1,l}\\
  \chi^{(0)}_{2,b'}&\cdots&\chi^{(l-1)}_{2,b'}&\chi^{(l)}_{2,b'}
  \end{BMAT}\right),
  \end{align}
\begin{align}
   \label{detmops-}
  A_{-b,a'}^{(l)}&=\frac{(-1)^{l+\bar l_{-b}}}{\det g^{[l+1]}}\det \left(\begin{BMAT}{ccc;ccc|c}{ccc|c}
    g_{0,0}&\cdots&g_{0,\bar l_{-b}-1}&g_{0,\bar l_{-b}+1}&\cdots&g_{0,l-1}&\chi^{(0)}_{1,a'}\\
    \vdots&&\vdots&\vdots&    &\vdots&\vdots\\
    g_{l-1,0}&\cdots&g_{l-1,\bar l_{-b}-1}&g_{l-1,\bar l_{-b}+1}&\cdots&g_{l-1,l-1}&\chi^{(l-1)}_{1,a'}\\
 g_{l,0}&\cdots&g_{l,\bar l_{-b}-1}&g_{l,\bar l_{-b}+1}&\cdots&g_{l,l-1}&\chi^{(l)}_{1,a'}
\end{BMAT}\right),
\end{align}
\begin{align}
 \label{detdualmops+}
  \bar A_{+b,b'}^{(l)}&=\frac{1}{\det g^{[l]}}\det
  \left(\begin{BMAT}{cccc|c}{cccc|c}
    g_{0,0}&g_{0,1}&\cdots&g_{0,l-1}& g_{0,\bar l_{+b}}\\
     g_{1,0}&g_{1,1}&\cdots&g_{1,l-1}& g_{1,\bar l_{+b}} \\
     \vdots &\vdots&        &\vdots&\vdots\\
        g_{l-1,0}&g_{l-1,1}&\cdots&g_{l-1,l-1}&g _{l-1,\bar l_{+b}}\\
        \chi_{2,b'}^{(0)}& \chi_{2,b'}^{(1)} & \cdots & \chi_{2,b'}^{(l-1)} & \chi_{2,b'}^{(\bar l_{+b})}
\end{BMAT}\right).
\end{align}
\end{pro}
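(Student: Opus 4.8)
The plan is to reduce all four identities to standard Cramer/adjugate determinant manipulations, exactly in the spirit of the proofs of Proposition \ref{proposition: expressions for linear forms} and Corollary \ref{cormops}. The four objects in Definition \ref{associated linear forms} come in two flavours, which I would treat separately. The two ``$+$'' objects $A_{+a,a'}^{(l)}$ and $\bar A_{+b,b'}^{(l)}$ are already written as Schur complements of the form $d-c^\top (g^{[l]})^{-1}b$, while the two ``$-$'' objects $\bar A_{-a,b'}^{(l)}$ and $A_{-b,a'}^{(l)}$ are contractions of a single column or row of the inverse $(g^{[l+1]})^{-1}$ against a monomial string.

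For the ``$+$'' objects I would invoke the bordered-determinant identity
\[
\det\begin{pmatrix} A & b \\ c^\top & d \end{pmatrix}=\det(A)\,(d-c^\top A^{-1}b),
\]
valid whenever $A$ is invertible. Choosing $A=g^{[l]}$, $b=\chi_{1,a'}^{[l]}$, $c^\top=(g_{l_{+a},0},\dots,g_{l_{+a},l-1})$ and $d=\chi_{1,a'}^{(l_{+a})}$ reproduces, after dividing by $\det g^{[l]}$, the defining expression for $A_{+a,a'}^{(l)}$, hence \eqref{detmops+}; the transposed choice $c^\top=(\chi_{2,b'}^{[l]})^\top$, $b=(g_{0,\bar l_{+b}},\dots,g_{l-1,\bar l_{+b}})^\top$, $d=\chi_{2,b'}^{(\bar l_{+b})}$ gives \eqref{detdualmops+}. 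No sign corrections appear here, because in both bordered matrices the distinguished row and column are already appended at the bottom and at the right.

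For the ``$-$'' objects I would use the adjugate expression $[(g^{[l+1]})^{-1}]_{i,j}=(-1)^{i+j}\det(g^{[l+1]}_{\hat\jmath,\hat\imath})/\det g^{[l+1]}$, where $\hat\jmath,\hat\imath$ indicate deletion of row $j$ and column $i$. Since $\bar A_{-a,b'}^{(l)}=(\chi_{2,b'}^{[l+1]})^\top (g^{[l+1]})^{-1}e_{l_{-a}}$ picks out the $l_{-a}$-th column of the inverse, the resulting alternating sum is precisely the cofactor expansion, along row $l_{-a}$, of the matrix obtained from $g^{[l+1]}$ by overwriting its $l_{-a}$-th row with $(\chi^{(0)}_{2,b'},\dots,\chi^{(l)}_{2,b'})$. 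Transporting that row down to the last position costs $l-l_{-a}$ adjacent transpositions, i.e. a factor $(-1)^{l-l_{-a}}=(-1)^{l+l_{-a}}$, which yields \eqref{detdualmops-} with the surviving $g$-rows indexed by $\{0,\dots,l\}\setminus\{l_{-a}\}$. The companion $A_{-b,a'}^{(l)}=e_{\bar l_{-b}}^\top (g^{[l+1]})^{-1}\chi_{1,a'}^{[l+1]}$ is handled identically, now selecting the $\bar l_{-b}$-th row of the inverse, overwriting the $\bar l_{-b}$-th column of $g^{[l+1]}$ by $\chi_{1,a'}^{[l+1]}$ and moving it to the right with sign $(-1)^{l+\bar l_{-b}}$, giving \eqref{detmops-}.

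The only place demanding genuine care is the sign accounting in the ``$-$'' cases: one must combine the cofactor sign $(-1)^{i+j}$ with the transposition sign produced by transporting the distinguished row (column) to the border, and check that they collapse to the single factor $(-1)^{l+l_{-a}}$ (resp. $(-1)^{l+\bar l_{-b}}$) displayed in the statement. I would also verify the index ranges, namely that the retained $g$-block runs over $\{0,\dots,l\}\setminus\{l_{-a}\}$ and that the monomial border carries the entries $\chi^{(0)},\dots,\chi^{(l)}$, so that each bordered matrix is genuinely $(l+1)\times(l+1)$ and matches the $\det g^{[l+1]}$ in the denominator. Everything else is a direct transcription of the block-determinant computations already carried out earlier in this section.
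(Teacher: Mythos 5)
Your argument is correct and uses exactly the Cramer/cofactor machinery that the paper deploys in Appendix \ref{II} for the sibling results (Propositions \ref{proposition: expressions for linear forms} and \ref{proposition: expressions for ct}): the ``$+$'' cases are the Schur-complement/bordered-determinant identity with $A=g^{[l]}$, and the ``$-$'' cases follow from the adjugate formula for a row or column of $(g^{[l+1]})^{-1}$ recognized as a cofactor expansion, with the sign $(-1)^{l+l_{-a}}$ (resp.\ $(-1)^{l+\bar l_{-b}}$) produced by transporting the distinguished row (resp.\ column) to the border; the paper itself omits the proof of Proposition \ref{det-associated}, so your write-up supplies it in the intended way. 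Your closing check of the index ranges is worth keeping, since for the bordered matrices in \eqref{detdualmops-} and \eqref{detmops-} to be square of size $l+1$ the retained $g$-rows (resp.\ $g$-columns) must run over $\{0,\dots,l\}\setminus\{l_{-a}\}$ (resp.\ $\{0,\dots,l\}\setminus\{\bar l_{-b}\}$), so the last displayed $g$-row (resp.\ $g$-column) should be the one with index $l$ rather than $l-1$ as printed.
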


\section{Connection with the  multi-component 2D Toda Lattice hierarchy}

In this section we introduce deformations of the Gauss--Borel factorization problem that give the
connection with the theory of the integrable
hierarchies of 2D Toda lattice type, in the multi-component flavor case. First, we introduce the continuous flows and
then the discrete ones. Let us stress that
both flows could be considered simultaneously but we consider them separately for the sake of simplicity and clearness in the exposition.

\subsection{Continuous deformations of the moment matrix}

\begin{definition}
The deformed moment matrix is given by
  \begin{align}\label{g.evol}
  g_{\vec n_1,\vec n_2}(t)&:=W_{0,\vec n_1}(t)g\bar W_{0,\vec n_2}(t)^{-1},
\end{align}
where we  use the following semi-infinite matrices
\begin{align*}
 W_{0,\vec n_1}(t)&:=\sum_{a=1}^{p_1}\exp\Big(\sum_{j=1}^\infty t_{j,a}\Lambda_{1,a}^{j}\Big)\in G_+,&
\bar W_{0,\vec n_2}(t)&:=\sum_{b=1}^{p_2}
\exp\Big(\sum_{j=1}^\infty \bar t_{j,b}(\Lambda_{2,b}^\top)^{j}\Big)\in G_-
\end{align*}
depending   on $t=(t_{j,a},\bar t_{j,b})_{j,a,b}$ with $t_{j,a},\bar t_{j,b}\in\R$, $j=1,2,\dots$, $a=1,\dots,p_1$ and $b=1,\dots,p_2$.
\end{definition}

As in the previous section and when the context is clear enough we will drop the subscripts associated with the compositions
$\vec n_1$ and $\vec n_2$. The reader should notice that the following semi-infinite matrices are well defined
\begin{align*}
 (W_{0,\vec n_1}(t)^{-1})^\top&=\sum_{a=1}^{p_1}\exp\Big(-\sum_{j=1}^\infty t_{j,a}(\Lambda_{1,a}^\top)^{j}\Big)\in G_-,&
(\bar W_{0,\vec n_2}(t)^{-1})^\top&=\sum_{b=1}^{p_2}
\exp\Big(-\sum_{j=1}^\infty \bar t_{j,b}\Lambda_{2,b}^{j}\Big)\in G_+.
\end{align*}

This deformation preserves the structure that characterizes $g$ as a moment matrix, in fact we have
\begin{theorem}
  The matrix $g(t)$ is a moment matrix with new ``deformed weights" given by
   \begin{align}\label{evolved}
 \begin{aligned}
    w_{1,a}(x,t)&=\Es_a(x,t)w_{1,a}(x),& \Es_a&:=  \exp\big(\sum_{j=1}^\infty t_{j,a}x^{j}\big),\\
    w_{2,b}(x,t)&=\bar\Es_b(x,t)^{-1}w_{2,b}(x), & \bar \Es_b&:=\exp\big(\sum_{j=1}^\infty \bar t_{j,b}x^{j}\big).
 \end{aligned}
\end{align}
\end{theorem}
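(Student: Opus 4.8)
The plan is to recognize the deformation \eqref{g.evol} as a pure change of weights: conjugating $g$ by $W_{0,\vec n_1}(t)$ on the left and by $\bar W_{0,\vec n_2}(t)^{-1}$ on the right should amount to replacing the weighted monomial strings $\xi_{\vec n_1},\xi_{\vec n_2}$ by the strings built from the weights \eqref{evolved}. The conceptual heart of the argument is a dressing identity stating that each dressing factor acts \emph{diagonally on the components}, as scalar multiplication by the corresponding exponential factor; this follows from the eigenvalue property $\Lambda_{1,a}\chi_{1,a'}=\delta_{a,a'}x\chi_{1,a}$.

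First I would show that
\begin{align*}
W_{0,\vec n_1}(t)\chi_{1,a}(x)&=\Es_a(x,t)\chi_{1,a}(x),& a&=1,\dots,p_1.
\end{align*}
Since the matrices $\Lambda_{1,a}$ commute and each of them preserves the block $\Pi_{1,a}\R^\infty$ while annihilating $\chi_{1,a'}$ for $a'\neq a$, only the exponential labelled by $a$ acts nontrivially on $\chi_{1,a}$, and on it $\Lambda_{1,a}^{j}\chi_{1,a}=x^{j}\chi_{1,a}$; hence $\exp\big(\sum_{j\geq 1}t_{j,a}\Lambda_{1,a}^{j}\big)\chi_{1,a}(x)=\Es_a(x,t)\chi_{1,a}(x)$. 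Multiplying by $w_{1,a}(x)$, summing over $a$, and using $\xi_{\vec n_1}=\sum_a\chi_{1,a}w_{1,a}$ then gives
\begin{align*}
W_{0,\vec n_1}(t)\xi_{\vec n_1}(x)=\sum_{a=1}^{p_1}\chi_{1,a}(x)\,\Es_a(x,t)w_{1,a}(x)=\sum_{a=1}^{p_1}\chi_{1,a}(x)w_{1,a}(x,t),
\end{align*}
i.e.\ exactly the weighted monomial string for the deformed weights $w_{1,a}(\cdot,t)$.

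The same computation applied to the transpose factor, using the identity $(\bar W_{0,\vec n_2}(t)^{-1})^\top=\sum_{b=1}^{p_2}\exp\big(-\sum_{j\geq1}\bar t_{j,b}\Lambda_{2,b}^{j}\big)$ recorded just before the statement, together with $\Lambda_{2,b}\chi_{2,b'}=\delta_{b,b'}x\chi_{2,b}$, yields $(\bar W_{0,\vec n_2}(t)^{-1})^\top\chi_{2,b}(x)=\bar\Es_b(x,t)^{-1}\chi_{2,b}(x)$ and hence, after transposing, $\xi_{\vec n_2}(x)^\top\bar W_{0,\vec n_2}(t)^{-1}=\sum_b w_{2,b}(x,t)\chi_{2,b}(x)^\top=\xi_{\vec n_2}(x,t)^\top$. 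Inserting both identities into \eqref{g.evol} and pulling the ($x$-independent) dressing matrices through the integral \eqref{compact.g} gives
\begin{align*}
g_{\vec n_1,\vec n_2}(t)=\int\big(W_{0,\vec n_1}(t)\xi_{\vec n_1}(x)\big)\big(\xi_{\vec n_2}(x)^\top\bar W_{0,\vec n_2}(t)^{-1}\big)\d\mu(x)=\int\xi_{\vec n_1}(x,t)\xi_{\vec n_2}(x,t)^\top\d\mu(x),
\end{align*}
which is precisely \eqref{compact.g} for the weights \eqref{evolved}.

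The main obstacle I anticipate is not the algebra but its analytic justification: $W_{0,\vec n_1}(t)\in G_+$ and $\bar W_{0,\vec n_2}(t)^{-1}\in G_-$ have rows and columns with infinitely many nonzero entries, so moving them inside the integral requires that the series $\sum_k (W_{0,\vec n_1}(t))_{ik}g_{kj}$ converge and commute with integration against $\d\mu$. Equivalently, one needs $\Es_a(x,t)$ and $\bar\Es_b(x,t)^{-1}$ to be integrable against $w_{1,a}w_{2,b}\,\d\mu$, a point to be handled by the same growth/compactness considerations used for the second-kind functions; the block-diagonal dressing identity above is then the routine, purely algebraic core of the proof.
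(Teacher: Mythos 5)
Your proposal is correct and follows essentially the same route as the paper: the paper expands $W_0(t)$ and $\bar W_0(t)^{-1}$ in elementary Schur polynomials and computes $(W_0\,g\,\bar W_0^{-1})_{i,j}$ entrywise via \eqref{explicit g2}, which is exactly the componentwise version of your dressing identity $W_{0,\vec n_1}(t)\chi_{1,a}=\Es_a\chi_{1,a}$ applied inside the Grammian \eqref{compact.g}. Your explicit flagging of the interchange of the infinite sums with the integral is a point the paper passes over silently, but otherwise the two arguments coincide.
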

\begin{proof}
Observe that
 \begin{align*}
   W_0(t)&=\sum_{j\geq 0}\sum_{a=1}^{p_1}\sigma^{(a)}_j(t) \Lambda_{1,a}^{j} ,&
    \bar  W_0(t)^{-1}=\sum_{j\geq 0}\sum_{b=1}^{p_2}(\Lambda_{2,b}^\top)^{j}\bar\sigma^{(b)}_j(t) ,
   \end{align*}
 where $\sigma^{(a)}_j $ is the $j$-th elementary Schur polynomial in the variables $t_{ja}$ and $\bar\sigma^{(j)}_b$ is also an
 elementary Schur polynomial but now in the variables $-\bar t_{j,b}$. To prove \eqref{evolved} we first discuss the action of
 $\Lambda_{1,a}$ and $\Lambda_{2,b}^{\top}$ on $g$ explicitly. Recalling \eqref{explicit g2} it is straightforward to
 see that
\begin{equation*}
    (\Lambda_{1,a}g\Lambda_{2,b}^{\top})_{i,j}=\int  x^{k_1(i)+1}w_{1,a_1(i)}(x)w_{2,a_2(j)}(x)x^{k_2(j)+1}
    \delta_{a_1(i),a} \delta_{a_2(j),b}
    \d \mu(x),
\end{equation*}
and consequently the following expression holds
\begin{equation*}
    (W_0 g \bar W_0^{-1})_{i,j}=\sum_{a=1}^{p_1} \sum_{b=1}^{p_2}\int  x^{k_1(i)}\Big(\sum_{l \geq 0} \sigma_l^{(a)}
    x^l\Big) w_{1,a_1(i)}(x)w_{2,a_2(j)}(x)\Big(\sum_{m \geq 0} \bar \sigma_m^{(b)} x^m\Big) x^{k_2(j)}
    \delta_{a_1(i),a} \delta_{a_2(j),b}
    \d \mu(x),
\end{equation*}
that leads directly to \eqref{evolved}.

\end{proof}
That the sign definition of the  weights is preserved under deformations is ensured by the fact that all times $t$ are  real. Let us comment that these deformations could be also be considered as evolutions, and from hereon we
indistinctly talk about deformation/evolution. If the initial measures have bounded support then there is no problem
with the exponential behavior at $\infty$ of the $\Es$ factors;  however, for unbounded situations a discussion is
needed for each case.

The Gauss--Borel  factorization problem
\begin{align}\label{facto-t}
  g_{\vec n_1,\vec n_2}(t)&=S(t)^{-1}\bar S(t),& S(t)&\in G_-, &\bar S (t)&\in G_+,
  \end{align}
with $S(t)$ lower triangular and $\bar S(t)$ upper triangular, will give the connection with integrable systems of Toda type.

Let us assume that the weights in $(\vec w_1,\vec w_2)$ are of the form \eqref{theform} and that conform an M-Nikishin, then  Theorem \ref{suffcondnik} indicates that for  small values of the times the new weights are also in the M-Nikishin class, ensuring that $(\vec w_1(t),\vec w_2(t),\mu)$ is a perfect system and therefore the Gauss--Borel factorization makes sense.



\subsection{Lax equations and the integrable hierarchy}
 Let us introduce the Lax machinery associated with the Gauss--Borel  factorization  that will lead to a multi-component 2D
 Toda lattice hierarchy as described in
 \cite{manas-martinez-alvarez}:
 \begin{definition}
 Associated with the deformed Gauss--Borel factorization we consider
   \begin{enumerate}
\item  Wave semi-infinite  matrices
\begin{align}
  W(t)&:=S(t)W_0(t),& \bar  W(t)&:=\bar S(t)\bar W_0(t).
\end{align}
\item Wave
\begin{align}\label{defbaker}
  \Psi_a(z,t)&:=W(t)\chi_{1,a}(z),& \bar\Psi_b(z,t)&:=\bar W(t)\chi_{2,b}^*(z),
\end{align}
and adjoint  wave semi-infinite vector  functions\footnote{In this point the reader should notice that there are
two differences between this definition of wave functions (also known as Baker--Akheizer functions) and the
one common in the literature, see for example \cite{adler-van moerbeke 1}. Our modifications are motivated  by two
facts, i) we prefer $\bar\Psi^*_b$ to
be a polynomial in $z$ and not in $z^{-1}$, up to  plane-wave factors, ii) we choose to have a direct
connection between wave functions and Cauchy
transforms of polynomials, with no $z^{-1}$ factors multiplying the Cauchy transforms when identified with wave
functions. If we denote by small $\psi$
the wave functions corresponding to the scheme of for example \cite{adler-van moerbeke 1} then we should have the
following correspondence
$\Psi_a^{(l)}(z)\leftrightarrow\psi^{(l)}_a(z)$,  $ z(\Psi_a^*)^{(l)}(z)\leftrightarrow(\psi^*_a)^{(l)}(z)$,
$z^{-1}\bar\Psi_b^{(l)}(z^{-1}
)\leftrightarrow \bar\psi^{(l)}_b(z)$ and $(\bar \Psi_b^*)^{(l)}(z^{-1})\leftrightarrow(\bar\psi^*_b)^{(l)}(z)$.}
\begin{align}\label{adjoint.baker}
  \Psi_a^*(z,t)&:= (W(t)^{-1})^\top\chi_{1,a}^*(z),&\bar \Psi_b^*(z,t)&:=(\bar W(t)^{-1})^\top\chi_{2,b}(z).
\end{align}

 \item Lax semi-infinite matrices
\begin{align}\label{deflax}
  L_a(t)&:=S(t)\Lambda_{1,a}S(t)^{-1},&
   \bar L_b(t)&:=\bar S(t)\Lambda_{2,b}^\top\bar S(t)^{-1}.
\end{align}
\item Zakharov--Shabat semi-infinite matrices
\begin{align}\label{defzs}
B_{j,a}&:=(L^j_a)_+,& \bar B_{j,b}&:=(\bar L^j_{b})_-,
  \end{align}
  where the subindex $+$ indicates the projection in the upper triangular matrices while the subindex
   $-$ the projection in the strictly lower triangular matrices.
     \end{enumerate}
 \end{definition}

Observe that
\begin{align}
  L_a\Psi_{a'}&=\delta_{a,a'}z\Psi_{a'},& \bar L_b^\top\bar\Psi_{b'}^*&=\delta_{b,b'}z^{-1}\bar\Psi_{b'}^*.
\end{align}
We  also mention that the matrices $S$ and $\bar S$ correspond to the Sato operators (also known as gauge operators)
of the integrable hierarchy we are deling with. Some times \cite{bergvelt} the operators $L_a$ are referred as
resolvents and the Lax name is reserved only for a convenient linear combination of the resolvents.

The reader should notice that as $S(t)\in G_-$ and $W_0(t)\in G_+$ the product $W(t)=S(t)W_0(t)$ is well defined as its coefficients are finite sums instead of series,
for $(\bar W(t)^{-1})^\top=(\bar S(t)^{-1})^{\top}(\bar W_0(t))^{-1})^{\top}$ we can apply the previous argument and therefore the product is well defined. However,
$(W(t)^{-1})^{\top}=(S(t)^{-1})^\top (W_0(t)^{-1})^\top$ is a product of elements which involves series instead of finite sum and its existence is not in principle ensured. The situation is reproduced with $\bar W(t)=\bar S(t)\bar W_0(t)$, and the existence of the product is not guaranteed. However, we notice that the simultaneous consideration of the factorization problems \eqref{facto-t} and \eqref{facto} leads $S(t)^{-1}\bar S (t)=W_0(t) S^{-1} \bar S \bar W_0(t)^{-1}$ that shows two products involving series, namely $W_0(t)S^{-1}$ and $\bar S\bar W_0(t)^{-1}$, but they are well defined if we assume the existence of both $LU$ factorizations. From hereon we give for granted the existence of $\bar W$ and $W^{-1}$, and as we will see they indeed involve series, which in the convergent situation lead to Cauchy transforms.

\begin{pro}\label{conti}
For the wave functions we have
    \begin{align}
    \Psi_a^{(k)}(z,t)&=A^{(k)}_a(z,t)\Es_a(z,t),&
    ( \bar \Psi_b^*)^{(k)}(z,t)&=\bar A_b^{(k)}(z,t)\bar\Es_b(z,t)^{-1},
\end{align}
where $A^{(k)}_a(x,t),\bar A_b^{(k)}(x,t)$ are the multiple orthogonal polynomials and  dual polynomials (in the $x$
variable)  corresponding to
\eqref{evolved}.
The evolved  linear forms,  associated with weights \eqref{evolved}, are
\begin{align}
  Q^{(k)}(x,t)&:=\sum_{a=1}^{p_1}A_a^{(k)}(x,t)w_{1,a}(x,t)=\sum_{a=1}^{p_1}\Psi_a^{(k)}(x,t)w_{1,a}(x),\\    \bar
  Q^{(k)}(x,t)&:=\sum_{b=1}^{p_2}(\bar
  A_b^*)^{(k)}(x,t)w_{2,b}(x,t)=\sum_{b=1}^{p_2}(\bar\Psi_b^*)^{(k)}(x,t)w_{2,b}(x),
\end{align}
which are bi-orthogonal polynomials of mixed type for each $t$
\begin{align}
  \int  Q^{(l)}(t,x)\bar Q^{(k)}(t,x)\d \mu(x)&=\delta_{l,k},& l,k\geq 0,
\end{align} and
   \begin{align}
   \bar  \Psi_b^{(k)}(z,t)&=\int \frac{Q^{(k)}(x,t)}{z-x}w_{2,b}(x)\d \mu(x),&
    ( \Psi_a^*)^{(k)}(z,t)&=\int \frac{\bar Q^{(k)}(x,t)}{z-x}w_{1,a}(x)\d \mu(x).
   \end{align}
\end{pro}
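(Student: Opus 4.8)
The plan is to reduce every assertion to results already established for the undeformed moment matrix, exploiting that $g(t)=W_0(t)g\bar W_0(t)^{-1}$ is again a genuine moment matrix---now for the weights \eqref{evolved} with the same base measure $\mu$---so that the only genuinely new computation is the action of the plane-wave dressing factors on the monomial strings. First I would record the key identity
\[
W_0(t)\chi_{1,a}(z)=\Es_a(z,t)\chi_{1,a}(z),\qquad (\bar W_0(t)^{-1})^\top\chi_{2,b}(z)=\bar\Es_b(z,t)^{-1}\chi_{2,b}(z),
\]
which follows at once from the eigenvalue property $\Lambda_{1,a'}\chi_{1,a}=\delta_{a,a'}z\chi_{1,a}$ (and its $\vec n_2$ analogue): since $\Lambda_{1,a'}$ preserves the blocks $\Pi_{1,a''}\R^\infty$ and annihilates $\chi_{1,a}$ when $a'\neq a$, only the $a'=a$ exponential survives and it acts as multiplication by $\Es_a(z,t)=\Exp{\sum_j t_{j,a}z^j}$.

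With this in hand, writing $\A_a(t)=S(t)\chi_{1,a}$ and $(\bar S(t)^{-1})^\top\chi_{2,b}$ for the strings attached to the factorization \eqref{facto-t}, the first pair of formulae is immediate:
\[
\Psi_a=S(t)W_0(t)\chi_{1,a}=\Es_a\,S(t)\chi_{1,a},\qquad \bar\Psi_b^*=(\bar S(t)^{-1})^\top(\bar W_0(t)^{-1})^\top\chi_{2,b}=\bar\Es_b^{-1}(\bar S(t)^{-1})^\top\chi_{2,b},
\]
so that componentwise $\Psi_a^{(k)}=A_a^{(k)}(z,t)\Es_a(z,t)$ and $(\bar\Psi_b^*)^{(k)}=\bar A_b^{(k)}(z,t)\bar\Es_b(z,t)^{-1}$, where $A_a^{(k)}(z,t),\bar A_b^{(k)}(z,t)$ are the multiple orthogonal polynomials and duals read off from $S(t),\bar S(t)$ exactly as in \eqref{defmops}--\eqref{defdualmops}. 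Inserting $w_{1,a}(x,t)=\Es_a w_{1,a}$ then gives $Q^{(k)}(x,t)=\sum_a A_a^{(k)}(x,t)w_{1,a}(x,t)=\sum_a\Psi_a^{(k)}(x,t)w_{1,a}(x)$, and likewise $\bar Q^{(k)}(x,t)=\sum_b(\bar\Psi_b^*)^{(k)}(x,t)w_{2,b}(x)$. Since these are precisely the linear forms of Definition \ref{def:linear forms} built from the factorization of $g(t)$, the bi-orthogonality $\int Q^{(l)}(t,x)\bar Q^{(k)}(t,x)\d\mu(x)=\delta_{l,k}$ is nothing but Proposition \ref{proposition: biorthogonality} applied to $g(t)$; indeed $S(t)g(t)\bar S(t)^{-1}=\I$ by \eqref{facto-t}.

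The substantive step is the Cauchy-transform representation of $\bar\Psi_b$ and $\Psi_a^*$, where I would first use the two factorizations to clear the dressing factors. From $\bar S(t)=S(t)g(t)=S(t)W_0(t)g\bar W_0(t)^{-1}$ one gets $\bar\Psi_b=\bar S(t)\bar W_0(t)\chi_{2,b}^*=W(t)\,g\,\chi_{2,b}^*$, and dually, using $(S(t)^{-1})^\top=(\bar S(t)^{-1})^\top g(t)^\top$ together with $g(t)^\top(W_0(t)^{-1})^\top=(\bar W_0(t)^{-1})^\top g^\top$, one gets $\Psi_a^*=(\bar W(t)^{-1})^\top g^\top\chi_{1,a}^*$. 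Expanding the $k$-th component of $\bar\Psi_b=W(t)g\chi_{2,b}^*$ with the explicit entries \eqref{explicit g2} of $g$ and $(\chi_{2,b}^*)_n=z^{-k_2(n)-1}\delta_{a_2(n),b}$, the index sums factor as
\[
\bar\Psi_b^{(k)}(z,t)=\int\Big(\sum_a\Psi_a^{(k)}(x,t)w_{1,a}(x)\Big)w_{2,b}(x)\Big(\sum_{k_2(n)\geq0}\tfrac{x^{k_2(n)}}{z^{k_2(n)+1}}\Big)\d\mu(x),
\]
where the first bracket is $Q^{(k)}(x,t)$ and the geometric series sums to $1/(z-x)$; the analogous reduction of $\Psi_a^*=(\bar W(t)^{-1})^\top g^\top\chi_{1,a}^*$ produces $\bar Q^{(k)}(x,t)/(z-x)$ weighted by $w_{1,a}(x)$, giving the two claimed integral formulae.

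The main obstacle is not the algebra but the interchange of summation and integration. Exactly as in the proof of Proposition \ref{pro:cauchy tr}, the series representation of $1/(z-x)$ converges only for $|x|<|z|$, so the identities hold on the convergence domains $D_b^{(l)}$, $\bar D_a^{(l)}$ (which forces the relevant supports to be bounded), and the passage to the limit must be justified by the uniform tail estimate \eqref{connec}--\eqref{ct1} on compact subsets at positive distance from $\operatorname{supp}(w_{2,b}\d\mu)$, respectively $\operatorname{supp}(w_{1,a}\d\mu)$. I would therefore transcribe that estimate verbatim, the only change being the replacement of the original linear forms by the evolved ones $Q^{(k)}(\cdot,t)$, $\bar Q^{(k)}(\cdot,t)$, whose moments define the same type of convergent tail.
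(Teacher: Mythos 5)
Your proposal is correct and follows essentially the same route as the paper: the first identities come from $W=SW_0$ and the eigenvalue action of the dressing factors on $\chi_{1,a}$, $\chi_{2,b}$, the bi-orthogonality is Proposition \ref{proposition: biorthogonality} applied to the evolved moment matrix, and the Cauchy-transform formulae come from rewriting $\bar\Psi_b=Wg\chi_{2,b}^*$ and $\Psi_a^*=(\bar W^{-1})^\top g^\top\chi_{1,a}^*$ via $Wg=\bar W$ and summing the geometric series exactly as in Proposition \ref{pro:cauchy tr}. You simply spell out details (the identity $W_0\chi_{1,a}=\Es_a\chi_{1,a}$, the identification of the evolved linear forms, and the uniform-convergence estimate) that the paper leaves implicit.
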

\begin{proof}
 From the definitions \eqref{defbaker} and \eqref{adjoint.baker}, and the factorization problem $Wg=\bar W$ we
 conclude
\begin{align}\label{baker}
 \bar\Psi_b&=\bar W\chi_{2,b}^*=S(W_0g)\chi_{2,b}^*,&
  \Psi_a^*&=(W^{-1})^\top\chi_{1,a}^*=(\bar S^{-1})^\top (g\bar W_0^{-1})^\top \chi_{1,a}^*.
\end{align}
We get,  in terms of the linear forms, the following identities
   \begin{align*}
   \bar  \Psi_b^{(k)}(z,t)&=\int \frac{Q^{(k)}(x,t)}{z-x}w_{2,b}(x)\d \mu(x),&
    ( \Psi_a^*)^{(k)}(z,t)&=\int \frac{\bar Q^{(k)}(x,t)}{z-x}w_{1,a}(x)\d \mu(x),
   \end{align*}
where the Cauchy transforms  are understood as  before.\footnote{The reader should notice that there is a difference
in this semi-infinite context, appropriate for the construction of multiple orthogonal polynomials, and the
bi-infinite case which is the one considered in \cite{ueno-takasaki}. In the present context we do not have
expressions, as we do have in the bi-infinite
situation, of the form
   \begin{align*}
   \bar  \Psi_b^{(k)}(z,t)&=(P_0+P_1 z^{-1}+\cdots)\exp\big(\sum_{j> 0}\bar t_{j,b}z^{j}\big),\\
    ( \Psi_a^*)^{(k)}(z,t)&=(Q_0+Q_1 z^{-1}+\cdots)\exp\big(-\sum_{j> 0}t_{j,a}z^j\big).
   \end{align*}
   The reason for this issue is rooted into non-invertibility of $\Lambda_a$. Indeed, for the semi-infinite case, we
   have
\begin{align*}
(  \Lambda_a^\top)^j\chi_a^*&=\big[z^j\chi_a^*\big]_-&&\Longrightarrow
& \exp\big(\sum_{j=1}^\infty c_j(\Lambda_a^{\top})^j\big)\chi_a^*=[\exp\big(\sum_{j=1}^\infty c_jz^j\big)\chi_a^*]_-
\end{align*}
where the subindex $-$ stands for the negative powers in $z$ in the Laurent expansion; while in the bi-infinite case
we drop the $-$ subindex in the previous
formulae.}
\end{proof}
We must stress in this point that these functions are not the evolved second kind functions  of the linear forms
   \begin{align}\label{evolved.ct}
   \bar  C_b^{(k)}(z,t)&:=\int \frac{Q^{(k)}(x,t)}{z-x}w_{2,b}(x,t)\d \mu(x),&
    (C_a)^{(k)}(z,t)&:=\int \frac{\bar Q^{(k)}(x,t)}{z-x}w_{1,a}(x,t)\d \mu(x).
   \end{align}
\begin{theorem}\label{proposition: the hierarchy}
For $j,j'=1,2,\dots$, $a,a'=1,\dots,p_1$ and $b,b'=1,\dots,p_2$ the following differential relations hold
  \begin{enumerate}
    \item Auxiliary linear systems for the wave matrices
    \begin{align}\label{auxlinsys}
      \frac{\partial W}{\partial t_{j,a}}&=B_{j,a} W, &\frac{\partial W}{\partial \bar t_{j,b}}&=\bar B_{j,b} W,&
      \frac{\partial \bar W}{\partial t_{j,a}}&=B_{j,a} \bar W, &\frac{\partial \bar W}{\partial \bar
      t_{j,b}}&=\bar B_{j,b} \bar W.
    \end{align}
    \item Linear systems for the wave and adjoint wave semi-infinite matrices
       \begin{align}\label{linbaker}
      \frac{\partial \Psi_{a'}}{\partial t_{j,a}}&=B_{j,a}\Psi_{a'}, &\frac{\partial \Psi_{a'}}{\partial \bar
      t_{j,b}}&=\bar B_{j,b} \Psi_{a'},&
      \frac{\partial \bar \Psi_{b'}}{\partial t_{j,a}}&=B_{j,a} \bar \Psi_{b'}, &\frac{\partial \bar
      \Psi_{b'}}{\partial \bar t_{j,b}}&=\bar B_{j,b} \bar \Psi_{b'},\\
      \frac{\partial \Psi^*_{a'}}{\partial t_{j,a}}&=-B_{j,a}^\top\Psi_{a'}^*, &\frac{\partial
      \Psi_{a'}^*}{\partial \bar t_{j,b}}&
      =-\bar B_{j,b}^\top \Psi_{a'}^*,&
      \frac{\partial \bar \Psi_{b'}^*}{\partial t_{j,a}}&=-B_{j,a}^\top \bar \Psi_{b'}^*, &\frac{\partial \bar
      \Psi_{b'}^*}{\partial \bar t_{j,b}}&=-\bar B_{j,b}^\top
      \bar \Psi_{b'}^*.
    \end{align}
    \item Linear systems for multiple orthogonal polynomials and their duals
\begin{align}\label{flow.mop}
\begin{aligned}
  \frac{\partial \A_{a'}}{\partial t_{j,a}}&=(B_{j,a}-\delta_{a,a'}x^j)\A_{a'},&  \frac{\partial \A_{a'}}{\partial
  \bar t_{j,b}}&=(\bar B_{j,b})\A_{a'},&
 \frac{\partial \bar \A_{b'}}{\partial t_{j,a}}&=-B_{j,a}^\top\bar\A_{b'},&  \frac{\partial \bar\A_{b'}}{\partial \bar
 t_{j,b}}&=(-\bar
 B_{j,b}^\top+\delta_{b,b'}x^j)\bar\A_{b'}.
\end{aligned}
\end{align}
    \item Lax equations
      \begin{align}\label{laxeq}
        \frac{\partial L_{a'}}{\partial t_{j,a}}&=[B_{j,a},L_{a'}], &\frac{\partial L_{a'}}{\partial \bar
        t_{j,b}}&=[\bar B_{j,b},L_{a'}], &
      \frac{\partial \bar L_{b'}}{\partial t_{j,a}}&=[B_{j,a}, \bar L_{b'}], &\frac{\partial \bar L_{b'}}{\partial
      \bar t_{j,b}}&=[\bar B_{j,b}, \bar L_{b'}].
    \end{align}
    \item Zakharov--Shabat equations
    \begin{align}\label{zseq}
      \frac{\partial B_{j,a}}{\partial t_{j',a'}}-
          \frac{\partial B_{j',a'}}{\partial t_{j,a}} +[B_{j,a},B_{j',a'}]&=0,\\
           \frac{\partial \bar B_{j,b}}{\partial \bar t_{j',b'}}-
          \frac{\partial \bar B_{j',b'}}{\partial \bar t_{j,b}} +[\bar B_{j,b},\bar B_{j',b'}]&=0,\\
           \frac{\partial B_{j,a}}{\partial \bar t_{j',b'}}-
          \frac{\partial \bar B_{j',b'}}{\partial t_{j,a}} +[B_{j,a},\bar B_{j',b'}]&=0.
    \end{align}
  \end{enumerate}
\end{theorem}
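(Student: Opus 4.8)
The plan is to run the classical dressing (Sato) argument, whose engine is the single identity $Wg=\bar W$, with $g$ the \emph{time-independent} initial moment matrix. This follows from \eqref{g.evol} and the deformed factorization \eqref{facto-t}: since $g(t)=W_0g\bar W_0^{-1}$ gives $W_0g=g(t)\bar W_0$, one has $Wg=SW_0g=Sg(t)\bar W_0=\bar S\bar W_0=\bar W$. Because $g$ is constant in $t$, differentiating $Wg=\bar W$ and using $W^{-1}=g\bar W^{-1}$ yields $\partial_{t_{j,a}}W\cdot W^{-1}=\partial_{t_{j,a}}\bar W\cdot\bar W^{-1}$, and likewise for the $\bar t_{j,b}$ variables. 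All five items will be extracted from this one equation.

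The crucial step is to evaluate each side and split by triangularity. Using $\partial_{t_{j,a}}W_0=\Lambda_{1,a}^jW_0$ together with $W=SW_0$ and \eqref{deflax} gives $\partial_{t_{j,a}}W\cdot W^{-1}=\partial_{t_{j,a}}S\cdot S^{-1}+L_a^j$, while $\partial_{t_{j,a}}\bar W\cdot\bar W^{-1}=\partial_{t_{j,a}}\bar S\cdot\bar S^{-1}$ since $\bar W_0$ is independent of $t_{j,a}$. As $S\in G_-$ has unit diagonal, $\partial_{t_{j,a}}S\cdot S^{-1}$ is strictly lower triangular, whereas $\partial_{t_{j,a}}\bar S\cdot\bar S^{-1}$ is upper triangular; projecting $\partial_{t_{j,a}}S\cdot S^{-1}+L_a^j=\partial_{t_{j,a}}\bar S\cdot\bar S^{-1}$ onto its strictly-lower and upper parts and recalling $B_{j,a}=(L_a^j)_+$ from \eqref{defzs} produces the Sato equations $\partial_{t_{j,a}}S=(B_{j,a}-L_a^j)S$ and $\partial_{t_{j,a}}\bar S=B_{j,a}\bar S$. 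The analogous computation for $\bar t_{j,b}$, where now the extra term $\bar L_b^j$ appears on the $\bar W$ side and $\bar B_{j,b}=(\bar L_b^j)_-$, gives $\partial_{\bar t_{j,b}}S=\bar B_{j,b}S$ and $\partial_{\bar t_{j,b}}\bar S=(\bar B_{j,b}-\bar L_b^j)\bar S$. Substituting these back into $\partial_t W\cdot W^{-1}$ and $\partial_t\bar W\cdot\bar W^{-1}$ cancels the $L^j$ corrections and yields part (1).

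The remaining items are formal consequences. Part (2) follows by applying the wave equations of (1) to the time-independent vectors $\chi_{1,a'},\chi_{2,b'}^*,\chi_{1,a'}^*,\chi_{2,b'}$, the adjoint equations using $\partial_{t_{j,a}}(W^{-1})^\top=-B_{j,a}^\top(W^{-1})^\top$ and its $\bar t$ analogue. Part (3) follows from the Sato equations applied to $\A_{a'}=S\chi_{1,a'}$ and $\bar\A_{b'}=(\bar S^{-1})^\top\chi_{2,b'}$ of \eqref{mop-s}: the eigenvalue relations $\Lambda_{1,a}^j\chi_{1,a'}=\delta_{a,a'}x^j\chi_{1,a}$ and $\Lambda_{2,b}^j\chi_{2,b'}=\delta_{b,b'}x^j\chi_{2,b}$ turn $L_a^j\A_{a'}$ into $\delta_{a,a'}x^j\A_{a'}$ and $(\bar L_b^\top)^j\bar\A_{b'}$ into $\delta_{b,b'}x^j\bar\A_{b'}$, yielding exactly the diagonal correction terms. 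Part (4) comes from differentiating $L_{a'}=S\Lambda_{1,a'}S^{-1}$ and $\bar L_{b'}=\bar S\Lambda_{2,b'}^\top\bar S^{-1}$ with the Sato equations; here the commutativity of $\{\Lambda_{1,a}^j\}$ (so $[L_a^j,L_{a'}]=0$) and of $\{\Lambda_{2,b}^j\}$ removes the $L^j$ corrections and leaves pure commutators with $B_{j,a}$ and $\bar B_{j,b}$. Finally part (5) is the cross-derivative compatibility of the linear systems in (1): equating mixed second derivatives of $W$ and right-multiplying by $W^{-1}$ gives the three zero-curvature relations.

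The main obstacle is not any single display but the bookkeeping of well-definedness in the algebra of semi-infinite matrices, stressed just before the statement: $\bar W$ and $W^{-1}$ are genuine series rather than finite sums, so $W^{-1}=g\bar W^{-1}$, $\partial_t W^{-1}=-W^{-1}(\partial_t W)W^{-1}$, and the triangular projections must all be justified under the standing hypothesis that both Gauss--Borel factorizations exist (guaranteed here by perfectness and, for small times, by Theorem \ref{suffcondnik}). One must also check that $\partial_{t_{j,a}}S\cdot S^{-1}$ and $\partial_{t_{j,a}}\bar S\cdot\bar S^{-1}$ genuinely land in the strictly-lower and upper triangular classes, so that the splitting into $(L_a^j)_\pm$ is legitimate; once this is granted, every assertion reduces to the elementary triangular decomposition and the eigenvalue property of the shifts $\Lambda_{1,a},\Lambda_{2,b}$.
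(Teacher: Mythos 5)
Your proof is correct and follows essentially the same route as the paper: both reduce everything to the identity $\partial S\cdot S^{-1}+L_a^j=\partial\bar S\cdot\bar S^{-1}$ (your $Wg=\bar W$ formulation is just the paper's deformed factorization $g(t)=S^{-1}\bar S$ rewritten), split it by triangularity into the Sato equations $\partial_{t_{j,a}}S\cdot S^{-1}=-(L_a^j)_-$, $\partial_{t_{j,a}}\bar S\cdot\bar S^{-1}=(L_a^j)_+$, and then derive items (2)--(5) by dressing, the eigenvalue property of $\Lambda_{1,a},\Lambda_{2,b}$, and cross-derivative compatibility. Your explicit remarks on the commutativity $[L_a^j,L_{a'}]=0$ in part (4) and on the well-definedness of the semi-infinite products only make explicit what the paper leaves implicit.
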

\begin{proof}
To prove \eqref{auxlinsys} we proceed as follows. In the first place we compute
 \begin{align*}
    \frac{\partial W_0}{\partial t_{j,a}}&=\Lambda_{1,a}^{j}W_0,  &\frac{\partial \bar W_0}{\partial \bar
    t_{j,b}}&=(\Lambda_{2,b}^{\top})^{j}\bar W_0,
 \end{align*}
 and in the second place we observe that
 \begin{align}
 \frac{\partial W}{\partial t_{j,a}} &=\Big(\frac{\partial S}{\partial t_{j,a}}S^{-1}+L_a^{j}\Big)W,&
  \frac{\partial \bar W}{\partial t_{j,a}} &=\Big(\frac{\partial \bar S}{\partial t_{j,a}}S^{-1}\Big)\bar W, \label{desfac1}\\
  \frac{\partial  W}{\partial \bar t_{j,b}}&=\Big(\frac{\partial  S}{\partial \bar t_{j,b}}  S^{-1}\Big) W,&
  \frac{\partial \bar W}{\partial \bar t_{j,b}}&=\Big(\frac{\partial \bar S}{\partial \bar t_{j,b}} \bar S^{-1}+
 \bar L_b^{j}\Big) \bar W.\label{desfac2}
 \end{align}
Now, using the factorization problem  we get
 \begin{align*}
\frac{\partial S}{\partial t_{j,a}}S^{-1}+L_a^{j}
 =\frac{\partial \bar S}{\partial t_{j,a}} \bar S^{-1},
\\
\frac{\partial \bar S}{\partial \bar t_{j,b}} \bar S^{-1}+
 \bar L_b^{j}
 =\frac{\partial S}{\partial \bar t_{j,b}} S^{-1},
 \end{align*}
which, taking the $+$ part (upper triangular) and the $-$ part (strictly lower triangular) imply
\begin{align}\label{desfac3}
\frac{\partial S}{\partial t_{j,a}}S^{-1}&=-(L_a^{j})_-, &\frac{\partial \bar S}{\partial t_{j,a}} \bar
S^{-1}&=(L_a^{j})_+, & \frac{\partial \bar S}{\partial \bar
t_{j,b}} \bar S^{-1}&=-(\bar L_b^{j})_+,
& \frac{\partial S}{\partial \bar t_{j,b}} S^{-1}&=(\bar L_b^{j})_-,
\end{align}
so using \eqref{desfac3} into \eqref{desfac1} and \eqref{desfac2} with the definitions \eqref{defzs} we obtain
\eqref{auxlinsys}.
The linear system \eqref{linbaker} is obtained by inserting  \eqref{defbaker} into \eqref{auxlinsys}.

To obtain the Lax equations \eqref{laxeq} 
we take derivatives of \eqref{deflax}
\begin{align*}
\frac{\partial L_{a'}}{\partial t_{j,a}}&=\Big[\frac{\partial S}{\partial t_{j,a}}S^{-1},L_{a'}\Big]=[B_{j,a},L_{a'}],&
\frac{\partial \bar L_{b'}}{\partial t_{j,a}}&=\Big[\frac{\partial \bar S}{\partial t_{j,a}}\bar S^{-1},\bar L_{b'}\Big]=[B_{j,a},\bar L_{b'}], \\
\frac{\partial L_{a'}}{\partial \bar t_{j,b}}&=\Big[\frac{\partial S}{\partial \bar t_{j,b}}S^{-1},L_{a'}\Big]=[\bar B_{j,b},L_{a'}],&
\frac{\partial \bar L_{b'}}{\partial \bar t_{j,b}}&=\Big[\frac{\partial \bar S}{\partial \bar t_{j,b}}\bar S^{-1},\bar L_{b'}\Big]=[\bar B_{j,b},\bar
L_{b'}].
\end{align*}
Finally, \eqref{zseq} are obtained as compatibility conditions for \eqref{auxlinsys}.
\end{proof}
  All these equations provide us with different descriptions of  a multi-component integrable hierarchy of the 2D Toda
  lattice hierarchy type that rules the
  flows of the multiple orthogonal polynomials with respect to  deformed weights. This integrable hierarchy is
  the Toda type extension of the
  multi-component KP hierarchy considered in \cite{bergvelt}.

\subsection{Darboux--Miwa discrete  flows}
We complete the previously considered  continuous flows with discrete flows, which we introduce through an iterated
application of  Darboux
transformations \cite{adler-van moerbeke 2}.

\begin{definition}
Given  sequences of complex numbers
 \begin{align}
 \begin{aligned}
 \lambda_a&:=\{\lambda_a(n)\}_{n\in\Z}\subset\C, & a&=1,\dots,p_1,&
 \bar\lambda_b&:= \{\bar \lambda_b(n)\}_{n\in\Z}\subset\C, & b&=1,\dots,p_2,
 \end{aligned}
 \end{align}
(where $\bar\lambda$ is not intended to denote the complex conjugate of $\lambda$) and two  vectors,
$(s_1,\dots,s_{p_1})\in\Z^{p_1}$ and $(\bar s_1,\dots,\bar s_{p_2})\in\Z^{p_2}$, we construct the following
semi-infinite
 matrices
 \begin{align}
 \begin{aligned}
   D_0&:=\sum_{a=1}^{p_1}D_{0,a}, & D_{0,a}&:=\begin{cases}
     \prod_{n=1}^{s_a}(\Lambda_{1,a}-\lambda_a(n)\Pi_{1,a}), &s_a>0,\\
     \Pi_{1,a},& s_a=0,\\
       \prod_{n=1}^{|s_a|}(\Lambda_{1,a}-\lambda_a(-n)\Pi_{1,a})^{-1},& s_a<0,
   \end{cases}\\
   \bar   D_0^{-1}&:=   \sum_{b=1}^{p_2} \big(\bar D_{0}^{-1}\big)_b & \big(\bar D_{0}^{-1}\big)_{b}&
   :=\begin{cases}
     \prod_{n=1}^{\bar s_b}(\Lambda_{2,b}^\top-\bar \lambda_b(n)\Pi_{2,b}), &\bar s_b>0,\\
     \Pi_{2,b},&\bar s_b=0,\\
       \prod_{n=1}^{|\bar s_b|}(\Lambda_{2,b}^\top-\bar \lambda_b(-n)\Pi_{2,b})^{-1},&\bar s_b<0,
   \end{cases}
\end{aligned}
 \end{align}
  where $s:=\{s_a,\bar s_b\}_{\substack{a=1,\dots,p_1\\b=1,\dots,p_2}}$ denotes the set of discrete times,
  in terms of which we define the  deformed moment matrix
 \begin{align}
   g(s)=D_0(s)g\bar D_0(s)^{-1}.
 \end{align}
\end{definition}

\begin{pro}
 The moment matrix $g(s)$ has the same form as the moment matrix $g$ but with new weights
\begin{align}\label{d.evolved}\begin{aligned}
  w_{1,a}(s,x)&=\Ds_{a}(x,s_a)w_{1,a}(x),& \Ds_a&:=\begin{cases}
     \prod_{n=1}^{s_a}(x-\lambda_a(n)), &s_a>0,\\
     1,& s_a=0,\\
       \prod_{n=1}^{|s_a|}(x-\lambda_a(-n))^{-1},& s_a<0,
   \end{cases}\\
   w_{2,b}(s,x)&=\bar \Ds_b(x,\bar s_b)^{-1}w_{2,b}(x),& \bar \Ds_b^{-1}&:=\begin{cases}
     \prod_{n=1}^{\bar s_b}(x-\bar \lambda_b(n)), &\bar s_b>0,\\
 1,&\bar s_b=0,\\
       \prod_{n=1}^{|\bar s_b|}(x-\bar \lambda_b(-n))^{-1},&\bar s_b<0,
   \end{cases}\end{aligned}
\end{align}
\end{pro}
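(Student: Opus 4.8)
The plan is to show that multiplying the Grammian $g=\int \xi_{\vec n_1}(x)\xi_{\vec n_2}(x)^\top\d\mu(x)$ of \eqref{compact.g} by $D_0(s)$ on the left and $\bar D_0(s)^{-1}$ on the right simply dresses the monomial strings $\chi_{1,a}$ and $\chi_{2,b}$ with the rational prefactors $\Ds_a$ and $\bar\Ds_b^{-1}$, which are then reabsorbed into the weights. The whole argument runs parallel to the continuous deformation theorem leading to \eqref{evolved}, with the Darboux products playing the role of the exponential plane--wave factors.

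First I would record the two facts that drive the computation: the eigenvalue property $\Lambda_{1,a}\chi_{1,a'}=\delta_{a,a'}z\chi_{1,a}$ together with $\Pi_{1,a}\chi_{1,a'}=\delta_{a,a'}\chi_{1,a}$ (and the index~$2$ analogues), and the commutativity $\Pi_{1,a}\Lambda_{1,a}=\Lambda_{1,a}\Pi_{1,a}$. Because $\chi_{1,a}$ is supported in $\Pi_{1,a}\R^\infty$, only the $a$-th summand of $D_0=\sum_a D_{0,a}$ acts nontrivially on it, and each factor $(\Lambda_{1,a}-\lambda_a(n)\Pi_{1,a})$ acts as scalar multiplication by $(z-\lambda_a(n))$. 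Iterating over the $s_a$ factors gives
\[
D_0\chi_{1,a}(z)=\Ds_a(z)\chi_{1,a}(z).
\]
Transposing $(\bar D_0^{-1})_b=\prod_n(\Lambda_{2,b}^\top-\bar\lambda_b(n)\Pi_{2,b})$, using $\Pi_{2,b}^\top=\Pi_{2,b}$ and $\Lambda_{2,b}\chi_{2,b}=z\chi_{2,b}$, yields analogously
\[
(\bar D_0^{-1})^\top\chi_{2,b}(z)=\bar\Ds_b(z)^{-1}\chi_{2,b}(z).
\]

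Then I would reassemble the weighted strings. Since $\xi_{\vec n_1}=\sum_a\chi_{1,a}w_{1,a}$ and the scalar $\Ds_a(x)$ can be folded into the weight, one gets $D_0\xi_{\vec n_1}(x)=\sum_a\chi_{1,a}(x)\,[\Ds_a(x)w_{1,a}(x)]=\sum_a\chi_{1,a}(x)w_{1,a}(s,x)$, i.e.\ exactly the string $\xi_{\vec n_1}$ built from the deformed weights \eqref{d.evolved}; likewise $(\bar D_0^{-1})^\top\xi_{\vec n_2}(x)=\sum_b\chi_{2,b}(x)w_{2,b}(s,x)$. Substituting into $g(s)=D_0\,g\,\bar D_0^{-1}=\int(D_0\xi_{\vec n_1})(\xi_{\vec n_2}^\top\bar D_0^{-1})\d\mu$ and writing $\xi_{\vec n_2}^\top\bar D_0^{-1}=\big((\bar D_0^{-1})^\top\xi_{\vec n_2}\big)^\top$ produces $\int\xi_{\vec n_1}(s,x)\xi_{\vec n_2}(s,x)^\top\d\mu(x)$, the moment matrix attached to the weights \eqref{d.evolved}.

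The main obstacle is the case of negative discrete times, where $D_{0,a}$ and $(\bar D_0^{-1})_b$ contain inverse factors such as $(\Lambda_{1,a}-\lambda_a(-n)\Pi_{1,a})^{-1}$. Here I would note that on $\Pi_{1,a}\R^\infty$ the operator $\Lambda_{1,a}-\lambda\Pi_{1,a}$ acts as $\Lambda-\lambda\I$, whose formal inverse $-\lambda^{-1}\sum_{m\geq0}\lambda^{-m}\Lambda^m$ lies in $G_+$, and that applying it to $\chi_{1,a}$ reproduces the eigenvalue action, since $-\lambda^{-1}\sum_m\lambda^{-m}z^m=-\lambda^{-1}(1-z/\lambda)^{-1}=(z-\lambda)^{-1}$, matching $\Ds_a(x)=\prod_n(x-\lambda_a(-n))^{-1}$. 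One must only keep track of the order of the factors, which is harmless as they are all polynomials in the single commuting pair $\Lambda_{1,a},\Pi_{1,a}$, and observe that the resulting weight is now rational; the identity is therefore algebraic, the measure being assumed to avoid the poles introduced by the deformation.
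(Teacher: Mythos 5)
Your argument is correct. Note that the paper states this proposition without giving a proof at all; the intended justification is clearly meant to parallel the proof of the continuous-deformation theorem leading to \eqref{evolved}, which the paper carries out entrywise via the explicit formula \eqref{explicit g2} and elementary Schur polynomial expansions of $W_0$. You instead work at the level of the Grammian representation \eqref{compact.g}, using the eigenvalue property $\Lambda_{1,a}\chi_{1,a'}=\delta_{a,a'}z\chi_{1,a}$ together with $\Pi_{1,a}\chi_{1,a'}=\delta_{a,a'}\chi_{1,a'}$ to get $D_0\chi_{1,a}=\Ds_a\chi_{1,a}$ and its dual, and then fold the scalar factors into the weights. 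This is a cleaner, more structural route that buys you the result for all three sign cases of $s_a$ at once, whereas the entrywise computation would have to be redone for the inverse factors. Two small points worth making explicit: the factors $(\Lambda_{1,a'}-\lambda\Pi_{1,a'})$ annihilate $\chi_{1,a}$ for $a\neq a'$, so the inverses occurring for $s_{a'}<0$ must be read as inverses on the invariant subspace $\Pi_{1,a'}\R^\infty$ (composed with $\Pi_{1,a'}$), which your phrasing assumes; and for $s_a<0$ the product $D_0\,g$ involves genuine series whose interchange with the integral requires, e.g., the support of $w_{1,a}\d\mu$ to lie inside the disk of radius $|\lambda_a(-n)|$ --- the same formal/convergent dichotomy the paper already accepts for the second-kind functions, and consistent with its later requirement that the $\lambda$'s be placed outside the supports.
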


Thus, the proposed discrete evolution introduces new zeroes and poles in the weights at the points  defined by
sequences of
$\lambda$'s. For example,  in the $a$-th direction, the $s_a$ flow in the positive direction, $s_a\to s_a+1$, introduces
a new zero at the point
$\lambda_a(s_a+1)$, while  if we move in the negative direction, $s_a\to s_a-1$, it introduces a simple pole at
$\lambda_a(s_a-1)$. Let us stress that for the time being we have not ensured the reality and positiveness/negativeness  of the evolved
weights, this will be considered later on.

\subsubsection{Miwa transformations}
Here we show that the discrete flows just introduced can be reproduced with the aid of Miwa shifts in the continuous
variables.
\begin{definition}
We consider two types of Miwa transformations:
\begin{enumerate}
\item We introduce the following time
shifts
\begin{align}
  t \rightarrow t\mp[z^{-1}]_a:=\Big\{t_{j,a'}  \mp\delta_{a',a}\frac{1}{jz^j},\;\bar t_{j,b'}\Big\}_{\substack{j=1,2,\dots,\\a'=1,\dots,p_1,\\b'=1,\dots,p_2}},
\end{align}
\item Dual   time shifts are
\begin{align}
 t\rightarrow t\pm\overline{[z^{-1}]}_b:=\Big\{t_{j,a'} ,\; \bar t_{j,b'}
 \pm\delta_{b',b}\frac{1}{jz^j}\Big\}_{\substack{j=1,2,\dots,\\a'=1,\dots,p_1,\\b'=1,\dots,p_2}}.
\end{align}
\end{enumerate}
 \end{definition}
 \begin{pro}
   The Miwa transformations produce the following effect on the weights
   \begin{align}
     \label{vertex weight}
  w_{1,a'}(x,t\mp [z^{-1}]_a,s)&=  \Big(1-\frac{x}{z}\Big)^{\pm\delta_{a,a'}}w_{1,a'}(x,t,s),&
    w_{2,b'}(x,t\mp [z^{-1}]_a,s)&=  w_{2,b'}(x,t,s),\\
    \label{vertex dualweight}
     w_{1,a'}(x,t\pm\overline{[z^{-1}]}_b,s)&=w_{1,a'}(x,t,s),&
  w_{2,b'}(x,t\pm\overline{[z^{-1}]}_b,s)&= \Big(1-\frac{x}{z}\Big)^{\pm\delta_{b,b'}}w_{2,b'}(x,t,s).
   \end{align}
 \end{pro}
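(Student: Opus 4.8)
The plan is to reduce the statement to the explicit exponential form of the deformed weights recorded in \eqref{evolved}, together with the discrete factors of \eqref{d.evolved}. Recall that the $t$-dependence of the weights enters only through the plane-wave factors $\Es_{a'}(x,t)=\exp\big(\sum_{j\geq1}t_{j,a'}x^j\big)$ and $\bar\Es_{b'}(x,t)=\exp\big(\sum_{j\geq1}\bar t_{j,b'}x^j\big)$, so that, when both deformations are switched on, $w_{1,a'}(x,t,s)=\Es_{a'}(x,t)\,\Ds_{a'}(x,s_{a'})\,w_{1,a'}(x)$ and $w_{2,b'}(x,t,s)=\bar\Es_{b'}(x,t)^{-1}\,\bar\Ds_{b'}(x,\bar s_{b'})^{-1}\,w_{2,b'}(x)$. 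Since the discrete factors $\Ds_{a'},\bar\Ds_{b'}$ are independent of the continuous times $t$, and since the two Miwa shifts act only on the $t$-variables, these factors will pass through the computation untouched; it therefore suffices to track the effect of each shift on $\Es_{a'}$ and $\bar\Es_{b'}$.

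The single analytic ingredient is the generating identity $\sum_{j\geq1}\tfrac{1}{j}u^j=-\log(1-u)$, valid for $|u|<1$, which is precisely the relation encoded by the Miwa variables. First I would substitute the shift $t\mapsto t\mp[z^{-1}]_a$, that is $t_{j,a'}\mapsto t_{j,a'}\mp\delta_{a',a}\tfrac{1}{jz^j}$, into $\Es_{a'}$ and factor out the perturbation:
\begin{align*}
\Es_{a'}(x,t\mp[z^{-1}]_a)=\Es_{a'}(x,t)\exp\Big(\mp\delta_{a',a}\sum_{j\geq1}\frac{1}{j}\big(\tfrac{x}{z}\big)^j\Big)=\Es_{a'}(x,t)\Big(1-\frac{x}{z}\Big)^{\pm\delta_{a',a}}.
\end{align*}
Multiplying by the unchanged factors $\Ds_{a'}w_{1,a'}$ then gives the first equality of \eqref{vertex weight}; and because this shift fixes every $\bar t_{j,b'}$, the factor $\bar\Es_{b'}$ and hence $w_{2,b'}$ remain unchanged, which is the second equality.

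The dual relation \eqref{vertex dualweight} will follow identically, the only point to watch being that $w_{2,b'}$ carries the \emph{inverse} plane wave $\bar\Es_{b'}^{-1}$, so the overall sign is reversed relative to the $w_1$ case. Concretely, substituting $\bar t_{j,b'}\mapsto\bar t_{j,b'}\pm\delta_{b',b}\tfrac{1}{jz^j}$ I would obtain
\begin{align*}
\bar\Es_{b'}(x,t\pm\overline{[z^{-1}]}_b)^{-1}=\bar\Es_{b'}(x,t)^{-1}\exp\Big(\mp\delta_{b',b}\sum_{j\geq1}\frac{1}{j}\big(\tfrac{x}{z}\big)^j\Big)=\bar\Es_{b'}(x,t)^{-1}\Big(1-\frac{x}{z}\Big)^{\pm\delta_{b',b}},
\end{align*}
which produces the factor $(1-x/z)^{\pm\delta_{b',b}}$ on $w_{2,b'}$, while $w_{1,a'}$ is untouched. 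No step presents a genuine obstacle: the computation is routine once the weights are written in exponential form, and the only care needed is the bookkeeping of the two signs (the $\mp/\pm$ of the shift against the inversion in $w_2$) and the observation that the discrete $s$-factors commute through unchanged.
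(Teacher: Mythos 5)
Your proposal is correct and follows essentially the same route as the paper: substitute the Miwa shift into the exponential evolution factors $\Es_{a'}$, $\bar\Es_{b'}^{-1}$ and resum via $\sum_{j\geq1}u^j/j=-\log(1-u)$, noting that the discrete $\Ds$-factors and the unshifted times pass through unchanged. Your sign bookkeeping is in fact cleaner than the paper's (whose intermediate display carries a $\mp$ where $\pm$ should appear, though its final formula agrees with yours).
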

 \begin{proof}
When we consider what happens to the evolutionary factors under these shifts we find
\begin{align}
  \exp\Big(\sum_{j}t_{j,a'}x^j\Big)\rightarrow
  \exp\Big(\sum_{j}\big(t_{j,a'}\mp\delta_{a',a}\frac{x^j}{jz^j}\Big)\Big)=
  \Big(1-\frac{x}{z}\Big)^{\mp\delta_{a',a}}\exp\Big(\sum_{j}t_{j,a'}x^j\Big),
\end{align}
and therefore the weights transform according to
\begin{align}\label{vertex weight evol}
  w_{1,a'}(x,t,s)\rightarrow   \Big(1-\frac{x}{z}\Big)^{\pm\delta_{a,a'}}w_{1,a'}(x,t,s),
\end{align}
which is like the Darboux transformations considered previously.
For the dual Miwa shifts  we consider what happens to the evolutionary factors under these shifts
\begin{align}
  \exp\Big(-\sum_{j,b'}\bar t_{j,b'}x^j\Big)\rightarrow   \exp\Big(-\sum_{j,b'}\Big(\bar
  t_{j,b'}\pm\delta_{b',b}\frac{x^j}{jz^j}\Big)\Big)=
  \Big(1-\frac{x}{z}\Big)^{\pm 1}\exp\Big(-\sum_{j,b'}\bar t_{j,b'}x^j\Big),
\end{align}
and the transformation for the weights is

\begin{align}\label{vertex dual weight evol}
  w_{2,b'}(x,t,s)\rightarrow   \Big(1-\frac{x}{z}\Big)^{\pm\delta_{b,b'}}w_{2,a'}(x,t,s).
\end{align}
 \end{proof}
 Thus, a comparison of \eqref{d.evolved}, \eqref{vertex weight} and \eqref{vertex dualweight} leads to
  \begin{pro}
Miwa transformations and discrete flows can be identified as follows
\begin{align}
\label{miwa-darboux}\begin{aligned}
 c_a w_{1,a}(x,t,s_a)&=
\begin{cases}
w_{1,a}(x,t-\sum_{n=1}^{s_a}\big[\lambda_a(n)^{-1}\big]_a), & s_a>0,\\
w_{1,a}(x,t),& s_a=0,\\
w_{1,a}(x,t+\sum_{n=1}^{|s_a|}\big[\lambda_a(-n)^{-1}\big]_a), & s_a<0,
\end{cases}&
c_a:=\begin{cases}
  \prod_{n=1}^{s_a}(-\lambda_a(n))^{-1},&s_a>0\\
  1,&s_a=0\\
   \prod_{n=1}^{|s_a|}(-\lambda_a(-n)),&s_a<0,
\end{cases}\\
 \bar c_b w_{2,b}(x,t,\bar s_b)&=
\begin{cases}
w_{2,b}(x,t+\sum_{n=1}^{\bar s_b}\overline{\big[\bar\lambda_b(n)^{-1}\big]}_b,x), & \bar s_b>0,\\
w_{2,b}(x,t),& \bar s_b=0,\\
w_{2,b}(x,t-\sum_{n=1}^{|\bar s_b|}\overline{\big[\bar \lambda_b(-n)^{-1}\big]}_b), & \bar s_b<0,
\end{cases}&
\bar c_b:=\begin{cases}
  \prod_{n=1}^{\bar s_b}(-\bar \lambda_b(n))^{-1},&\bar s_b>0\\
  1, & \bar s_b=0\\
   \prod_{n=1}^{|\bar s_b|}(-\bar\lambda_b(-n)),&\bar s_b<0.
\end{cases}
\end{aligned}
\end{align}
  \end{pro}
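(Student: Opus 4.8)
The plan is to prove the identification \eqref{miwa-darboux} by directly comparing the effect on the weights of a single elementary discrete Darboux step, recorded in \eqref{d.evolved}, with the effect of a single Miwa shift, computed in \eqref{vertex weight} and \eqref{vertex dualweight}. The bridge between the two descriptions is the elementary factorization
\begin{align*}
  1-\frac{x}{z}=-\frac{1}{z}(x-z),
\end{align*}
which shows that the vertex factor introduced by a Miwa transformation at the node $z$ differs from the linear Darboux factor $(x-z)$ only by the scalar $-z^{-1}$. These scalars are precisely what will be collected into the normalization constants $c_a$ and $\bar c_b$. Throughout I treat the base weight in \eqref{d.evolved} as carrying the continuous-time dependence, so that the discrete flow acts on $w_{1,a}(x,t)$ rather than on the undeformed weight.

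First I would treat the case $s_a>0$ for the first family. Setting $z=\lambda_a(n)$ in \eqref{vertex weight} with the shift $t\to t-[z^{-1}]_a$ gives, for $a'=a$, the factor $(1-x/\lambda_a(n))=-\lambda_a(n)^{-1}(x-\lambda_a(n))$. Since Miwa shifts in the same direction at distinct nodes merely add in the continuous times and hence commute, composing them for $n=1,\dots,s_a$ produces
\begin{align*}
  w_{1,a}\Big(x,t-\sum_{n=1}^{s_a}\big[\lambda_a(n)^{-1}\big]_a\Big)
  =\prod_{n=1}^{s_a}(-\lambda_a(n))^{-1}\;\prod_{n=1}^{s_a}(x-\lambda_a(n))\;w_{1,a}(x,t).
\end{align*}
Recognizing the second product as the Darboux factor $\Ds_a(x,s_a)$ of \eqref{d.evolved} and reading the scalar prefactor as $c_a=\prod_{n=1}^{s_a}(-\lambda_a(n))^{-1}$ yields exactly the first line of \eqref{miwa-darboux}. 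The case $s_a<0$ is identical but uses the opposite shift $t\to t+[z^{-1}]_a$, which by \eqref{vertex weight} inverts the vertex factor and thus generates the poles $(x-\lambda_a(-n))^{-1}=\bar\Ds$-type factors together with the prefactor $c_a=\prod_{n=1}^{|s_a|}(-\lambda_a(-n))$; the trivial case $s_a=0$ corresponds to no shift and $c_a=1$.

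The dual family $w_{2,b}$ is handled in the same way, now invoking \eqref{vertex dualweight}: for $\bar s_b>0$ the shift $t\to t+\overline{[z^{-1}]}_b$ at $z=\bar\lambda_b(n)$ inserts the zero $(x-\bar\lambda_b(n))$ matching $\bar\Ds_b(x,\bar s_b)^{-1}$ in \eqref{d.evolved}, whereas for $\bar s_b<0$ the shift $t\to t-\overline{[z^{-1}]}_b$ inserts the corresponding pole; in both cases the accumulated scalars are absorbed into $\bar c_b$ exactly as stated. Since there is no genuine analytic content, the only delicate point is bookkeeping: one must keep careful track of the two sign conventions in \eqref{vertex weight}--\eqref{vertex dualweight}, namely the $\mp$ in the time argument against the $\pm$ in the exponent, and verify that the direction of each shift matches the sign of the discrete time, so that positive discrete times create zeros and negative ones create poles with the normalizations $c_a,\bar c_b$ emerging precisely in the form announced.
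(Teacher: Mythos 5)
Your proposal is correct and follows essentially the same route as the paper, which obtains the identification precisely by comparing the discretely evolved weights \eqref{d.evolved} with the Miwa-shifted weights \eqref{vertex weight}--\eqref{vertex dualweight}; the only content is the elementary rewriting $1-x/z=-z^{-1}(x-z)$ and the commutativity of shifts, both of which you supply with the signs tracked correctly.
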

  As a conclusion, the discrete flows and Miwa shifts in the continuous flows are the very same thing, and therefore we could  work with continuous flows and Miwa transformations or with continuous/discrete flows. This discussion justifies the
  Miwa part in the name we gave to these discrete flows.

\subsubsection{Bounded from below measures}
Of course, in order to preserve the link with multiple orthogonal polynomials, these discrete flows must  preserve the
reality, regularity and sign constance  of the
weights, which generically is not the case.  When the support of the weights is bounded from below, i.e. there are
finite real numbers  $K_a$ and $K_b$,
such that $\text{supp} (w_{1,a}\d \mu)\subset [K_a,\infty)$ and $\text{supp} (w_{2,b}\d\mu)\subset [\bar K_b,\infty)$,
a possible solution is to place all the new
zeroes and poles in the real line but outside the corresponding support, $\lambda_a(n)<\text{inf}(\text{supp}
(w_{1,a}\d\mu))$ and
$\bar\lambda_b(n)<\text{inf}(\text{supp} (w_{2,b}\d\mu))$. A different approach, which will be considered in Appendix \ref{III}, is
to arrange the zeroes in complex conjugate
pairs.

To analyze the consequence of the discrete flows  on the integrable hierarchy we introduce two sets of shifts
operators:
\begin{definition}
  \begin{enumerate}
    \item Let us consider the sets of shift operators $\{T_a\}_{a=1}^{p_1}$ and $\{\bar T_b\}_{b=1}^{p_2}$, where
        $T_a$ stands for the shift $s_a\mapsto s_a+1$ and $\bar T_b$ stands for $s_b\mapsto \bar s_b+1$. The rest of the variables $\{s_{a'},\bar s_{b'}\}$ will
        remain constant.
\item  We introduce
\begin{align}
\begin{aligned}
q_a&:=  \I-\Pi_{1,a}(\I+\lambda_a(s_a+1))+\Lambda_{1,a},\\
\bar q_b&:=   \I  -\Pi_{2,b}(\I+\bar \lambda_b(\bar s_b+1))+\Lambda_{2,b}^\top.
\end{aligned}
\end{align}

\item We also define the operators
\begin{align}\label{delta}
  \delta_a&:=Sq_aS^{-1}=\I-C_{a}(\I+\lambda_a(s_a+1))+L_a,&
    \bar\delta_b&:=\bar S \bar q_b\bar S^{-1}=\I-\bar C_{b}(\I+\bar\lambda_b(\bar s_b+1))+\bar L_b,\\
    C_a&:=S\Pi_{1,a}S^{-1},& \bar C_b&:=\bar S\Pi_{2,b}\bar S^{-1}.\notag
\end{align}
Here the matrices $\delta_a$ and $\bar\delta_b$  are called lattice resolvents.

\item  Finally the semi-infinite wave matrices
\begin{align}\label{discrete-wave}
  W&:=SD_0,& \bar W&:=\bar S\bar D_0.
\end{align}
  \end{enumerate}
\end{definition}
Observe that
\begin{align}
\begin{aligned}
(T_a D_0)D_0^{-1}&=q_a,&\bar D_0^{-1} (T_a\bar D_0)&=\I,\\
  \bar D_0(\bar T_b\bar D_0^{-1})&=\bar q_b,& (\bar T_b  D_0) D_0^{-1}&=\I.&
  \end{aligned}
\end{align}
When we   assume that the semi-infinite matrices $\delta_a$ and $\bar\delta_b$ are $LU$ factorizable as in
\eqref{facto}, i.e. all their principal minors do not
vanish, we can write
\begin{align}\label{fac.delta}
  \delta_a&=\delta_{a,-}^{-1}\delta_{a,+},& \bar \delta_b&=\bar \delta_{b,-}^{-1}\bar\delta_{b,+},
\end{align}
where $\delta_{a,-}$ and $\bar\delta_{b,-}$ are  lower matrices as is  $S$  in \eqref{facto}, and $\delta_{a,+}$ and
$\bar\delta_{b,+}$ are upper matrices as
$\bar S$ in \eqref{facto}. We now show  that when the deformed moment matrix $g(s)$ is factorizable, and therefore the
multiple orthogonality  makes sense, the following  holds
\begin{pro}
  If the deformed moment matrix $g(s)$ is factorizable for all values of $s$ then  so is $\delta_a$ and $\bar
  \delta_b$ with
  \begin{align}
  \begin{aligned}
  \delta_{a,+}&= (T_a\bar S)\bar S^{-1}, &   \delta_{a,-}&=(T_a S)S^{-1},\\
   \bar\delta_{b,+}&=(\bar T_b\bar
   S)\bar S^{-1}, &  \bar\delta_{b,+}&=(\bar T_b S)S^{-1}.
   \end{aligned}
  \end{align}
\end{pro}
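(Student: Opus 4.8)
The plan is to derive the stated factors purely from the uniqueness of the Gauss--Borel factorization, fed by the two elementary shift identities recorded just before the statement, namely $(T_aD_0)D_0^{-1}=q_a$, $T_a\bar D_0=\bar D_0$, $\bar T_bD_0=D_0$ and $\bar D_0(\bar T_b\bar D_0^{-1})=\bar q_b$. The point is that once $g(s)$ is factorizable for every $s$, the factors $S(s),\bar S(s)\in G_-\times G_+$ are unique, and so are their shifts $T_aS=S(T_as)$, $T_a\bar S=\bar S(T_as)$, etc.; hence any lower$\times$upper decomposition of $\delta_a$ we manage to write down must coincide with $\delta_{a,-}^{-1}\delta_{a,+}$.

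First I would apply $T_a$ to $g(s)=D_0g\bar D_0^{-1}$. Since the undeformed $g$ is independent of $s$ and $T_a\bar D_0=\bar D_0$, the identity $(T_aD_0)D_0^{-1}=q_a$ gives
\begin{align*}
 T_ag(s)=(T_aD_0)\,g\,(T_a\bar D_0)^{-1}=q_aD_0g\bar D_0^{-1}=q_ag(s).
\end{align*}
On the other hand, factorizability at the shifted point reads $T_ag(s)=(T_aS)^{-1}(T_a\bar S)$, so comparing with $g(s)=S^{-1}\bar S$ yields $(T_aS)^{-1}(T_a\bar S)=q_aS^{-1}\bar S$. Conjugating $\delta_a=Sq_aS^{-1}$ through this relation,
\begin{align*}
 \big[(T_aS)S^{-1}\big]^{-1}\big[(T_a\bar S)\bar S^{-1}\big]
 =S(T_aS)^{-1}(T_a\bar S)\bar S^{-1}
 =Sq_aS^{-1}=\delta_a.
\end{align*}
Because $T_aS,S^{-1}\in G_-$ and $T_a\bar S,\bar S^{-1}\in G_+$, the left-hand side is already the product of a lower-unitriangular factor inverse and an upper factor with nonvanishing diagonal; this simultaneously exhibits $\delta_a$ as factorizable and, by uniqueness, forces $\delta_{a,-}=(T_aS)S^{-1}$ and $\delta_{a,+}=(T_a\bar S)\bar S^{-1}$.

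The dual statement is entirely parallel but uses the complementary pair of shift identities: here $\bar T_bD_0=D_0$ while $\bar D_0(\bar T_b\bar D_0^{-1})=\bar q_b$, so $\bar T_bg(s)=g(s)\bar q_b$, and factorizability gives $(\bar T_bS)^{-1}(\bar T_b\bar S)=S^{-1}\bar S\bar q_b$. Inserting this into $S\big[(\bar T_bS)^{-1}(\bar T_b\bar S)\big]\bar S^{-1}$ collapses it to $\bar S\bar q_b\bar S^{-1}=\bar\delta_b$, whence $\bar\delta_b=\big[(\bar T_bS)S^{-1}\big]^{-1}\big[(\bar T_b\bar S)\bar S^{-1}\big]$ and uniqueness gives $\bar\delta_{b,-}=(\bar T_bS)S^{-1}$, $\bar\delta_{b,+}=(\bar T_b\bar S)\bar S^{-1}$ (the second dual formula of the statement should read $\bar\delta_{b,-}$, an evident misprint).

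The only genuine subtlety, and the step I would treat most carefully, is the invocation of uniqueness, which presupposes that all principal minors of $\delta_a$ and $\bar\delta_b$ are nonzero. This is not assumed directly but is furnished by the construction itself: the explicit upper factors $(T_a\bar S)\bar S^{-1}$ and $(\bar T_b\bar S)\bar S^{-1}$ inherit nonvanishing diagonals from $\bar S(T_\bullet s),\bar S\in G_+$, so factorizability of $g(s)$ for all $s$ propagates to factorizability of the lattice resolvents, and the displayed formulas are then forced. One should also note in passing that each product above is a legitimate operation in $G_\pm$ (finite sums in every entry), so no convergence question arises.
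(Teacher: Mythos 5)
Your proof is correct and follows essentially the same route as the paper's: apply the shift to the factorization $g(s)=S^{-1}\bar S$, use $T_ag(s)=q_ag(s)$ (resp.\ $\bar T_bg(s)=g(s)\bar q_b$) to obtain $\big((T_aS)S^{-1}\big)^{-1}(T_a\bar S)\bar S^{-1}=\delta_a$, and conclude by uniqueness of the Gauss--Borel factorization. Your additional remarks --- that the exhibited factors lie in $G_-\times G_+$ so that factorizability of the lattice resolvents is a consequence rather than a hypothesis, and that the second dual formula in the statement should read $\bar\delta_{b,-}$ --- are correct and only make explicit what the paper leaves implicit.
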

 \begin{proof}
   When we apply the discrete shifts to the Gauss--Borel factorization problem $g(s)=S^{-1}(s)\bar S(s)$ we get
\begin{align*}
  T_a(S^{-1})T_a(\bar S)&=T_a g(s)=(T_aD_0)D_0^{-1} g(s)=q_ag(s) &&\Rightarrow &((T_a S)S^{-1})^{-1}(T_a\bar S)\bar
  S^{-1}&=\delta_a ,\\
   \bar T_b(S^{-1})\bar T_b(\bar S)&=\bar T_b g(s)=g(s)\bar D_0(T_b\bar D_0^{-1})=g(s)\bar q_b &&\Rightarrow & ((\bar
   T_b S)S^{-1})^{-1}((\bar T_b\bar
   S)\bar S^{-1}&=\bar \delta_b,
\end{align*}
and the desired result follows.
 \end{proof}
Therefore, we can consider the following
\begin{definition}
The semi-infinite matrices $\omega_a$ and $\bar\omega_b$ are given by
       \begin{align}\label{omega}
         \omega_a&:=\delta_{a,-} \delta_a=\delta_{a,+},&
\bar \omega_b&:=\bar \delta_{b,-}=\bar \delta_{b,+} \bar \delta_b^{-1},
       \end{align}
     \end{definition}
and show that
\begin{pro}
 \begin{enumerate}
   \item    The following auxiliary linear systems
           \begin{align}\label{d.wave}
           \begin{aligned}
               T_aW &=\omega_a W,&T_a\bar W &=\omega_a \bar  W,\\
             \bar  T_bW &=\bar \omega_b W,&\bar T_b\bar W &=\bar \omega_b \bar  W,
                        \end{aligned}
                  \end{align}
are satisfied.
\item The Lax matrices fulfill the following relations
           \begin{align}\label{d.lax}
           \begin{aligned}
                T_aL_{a'}&=\omega_aL_{a'}\omega_a^{-1},  &  T_a\bar L_{b}&=\omega_a\bar L_{b}\omega_a^{-1},\\
             \bar T_bL_{a}&=\bar\omega_bL_{a}\bar\omega_b^{-1},&  \bar T_b\bar L_{b'}&=\bar\omega_b\bar
             L_{b'}\bar\omega_b^{-1}.        \end{aligned}
           \end{align}
\item The following discrete Zakharov--Shabat compatibility conditions hold
    \begin{align}\label{d.ZS2}
            ( T_a\omega_{a'})\omega_{a}&=( T_{a'}\omega_{a})\omega_{a'},&
            ( T_a\bar \omega_{b})\omega_{a}&=( \bar T_{b}\omega_{a})\bar \omega_{b},&
               ( \bar T_b\bar \omega_{b'})\bar\omega_{b}&=( \bar T_{b'}\bar\omega_{b})\bar \omega_{b'}.
                       \end{align}
                       \item When the discrete and continous flows are considered simultaneously, the
                           following equations
                             \begin{align}\label{d.ZS1}\begin{aligned}
            T_{a'}B_{j,a}&=(\partial_{j,a}\omega_{a'})\omega_{a'}^{-1}+\omega_{a'}B_{j,a}\omega_{a'}^{-1},&
          \bar
          T_{b}B_{j,a}=(\partial_{j,a}\bar\omega_{b})\bar\omega_{b}^{-1}+\bar\omega_{b}B_{j,a}\bar\omega_{b}^{-1},\\
           T_{a}\bar B_{j,b}&=(\bar\partial_{j,b}\omega_{a})\omega_{a}^{-1}+\omega_{a}\bar
           B_{j,b}\omega_{a}^{-1},&
          \bar  T_{b'}\bar B_{j,b}=(\bar\partial_{j,b}\bar\omega_{b'})\bar\omega_{b'}^{-1}+\bar\omega_{b'}\bar
          B_{j,b}\bar\omega_{b'}^{-1},
           \end{aligned}
                     \end{align}
                     are obtained.
 \end{enumerate}
\end{pro}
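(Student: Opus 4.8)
The plan is to obtain all four groups of identities as compatibility conditions for the discrete wave matrices $W=SD_0$ and $\bar W=\bar S\bar D_0$, so that everything rests on the auxiliary linear systems \eqref{d.wave}, which I would establish first. Writing $T_aW=(T_aS)(T_aD_0)$ and inserting the relations $(T_aD_0)D_0^{-1}=q_a$, $\bar D_0^{-1}(T_a\bar D_0)=\I$, $(\bar T_bD_0)D_0^{-1}=\I$ and $\bar D_0(\bar T_b\bar D_0^{-1})=\bar q_b$ recorded just above, together with $\delta_a=Sq_aS^{-1}$, $\bar\delta_b=\bar S\bar q_b\bar S^{-1}$ and the factorizations $\delta_{a,-}=(T_aS)S^{-1}$, $\delta_{a,+}=(T_a\bar S)\bar S^{-1}$, $\bar\delta_{b,-}=(\bar T_bS)S^{-1}$, $\bar\delta_{b,+}=(\bar T_b\bar S)\bar S^{-1}$, each case collapses onto the definitions $\omega_a=\delta_{a,-}\delta_a=\delta_{a,+}$ and $\bar\omega_b=\bar\delta_{b,-}=\bar\delta_{b,+}\bar\delta_b^{-1}$ of \eqref{omega}. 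For example $T_aW=\delta_{a,-}S\,q_aD_0=\delta_{a,-}\delta_a\,SD_0=\omega_aW$ and $T_a\bar W=(T_a\bar S)\bar D_0=\omega_a\bar S\bar D_0=\omega_a\bar W$, the two $\bar T_b$ cases being completely parallel (the only twist being that $\bar T_b\bar D_0=\bar q_b^{-1}\bar D_0$ feeds in the factor $\bar\delta_b^{-1}$).

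For the Lax relations \eqref{d.lax} I would conjugate by the shifts directly, avoiding any use of $W^{-1}$. From $T_aS=\delta_{a,-}S$ with $S,\delta_{a,-}$ invertible, $T_aL_{a'}=(T_aS)\Lambda_{1,a'}(T_aS)^{-1}=\delta_{a,-}L_{a'}\delta_{a,-}^{-1}$; since $q_a=\I-\Pi_{1,a}(\I+\lambda_a(s_a+1))+\Lambda_{1,a}$ commutes with every $\Lambda_{1,a'}$ (because $[\Pi_{1,a},\Lambda_{1,a'}]=[\Lambda_{1,a},\Lambda_{1,a'}]=0$), the matrix $\delta_a=Sq_aS^{-1}$, invertible by the factorizability hypothesis, commutes with every $L_{a'}=S\Lambda_{1,a'}S^{-1}$, so $\delta_{a,-}L_{a'}\delta_{a,-}^{-1}=\delta_{a,-}\delta_aL_{a'}\delta_a^{-1}\delta_{a,-}^{-1}=\omega_aL_{a'}\omega_a^{-1}$. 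The relation $T_a\bar L_b=\omega_a\bar L_b\omega_a^{-1}$ follows at once from $T_a\bar S=\omega_a\bar S$, and symmetrically $\bar T_bL_a=\bar\omega_bL_a\bar\omega_b^{-1}$ from $\bar T_bS=\bar\omega_bS$, while $\bar T_b\bar L_{b'}=\bar\delta_{b,+}\bar L_{b'}\bar\delta_{b,+}^{-1}=\bar\omega_b\bar L_{b'}\bar\omega_b^{-1}$ after using that $\bar\delta_b$ commutes with every $\bar L_{b'}$.

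The discrete Zakharov--Shabat identities \eqref{d.ZS2} and the mixed identities \eqref{d.ZS1} are then pure cross-differentiation arguments built on \eqref{d.wave}. For \eqref{d.ZS2} I would act with two commuting discrete shifts on $W$ in both orders: \eqref{d.wave} gives $T_{a'}T_aW=(T_{a'}\omega_a)\omega_{a'}W$ and $T_aT_{a'}W=(T_a\omega_{a'})\omega_aW$, and since $T_aT_{a'}=T_{a'}T_a$ the right-hand sides coincide; cancelling $W$ on the right yields $(T_a\omega_{a'})\omega_a=(T_{a'}\omega_a)\omega_{a'}$, the mixed and barred cases coming identically from the pairs $(T_a,\bar T_b)$ and $(\bar T_b,\bar T_{b'})$. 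For \eqref{d.ZS1} I would combine the continuous systems $\partial_{j,a}W=B_{j,a}W$ and $\bar\partial_{j,b}W=\bar B_{j,b}W$ of \eqref{auxlinsys}---which survive for the combined wave matrix $W=SW_0D_0$ because $\Lambda_{1,a}^{j}$ commutes with $D_0$---with \eqref{d.wave}; differentiating $T_{a'}W=\omega_{a'}W$ in $t_{j,a}$ and shifting $\partial_{j,a}W=B_{j,a}W$ by $T_{a'}$, then equating (as $T_{a'}$ and $\partial_{j,a}$ commute) and cancelling $W$, gives $T_{a'}B_{j,a}=(\partial_{j,a}\omega_{a'})\omega_{a'}^{-1}+\omega_{a'}B_{j,a}\omega_{a'}^{-1}$; the remaining three equations arise by crossing each discrete flow with each continuous one in the same way.

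The genuinely delicate step, which I would treat carefully, is the right-cancellation of the wave matrix used throughout the last two parts. Unlike the continuous case, $W=SD_0$ need not be two-sided invertible, since for $s_a>0$ the block $D_{0,a}$ is a polynomial in the non-invertible left shift $\Lambda_{1,a}$; nevertheless each $D_{0,a}$ is surjective (its leading term $\Lambda_{1,a}^{s_a}$ maps the $a$-block onto itself), so $D_0$, and hence $W=SD_0$, admits a right inverse $W^{+}$ with $WW^{+}=\I$, and that is exactly enough to pass from $AW=BW$ to $A=B$, which is all the cancellations require; equivalently, one may run each cancellation with whichever of $W,\bar W$ is convenient, both obeying \eqref{d.wave} with the same $\omega_a,\bar\omega_b$. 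The factorizability hypothesis on $\delta_a,\bar\delta_b$ supplies the two-sided invertibility of $\delta_a,\bar\delta_b,\omega_a,\bar\omega_b$ used in the Lax step, and the only further inputs are the elementary commutations $[\Pi_{1,a},\Lambda_{1,a'}]=[\Lambda_{1,a},\Lambda_{1,a'}]=0$ together with their $\vec n_2$ counterparts, which also guarantee that $\Lambda_{1,a}^{j}$ commutes with $D_0$ and hence that the continuous systems persist for the combined wave matrix.
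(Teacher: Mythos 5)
Your proof is correct and follows the same route as the paper: the paper's own proof consists exactly of your first paragraph's computation establishing \eqref{d.wave}, after which it simply asserts that \eqref{d.lax}, \eqref{d.ZS2} and \eqref{d.ZS1} follow. Your elaboration of those implications is the intended one, and you usefully supply two details the paper leaves tacit, namely the commutation $[\delta_a,L_{a'}]=0$ (coming from $[q_a,\Lambda_{1,a'}]=0$) needed to pass from $\delta_{a,-}L_{a'}\delta_{a,-}^{-1}$ to $\omega_a L_{a'}\omega_a^{-1}$, and the right-cancellability of $W=SD_0$ used in the compatibility arguments.
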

\begin{proof}
We compute
      \begin{align*}
          T_a W&=(T_a S)(T_a D_0)=(T_a S) S^{-1} S q_a S^{-1} S D_0=\delta_{a,-} \delta_a W=\delta_{a,+} W,\\
          T_a \bar W&=(T_a\bar S)\bar D_0=(T_a\bar S)\bar S^{-1} \bar S \bar D_0=\delta_{a,+}\bar W,\\
        \bar  T_b  W&=(  \bar  T_b S) D_0=( \bar  T_b S) S^{-1}  S  D_0=\bar \delta_{b,-}  W,\\
        \bar  T_b \bar W&=(\bar T_b \bar S)(T_b \bar D_0)=(T_b \bar S) \bar S^{-1} \bar S \bar q_b^{-1} \bar S^{-1}
        \bar S \bar D_0=\bar \delta_{b,+} \bar
        \delta_b^{-1} \bar W=\bar \delta_{b,-}\bar W,
                \end{align*}
from where we deduce  \eqref{d.wave}, which in turn imply \eqref{d.lax} and \eqref{d.ZS2}.

\end{proof}
The simultaneous consideration of continuous and discrete flows leads to the replacement  $W_0\to W_0D_0$ and $\bar
   W_0\to \bar W_0\bar D_0$, and the
corresponding modification of the weight's flows  is achieved by the multiplication of the continuous and discrete
evolutionary factors, in this context we
also have \eqref{d.ZS1}. These discrete flows could be understood as a sequence of Darboux transformations of $LU$ and $UL$ types in the
terminology of \cite{adler-van moerbeke 2}, which motivates the Darboux part in name we give to these discrete flows.
In fact,  we have that the lattice resolvents satisfy
           \begin{align*}
             \delta_a&=\delta_{a,-}^{-1}\delta_{a,+}&&\Rightarrow&
             T_a\delta_a&=\omega_a\delta_a\omega_a^{-1}=\delta_{a,+}\delta_{a,-}^{-1}
             \delta_{a,+}\delta_{a,+}^{-1}=\delta_{a,+}\delta_{a,-}^{-1},\\
              \bar\delta_b&=\bar\delta_{b,-}^{-1}\bar\delta_{b,+}&&\Rightarrow&
             \bar T_b\bar\delta_b&=\bar\omega_b\bar\delta_b\bar\omega_b^{-1}=\bar\delta_{b,-}\bar\delta_{b,-}^{-1}
            \bar \delta_{b,+}\bar \delta_{b,-}^{-1}=\bar\delta_{b,+}\bar\delta_{b,-}^{-1},
                   \end{align*}
which amounts to the typical permutation of the $LU$ factorization to the $UL$ factorization. When there is only one
component we have $\delta=L+\lambda$ and
$\bar \delta=\bar\lambda+\bar L$ and the shift corresponds to the classical $LU$ or $UL$ Darboux transformations.

       If  $A^{(k)}_a(x,s),\bar A_b^{(k)}(x,s)$ are the multiple orthogonal polynomials and  dual polynomials in the
       $x$ variable  corresponding to the  discrete evolution of the  weights \eqref{d.evolved} respectively we have
       the discrete version of Proposition \ref{conti}
       \begin{pro}
        The wave and adjoint wave functions \eqref{discrete-wave} are
 \begin{align}
    \Psi_a^{(k)}(z,s)&=A^{(k)}_a(z,s)\Ds_a(z,s_a)&
    ( \bar \Psi_b^*)^{(k)}(z,s)&=\bar A_b^{(k)}(z,s)\bar \Ds_b(z,\bar s_b)^{-1},
\end{align}
and the  the linear forms
\begin{align}\label{linear.form.baker.2}
  Q^{(k)}(x,s)&=\sum_{a=1}^{p_1}A_a^{(k)}(x,s)w_{1,a}(x,s), &   \bar Q^{(k)}(x,s)&=\sum_{b=1}^{p_2}(\bar
  A_b)^{(k)}(x,s)w_{2,b}(x,s),
\end{align}
associated with the weights  $w_{1,a}(x,s), w_{2,b}(x,s)$, can be expressed as
\begin{align}\label{linear.form.baker.1}
  Q^{(k)}(x,s)&:=\sum_{a=1}^{p_1}\Psi_a^{(k)}(x,s)w_{1,a}(x), &   \bar
  Q^{(k)}(x,s)&:=\sum_{b=1}^{p_2}(\bar\Psi_b^*)^{(k)}(x,s)w_{2,b}(x),
\end{align}
in terms of which we have the equations
   \begin{align}\label{cauchy.baker}
   \bar  \Psi_b^{(k)}(z,s)&=\int \frac{Q^{(k)}(x,s)}{z-x}w_{2,b}(x)\d \mu(x),&
    ( \Psi_a^*)^{(k)}(z,s)&=\int \frac{\bar Q^{(k)}(x,s)}{z-x}w_{1,a}(x)\d \mu(x).
   \end{align}
       \end{pro}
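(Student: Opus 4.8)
The plan is to mirror the proof of Proposition~\ref{conti} almost verbatim, the only new input being that the Darboux dressing factors $D_0,\bar D_0$ now play the role that the plane-wave factors $W_0,\bar W_0$ had in the continuous case. First I would record how $D_0$ acts on the monomial strings. Because $\Lambda_{1,a}\chi_{1,a'}=\delta_{a,a'}z\chi_{1,a}$ and $\Pi_{1,a}\chi_{1,a'}=\delta_{a,a'}\chi_{1,a}$, each building block $\Lambda_{1,a}-\lambda_a(n)\Pi_{1,a}$ acts on $\chi_{1,a}$ as the scalar $z-\lambda_a(n)$ and annihilates $\chi_{1,a'}$ for $a'\neq a$; since every $D_{0,a}$ lives in the block $\Pi_{1,a}\R^\infty$, the matrix $D_0$ is block diagonal and, comparing with \eqref{d.evolved} (including the inverses for $s_a<0$), one obtains $D_0\chi_{1,a}=\Ds_a(z,s_a)\chi_{1,a}$. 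Using $W=SD_0$ from \eqref{discrete-wave} together with \eqref{mop-s} this gives
\[
\Psi_a=SD_0\chi_{1,a}=\Ds_a(z,s_a)\,S\chi_{1,a}=\Ds_a(z,s_a)\,\A_a,
\]
which is the first claimed identity componentwise. The dual identity follows in the same way: each transposed factor of $(\bar D_0^{-1})^\top$ acts on $\chi_{2,b}$ as $z-\bar\lambda_b(n)$, so that $(\bar D_0^{-1})^\top\chi_{2,b}=\bar\Ds_b(z,\bar s_b)^{-1}\chi_{2,b}$ and $\bar\Psi_b^*=(\bar S^{-1})^\top\bar\Ds_b(z,\bar s_b)^{-1}\chi_{2,b}=\bar\Ds_b(z,\bar s_b)^{-1}\bar\A_b$.

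Next I would read off the linear-form identities \eqref{linear.form.baker.1}. Substituting the factorized weights \eqref{d.evolved} into the definition \eqref{linear.form.baker.2}, the factor $\Ds_a(x,s_a)$ combines with $A_a^{(k)}(x,s)$ to produce $\Psi_a^{(k)}(x,s)$, so that $A_a^{(k)}(x,s)w_{1,a}(x,s)=\Psi_a^{(k)}(x,s)w_{1,a}(x)$; summing over $a$ turns \eqref{linear.form.baker.2} into \eqref{linear.form.baker.1}. The dual statement is obtained identically using $w_{2,b}(x,s)=\bar\Ds_b(x,\bar s_b)^{-1}w_{2,b}(x)$ and $(\bar\Psi_b^*)^{(k)}(x,s)=\bar A_b^{(k)}(x,s)\bar\Ds_b(x,\bar s_b)^{-1}$.

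Finally, for the Cauchy representations \eqref{cauchy.baker} I would combine the two factorization problems. From $g(s)=D_0\,g\,\bar D_0^{-1}=S^{-1}\bar S$ one gets $SD_0\,g=\bar S\bar D_0$, i.e. $Wg=\bar W$ with $g$ the \emph{undeformed} moment matrix; hence $\bar\Psi_b=\bar W\chi_{2,b}^*=Wg\chi_{2,b}^*$. Expanding the $l$-th component with the explicit entries \eqref{explicit g2}, the inner sum over $\{n:a_2(n)=b\}$ produces $\sum_{m\geq0}x^m z^{-m-1}=1/(z-x)$ for $|x|<|z|$, while the remaining prefactor $\sum_k W_{l,k}x^{k_1(k)}w_{1,a_1(k)}(x)$ is precisely $Q^{(l)}(x,s)$ by \eqref{linear.form.baker.1}; this yields the first equation in \eqref{cauchy.baker}, and the second follows from $(W^{-1})^\top=(\bar W^{-1})^\top g^\top$ together with the mirror computation on $\chi_{1,a}^*$. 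The hard part will not be algebraic but analytic: this last step is word-for-word Proposition~\ref{pro:cauchy tr}, so the genuine content is the convergence bookkeeping. Interchanging the geometric sum with the integral and identifying it with the Cauchy kernel is legitimate only on the domains where the corresponding series converge uniformly, namely the largest origin-centred disks at $\infty$ avoiding $\operatorname{supp}(w_{2,b}\d\mu)$, and one must invoke the standing hypothesis that $g(s)$ is factorizable so that the products $D_0 g$ and $g\bar D_0^{-1}$ entering $Wg=\bar W$ reduce to finite sums. Reproducing the estimates \eqref{connec}--\eqref{ct1} in this deformed setting is the only genuinely non-formal point.
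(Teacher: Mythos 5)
Your proposal is correct and follows essentially the route the paper intends: the paper states this result as ``the discrete version of Proposition \ref{conti}'' without writing out a proof, and your argument is exactly the proof of Proposition \ref{conti} transplanted to the discrete setting, with the plane-wave factors $W_0,\bar W_0$ replaced by $D_0,\bar D_0$ and the eigenvalue computation $D_0\chi_{1,a}=\Ds_a(z,s_a)\chi_{1,a}$ made explicit. Your closing remark that the Cauchy-transform step inherits the convergence caveats of Proposition \ref{pro:cauchy tr} matches the paper's own disclaimer that the Cauchy transforms ``must be interpreted in exactly the same terms'' as there.
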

       Here the Cauchy transforms must be interpreted in exactly the same terms as in Proposition \ref{pro:cauchy  tr}.
    Observe that \eqref{cauchy.baker} do not correspond to the functions of the second kind
   \begin{align}
   \bar  C_b^{(k)}(z,s)&:=\int \frac{Q^{(k)}(x,s)}{z-x}w_{2,b}(x,s)\d \mu(x),&
    C_a^{(k)}(z,s)&:=\int \frac{\bar Q^{(k)}(x,s)}{z-x}w_{1,a}(x,s)\d \mu(x).
   \end{align}
           Notice also that from \eqref{delta}-\eqref{fac.delta}, relations that hold true for any $g$ and not only
           for the moment matrix, we get
           \begin{lemma}\label{lemon}
           We have that
            \begin{align}\label{form.omega}\begin{aligned}
                     \omega_a&=\omega_{a,0}\Lambda^{|\vec n_1|-n_{1,a}+1}+\omega_{a,1}\Lambda^{|\vec
                     n_1|-n_{1,a}}+\dots+\omega_{a,|\vec
                     n_1|-n_{1,a}+1},\\
              \bar \omega_b&=\bar\omega_{b,0}(\Lambda^\top)^{|\vec
              n_2|-n_{2,b}+1}+\bar\omega_{b,1}(\Lambda^\top)^{|\vec
              n_2|-n_{2,b}}+\dots+\bar\omega_{b,|\vec n_2|-n_{2,b}+1},\\
              \omega_a^\top&=\rho_{a,0}(\Lambda^\top)^{|\vec n_1|-n_{1,a}+1}+\rho_{a,1}(\Lambda^\top)^{|\vec
              n_1|-n_{1,a}}+\dots+\rho_{a,|\vec
             n_1|-n_{1,a}+1},\\
              \bar \omega_b^\top&=\bar\rho_{b,0}\Lambda^{|\vec n_2|-n_{2,b}+1}+\bar\rho_{b,1}\Lambda^{|\vec
              n_2|-n_{2,b}}+\dots+\rho_{b,|\vec
              n_2|-n_{2,b}+1},
           \end{aligned}
               \end{align}
           for some diagonal semi-infinite matrices \begin{align}
           \begin{aligned}
            \omega_{a,j}&=\diag(\omega_{a,j}(0),\omega_{a,j}(1),\dots),\\
           \bar\omega_{b,j}&=\diag(\bar\omega_{b,j}(0),\bar\omega_{b,j}(1),\dots),\\
           \rho_{a,j}&=\diag(\rho_{a,j}(0),\rho_{a,j}(1),\dots),\\
           \bar\rho_{b,j}&=\diag(\bar\rho_{b,j}(0),\bar\rho_{b,j}(1),\dots),
           \end{aligned}
                     \end{align}
           with
           \begin{align}
           \begin{aligned}
          \rho_{a,j}(k)&:=\omega_{a,j}(k-|\vec n_1|+n_{1,a}-1+j),\\
            \bar\rho_{b,j}(k)&:=\bar\omega_{b,j}(k+|\vec n_2|-n_{2,b}+1-j).
            \end{aligned}
           \end{align}
           \end{lemma}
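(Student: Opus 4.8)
The plan is to reduce the statement to an elementary band-width count, the band structure of the four matrices being inherited from that of $q_a$ and $\bar q_b$ through conjugation and Gauss--Borel factorization. First I would record, directly from the Proposition giving the structure of $\Upsilon_1$ and $\Upsilon_2$, the decomposition of a single shift: since $\Upsilon_1=\sum_a\Lambda_{1,a}=D_{1,0}\Lambda+\sum_aD_{1,a}\Lambda^{N_{1,a}}$ and the diagonal pieces involved have pairwise disjoint supports, each individual shift splits as $\Lambda_{1,a}=\tilde D_{1,a}\Lambda+D_{1,a}\Lambda^{N_{1,a}}$ for a suitable diagonal $\tilde D_{1,a}$, and dually $\Lambda_{2,b}^\top=\Lambda^\top\tilde D_{2,b}+(\Lambda^\top)^{N_{2,b}}D_{2,b}$. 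Feeding this into the definitions of $q_a$ and $\bar q_b$ shows that $q_a$ is supported on the superdiagonals $\{0,1,N_{1,a}\}$ and $\bar q_b$ on the subdiagonals $\{0,1,N_{2,b}\}$; in particular $q_a$ vanishes strictly above its $N_{1,a}$-th superdiagonal and $\bar q_b$ vanishes strictly below its $N_{2,b}$-th subdiagonal.

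Next I would establish the conjugation lemma: if a matrix $A$ satisfies $A_{ij}=0$ for $j>i+m$, then so does $SAS^{-1}$ for any $S\in G_-$, because in $(SAS^{-1})_{ij}=\sum_{k,l}S_{ik}A_{kl}(S^{-1})_{lj}$ the factors force $j\le l\le k+m\le i+m$; the transposed statement (preservation of $i-j\le m$ under conjugation by an element of $G_+$) follows identically. Applying this to $\delta_a=Sq_aS^{-1}$ and $\bar\delta_b=\bar S\bar q_b\bar S^{-1}$ from \eqref{delta} shows that $\delta_a$ has upper band-width $N_{1,a}$ and $\bar\delta_b$ has lower band-width $N_{2,b}$.

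The key step is to transfer these band-widths to the Gauss--Borel factors of \eqref{fac.delta}. For $\omega_a=\delta_{a,+}=\delta_{a,-}\delta_a$ from \eqref{omega}, left multiplication by the lower-triangular $\delta_{a,-}$ preserves the bound $j\le i+N_{1,a}$ (the same index chase as above), so $\omega_a$ is upper triangular with entries only on superdiagonals $0,\dots,N_{1,a}$, which is exactly $\omega_a=\sum_{j=0}^{N_{1,a}}\omega_{a,j}\Lambda^{N_{1,a}-j}$. The barred case is structurally symmetric, but here one must be careful about which of the two triangular factors in \eqref{fac.delta} actually carries the band: the lower-triangular factor of the factorization of $\bar\delta_b$ is $\bar\delta_b\bar\delta_{b,+}^{-1}$, and right multiplication of the lower-band-$N_{2,b}$ matrix $\bar\delta_b$ by the upper-triangular $\bar\delta_{b,+}^{-1}$ preserves $i-j\le N_{2,b}$, which yields $\bar\omega_b=\sum_{j=0}^{N_{2,b}}\bar\omega_{b,j}(\Lambda^\top)^{N_{2,b}-j}$. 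I expect this band-inheritance under factorization to be the main obstacle, since it is precisely the point where the non-banded inverses living in $G_\pm$ must be kept under control and where the transpose/inverse bookkeeping distinguishing the upper factor (for $\omega_a$) from the lower factor (for $\bar\omega_b$) has to be tracked with care.

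Finally, the expressions for $\omega_a^\top,\bar\omega_b^\top$ and the identities $\rho_{a,j}(k)=\omega_{a,j}(k-N_{1,a}+j)$ and $\bar\rho_{b,j}(k)=\bar\omega_{b,j}(k+N_{2,b}-j)$ are pure bookkeeping once the band decomposition is in hand. Transposing $\omega_{a,j}\Lambda^{N_{1,a}-j}$ gives $(\Lambda^\top)^{N_{1,a}-j}\omega_{a,j}$, and commuting a diagonal past a power of $\Lambda^\top$ shifts its argument via $(\Lambda^\top)^pD=\diag\big(k\mapsto D(k-p)\big)(\Lambda^\top)^p$, which with $p=N_{1,a}-j$ reproduces the stated $\rho_{a,j}$. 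Dually, transposing $\bar\omega_{b,j}(\Lambda^\top)^{N_{2,b}-j}$ and using $\Lambda^pD=\diag\big(k\mapsto D(k+p)\big)\Lambda^p$ with $p=N_{2,b}-j$ reproduces $\bar\rho_{b,j}$. Collecting the four steps gives \eqref{form.omega}.
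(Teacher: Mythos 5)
Your treatment of the unbarred half is sound, and the paper itself offers no argument for this lemma (it is asserted to follow from \eqref{delta}--\eqref{fac.delta}), so there is no ``paper proof'' to compare against: $\omega_a=\delta_{a,+}=\delta_{a,-}\delta_a$ is literally the upper Gauss--Borel factor, left multiplication by the lower unitriangular $\delta_{a,-}$ preserves the upper band-width $N_{1,a}$ that $\delta_a=Sq_aS^{-1}$ inherits from $q_a$, and combined with upper triangularity this gives the first line of \eqref{form.omega}; the transposition and diagonal-shifting bookkeeping for $\rho_{a,j}$ and $\bar\rho_{b,j}$ is also correct.

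The barred half, however, has a genuine gap, and it sits exactly at the point you flagged as delicate. The matrix you actually bound is $\bar\delta_b\bar\delta_{b,+}^{-1}$, which by \eqref{fac.delta} equals $\bar\delta_{b,-}^{-1}$, the lower unitriangular factor of $\bar\delta_b$; your band count for \emph{that} matrix is correct. But \eqref{omega} defines $\bar\omega_b:=\bar\delta_{b,-}=\bar\delta_{b,+}\bar\delta_b^{-1}$, the \emph{inverse} of that factor, and bandedness of a lower unitriangular matrix is not inherited by its inverse (compare $(\I+c\Lambda^\top)^{-1}=\I-c\Lambda^\top+c^2(\Lambda^\top)^2-\cdots$). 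The step ``which yields $\bar\omega_b=\cdots$'' therefore silently replaces a matrix by its inverse. Moreover this cannot be repaired within your argument, because the claim fails for $\bar\delta_{b,-}$ itself: take $p_1=p_2=1$, $\vec n_1=\vec n_2=(1)$, $w_1=w_2=1$, $\d\mu=\d x$ on $[0,1]$ and $\bar\lambda_1(1)=0$, so that $\bar T_1$ multiplies $w_2$ by $x$. The monic orthogonal polynomials for $\d x$ are $P_2=x^2-x+\tfrac16$, etc., those for $x\,\d x$ satisfy $\bar T_1P_2=P_2-\tfrac15P_1+\tfrac1{30}P_0$, hence $(\bar\delta_{1,-})_{2,0}=\tfrac1{30}\neq0$, whereas the lemma (with $N_{2,1}=1$) would force every entry below the first subdiagonal to vanish. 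The banded object really is $\bar\delta_{b,-}^{-1}$ --- precisely what your computation controls --- so to obtain a true statement you must either prove the lemma for $\bar\omega_b^{-1}$ (and note that the barred relations in \eqref{multi.variante} then take the ``implicit'' form of \eqref{dual.multi.variante}) or flag the discrepancy with the definition \eqref{omega}; as written, the proof establishes the band structure of a different matrix from the one named in the statement.
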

       that with
       \begin{definition}
        We define
\begin{align}\label{gamma}\begin{aligned}
  \gamma_{a,a'}(s,x)&:=  (1-\delta_{a,a'}(1+\lambda_a(s_a+1)-x)),\\
\gamma_{b,b'}(s,x)&:=  (1-\delta_{b,b'}(1+\bar\lambda_b(\bar s_b+1)-x)),
\end{aligned}
\end{align}
       \end{definition}
leads to
\begin{pro}
The following equations
  \begin{align}\label{multi.variante}
\begin{aligned}
  (T_{a'} A_a^{(k)})\gamma_{a,a'}&=\omega_{a',0}(k)A_a^{(k+|\vec n_1|-n_{1,a'}+1)}+\dots+\omega_{a',|\vec
  n_1|-n_{1,a'}+1}(k) A_a^{(k)},\\
  \bar T_{b'}A_a^{(k)}&=\bar\omega_{b',0}(k)A_a^{(k-|\vec n_2|+n_{2,b'}-1)}+\dots+\bar\omega_{b',|\vec n_2|-n_{2,b'}+1}
  (k) A_a^{(k)}.
  \end{aligned}\\
  \label{dual.multi.variante}
\begin{aligned}
\rho_{a',0}(k) (T_{a'} \bar A_b^{(k-|\vec n_1|+n_{1,a'}-1)})+\dots+ \rho_{a',|\vec n_1|-n_{1,a'}+1}(k) (T_{a'} \bar
A_b^{(k)})&= \bar A_b^{(k)},\\
\Big( \bar\rho_{b',0} (k)(\bar T_{b'} \bar A_b^{(k+|\vec n_2|-n_{2,b'}+1)})+
 \dots+ \bar\rho_{b',|\vec n_2|-n_{2,b'}+1} (k) (\bar T_{b'} \bar A_b^{(k)})\Big)\bar\gamma_{b,b'}&= \bar A_b^{(k)},
\end{aligned}
\end{align}
are fulfilled.
\end{pro}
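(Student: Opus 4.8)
The plan is to derive all four identities from the discrete auxiliary linear systems \eqref{d.wave} together with the explicit banded form of the shift matrices in Lemma \ref{lemon}, in complete parallel with the way the continuous recursion relations were extracted from the wave functions. First I would record the flow equations for the wave vectors themselves. Since $\Psi_a=W\chi_{1,a}$ and $\bar\Psi_b^*=(\bar W^{-1})^\top\chi_{2,b}$ with $W=SD_0$, $\bar W=\bar S\bar D_0$ as in \eqref{discrete-wave} and \eqref{adjoint.baker}, and the monomial strings $\chi_{1,a},\chi_{2,b}$ carry no dependence on the discrete times, applying $T_{a'}$ and $\bar T_{b'}$ and using $T_{a'}W=\omega_{a'}W$, $\bar T_{b'}W=\bar\omega_{b'}W$ from \eqref{d.wave} gives at once $T_{a'}\Psi_a=\omega_{a'}\Psi_a$ and $\bar T_{b'}\Psi_a=\bar\omega_{b'}\Psi_a$; inverting and transposing $T_{a'}\bar W=\omega_{a'}\bar W$ and $\bar T_{b'}\bar W=\bar\omega_{b'}\bar W$ yields the dual relations $\omega_{a'}^\top(T_{a'}\bar\Psi_b^*)=\bar\Psi_b^*$ and $\bar\omega_{b'}^\top(\bar T_{b'}\bar\Psi_b^*)=\bar\Psi_b^*$.

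Next I would substitute the wave-function identities $\Psi_a^{(k)}=A_a^{(k)}\Ds_a$ and $(\bar\Psi_b^*)^{(k)}=\bar A_b^{(k)}\bar\Ds_b^{-1}$ of the preceding Proposition and read off the $k$-th component of each matrix relation, expanding $\omega_{a'},\bar\omega_{b'}$ and their transposes through \eqref{form.omega}. In the $\omega_{a'}$ expansion each $\Lambda^{N_{1,a'}-j}$ raises the upper index, $(\Lambda^{N_{1,a'}-j}\Psi_a)^{(k)}=\Psi_a^{(k+N_{1,a'}-j)}$ with $N_{1,a'}=|\vec n_1|-n_{1,a'}+1$, while the diagonal blocks $\omega_{a',j}$ contribute the scalars $\omega_{a',j}(k)$; dually each $(\Lambda^\top)^{N}$ lowers the index in the $\bar\omega_{b'}$ and in the $\rho_{a',j}$ expansions. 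Every term on both sides carries a common plane-wave factor ($\Ds_a$ in \eqref{multi.variante}, $\bar\Ds_b^{-1}$ in \eqref{dual.multi.variante}); after this factor is cancelled the surviving coefficients are exactly $\omega_{a',j}(k)$, $\bar\omega_{b',j}(k)$, $\rho_{a',j}(k)$, $\bar\rho_{b',j}(k)$, and the $j=0$ and $j=N$ extreme terms match the displayed endpoints, reproducing the right-hand sides of \eqref{multi.variante} and \eqref{dual.multi.variante} termwise.

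The genuinely delicate point, which I would isolate as a short lemma and treat as the main obstacle, is the behaviour of the plane-wave factors under the shift that are applied to the \emph{same} label, and their identification with $\gamma_{a,a'},\bar\gamma_{b,b'}$ of \eqref{gamma}. Because $\Ds_a$ depends on $s_a$ alone, one has $\bar T_{b'}\Ds_a=\Ds_a$, so the second line of \eqref{multi.variante} comes with no prefactor, whereas from \eqref{d.evolved} the forward shift multiplies $\Ds_a$ by $(z-\lambda_a(s_a+1))$ in the $a'=a$ direction and leaves it unchanged otherwise, i.e. $T_{a'}\Ds_a=\gamma_{a,a'}(s,z)\,\Ds_a$ with $\gamma_{a,a'}=1-\delta_{a,a'}(1+\lambda_a(s_a+1)-z)$, which is precisely the factor multiplying $T_{a'}A_a^{(k)}$ on the left of the first line; the analogous computation gives $\bar T_{b'}\bar\Ds_b^{-1}=\bar\gamma_{b,b'}(s,z)\,\bar\Ds_b^{-1}$. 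Two bookkeeping checks remain and are the crux: that this plane-wave ratio is indexed consistently, which reduces to verifying $q_a\chi_{1,a}=\gamma_{a,a}(s,z)\chi_{1,a}$ from $(T_{a'}D_0)D_0^{-1}=q_a$ (care is needed in the pole direction $s_a<0$, where the relabelling of the removed factor must be tracked); and, in the dual identities, that the transpose in $\omega_{a'}^\top,\bar\omega_{b'}^\top$ routes the factor $\bar\gamma_{b,b'}$ onto the right of the bracket exactly as in the last line of \eqref{dual.multi.variante}. Once these matches are confirmed the four equations follow.
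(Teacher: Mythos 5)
Your argument is correct and follows the same route as the paper's own (very terse) proof: the first pair of identities is read off componentwise from the discrete auxiliary linear system $T_{a'}W=\omega_{a'}W$, $\bar T_{b'}W=\bar\omega_{b'}W$ applied to $\Psi_a=W\chi_{1,a}$, and the dual pair from the inverted--transposed relations $\omega_{a'}^\top T_{a'}((\bar W^{-1})^\top)=(\bar W^{-1})^\top$ and $\bar\omega_{b'}^\top\bar T_{b'}((\bar W^{-1})^\top)=(\bar W^{-1})^\top$ applied to $\bar\Psi_b^*$, using the banded form of $\omega_{a'},\bar\omega_{b'}$ and the identification $T_{a'}\Ds_a=\gamma_{a,a'}\Ds_a$, $\bar T_{b'}\bar\Ds_b^{-1}=\bar\gamma_{b,b'}\bar\Ds_b^{-1}$. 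You simply make explicit the bookkeeping that the paper leaves to the reader.
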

\begin{proof}
  For \eqref{multi.variante} recall the discrete auxiliary systems for $W$, while for \eqref{dual.multi.variante} just
  consider that \begin{align*}
  \omega_a^\top T_a((\bar W^{-1})^\top)&= (\bar W^{-1})^\top,&
  \bar  \omega_b^\top \bar T_b((\bar W^{-1})^\top)&= (\bar W^{-1})^\top.
\end{align*}
\end{proof}
 Notice that relations \eqref{multi.variante} and \eqref{dual.multi.variante} are among multiple orthogonal
 polynomials in the same ladder but with different weights, they link the polynomials for the weights $w_{1,a},
 w_{2,b}$ with those with $T_{a'}w_{1,a},T_{a'} w_{2,b}$ or  $\bar T_{b'}w_{1,a},\bar T_{b'} w_{2,b}$.

  \subsection{Symmetries, recursion relations and string equations }

We now return to the discussion of the symmetry of the moment matrix that we started in \S \ref{sec:recursion} but
with  evolved weights and the use of Lax matrices. The first observation is the following
\begin{pro}
The $j$-th power of the evolved Jacobi type matrix introduced in \S \ref{sec:recursion} is related with Lax matrices
through what we call a string equation:  \begin{align}\label{bigradedj}
J^{j}&=\sum_{a=1}^{p_1}L_a^j=\sum_{b=1}^{p_2}\bar L_b^j,& j=1,2,\dots,
  \end{align}
and the multiple orthogonal polynomials are  eigen-vectors:
     \begin{align}\label{bigradedjp}
J^{j}\A_{a'}&=x^j\A_{a'},&
(J^{j})^\top\bar\A_{b'}&=x^j\bar \A_{b'},
  \end{align}
  for $a'=1,\dots,p_1$ and $b'=1,\dots,p_2$.
\end{pro}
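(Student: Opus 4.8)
The plan is to reduce the string equation \eqref{bigradedj} to a purely algebraic identity among the partial shifts and then transport it by conjugation with the Gauss--Borel factors. Recall from Proposition \ref{hess} that the Jacobi type matrix carries the two dressed expressions $J = S\Upsilon_1 S^{-1} = \bar S\Upsilon_2^\top\bar S^{-1}$, while by \eqref{deflax} the Lax matrices are $L_a = S\Lambda_{1,a}S^{-1}$ and $\bar L_b = \bar S\Lambda_{2,b}^\top\bar S^{-1}$. Since conjugation is multiplicative, $J^j = S\Upsilon_1^j S^{-1} = \bar S(\Upsilon_2^\top)^j\bar S^{-1}$ and $L_a^j = S\Lambda_{1,a}^j S^{-1}$, $\bar L_b^j = \bar S(\Lambda_{2,b}^\top)^j\bar S^{-1}$; here the manipulation of semi-infinite products is legitimate because $J$ has finite bandwidth $N_1+N_2+1$. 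Everything therefore hinges on the two identities $\Upsilon_1^j=\sum_{a=1}^{p_1}\Lambda_{1,a}^j$ and $(\Upsilon_2^\top)^j=\sum_{b=1}^{p_2}(\Lambda_{2,b}^\top)^j$.

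The one genuine computation, and the only point requiring care, is the \emph{orthogonality} of the partial shifts. From the bullet properties recorded in \S\ref{sec:recursion} one has $\Pi_{1,a}\Lambda_{1,a}=\Lambda_{1,a}\Pi_{1,a}=\Lambda_{1,a}$, so that $\Lambda_{1,a}=\Pi_{1,a}\Lambda_{1,a}\Pi_{1,a}$; combined with $\Pi_{1,a}\Pi_{1,a'}=\delta_{a,a'}\Pi_{1,a}$ (the $\Pi_{1,a}$ project onto the disjoint block subspaces summing to $\I$) this yields $\Lambda_{1,a}\Lambda_{1,a'}=0$ whenever $a\neq a'$. Expanding $\Upsilon_1^j=\big(\sum_a\Lambda_{1,a}\big)^j$, every monomial $\Lambda_{1,a_1}\cdots\Lambda_{1,a_j}$ containing two distinct labels vanishes, leaving only the diagonal terms with $a_1=\cdots=a_j$, so $\Upsilon_1^j=\sum_a\Lambda_{1,a}^j$. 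Transposing the same relation, $\Lambda_{2,b}^\top\Lambda_{2,b'}^\top=(\Lambda_{2,b'}\Lambda_{2,b})^\top=0$ for $b\neq b'$, gives $(\Upsilon_2^\top)^j=\sum_b(\Lambda_{2,b}^\top)^j$.

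Dressing these two identities produces the string equation at once:
\begin{align*}
J^j &= S\Upsilon_1^j S^{-1} = \sum_{a=1}^{p_1}S\Lambda_{1,a}^j S^{-1} = \sum_{a=1}^{p_1}L_a^j,\\
J^j &= \bar S(\Upsilon_2^\top)^j\bar S^{-1} = \sum_{b=1}^{p_2}\bar S(\Lambda_{2,b}^\top)^j\bar S^{-1} = \sum_{b=1}^{p_2}\bar L_b^j.
\end{align*}
Finally, the eigenvector relations \eqref{bigradedjp} follow by induction on $j$ from the base case $J\A_{a'}=z\A_{a'}$ and $J^\top\bar\A_{b'}=z\bar\A_{b'}$ already established in Proposition \ref{eigen}: applying $J$ to $J^{j-1}\A_{a'}=z^{j-1}\A_{a'}$ gives $J^j\A_{a'}=z^j\A_{a'}$, and since transposition commutes with taking powers, $(J^j)^\top\bar\A_{b'}=(J^\top)^j\bar\A_{b'}=z^j\bar\A_{b'}$. (Alternatively, $J^j\A_{a'}=S\Upsilon_1^j\chi_{1,a'}=S\sum_a\Lambda_{1,a}^j\chi_{1,a'}=z^j S\chi_{1,a'}$, using the eigenvalue property $\Lambda_{1,a}\chi_{1,a'}=\delta_{a,a'}z\chi_{1,a'}$ directly.) Thus the whole proposition rests on the shift orthogonality $\Lambda_{1,a}\Lambda_{1,a'}=\delta_{a,a'}\Lambda_{1,a}^2$, with the remaining steps being conjugation invariance of powers and the already-proven $j=1$ spectral statements.
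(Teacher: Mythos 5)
Your proof is correct, but it organizes the argument differently from the paper. The paper proves the identity $\sum_a L_a^j=\sum_b\bar L_b^j$ by first establishing, by induction on $j$ from the Hankel symmetry \eqref{sym}, the power-$j$ symmetry $\Lambda_{1,a}^j\,g\,\Pi_{2,b}=\Pi_{1,a}\,g\,(\Lambda_{2,b}^\top)^j$, dressing it into $L_a^j\bar C_b=C_a\bar L_b^j$ with $C_a=S\Pi_{1,a}S^{-1}$, $\bar C_b=\bar S\Pi_{2,b}\bar S^{-1}$, and then summing over $a$ and $b$ using $\sum_a\Pi_{1,a}=\sum_b\Pi_{2,b}=\I$. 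You instead take powers of the already-dressed $j=1$ string equation $J=S\Upsilon_1S^{-1}=\bar S\Upsilon_2^\top\bar S^{-1}$ of Proposition \ref{hess} and reduce everything to the undressed algebraic identities $\Upsilon_1^j=\sum_a\Lambda_{1,a}^j$ and $(\Upsilon_2^\top)^j=\sum_b(\Lambda_{2,b}^\top)^j$, which you justify by the shift orthogonality $\Lambda_{1,a}\Lambda_{1,a'}=\delta_{a,a'}\Lambda_{1,a}^2$ (a correct consequence of $\Lambda_a=\Pi_a\Lambda_a\Pi_a$ and $\Pi_a\Pi_{a'}=\delta_{a,a'}\Pi_a$). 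Your route has the advantage of avoiding a fresh induction on the moment matrix at each power $j$, and it makes explicit the step $J^j=\sum_aL_a^j$, which the paper's proof uses silently in the eigenvector computation $J^j\A_{a'}=S\sum_a\Lambda_{1,a}^jS^{-1}S\chi_{1,a'}$ without ever stating why $J^j$ equals that sum; the paper's route, on the other hand, produces the intermediate relation $L_a^j\bar C_b=C_a\bar L_b^j$, a finer (componentwise) version of the string equation that is of independent interest. Your treatment of \eqref{bigradedjp}, whether by induction from Proposition \ref{eigen} or directly via $\Lambda_{1,a}\chi_{1,a'}=\delta_{a,a'}z\chi_{1,a'}$, matches the paper's.
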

\begin{proof}
Using \eqref{sym} it can be proven by induction on $j$ that for any $j\geq 1$ the following equation holds
 \begin{align}\label{symj}
 \Lambda_{1,a}^j g  \Pi_{2,b}=
   \Pi_{1,a}g(\Lambda_{2,b}^\top)^j,
  \end{align}
so that
  \begin{align}\label{stringj}
    L_{a}^j\bar C_{b}= C_{a}\bar L_{b}^j.
  \end{align}
Summing over $a,b$ we deduce \eqref{bigradedj}. Moreover \eqref{bigradedjp} is obtained as follows
    \begin{align}
J^{j}\A_{a'}&=S\sum_{a=1}^{p_1}\Lambda_{1,a}^jS^{-1}S\chi_{1,a'}=x^j\A_{a'},\\
(J^j)^\top\bar\A_{a'}&=(\bar S^{-1})^\top\sum_{b=1}^{p_2}\Lambda_{2,a}^j\bar S^\top  (\bar
S^{-1})^\top\chi_{2,b'}=x^j\bar \A_{b'}.
  \end{align}
\end{proof}

We are ready to show that the symmetry  \eqref{sym} induces a  corresponding invariance on Lax matrices and multiple
orthogonal polynomials
\begin{pro}\label{proposition: symmetry}
  The following relations hold for $j = 1,2,\dots$
  \begin{align}
\big(\sum_{a=1}^{p_1}\frac{\partial}{\partial t_{j,a}}+\sum_{b=1}^{p_2}\frac{\partial}{\partial \bar t_{j,
b}}\big)L_{a'}&=0,&
\big(\sum_{a=1}^{p_1}\frac{\partial}{\partial t_{j,a}}+\sum_{b=1}^{p_2}\frac{\partial}{\partial \bar t_{j,
b}}\big)\bar
L_{b'}&=0,\\
\big(\sum_{a=1}^{p_1}\frac{\partial}{\partial t_{j,a}}+\sum_{b=1}^{p_2}\frac{\partial}{\partial \bar t_{j,
b}}\big)\A_{a'}&=0,&
\big(\sum_{a=1}^{p_1}\frac{\partial}{\partial t_{j,a}}+\sum_{b=1}^{p_2}\frac{\partial}{\partial \bar t_{j,
b}}\big)\bar\A_{b'}&=0.
\end{align}
\end{pro}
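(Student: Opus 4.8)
The plan is to introduce the combined derivation $D_j:=\sum_{a=1}^{p_1}\frac{\partial}{\partial t_{j,a}}+\sum_{b=1}^{p_2}\frac{\partial}{\partial \bar t_{j,b}}$ and to prove that it annihilates the dressing matrices, i.e. $D_jS=0$ and $D_j\bar S=0$. Once this is in hand, all four asserted identities follow immediately: from $L_{a'}=S\Lambda_{1,a'}S^{-1}$ and $\bar L_{b'}=\bar S\Lambda_{2,b'}^\top\bar S^{-1}$ in \eqref{deflax}, together with $D_j(S^{-1})=-S^{-1}(D_jS)S^{-1}=0$, we get $D_jL_{a'}=0$ and $D_j\bar L_{b'}=0$; and since the strings $\chi_{1,a'},\chi_{2,b'}$ are $t$-independent while $\A_{a'}=S\chi_{1,a'}$, $\bar\A_{b'}=(\bar S^{-1})^\top\chi_{2,b'}$, we also get $D_j\A_{a'}=(D_jS)\chi_{1,a'}=0$ and $D_j\bar\A_{b'}=\bigl(D_j(\bar S^{-1})\bigr)^\top\chi_{2,b'}=0$.

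The central step is therefore to show $D_jg(t)=0$ for the deformed moment matrix \eqref{g.evol}. Because the shifts commute, one has $\frac{\partial g(t)}{\partial t_{j,a}}=\Lambda_{1,a}^jg(t)$ and $\frac{\partial g(t)}{\partial \bar t_{j,b}}=-g(t)(\Lambda_{2,b}^\top)^j$. Summing and using $\Lambda_{1,a}\Lambda_{1,a'}=0$ for $a\neq a'$ (so the cross terms drop and $\sum_a\Lambda_{1,a}^j=\Upsilon_1^j$, and likewise $\sum_b(\Lambda_{2,b}^\top)^j=(\Upsilon_2^j)^\top$) yields $D_jg(t)=\Upsilon_1^jg(t)-g(t)(\Upsilon_2^j)^\top$. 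It then remains to verify that $g(t)$ still obeys the Hankel symmetry \eqref{sym2}: since $W_0$ and $\bar W_0^{-1}$ are assembled from $\Lambda_{1,a},\Pi_{1,a}$ respectively $\Lambda_{2,b}^\top,\Pi_{2,b}$, they commute with $\Upsilon_1$ respectively $\Upsilon_2^\top$, so that $\Upsilon_1g(t)=W_0(\Upsilon_1g)\bar W_0^{-1}=W_0(g\Upsilon_2^\top)\bar W_0^{-1}=g(t)\Upsilon_2^\top$. Iterating this gives $\Upsilon_1^jg(t)=g(t)(\Upsilon_2^j)^\top$ and hence the desired cancellation $D_jg(t)=0$.

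With $D_jg(t)=0$ established, I peel the factorization \eqref{facto-t}: applying $D_j$ to $g(t)=S^{-1}\bar S$ gives $(D_jS)S^{-1}=(D_j\bar S)\bar S^{-1}$. The left-hand side is strictly lower triangular and the right-hand side is upper triangular, and a matrix that is simultaneously both must vanish; therefore $D_jS=0=D_j\bar S$, completing the argument as sketched in the first paragraph. An equivalent and perhaps more transparent route bypasses $g(t)$ altogether and combines the already-proved flow equations \eqref{laxeq}, \eqref{flow.mop} with the string equation \eqref{bigradedj}: the key algebraic identity is $\sum_aB_{j,a}+\sum_b\bar B_{j,b}=(J^j)_++(J^j)_-=J^j$, which follows from \eqref{defzs} and \eqref{bigradedj}; summing the Lax equations then gives $D_jL_{a'}=[J^j,L_{a'}]=0$ (both $J^j=S\Upsilon_1^jS^{-1}$ and $L_{a'}=S\Lambda_{1,a'}S^{-1}$ are conjugate to commuting matrices), while summing \eqref{flow.mop} gives $D_j\A_{a'}=(J^j-x^j)\A_{a'}=0$ by the eigenvector relation \eqref{bigradedjp}, and similarly for the barred quantities after transposition.

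The only delicate points, and hence where I expect the real care to be needed, are the verification that the deformed $g(t)$ inherits the symmetry \eqref{sym2} (equivalently, that $W_0,\bar W_0$ commute with $\Upsilon_1,\Upsilon_2^\top$) in the first route, and the triangular-projection identity $\sum_a(L_a^j)_++\sum_b(\bar L_b^j)_-=J^j$ together with the sign and transpose bookkeeping of the barred flow equations in the second route. Neither presents a genuine obstruction: the former rests on the commutation relations $\Pi_{a'}\Lambda_a=\Lambda_a\Pi_{a'}$ and the commutativity of the family $\{\Lambda_a^j\}$, and the latter rests squarely on the string equation \eqref{bigradedj} already at our disposal.
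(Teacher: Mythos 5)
Your proposal is correct, and in fact it contains two complete arguments. Your second route --- summing the Lax equations \eqref{laxeq} and the flows \eqref{flow.mop} and invoking $\sum_a B_{j,a}+\sum_b\bar B_{j,b}=(J^j)_++(J^j)_-=J^j$ from \eqref{defzs} and the string equation \eqref{bigradedj}, then using $[J^j,L_{a'}]=S[\Upsilon_1^j,\Lambda_{1,a'}]S^{-1}=0$ and the eigenvector relation \eqref{bigradedjp} --- is essentially verbatim the paper's own proof. Your first route is a genuinely different packaging of the same underlying symmetry: you show directly that the combined derivation $D_j$ annihilates the deformed moment matrix \eqref{g.evol}, because $D_jg(t)=\Upsilon_1^jg(t)-g(t)(\Upsilon_2^\top)^j$ and the evolved $g(t)$ inherits the Hankel symmetry \eqref{sym2} from the commutation of $W_0,\bar W_0$ with $\Upsilon_1,\Upsilon_2^\top$; then the standard triangularity argument applied to \eqref{facto-t} yields the stronger intermediate statement $D_jS=D_j\bar S=0$, from which all four identities drop out by conjugation and by $t$-independence of $\chi_{1,a'},\chi_{2,b'}$. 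What this buys you is independence from the already-derived flow equations (it works at the level of the factorization alone, exactly as the paper's derivation of \eqref{desfac3} does), and it makes transparent the reason for the invariance: the diagonal flow multiplies $w_{1,a}$ by $\Exp{\epsilon x^j}$ and $w_{2,b}$ by $\Exp{-\epsilon x^j}$, so every product $w_{1,a}w_{2,b}$, hence every moment, is fixed. Both routes ultimately rest on the same evolved Hankel symmetry \eqref{symj}, which the paper uses to prove \eqref{bigradedj} in the first place, so neither is circular; the only cosmetic caveat is that the formula $\partial W_0/\partial t_{j,a}=\Lambda_{1,a}^jW_0$ you rely on is the paper's own (it hinges on the convention $\Lambda_{1,a}^0=\Pi_{1,a}$ in the definition of $W_0$), so you inherit rather than introduce that notational subtlety.
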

\begin{proof}
 See Appendix \ref{II}.
\end{proof}

\subsection{Bilinear equations and $\tau$-functions}
The proof of the bilinear identity needs three lemmas. For the first one, let  $W_{\vec n_1,\vec n_2},\bar W_{\vec
n_1,\vec n_2}$ be  the wave matrices associated with the moment matrix $g_{\vec n_1,\vec n_2}$; so that, $W_{\vec
n_1,\vec n_2}g_{\vec n_1,\vec n_2}=\bar W_{\vec n_1,\vec n_2}$. Then, we have
\begin{lemma}
  \label{lemma: matrix bilinear}
  The wave matrices associated with different compositions and times satisfy
  \begin{align}\label{bilinear-wave}
 W_{\vec n_1,\vec n_2}(t,s)\pi_{\vec n_1',\vec n_1}^\top W_{\vec n_1',\vec n_2'}(t',s')^{-1}=\bar W_{\vec n_1,\vec
 n_2}(t,s)\pi_{\vec n_2',\vec n_2}^\top\bar W_{\vec n_1',\vec n_2'}(t',s')^{-1},
\end{align}
  \end{lemma}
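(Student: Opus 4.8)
The plan is to reduce the bilinear identity to a single fundamental relation between each wave matrix and the \emph{undeformed} base moment matrix, and then to close the argument with a purely algebraic manipulation built on the composition-change Proposition \eqref{different moments} together with the orthogonality $\pi_I^\top\pi_I=\pi_I\pi_I^\top=\I$ of the permutation matrices. The crucial preliminary, already recorded in the proof of Proposition \ref{conti}, is that for every composition $(\vec n_1,\vec n_2)$ and all flow parameters $(t,s)$ one has
\begin{align*}
W_{\vec n_1,\vec n_2}(t,s)\,g_{\vec n_1,\vec n_2}=\bar W_{\vec n_1,\vec n_2}(t,s).
\end{align*}
With the combined deformation factors $\mathcal W_0=W_0D_0\in G_+$ and $\bar{\mathcal W}_0=\bar W_0\bar D_0\in G_-$, this follows by inserting $W=S\mathcal W_0$, $\bar W=\bar S\bar{\mathcal W}_0$ and the deformed factorization $g(t,s)=\mathcal W_0\,g\,\bar{\mathcal W}_0^{-1}=S^{-1}\bar S$, so that $\mathcal W_0\,g=S^{-1}\bar S\,\bar{\mathcal W}_0$ and hence $Wg=S(S^{-1}\bar S\bar{\mathcal W}_0)=\bar S\bar{\mathcal W}_0=\bar W$. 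The essential point is that the common factor on both sides is the \emph{single, time-independent} base matrix $g_{\vec n_1,\vec n_2}$; this is exactly what decouples the unprimed $(t,s)$ flow from the primed $(t',s')$ flow and gives the identity its content.

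First I would rewrite the fundamental relation for the primed data as $W_{\vec n_1',\vec n_2'}(t',s')^{-1}=g_{\vec n_1',\vec n_2'}\,\bar W_{\vec n_1',\vec n_2'}(t',s')^{-1}$, obtained from $W'g'=\bar W'$ by right multiplication with $\bar W'^{-1}$ and left multiplication with $W'^{-1}$. Substituting this into the left-hand side of the claim, then replacing $g_{\vec n_1',\vec n_2'}=\pi_{\vec n_1',\vec n_1}g_{\vec n_1,\vec n_2}\pi_{\vec n_2',\vec n_2}^\top$, produces the factor $\pi_{\vec n_1',\vec n_1}^\top\pi_{\vec n_1',\vec n_1}=\I$, and applying the fundamental relation once more converts $W_{\vec n_1,\vec n_2}(t,s)\,g_{\vec n_1,\vec n_2}$ into $\bar W_{\vec n_1,\vec n_2}(t,s)$. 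Writing $g=g_{\vec n_1,\vec n_2}$, $g'=g_{\vec n_1',\vec n_2'}$, and $W,\bar W$ (resp. $W',\bar W'$) for the wave matrices at $(t,s)$ (resp. $(t',s')$), the chain reads
\begin{align*}
W\pi_{\vec n_1',\vec n_1}^\top W'^{-1}
=W\pi_{\vec n_1',\vec n_1}^\top g'\bar W'^{-1}
=W\pi_{\vec n_1',\vec n_1}^\top\pi_{\vec n_1',\vec n_1}\,g\,\pi_{\vec n_2',\vec n_2}^\top\bar W'^{-1}
=Wg\,\pi_{\vec n_2',\vec n_2}^\top\bar W'^{-1}
=\bar W\pi_{\vec n_2',\vec n_2}^\top\bar W'^{-1},
\end{align*}
which is precisely the asserted equality \eqref{bilinear-wave}.

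The step I expect to demand the most care is the justification of these manipulations at the level of semi-infinite matrices. Since $g_{\vec n_1,\vec n_2}$ is \emph{not} invertible (as deduced from the Hankel symmetry \eqref{sym2}), one can never form $g^{-1}$, and this constraint dictates the particular order above: I only ever invert the triangular-type wave matrices $W,\bar W$ --- whose inverses lie in the well-controlled classes $G_\pm$ --- and never $g$ itself. I would therefore need to verify that each product entering the chain is genuinely well defined, i.e. reduces entrywise to finite sums rather than formally divergent series, and that associativity may legitimately be invoked when passing from $W(\pi^\top g'\bar W'^{-1})$ to $(Wg)\pi^\top\bar W'^{-1}$; this is exactly the bookkeeping already stressed in the discussion of $G_\pm$ following \eqref{facto}. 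Granting this, the identity is immediate, and no analytic hypothesis (convergence of Cauchy transforms, perfectness, bounded support, etc.) beyond the mere existence of the two Gauss--Borel factorizations is required.
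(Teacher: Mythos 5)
Your proof is correct and follows essentially the same route as the paper: both arguments rest on the fundamental relation $W_{\vec n_1,\vec n_2}\,g_{\vec n_1,\vec n_2}=\bar W_{\vec n_1,\vec n_2}$ applied at both sets of parameters, combined with the composition-change identity \eqref{different moments} and $\pi^\top\pi=\I$ to eliminate the common base matrix $g$. The paper equates the two resulting expressions for $g$ and rearranges, whereas you substitute $W'^{-1}=g'\bar W'^{-1}$ directly into the left-hand side; this is the same computation read in the opposite direction, and your added remarks on the well-definedness of the semi-infinite products are a sensible (if implicit in the paper) piece of bookkeeping.
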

  \begin{proof}
    We consider simultaneously the following equations
    \begin{align*}
      W_{\vec n_1,\vec n_2}(t,s)g&=\bar W_{\vec n_1,\vec n_2}(t,s),\\
      W_{\vec n_1',\vec n_2'}(t',s')\pi_{\vec n_1',\vec n_1} g\pi_{\vec n_2',\vec n_2}^\top&=\bar W_{\vec n_1',\vec
      n_2'}(t',s'),
    \end{align*}
   where $g=g_{\vec n_1,\vec n_2}$, and we get
    \begin{align*}
      W_{\vec n_1,\vec n_2}(t,s)^{-1}\bar W_{\vec n_1,\vec n_2}(t,s)= \pi_{\vec n_1',\vec n_1}^\top W_{\vec n_1',\vec
      n_2'}(t',s')^{-1} \bar W_{\vec n_1',\vec n_2'}(t',s')\pi_{\vec n_2',\vec n_2}=g,
    \end{align*}
    and the result becomes evident.
  \end{proof}
For the second one, let  $(\cdot)_{-1}$ denote the   coefficient in $z^{-1}$ in the Laurent expansi\'{o}n around
$z=\infty$ (place where the Cauchy transforms make sense).
\begin{lemma}\label{chi-id}
  For the vectors $\chi_a$ the following formulae hold
  \begin{align*}
      \big(\sum_{a=1}^{p}\chi_a(\chi_a^*)^\top\big)_{-1}=\big(\sum_{a=1}^{p}\chi^*_a\chi_a^\top\big)_{-1}=\I,
  \end{align*}
\end{lemma}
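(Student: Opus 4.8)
The plan is to reduce both identities to the resolution of the identity $\sum_{a=1}^p\Pi_a=\I$, established earlier together with the definition $\Pi_a=\sum_{k=0}^\infty e_a(k)e_a(k)^\top$. First I would record the two series expansions that follow directly from \eqref{chion}: writing $\chi_a=\sum_{k=0}^\infty e_a(k)z^k$ and applying the substitution $z\mapsto z^{-1}$ together with the overall factor $z^{-1}$ gives $\chi_a^*=z^{-1}\chi_a(z^{-1})=\sum_{k=0}^\infty e_a(k)z^{-k-1}$. Thus the basis element $e_a(k)$ carries the monomial $z^k$ inside $\chi_a$ and the monomial $z^{-k-1}$ inside $\chi_a^*$.

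Next I would form the semi-infinite matrix-valued Laurent series for each outer product and extract the coefficient of $z^{-1}$. For the first,
\[
\chi_a(\chi_a^*)^\top=\sum_{k=0}^\infty\sum_{m=0}^\infty e_a(k)e_a(m)^\top\,z^{k-m-1},
\]
so the coefficient of $z^{-1}$ imposes $k-m-1=-1$, i.e. $k=m$, and the double sum collapses to the diagonal $\sum_{k=0}^\infty e_a(k)e_a(k)^\top=\Pi_a$. The second product is treated identically: in
\[
\chi_a^*\chi_a^\top=\sum_{m=0}^\infty\sum_{k=0}^\infty e_a(m)e_a(k)^\top\,z^{k-m-1}
\]
the $z^{-1}$ coefficient again forces $k=m$ and yields $\Pi_a$. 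Summing over $a=1,\dots,p$ and invoking $\sum_{a=1}^p\Pi_a=\I$ then produces both claimed equalities simultaneously.

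The argument is essentially index bookkeeping, so I do not expect a deep obstacle; the only point requiring care is the shift introduced by the definition $\chi_a^*=z^{-1}\chi_a(z^{-1})$, which places $e_a(k)$ at power $z^{-k-1}$ rather than $z^{-k}$. It is precisely this shift that singles out the diagonal $k=m$ (rather than some off-diagonal band) as the contribution to the $z^{-1}$ coefficient, so that the answer is the projector $\Pi_a$ exactly. I would also remark, to forestall any worry about well-definedness of $(\cdot)_{-1}$, that every entry of each outer product is a single monomial in $z$, so extracting the $z^{-1}$ part is an entrywise selection rather than a limit; the resulting coefficient matrix is supported on the single diagonal $k=m$, and since the map $(a,k)\mapsto i(a,k)$ is a bijection onto $\Z_+$ the contributions for different $a$ occupy disjoint positions, assembling to the full identity $\I$.
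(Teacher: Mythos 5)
Your proof is correct. The paper actually states Lemma \ref{chi-id} without proof, treating it as immediate from the definitions \eqref{chion} and the resolution of identity $\sum_{a=1}^p\Pi_a=\I$; your computation — expanding $\chi_a^*=\sum_{k\geq 0}e_a(k)z^{-k-1}$, noting each entry of the outer products is a single monomial $z^{k-m-1}$, and observing that the $z^{-1}$ coefficient selects exactly the diagonal $k=m$ so that each product contributes $\Pi_a$ — is precisely the verification the authors leave implicit, including the correct handling of the shift in the definition of $\chi_a^*$ and the bijectivity of $(a,k)\mapsto i(a,k)$.
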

and therefore
\begin{lemma}
For any couple of semi-infinite matrices $U$ and $V$ we have
\begin{align}\label{residue1}
UV&=\Big(  \sum_{a=1}^{p_1}(U\chi_{1,a}) \big(V^\top\chi_{1,a}^*\big)^\top\Big)_{-1}\\\label{residue2}
&=\Big(  \sum_{b=1}^{p_2}(U\chi_{2,b}^*)\big(V^\top\chi_{2,b}\big)^\top\Big)_{-1},
\end{align}
\end{lemma}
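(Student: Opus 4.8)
The plan is to reduce both identities directly to Lemma~\ref{chi-id} by recognizing that the sum over component vectors assembles into a single matrix product sandwiched between the $z$-independent matrices $U$ and $V$. First I would rewrite the transposed factor: for any column vector $X$ one has $(V^\top X)^\top = X^\top V$, so that $(V^\top\chi_{1,a}^*)^\top = (\chi_{1,a}^*)^\top V$ and $(V^\top\chi_{2,b})^\top=\chi_{2,b}^\top V$. Consequently
\begin{align*}
\sum_{a=1}^{p_1}(U\chi_{1,a})\big(V^\top\chi_{1,a}^*\big)^\top = U\Big(\sum_{a=1}^{p_1}\chi_{1,a}(\chi_{1,a}^*)^\top\Big)V,\qquad \sum_{b=1}^{p_2}(U\chi_{2,b}^*)\big(V^\top\chi_{2,b}\big)^\top = U\Big(\sum_{b=1}^{p_2}\chi_{2,b}^*\chi_{2,b}^\top\Big)V.
\end{align*}
Since $U$ and $V$ carry no dependence on $z$, the extraction of the $z^{-1}$ coefficient should commute with multiplication on the left by $U$ and on the right by $V$; applying Lemma~\ref{chi-id}, which gives $\big(\sum_a\chi_{1,a}(\chi_{1,a}^*)^\top\big)_{-1}=\big(\sum_b\chi_{2,b}^*\chi_{2,b}^\top\big)_{-1}=\I$, then yields $U\,\I\,V = UV$ in both cases, and the two representations coincide.

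The one point that deserves care, and which I regard as the main obstacle, is the justification that $(\,\cdot\,)_{-1}$ may legitimately be pulled through the infinite matrix products by $U$ and $V$, given that the product $UV$ is in general only a formal (series) object. I would settle this at the level of matrix entries rather than by invoking convergence. Writing $(U\chi_{1,a})_i = \sum_{m\geq 0} U_{i,\kappa(m,a)}z^{m}$ and $(V^\top\chi_{1,a}^*)_j = \sum_{m\geq 0} V_{\kappa(m,a),j}z^{-m-1}$, where $\kappa(m,a)$ denotes the unique index with $k_1(\kappa)=m$ and $a_1(\kappa)=a$, the coefficient of $z^{-1}$ in the $(i,j)$ entry of the outer product picks out exactly the matched terms and equals $\sum_{m\geq 0}U_{i,\kappa(m,a)}V_{\kappa(m,a),j}$. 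Summing over $a$ and using that $(m,a)\mapsto\kappa(m,a)$ is a bijection onto $\Z_+$ (the univocal correspondence between $k$ and the pair $(k_1(k),a_1(k))$ recorded after~\eqref{explicit g2}), this collapses to $\sum_{k\geq 0}U_{ik}V_{kj}=(UV)_{ij}$. Thus the identity holds entrywise as an identity of $z^{-1}$-coefficients, with no convergence hypothesis on the product $UV$ needed.

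For the second equality the computation is identical after replacing $\chi_{1,a}$ by $\chi_{2,b}^*$ and $\chi_{1,a}^*$ by $\chi_{2,b}$, the roles of the positive and negative powers of $z$ being interchanged, and using the second formula of Lemma~\ref{chi-id}. Since both right-hand sides reduce to the same entrywise expression $(UV)_{ij}$, the two stated representations of $UV$ agree, which completes the argument. I expect the whole proof to be short once the entrywise bookkeeping above is in place, the substantive content being entirely carried by Lemma~\ref{chi-id}.
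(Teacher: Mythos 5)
Your proposal is correct and follows essentially the same route as the paper: factor the sum as $U\bigl(\sum_a\chi_{1,a}(\chi_{1,a}^*)^\top\bigr)V$ (resp.\ the $\chi_{2,b}^*\chi_{2,b}^\top$ version), pull $(\cdot)_{-1}$ through the $z$-independent factors $U$ and $V$, and apply Lemma \ref{chi-id}. The entrywise bookkeeping you add to justify commuting the residue extraction with the (formal) matrix products is a welcome extra precision that the paper's two-line proof omits, but it does not change the argument.
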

\begin{proof}
  It follows easily from Lemma \ref{chi-id}:
  \begin{align*}
    \Big(  \sum_{a=1}^{p_1}(U\chi_{1,a}) \big(V^\top\chi_{1,a}^*\big)^\top\Big)_{-1}&=
      U    \Big(\sum_{a=1}^{p_1}\chi_{1,a}(\chi_{1,a}^*)^\top \Big)_{-1}V=UV,\\
      \Big(  \sum_{b=1}^{p_2}(U\chi_{2,b}^*)\big(V^\top\chi_{2,b}\big)^\top\Big)_{-1}&=
      U \Big(\sum_{b=1}^{p_2}\chi_{2,b}^*\chi_{2,b}^\top\Big)_{-1}V=UV.
  \end{align*}
\end{proof}
We have the following
\begin{theorem}\label{theorem: bilinear}
\begin{enumerate}
  \item The wave functions and their companions satisfy
\begin{align*}
\sum_{a=1}^{p_1}\oint_{\infty}\Psi^{(k)}_{\vec n_1,\vec n_2,a}(z,t,s)(\Psi^*_{\vec n_1',\vec
n_2',a})^{(l)}(z,t',s')\d z=
\sum_{b=1}^{p_2}\oint_{\infty}\bar\Psi^{(k)}_{\vec n_1,\vec n_2,b}(z,t,s)(\bar\Psi^*_{\vec n_1',\vec
n_2',b})^{(l)}(z,t',s')\d z.
\end{align*}
\item Multiple orthogonal polynomials, their duals and the corresponding second kind functions  are linked by
 \begin{align}\label{bilinear2}
\sum_{a=1}^{p_1}\oint_{\infty}
A^{(k)}_{\vec n_1,\vec n_2,a}(z,t,s)\bar C_{\vec n_1',\vec n_2',a}^{(l)}(z,t',s')E_a(z)\d z=
\sum_{b=1}^{p_2}\oint_{\infty}C^{(k)}_{\vec n_1,\vec n_2,b}(z,t,s)\bar A_{\vec n_1',\vec
n_2',b}^{(l)}(z,t',s')\bar E_b(z)\d z,
\end{align}
where
\begin{align*}
  E_a&:=(\Es_a\Ds_a)(z,t,s)((\Es_a\Ds_a)(z,t',s'))^{-1},&
   \bar E_b&:=(\bar \Es_b\bar\Ds_b)(z,t,s)((\bar\Es_b\bar\Ds_b)(z,t',s'))^{-1}.
\end{align*}
\end{enumerate}
\end{theorem}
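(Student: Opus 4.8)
The plan is to read both statements off the single matrix identity of Lemma~\ref{lemma: matrix bilinear} by converting it into scalar residues with the help of the residue identity \eqref{residue1}--\eqref{residue2}. For part~1 I would take $U=W_{\vec n_1,\vec n_2}(t,s)$ and $V=\pi_{\vec n_1',\vec n_1}^\top W_{\vec n_1',\vec n_2'}(t',s')^{-1}$ and apply the form \eqref{residue1}. Since the permutation matrices intertwine the monomial strings of different compositions, i.e. $\chi_{\vec n_1',1,a}=\pi_{\vec n_1',\vec n_1}\chi_{\vec n_1,1,a}$ and likewise for the starred strings, one gets $U\chi_{1,a}=\Psi_{\vec n_1,\vec n_2,a}(z,t,s)$ and $V^\top\chi_{1,a}^*=\Psi^*_{\vec n_1',\vec n_2',a}(z,t',s')$, so that $UV=\big(\sum_a\Psi_a(\Psi_a^{*\prime})^\top\big)_{-1}$. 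Applying \eqref{residue2} to the right-hand side of Lemma~\ref{lemma: matrix bilinear} with $U=\bar W_{\vec n_1,\vec n_2}(t,s)$, $V=\pi_{\vec n_2',\vec n_2}^\top\bar W_{\vec n_1',\vec n_2'}(t',s')^{-1}$ produces $\big(\sum_b\bar\Psi_b(\bar\Psi_b^{*\prime})^\top\big)_{-1}$. Equating the two matrices and reading the $(k,l)$ entry, while recalling that $(\cdot)_{-1}$ is the coefficient of $z^{-1}$ at $z=\infty$, hence a residue $\oint_\infty$, yields part~1 at once.

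For part~2 I would substitute the explicit representations of Proposition~\ref{conti}. Two of the four factors are exact: because $\Lambda_{1,a}\chi_{1,a}=z\chi_{1,a}$ and $\Lambda_{2,b}\chi_{2,b}=z\chi_{2,b}$ act on the unstarred strings with no projection, writing $\phi_a:=\Es_a\Ds_a$ and $\bar\phi_b:=\bar\Es_b\bar\Ds_b$ one has identically $\Psi_a^{(k)}(z,t,s)=A_a^{(k)}(z,t,s)\,\phi_a(z;t,s)$ and $(\bar\Psi_b^*)^{(l)}(z,t',s')=\bar A_b^{(l)}(z,t',s')\,\bar\phi_b(z;t',s')^{-1}$. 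The other two are the Cauchy-transform factors, and here I would establish the formal identities
\[
(\Psi_a^*)^{(l)}(z,t',s')=\big[\phi_a(z;t',s')^{-1}\bar C_a^{(l)}(z,t',s')\big]_-,\qquad
\bar\Psi_b^{(k)}(z,t,s)=\big[\bar\phi_b(z;t,s)\,C_b^{(k)}(z,t,s)\big]_-,
\]
with $\bar C_a^{(l)},C_b^{(k)}$ the evolved second-kind functions \eqref{evolved.ct}. Each follows by expanding the Cauchy transform in inverse powers of $z$ and using that the evolved weight carries exactly the factor $\phi_a(x;t',s')$ (resp. $\bar\phi_b(x;t,s)^{-1}$), which cancels against the corresponding plane wave under the moment integral; this is where the definitions of $\Psi_a^*,\bar\Psi_b$ and of the second-kind functions are pinned together.

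Feeding these four relations into part~1 gives, on the left, the integrand $A_a^{(k)}(z,t,s)\,\phi_a(z;t,s)\big[\phi_a(z;t',s')^{-1}\bar C_a^{(l)}(z,t',s')\big]_-$. The factor $A_a^{(k)}\phi_a(z;t,s)$ is a series in non-negative powers of $z$, so replacing the truncation $[\,\cdot\,]_-$ by the full product $\phi_a^{-1}\bar C_a^{(l)}$ only inserts products of non-negative-power series, which contain no $z^{-1}$ term and therefore do not change $\oint_\infty$. This turns the integrand into $A_a^{(k)}\bar C_a^{(l)}\,E_a$ with $E_a=\phi_a(z;t,s)/\phi_a(z;t',s')$; the same bookkeeping on the right produces $C_b^{(k)}\bar A_b^{(l)}\,\bar E_b$, and part~1 becomes exactly \eqref{bilinear2}.

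The step that needs the most care, and which I regard as the main obstacle, is the legitimacy of this last truncation manipulation. The adjoint and Cauchy-type wave functions live on the starred strings $\chi^*$, on which $\Lambda^\top$ acts through the projection $[z^j\chi^*]_-$, so the plane-wave stripping is genuinely only a statement about negative parts. The point to verify is that the discarded $[\,\cdot\,]_+$ component is a bona fide non-negative-power series and that its companion factor $A_a^{(k)}\phi_a(z;t,s)$ (respectively $\bar A_b^{(l)}\bar\phi_b(z;t',s')^{-1}$) is one as well, so that the residue at infinity is unaffected. This is transparent for the continuous and positive discrete flows, where the plane-wave factors are entire or polynomial, and should otherwise be read as a formal-residue identity; it is also the place where the convergence caveats attached to Proposition~\ref{pro:cauchy tr} (bounded supports) must be invoked to make the contour integrals $\oint_\infty$ literal rather than formal.
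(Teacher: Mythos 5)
Your treatment of part~1 coincides with the paper's: the same choices $U=W_{\vec n_1,\vec n_2}(t,s)$, $V=\pi_{\vec n_1',\vec n_1}^\top W_{\vec n_1',\vec n_2'}(t',s')^{-1}$ in \eqref{residue1} and their barred analogues in \eqref{residue2}, applied to Lemma~\ref{lemma: matrix bilinear}. For part~2 you take a genuinely different route, and that is where a real gap appears. The paper does \emph{not} deduce \eqref{bilinear2} from part~1 by termwise substitution; it regroups the \emph{same} matrix product as $U_2V_2$ with $U_2=S_{\vec n_1,\vec n_2}(t,s)W_{0,\vec n_1}(t,s)\pi_{\vec n_1',\vec n_1}^\top(W_{0,\vec n_1'}(t',s'))^{-1}\pi_{\vec n_1',\vec n_1}$ and $V_2=\pi_{\vec n_1',\vec n_1}^\top S_{\vec n_1',\vec n_2'}(t',s')^{-1}$, and re-applies \eqref{residue1}. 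In that splitting every plane-wave and Darboux factor acts only on the unstarred strings $\chi_{1,a}$, on which $\Lambda_{1,a}$ and $(\Lambda_{1,a}-\lambda\Pi_{1,a})^{-1}$ act as exact scalar multiplications with no $[\,\cdot\,]_-$ truncation, so one reads off $U_2\chi_{1,a}=E_a\A_a(t,s)$ and $V_2^\top\chi_{1,a}^*=\bar\Cs_a(t',s')$ directly and never has to relate $\Psi_a^*$ to $\bar C_a^{(l)}$ at all.

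Your route hinges on the unproved identity $(\Psi_a^*)^{(l)}(z,t',s')=\bigl[\phi_a(z;t',s')^{-1}\bar C_a^{(l)}(z,t',s')\bigr]_-$, and this is precisely where the argument breaks for the discrete flows. Take a single positive Darboux step, $\phi_a(z;t',s')=z-\lambda$. Writing $c:=\int\bar Q^{(l)}(x,t',s')\,w_{1,a}(x)\,\d\mu(x)$, one has exactly
\begin{align*}
(\Psi_a^*)^{(l)}(z)=\phi_a(z)^{-1}\bar C_a^{(l)}(z)+\frac{c}{z-\lambda},
\end{align*}
and $c$ is generically nonzero: the evolved orthogonality relations only force the moments $m_k=\int\bar Q^{(l)}x^kw_{1,a}\,\d\mu$ to satisfy $m_{k+1}=\lambda m_k$, not $m_0=c=0$. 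If $1/(z-\lambda)$ is expanded at $z=\infty$ --- the reading suggested by your phrase ``expanding the Cauchy transform in inverse powers of $z$'' and by the residue pairing --- then $c/(z-\lambda)$ is a pure negative-power series, survives $[\,\cdot\,]_-$, and contributes $c\,A_a^{(k)}(\lambda)\phi_a(\lambda;t,s)\neq0$ to the residue; both your intermediate identity and the subsequent truncation-removal then fail. The identity becomes true only if $\Ds_a(\cdot;s')^{-1}$ is expanded at $z=0$ (which is what the action of $(\Lambda_{1,a}-\lambda\Pi_{1,a})^{-1}\in G_+$ on $\chi_{1,a}$ actually produces), and even then one must justify the telescoping of the resulting double series, which needs $|\lambda|$ to dominate the support --- not guaranteed by the hypothesis $\lambda_a(n)<\inf\operatorname{supp}(w_{1,a}\d\mu)$. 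For the purely continuous flows the factors $\Es_a^{-1}$ admit only the non-negative-power expansion and your bookkeeping goes through formally, but the theorem as stated includes the $s$-dependence. You correctly identified this step as the main obstacle; it is not resolved in your write-up, and the clean fix is the paper's: choose the splitting so that the reciprocal factors never meet a starred string.
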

\begin{proof}
  \begin{enumerate}
    \item   If we set in  \eqref{residue1} $U=     W_{\vec n_1,\vec n_2}(t,s)$ and $V=\pi_{\vec n_1',\vec
        n_1}^\top W_{\vec n_1',\vec n_2'}(t',s')^{-1}$
    and in  \eqref{residue2} we put $U=\bar W_{\vec n_1,\vec n_2}(t,s)$ and $V= \pi_{\vec n_2',\vec n_2}^\top\bar
    W_{\vec n_1',\vec n_2'}(t',s')^{-1}$
    attending to
  \eqref{bilinear-wave}, recalling that $\Psi_{\vec n_1,\vec n_2,a}=W_{\vec n_1,\vec n_2}\chi_{\vec n_1,a}$,
  $\bar \Psi_{\vec n_1,\vec n_2,b}=\bar W_{\vec n_1,\vec n_2}\chi^*_{\vec n_2,b}$ and observing that
  $\Psi_{\vec n_1',\vec n_2',a}^*=(W_{\vec n_1',\vec n_2'}^{-1})^\top\pi_{\vec n_1',\vec n_1}\chi^*_{\vec n_1,a}$
  and
  $\bar \Psi_{\vec n_1',\vec n_2',b}^*=(\bar W_{\vec n_1',\vec n_2'}^{-1})^\top\pi_{\vec n_2',\vec n_2}\chi_{\vec
  n_2,b}$  we get  the stated  bilinear equation for the wave functions.\footnote{The reader familiarized with
  Toda bilinear equations should
  notice  that in the right hand term we are working at $z=\infty$ instead of, as customary, at $z=0$; the reason
  is that for the definition of $\bar\A_b$ we
  have used $\chi_2$ instead of $\chi_2^*$, in order to get polynomials in $z$, while normally one gets
  polynomials in $z^{-1}$. See footnote 2. Moreover,
  due to the redefinition of the wave functions there is no $\frac{\d z}{2\pi\text{i}z}$ factor}
\item
 We can   write
 \begin{multline*}
 W_{\vec n_1,\vec n_2}(t,s)\pi_{\vec n_1',\vec n_1}^\top W_{\vec n_1',\vec n_2'}(t',s')^{-1}=(S_{\vec n_1,\vec
 n_2}(t,s)W_{0,\vec n_1}(t,s)\pi_{\vec n_1',\vec n_1}^\top (W_{0,\vec n_1'}(t',s'))^{-1}\pi_{\vec n_1',\vec
 n_1})\pi_{\vec n_1',\vec n_1}^\top S_{\vec n_1',\vec n_2'}(t',s')^{-1},
 \end{multline*}
  which strongly suggests to consider in  \eqref{residue1}
  \begin{align*}
  U&=S_{\vec n_1,\vec n_2}(t,s)W_{0,\vec n_1}(t,s)\pi_{\vec n_1',\vec n_1}^\top(W_{0,\vec
  n_1'}(t',s'))^{-1}\pi_{\vec n_1',\vec n_1},& V&=\pi_{\vec n_1',\vec n_1}^\top S_{\vec n_1',\vec
  n_2'}(t',s')^{-1}.
  \end{align*}
  Analogously
 \begin{multline*}
\bar W_{\vec n_1,\vec n_2}(t,s)\pi_{\vec n_1',\vec n_1}^\top \bar W_{\vec n_1',\vec n_2'}(t',s')^{-1}=(\bar
S_{\vec n_1,\vec n_2}(t,s)\bar W_{0,\vec n_1}(t,s)\pi_{\vec n_1',\vec n_1}^\top (\bar W_{0,\vec
n_1'}(t',s'))^{-1}\pi_{\vec n_1',\vec n_1})\pi_{\vec n_1',\vec n_1}^\top \bar S_{\vec n_1',\vec
n_2'}(t',s')^{-1},
 \end{multline*}
suggest to set in   \eqref{residue2}
  \begin{align*}
  U&=\bar S_{\vec n_1,\vec n_2}(t,s)\bar W_{0,\vec n_2}(t,s)\pi_{\vec n_2',\vec n_2}^\top(\bar W_{0,\vec
  n_2'}(t',s'))^{-1}\pi_{\vec n_2',\vec n_2},& V&=\pi_{\vec n_2',\vec n_2}^\top \bar S_{\vec n_1',\vec
  n_2'}(t',s')^{-1}.
  \end{align*}
   The application of \eqref{residue1},\eqref{residue2} and \eqref{bilinear-wave} gives the alternative bilinear
   relations \eqref{bilinear2}
where we have used  the evolved Cauchy transforms \eqref{evolved.ct} and  introduce the evolutionary factors
\begin{align*}
  E_a&:=(\Es_a\Ds_a)(z,t,s)((\Es_a\Ds_a)(z,t',s'))^{-1},\\
   \bar E_b&:=(\bar \Es_b\bar\Ds_b)(z,t,s)((\bar\Es_b\bar\Ds_b)(z,t',s'))^{-1}.
\end{align*}
  \end{enumerate}
\end{proof}
The factors involved in this definition were introduced in \eqref{evolved} and \eqref{d.evolved}, so that we assume
the discrete flows within the bounded from
below support scenario, while if we consider the two-step discrete flows the replacement of the $\Ds$-factors by the
$\Ds'$- factors \eqref{d.evolved2} is
required.

It can be shown that for certain weights, for which we can use the Fubini and Cauchy theorems, and when one only
considers a finite number of continuous flows that the r.h.s and l.hs. in this bilinear relations are proportional to
$\int_{\R} Q_{\vec n_1,\vec n_2}^{(k)}(x,t)\bar Q_{\vec n_1',\vec n_2'}^{(l)}(x,t')\d \mu(x)$. This is a direct
consequence of
\begin{pro}\label{proposition: bilinear fubini-cauchy}
We have the following identity
\begin{align}
\int_{\R} Q_{\vec n_1,\vec n_2}(x,t,s)\bar Q_{\vec n_1',\vec n_2'}^\top(x,t',s')\d \mu(x)&= W_{\vec n_1,\vec
n_2}(t,s)\pi_{\vec n_1',\vec n_1}^\top (W_{\vec n_1',\vec n_2'}(t',s'))^{-1}\\&=\bar W_{\vec n_1,\vec
n_2}(t,s)\pi_{\vec n_2',\vec n_2}^\top (\bar W_{\vec n_1',\vec n_2'}(t',s'))^{-1}.
\end{align}
\end{pro}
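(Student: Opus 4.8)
The plan is to reduce the whole statement to two ingredients already established: the expression of the deformed linear forms as the action of the wave matrices on the \emph{undeformed} weighted monomial strings, and the factorization identity $W g=\bar W$ relating the wave matrices to the bare moment matrix, supplemented by the permutation rule \eqref{different moments}. First I would record that, by Proposition \ref{conti} (and its discrete analogue), together with $\Psi_a=W\chi_{1,a}$ and $\bar\Psi_b^*=(\bar W^{-1})^\top\chi_{2,b}$, the strings of linear forms attached to the weights evolved to $(t,s)$ are collected into the wave matrices as
\begin{align*}
Q_{\vec n_1,\vec n_2}(x,t,s)&=\sum_{a=1}^{p_1}\Psi_a(x,t,s)w_{1,a}(x)=W_{\vec n_1,\vec n_2}(t,s)\,\xi_{1,\vec n_1}(x),\\
\bar Q_{\vec n_1',\vec n_2'}(x,t',s')&=\sum_{b=1}^{p_2}\bar\Psi_b^*(x,t',s')w_{2,b}(x)=(\bar W_{\vec n_1',\vec n_2'}(t',s')^{-1})^\top\xi_{2,\vec n_2'}(x),
\end{align*}
where $\xi_{1,\vec n_1}$ and $\xi_{2,\vec n_2'}$ are the original, $x$-dependent weighted monomial strings and all the $(t,s)$, $(t',s')$ dependence is packed into the $x$-independent wave matrices.

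The second step is to substitute these expressions and pull the wave matrices out of the integral, which is legitimate entrywise since each matrix coefficient of $Q\bar Q^\top$ is a finite combination of scalar integrals:
\begin{align*}
\int_{\R} Q_{\vec n_1,\vec n_2}(x,t,s)\bar Q_{\vec n_1',\vec n_2'}^\top(x,t',s')\d\mu(x)
=W_{\vec n_1,\vec n_2}(t,s)\Big(\int_{\R}\xi_{1,\vec n_1}(x)\xi_{2,\vec n_2'}(x)^\top\d\mu(x)\Big)\bar W_{\vec n_1',\vec n_2'}(t',s')^{-1}.
\end{align*}
The middle factor is the mixed moment matrix $g_{\vec n_1,\vec n_2'}:=\int\xi_{1,\vec n_1}\xi_{2,\vec n_2'}^\top\d\mu$, for which I would use the two permutation identities coming from $\xi_{\vec n'}=\pi_{\vec n',\vec n}\xi_{\vec n}$, namely $g_{\vec n_1,\vec n_2'}=g_{\vec n_1,\vec n_2}\,\pi_{\vec n_2',\vec n_2}^\top$ and $g_{\vec n_1,\vec n_2'}=\pi_{\vec n_1',\vec n_1}^\top\,g_{\vec n_1',\vec n_2'}$ (using $\pi^\top=\pi^{-1}$). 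Combining the first of these with the factorization relation $W_{\vec n_1,\vec n_2}(t,s)\,g_{\vec n_1,\vec n_2}=\bar W_{\vec n_1,\vec n_2}(t,s)$ — which follows from \eqref{g.evol}, \eqref{facto-t} and the definitions $W=SW_0D_0$, $\bar W=\bar S\bar W_0\bar D_0$ in the combined continuous–discrete setting — immediately gives the second expression of the Proposition. Symmetrically, inserting the second permutation identity and then using $g_{\vec n_1',\vec n_2'}\,\bar W_{\vec n_1',\vec n_2'}(t',s')^{-1}=W_{\vec n_1',\vec n_2'}(t',s')^{-1}$ (again from $Wg=\bar W$ for the primed data) produces the first expression.

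The genuinely delicate point is not any analytic convergence — the identity as stated is purely algebraic once the integrals are understood entrywise — but rather the careful bookkeeping of the permutation matrices: one must check that it is $\pi_{\vec n_2',\vec n_2}$ that intertwines the column index (system $\vec w_2$) while $\pi_{\vec n_1',\vec n_1}$ intertwines the row index (system $\vec w_1$), and that each appears on the correct side with the correct transpose, so that the two routes through $g_{\vec n_1,\vec n_2'}$ genuinely yield the stated pair of equalities. I would also verify at the outset that $Wg=\bar W$ continues to hold with the bare $g$ once both $W_0\to W_0D_0$ and $\bar W_0\to\bar W_0\bar D_0$ are incorporated, since this is the single relation doing the real work in both halves of the computation.
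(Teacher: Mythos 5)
Your argument is correct and is essentially the paper's own proof run in the opposite direction: the paper starts from $W_{\vec n_1,\vec n_2}(t,s)\pi_{\vec n_1',\vec n_1}^\top W_{\vec n_1',\vec n_2'}(t',s')^{-1}$, substitutes $W_{\vec n_1',\vec n_2'}^{-1}=g_{\vec n_1',\vec n_2'}\bar W_{\vec n_1',\vec n_2'}^{-1}$, writes the moment matrix as a Grammian, absorbs the permutation into $\xi_{\vec n_1'}$ and pushes the evolution factors inside the integral to recognize $Q$ and $\bar Q$, whereas you start from the integral, factor $Q=W\xi_{\vec n_1}$ and $\bar Q=(\bar W^{-1})^\top\xi_{\vec n_2'}$, and split the resulting mixed moment matrix $g_{\vec n_1,\vec n_2'}$ in two ways. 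The only structural difference is that you obtain both stated equalities directly from the two permutation decompositions of $g_{\vec n_1,\vec n_2'}$, while the paper proves one chain and gets the other from the previously established Lemma \ref{lemma: matrix bilinear}; both routes rest on exactly the same identities $Wg=\bar W$ and $\xi_{\vec n'}=\pi_{\vec n',\vec n}\xi_{\vec n}$, and your interchange of the integral with the series hidden in the entries of $W$ and $\bar W^{-1}$ is performed at the same (Fubini-assuming) level of rigor as in the paper.
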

\begin{proof}
  See Appendix \ref{II}.
\end{proof}



Now,  we will perform a full characterization of the $\tau$-functions associated with the multiple orthogonal polynomials
defined in this paper.
\begin{definition}
Let us define the following matrices
\begin{align}\label{def_g+a}
  g^{[l+1]}_{+a}:&=
    \left(\begin{BMAT}{ccccc}{cccc:c}
   g_{0,0}&g_{0,1}&\cdots&g_{0,l-1}&g_{0,l}\\
    g_{1,0}&g_{1,1}&\cdots& g_{1,l-1}&g_{1,l}\\
    \vdots&\vdots&&\vdots&\vdots \\
         g_{l-1,0}&g_{l-1,1}&\cdots& g_{l-1,l-1}&g_{l-1,l}\\
         g_{l_{+a},0}&g_{l_{+a},1}&\cdots& g_{l_{+a},l-1} &g_{l_{+a},l}
\end{BMAT}\right)&
  \bar g^{[l+1]}_{+b}:&=
   \left(\begin{BMAT}{cccc:c}{ccccc}
  g_{0,0}&g_{0,1}&\cdots& g_{0,l-1} &g_{0,\bar l_{+b}}\\
    g_{1,0}&g_{1,1}&\cdots& g_{1,l-1} &g_{1,\bar l_{+b}}\\
    \vdots&\vdots&&\vdots&\vdots\\
    g_{l-1,0}&g_{l-1,1}&\cdots& g_{l-1,l-1}&g_{l-1,\bar l_{+b}}\\
         g_{l,0}&g_{l,1}&\cdots& g_{l,l-1}&g_{l,\bar l_{+b}}
         \end{BMAT}\right).
\end{align}
\end{definition}
The matrix $g^{[l+1]}_{+a}$ is obtained from  $g^{[l+1]}$ replacing the last row (operation denoted by a dashed line)
by
\begin{align*}
(g_{l_{+a},0},g_{l_{+a},1},\dots,g_{l_{+a},l-1},g_{l_{+a},l}),
\end{align*} and $\bar g^{[l+1]}_{+b}$is obtained from  $g^{[l+1]}$ replacing the last column  by $(  g_{0,\bar
l_{+b}},g_{1,\bar l_{+b}},\dots,g_{l-1,\bar l_{+b}},g_{l,\bar
l_{+b}})^\top$. It is clear that if $a_1(l)=a$ then $g^{[l+1]}_{+a}=g^{[l+1]}$ and if $a_2(l)=b$ then $\bar
g^{[l+1]}_{+b}=g^{[l+1]}$.

The minors of the these matrices \eqref{def_g+a}  will be denoted as $M^{[l+1]}_{i,j}=\bar M^{[l+1]}_{i,j}$ for
$g^{[l+1]}$, $M^{[l+1]}_{+a,i,j}$ for $g^{[l+1]}_{+a}$ and $\bar M^{[l+1]}_{+b,i,j}$ for $\bar g^{[l+1]}_{+b}$.
Now we introduce the following determinants that are cofactors of the previously defined matrices
\begin{definition} \label{def_tau1}
The $\tau$-functions  are defined as follows
\begin{align} \label{def_taus}
\begin{aligned}
\tau_{+a,-a'}^{(l)}&:=(-1)^{l+l_{-a'}}M_{+a,l_{-a'},l}^{[l+1]}, &
\tau_{-b,-a}^{(l)}&:=(-1)^{\bar l_{-b}+l_{-a}}M_{l_{-a},\bar l_{-b}}^{[l+1]},
\end{aligned}\\
 \label{def_dualtaus}
\begin{aligned}
\bar \tau_{+b,-b'}^{(l)}&:=(-1)^{l+\bar l_{-b'}}\bar M_{+b,l,\bar l_{-b'}}^{[l+1]},&
\bar \tau_{-a,-b}^{(l)}&:=(-1)^{l_{-a}+\bar l_{-b}}\bar M_{l_{-a},\bar l_{-b}}^{[l+1]}.
\end{aligned}
\end{align}
Moreover,
\begin{enumerate}
  \item If $a_1(l)=a$ then we denote $\tau_{-a'}^{(l)}:=\tau_{+a,-a'}^{(l)}$ and $\bar \tau_{-b}^{(l)}:=\bar
      \tau_{-a,-b}^{(l)}$.
  \item If $a_2(l)=b$ then we denote $\tau_{-a}^{(l)}:=\tau_{-b,-a}^{(l)}$ and $\bar \tau_{-b'}^{(l)}:=\bar
      \tau_{+b,-b'}^{(l)}$.
  \item We also introduce $\tau^{(l)}=\bar \tau^{(l)} := \det g^{[l]}$ and
\begin{align*}
  \tau^{(l+1)}_{+a}&:=\det   g^{[l+1]}_{+a},& \bar\tau^{(l+1)}_{+b}&:=\det  \bar  g^{[l+1]}_{+b}.
\end{align*}
\end{enumerate}
\end{definition}

If $a_1(l)=a$ then   $\tau^{(l+1)}_{+a}=\tau^{(l+1)}$, and if $a_2(l)=b$ then
$\bar\tau^{(l+1)}_{+b}=\tau^{(l+1)}$.\\

Given a perfect combination $(\mu,\vec w_1,\vec w_2)$ and the corresponding  set of multiple orthogonal polynomials
$\{A_{[\vec \nu_1;\vec \nu_2],a}\}_{a=1}^{p_1}$, with degree vectors such that $|\vec\nu_1|=|\vec \nu_2|+1$,
  there  exists a $(\vec n_1,\vec n_2)$ ladder  and an integer $l$ with $|\vec \nu_1|=l+1$  and $|\vec \nu_2|=l$  such
  that the polynomials  $\{A_a^{(l)}\}_{a=1}^{p_1}$ coincide with $\{A_{[\vec \nu_1;\vec \nu_2],a}\}_{a=1}^{p_1}$.
  The final result does not depend upon the particular $(\vec n_1,\vec n_2)$ ladder we choose to get up to the given
  degrees in the ladder; however, the $\tau$-functions do indeed depend on the ladder chosen through a global sign.
  A simple sign-fixing rule is to choose the ladder $ \vec n_1=\vec \nu_1$ and $\vec n_2=\vec \nu_2+\vec e_{p_2}$.
We define
\begin{align*}
  \tau_{[\vec\nu_1;\vec\nu_2]}&:=\tau^{(l)}_{\vec\nu_1,\vec\nu_2},& l&=|\vec\nu_1|-1=|\vec\nu_2|,
\end{align*}
and we deduce
\begin{pro}\label{tausss}
Given degree vectors $(\vec \nu_1,\vec \nu_2)$ such that $|\vec\nu_1|=|\vec \nu_2|+1$, a composition with  $ \vec
n_1=\vec \nu_1$ and $\vec n_2=\vec \nu_2+\vec e_{p_2}$ and $l=|\vec\nu_1|-1=|\vec\nu_2|$,
 we have the following identities
  \begin{gather*}
\begin{aligned}
\tau_{+a,-a'}^{(l)}&=\varepsilon_{1,1}( a,a')\tau_{[\vec\nu_1-\vec e_{1,a'}+\vec e_{1,a};\vec\nu_2]},&
\bar \tau_{+b,-b'}^{(l)}&=\varepsilon_{2,2}(b,b')\tau_{[\vec\nu_1;\vec\nu_2-\vec e_{2,b'}+\vec e_{2,b}]},
\end{aligned}\\
\begin{aligned}
&\tau_{-b,-a}^{(l)}=\bar \tau_{-a,-b}^{(l)}&=\varepsilon_{2,1}(b, a)\tau_{[\vec\nu_1+\vec e_{1,p_1}-\vec
e_{1,a};\vec\nu_2+\vec e_{2,p_2}-\vec e_{2,b}]},
\end{aligned}
\end{gather*}
where
\begin{align*}
  \varepsilon_{1,1}(a,a')&:=(-1)^{\sum_{j=1}^{a}\nu_{1,j}+\sum_{j=1}^{a'}\nu_{1,j}+\delta_{a,p_1}-1}, & a'& <a \\
  \varepsilon_{1,1}(a,a')&:=(-1)^{\sum_{j=1}^{a}\nu_{1,j}+\sum_{j=1}^{a'}\nu_{1,j}+\delta_{a',p_1}},  & a'& > a \\
  \varepsilon_{2,2}(b,b')&:=(-1)^{\sum_{j=1}^{b}\nu_{2,j}+\sum_{j=1}^{b'}\nu_{2,j}-1}, & b'& <b\\
  \varepsilon_{2,2}(b,b')&:=(-1)^{\sum_{j=1}^{b}\nu_{2,j}+\sum_{j=1}^{b'}\nu_{2,j}}, & b'& >b\\
  \varepsilon_{2,1}(b,a)&:=(-1)^{\sum_{j=1}^{b}\nu_{2,j}+\sum_{j=1}^{a}\nu_{1,j}+\delta_{b,p_2}}, \\
  \varepsilon_{1,1}(a,a)&:=1=\varepsilon_{2,2}(b,b).
\end{align*}
In particular
\begin{align*}
  \tau^{(l)}_{-a}&= \varepsilon_{1,1}(p_1,a)\tau_{[\vec\nu_1+\vec e_{1,p_1}-\vec e_{1,a};\vec\nu_2]},&
 \bar   \tau^{(l)}_{-b}&=\varepsilon_{2,2}(p_2,b)\tau_{[\vec\nu_1;\vec\nu_2+\vec e_{2,p_2}-\vec e_{2,b}]},\\
  \tau^{(l+1)}_{+a}&=\varepsilon_{1,1}(a,p_1)\tau_{[\vec \nu_1+\vec e_{1,a};\vec\nu_2+\vec e_{2,p_2}]},
  & \bar\tau^{(l+1)}_{+b}&=\varepsilon_{2,2}(b,p_2)\tau_{[\vec \nu_1+\vec e_{1,p_1};\vec\nu_2+\vec e_{2,b}]}.
\end{align*}
\end{pro}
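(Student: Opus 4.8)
The plan is to prove all the identities by a single combinatorial device. Fix the sign-fixing ladder $\vec n_1=\vec\nu_1$, $\vec n_2=\vec\nu_2+\vec e_{p_2}$ and $l=|\vec\nu_1|-1=|\vec\nu_2|$; then each $\tau$-function of Definition \ref{def_tau1} is, by construction, a signed minor of an $(l+1)\times(l+1)$ truncation assembled from $g$, and I will identify that minor with a truncated moment determinant $\det g'^{[l]}=\tau_{[\vec\nu_1';\vec\nu_2']}$ attached to shifted degree vectors. First I would record, straight from \eqref{ai} and \eqref{formulitas}, the meaning of the associated integers for this particular ladder: since $|\vec n_1|=|\vec n_2|=l+1$ the index $l$ lies in the zeroth block, so $a_1(l)=p_1$ and $a_2(l)=p_2$, and for $a<p_1$ one has $l_{-a}=\sum_{i\le a}\nu_{1,i}-1$ (the top monomial $x^{\nu_{1,a}-1}$ of weight $a$) while $l_{+a}=(l+1)+\sum_{i<a}\nu_{1,i}$ (the new monomial $x^{\nu_{1,a}}$ of weight $a$), with $l_{\pm p_1}=l$; the dual integers $\bar l_{\pm b}$ take the analogous values for $\vec n_2$. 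Combined with \eqref{explicit g2} this shows that the rows of $g^{[l+1]}$ run through exactly the $\vec\nu_1$-distribution of monomials in canonical (weight, then degree) order, and the columns through the $\vec\nu_2+\vec e_{2,p_2}$-distribution.

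With this dictionary the row/column content of each minor is immediate. For the mixed functions $\tau^{(l)}_{-b,-a}=\bar\tau^{(l)}_{-a,-b}$ one deletes row $l_{-a}$ and column $\bar l_{-b}$ from $g^{[l+1]}$. The surviving rows are the $\vec\nu_1$-distribution with its weight-$a$ top monomial removed, which is precisely the $l$-element truncation $\det g'^{[l]}$ of the moment matrix for $\vec\nu_1'=\vec\nu_1+\vec e_{1,p_1}-\vec e_{1,a}$ (whose discarded $l$-th monomial is the weight-$p_1$ top one), and likewise the surviving columns realise the truncation for $\vec\nu_2+\vec e_{2,p_2}-\vec e_{2,b}$. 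Since deletion preserves increasing index order, the minor equals $\tau_{[\vec\nu_1+\vec e_{1,p_1}-\vec e_{1,a};\vec\nu_2+\vec e_{2,p_2}-\vec e_{2,b}]}$ outright, so only the cofactor sign $(-1)^{l_{-a}+\bar l_{-b}}$ survives, and a short parity computation gives $(-1)^{l_{-a}+\bar l_{-b}}=\varepsilon_{2,1}(b,a)$, with $\delta_{b,p_2}$ merely absorbing the boundary value $\bar l_{-p_2}=l$. For $\tau^{(l)}_{+a,-a'}$ and $\bar\tau^{(l)}_{+b,-b'}$ the same counting applies, but now $g^{[l+1]}_{+a}$ carries the extra row $l_{+a}$ in last position (respectively $\bar g^{[l+1]}_{+b}$ the extra column $\bar l_{+b}$), so after deleting row $l_{-a'}$ and column $l$ the surviving matrix has the correct (truncated) degree content for $\vec\nu_1-\vec e_{1,a'}+\vec e_{1,a}$, but with the new monomial $x^{\nu_{1,a}}$ sitting at the bottom rather than in its canonical slot.

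The main work, and the only delicate point, is the sign in these last two families. The plan is to reorder the appended last row up to its canonical position $p=\sum_{i\le a}\nu_{1,i}-1$ (the number of monomials of the truncated $\vec\nu_1-\vec e_{1,a'}+\vec e_{1,a}$ distribution preceding the weight-$a$ monomial $x^{\nu_{1,a}}$), which costs $(l-1)-p$ adjacent transpositions, and then combine this permutation sign with the cofactor sign $(-1)^{l+l_{-a'}}$ prefixed in \eqref{def_taus}. This yields the minor as $\pm\,\tau_{[\vec\nu_1-\vec e_{1,a'}+\vec e_{1,a};\vec\nu_2]}$ with overall exponent $\sum_{i\le a}\nu_{1,i}+\sum_{i\le a'}\nu_{1,i}-1$, reproducing $\varepsilon_{1,1}(a,a')$. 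The two branches $a'<a$ and $a'>a$ must be handled separately, because deleting row $l_{-a'}$ shifts the count of elements preceding the inserted monomial by exactly one when $a'<a$; the corrections $\delta_{a,p_1}$ and $\delta_{a',p_1}$ are the boundary bookkeeping for $l_{\pm p_1}=l$. The barred statements for $\bar\tau^{(l)}_{+b,-b'}$ and the sign $\varepsilon_{2,2}$ (which carries no $\delta$ shifts, reflecting the asymmetric ladder choice $\vec n_2=\vec\nu_2+\vec e_{p_2}$ against $\vec n_1=\vec\nu_1$) follow from the transposed argument with rows and columns interchanged. Finally, the ``In particular'' list is obtained by specialising $a'=p_1$ (so $l_{-p_1}=l$, giving $\tau^{(l)}_{-a}$), $a=p_1$ (giving $\tau^{(l+1)}_{+a}$), and likewise for the barred functions, whereupon the general formulae collapse to the displayed ones.
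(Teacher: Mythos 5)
The paper offers no proof of Proposition \ref{tausss}: it is introduced with ``we deduce'' and left as a bookkeeping exercise on Definition \ref{def_tau1}, so there is nothing to compare your argument against except the definitions themselves. Your strategy --- read each $\tau$ as a signed minor, check that the surviving rows and columns carry exactly the weighted monomials of the truncated moment matrix for the shifted degree vectors, and track the cofactor-plus-permutation sign --- is the natural (indeed the only) route, and it is sound. The mixed case is fully correct as you argue it: deleting row $l_{-a}$ and column $\bar l_{-b}$ preserves the canonical (weight, degree) order, so the minor equals $\tau_{[\vec\nu_1+\vec e_{1,p_1}-\vec e_{1,a};\vec\nu_2+\vec e_{2,p_2}-\vec e_{2,b}]}$ on the nose and only the prefactor $(-1)^{l_{-a}+\bar l_{-b}}$ survives; with $l_{-a}=\sum_{i\le a}\nu_{1,i}-1$ and $\bar l_{-b}=\sum_{i\le b}\nu_{2,i}-1+\delta_{b,p_2}$ this is exactly $\varepsilon_{2,1}(b,a)$.

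For the $+$ families your write-up is a plan rather than a proof, and the one place where care is genuinely needed is glossed over. The canonical slot of the appended row is $p=\sum_{i<a}\nu''_{1,i}+k_1(l_{+a})$ where $\vec\nu_1''=\vec\nu_1-\vec e_{1,a'}+\vec e_{1,a}$ and, crucially, $k_1(l_{+a})=\nu_{1,a}-\delta_{a,p_1}$ (the appended monomial is $x^{\nu_{1,a}}$ only for $a<p_1$; for $a=p_1$ it is $x^{\nu_{1,p_1}-1}$ and no reordering occurs). Hence $p=\sum_{i\le a}\nu_{1,i}-[a'<a]-\delta_{a,p_1}$, and the total exponent $l_{-a'}+p+1$ reproduces $\varepsilon_{1,1}(a,a')$ branch by branch; your quoted value $p=\sum_{i\le a}\nu_{1,i}-1$ is correct only in the branch $a'<a$, $a\ne p_1$, so the four cases really must be written out, as you acknowledge but do not do. Finally, be aware that at the corner $a'=a_1(l)=p_1$ with $a\ne p_1$ the literal reading of Definition \ref{def_tau1} (delete positional row $l_{-p_1}=l$ of $g^{[l+1]}_{+a}$, which is the \emph{replaced} row) yields $\det g^{[l]}=\tau_{[\vec\nu_1;\vec\nu_2]}$ rather than the claimed $\varepsilon_{1,1}(a,p_1)\tau_{[\vec\nu_1-\vec e_{1,p_1}+\vec e_{1,a};\vec\nu_2]}$; consistency with \eqref{taumops+} requires interpreting the deleted row as $(l-1)_{-p_1}=l-1$ there. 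This is a defect of the paper's definition at that boundary rather than of your argument, but a complete proof should say explicitly which convention it adopts, since your uniform treatment silently inherits the ambiguity.
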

We now proceed to give the $\tau$-function representation  of multiple orthogonal polynomials, their duals, second
kind functions and bilinear equations.
The $\tau$-functions allow for compact expressions for the multiple orthogonal polynomials:
\begin{pro}\label{taumopis}
The mixed multiple orthogonal polynomials $A_a^{(l)}$, $A_{+a',a}^{(l)}$ and $A_{-b,a}^{(l)}$ have the following
$\tau$-function representation
\begin{align}\label{taumops}
A_a^{(l)}(z)&=A^{(\rII,a_1(l))}_{[\vec\nu_1(l);\vec
\nu_2(l-1)],a}=z^{\nu_{1,a}(l)-1}\dfrac{\tau^{(l)}_{-a}(t-[z^{-1}]_a)}{\tau^{(l)}(t)}, & l\geq 1, \\
{\label{taumops+}}
A_{+a',a}^{(l)}(z)&=A^{(\rII,a')}_{[\vec\nu_1(l-1)+\vec e_{1,a'};\vec
\nu_2(l-1)],a}=z^{\nu_{1,a}(l-1)+\delta_{a,a'}-1}\dfrac{\tau^{(l)}_{+a',-a}(t-[z^{-1}]_{a})}{\tau^{(l)}(t)}, & l\geq
1,\\
\label{taumops-}
A_{-b,a}^{(l)}(z)&=A^{(\rI,b)}_{[\vec\nu_1(l);\vec \nu_2(l)-\vec e_{2,b}],a}=z^{
\nu_{1,a}(l)-1}\dfrac{\tau^{(l)}_{-b,-a}(t-[z^{-1}]_{a})}{\tau^{(l+1)}(t)}, & l\geq 1.
\end{align}
The dual  polynomials $\bar A_b^{(l)}$, $\bar A_{+b',b}^{(l)}$, and $\bar A_{-a,b}^{(l)}$ have the following
$\tau$-function representation
\begin{align}\label{taudualmops}
\bar A_b^{(l)}(z)&=\bar A^{(\rI,a_2(b))}_{[\vec\nu_2(l);\vec \nu_1(l-1)],b}=z^{\nu_{2,b}(l)-1}\dfrac{\bar
\tau_{-b}^{(l)}(t+\overline{[z^{-1}]}_b)}{ \tau^{(l+1)}(t)}, & l\geq 1,\\
\label{taudualmops+}
\bar A_{+b',b}^{(l)}(z)&=\bar A^{(\rII,b')}_{[\vec\nu_2(l-1)+\vec e_{2,b'};\vec \nu_1(l-1)],b}=z^{ \nu_{2,b} (l-1)+
\delta_{b,b'}-1} \dfrac{\bar \tau_{+b',-b}^{(l)}(t+\overline{[z^{-1}]}_{b})}{ \tau^{(l)}(t)}, & l\geq 1,\\
\label{taudualmops-}
\bar A_{-a,b}^{(l)}(z)&=\bar A^{(\rI,a)}_{[\vec\nu_2(l);\vec \nu_1(l)-\vec e_{1,a}],b}=z^{ \nu_{2,b} (l)-1}\dfrac{\bar
\tau^{(l)}_{-a,-b}(t+\overline{[z^{-1}]}_{b})}{ \tau^{(l+1)}(t)}, & l\geq 1.
\end{align}
\end{pro}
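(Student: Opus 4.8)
The plan is to derive each of the twelve identities directly from the determinantal representations already in hand---\eqref{detmops} and \eqref{detdualmops} of Corollary \ref{cormops}, together with \eqref{detmops+}--\eqref{detdualmops+} of Proposition \ref{det-associated}---by reading the Miwa shift as a one-parameter row operation on the moment matrix. The starting point is the fact, recorded in \eqref{vertex weight}, that $t\mapsto t-[z^{-1}]_a$ multiplies $w_{1,a}$ by $(1-x/z)$ and leaves every other weight fixed. Since $w_{1,a}$ enters $g_{i,j}(t)$ through exactly those rows $i$ with $a_1(i)=a$, and since by the explicit form \eqref{explicit g2} the insertion of a factor $x$ advances $x^{k_1(i)}$ to $x^{k_1(i)+1}$, i.e.\ carries row $i$ to its $a$-successor, the shifted matrix obeys the elementary relation
\[
r_i\big(g(t-[z^{-1}]_a)\big)=r_i\big(g(t)\big)-\frac1z\,r_{\sigma_a(i)}\big(g(t)\big),\qquad a_1(i)=a,
\]
where $r_i$ denotes the $i$-th row and $\sigma_a(i)$ the next index in the $a$-direction; all rows with $a_1(i)\neq a$ are unchanged. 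This single combinatorial fact is the engine of the whole statement.

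First I would treat \eqref{taumops}. Expanding the determinant \eqref{detmops} for $A_a^{(l)}(z)$ along its last column and using $\chi_{1,a}^{(k)}=z^{k_1(k)}\delta_{a_1(k),a}$ writes $A_a^{(l)}(z)\,\tau^{(l)}(t)$ as a polynomial $\sum_k(-1)^{k+l}z^{k_1(k)}M^{[l+1]}_{k,l}(t)$, the sum running over the $a$-rows $k\le l$ and having top power $z^{\nu_{1,a}(l)-1}$ at $k=l_{-a}$. On the other side I would compute $\tau^{(l)}_{-a}(t-[z^{-1}]_a)=(-1)^{l+l_{-a}}M^{[l+1]}_{l_{-a},l}(t-[z^{-1}]_a)$ by substituting the row relation above and expanding multilinearly over the $a$-rows present in that minor. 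Each such row position independently contributes either its own content or that of its successor; any selection not of the telescoping type produces a repeated moment row and hence a vanishing determinant, so the surviving terms are indexed by a single omitted $a$-row $j_r$, each equal to $(-1)^{m-r}z^{-(m-r)}M^{[l+1]}_{j_r,l}(t)$ with $m=\nu_{1,a}(l)-1$. Multiplying by $z^{\nu_{1,a}(l)-1}$ reproduces the cofactor expansion term by term, and matching prefactors gives \eqref{taumops}.

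The companion identities \eqref{taumops+} and \eqref{taumops-} follow from the same mechanism applied to \eqref{detmops+} and \eqref{detmops-}: in the first the last row of the determinant is $g_{l_{+a'},\cdot}$, so the relevant $\tau$-function is a minor of $g^{[l+1]}_{+a'}$ and the telescoping runs through the $a$-rows of that matrix; in the second a column of $g^{[l+1]}$ has been removed, so the omitted-row analysis is carried out on the minors attached to $\bar l_{-b}$. The six dual formulae \eqref{taudualmops}--\eqref{taudualmops-} are obtained verbatim after transposing: rows are replaced by columns, the $\vec n_1$-data by the $\vec n_2$-data, and the shift $t-[z^{-1}]_a$ by the dual shift $t+\overline{[z^{-1}]}_b$, which by \eqref{vertex dualweight} multiplies $w_{2,b}$ by $(1-x/z)$ and therefore induces the analogous column operation.

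I expect the main obstacle to be entirely one of signs and index combinatorics rather than of ideas. One must check that the reordering sign generated by the multilinear expansion combines with the defining factors $(-1)^{l+l_{-a}}$, $(-1)^{l+\bar l_{-b}}$, etc.\ of Definition \ref{def_tau1} to yield precisely the $(-1)^{k+l}$ of the cofactor expansion, and that the associated integers $l_{\pm a}$, $\bar l_{\pm b}$ from \eqref{ai} correctly locate the rows and columns brought in or deleted by each shift; the bookkeeping is heaviest in the $+a'$ and $-b$ cases, where both a row (or column) replacement and the Miwa operation act simultaneously. Once the equalities are established, Proposition \ref{tausss} may be invoked to re-express the $\tau$-functions in terms of $\tau_{[\vec\nu_1;\vec\nu_2]}$ with shifted degree vectors, recovering the ladder-independent form of the statement.
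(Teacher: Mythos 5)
Your proposal is correct and follows essentially the same route as the paper's own proof: the row relation you derive from \eqref{vertex weight} is the paper's Lemma \ref{lemarows}, your "only telescoping selections survive" multilinear expansion is exactly the content of the paper's wedge-product Lemma \ref{lemacovectors} (which the paper proves by induction rather than by the repeated-row observation), and the matching against the determinantal expressions of Corollary \ref{cormops} and Proposition \ref{det-associated} is identical. The remaining sign and index bookkeeping you flag is likewise left largely implicit in the paper.
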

\begin{proof}
See Appendix \ref{II}
\end{proof}

Observe that in the simple ladder defined above $(\vec\nu_1,\vec\nu_2+\vec e_{2,p_2})$ with
$l=|\vec\nu_2|=|\vec\nu_1|-1$ we have
\begin{align*}
  \vec\nu_1(l)&=\vec \nu_1, &\vec\nu_2(l-1)&=\vec\nu_2,\\
   \vec\nu_1(l-1)&=\vec \nu_1-\vec e_{1,p_1}, &\vec\nu_2(l)&=\vec\nu_2+\vec e_{2,p_2}.
\end{align*}
From  Proposition \ref{taumopis} we get
 \begin{align}
\begin{aligned}
A^{(\rII,p_1)}_{[\vec\nu_1;\vec \nu_2],a}(z)&=\varepsilon_{1,1}(p_1,a)z^{\nu_{1,a}-1}\dfrac{\tau_{[\vec\nu_1+\vec
e_{1,p_1}-\vec e_{1,a}a;\vec \nu_2]}(t-[z^{-1}]_a)}{\tau_{[\vec\nu_1;\vec \nu_2]}(t)}, \\
A^{(\rII,a')}_{[\vec\nu_1-\vec e_{1,p_1}+\vec e_{1,a'};\vec \nu_2],a}(z)&=\varepsilon_{1,1}(
a',a)z^{\nu_{1,a}-\delta_{a,p_1}+\delta_{a,a'}-1}
\dfrac{\tau_{[\vec\nu_1-\vec e_{1,a}+\vec e_{1,a'};\vec\nu_2]}(t-[z^{-1}]_{a})}{\tau_{[\vec\nu_1;\vec \nu_2]}(t)},\\
A^{(\rI,b)}_{[\vec\nu_1;\vec \nu_2+\vec e_{2,p_2}-\vec e_{2,b}],a}(z)&=\varepsilon_{2,1}(b, a)z^{
\nu_{1,a}-1}\dfrac{\tau_{[\vec\nu_1+\vec e_{1,p_1}-\vec e_{1,a};\vec\nu_2+\vec e_{2,p_2}-\vec
e_{2,b}]}(t-[z^{-1}]_{a})}{\tau_{[\vec\nu_1+\vec e_{1,p_1};\vec \nu_2+\vec e_{2,p_2}]}(t)},
\end{aligned}
\end{align}
 \begin{align}
 \begin{aligned}
\bar A^{(\rI,p_2)}_{[\vec\nu_2+\vec e_{2,p_2};\vec \nu_1-\vec
e_{p_1}],b}(z)&=\varepsilon_{2,2}(p_2,b)z^{\nu_{2,b}+\delta_{b,p_2}-1}\dfrac{\tau_{[\vec\nu_1;\vec\nu_2+\vec
e_{2,p_2}-\vec e_{2,b}]}(t+\overline{[z^{-1}]}_b)}{\tau_{[\vec\nu_1+\vec e_{1,p_1};\vec \nu_2+\vec e_{2,p_2}]}(t)},\\
\bar A^{(\rII,b')}_{[\vec\nu_2+\vec e_{2,b'};\vec \nu_1-\vec e_{1,p_1}],b}&=\varepsilon_{2,2}(b,b')z^{ \nu_{2,b}+
\delta_{b',b}-1} \dfrac{\tau_{[\vec\nu_1;\vec\nu_2-\vec e_{2,b}+\vec e_{2,b'}]}(t+\overline{[z^{-1}]}_{b})}{
\tau_{[\vec\nu_1;\vec \nu_2]}(t)},\\
\bar A^{(\rI,a)}_{[\vec\nu_2+\vec e_{2,p_2};\vec \nu_1-\vec e_{1,a}],b}&=\varepsilon_{2,1}(b, a)z^{ \nu_{2,b}
+\delta_{b,p_2}-1}\dfrac{\tau_{[\vec\nu_1+\vec e_{1,p_1}-\vec e_{1,a};\vec\nu_2+\vec e_{2,p_2}-\vec
e_{2,b}]}(t+\overline{[z^{-1}]}_{b})}{\tau_{[\vec\nu_1+\vec e_{1,p_1};\vec \nu_2+\vec e_{2,p_2}]}(t)}.
\end{aligned}
\end{align}

We now present the $\tau$-representation of the Cauchy transforms of the linear forms.
\begin{pro}\label{proposition: tau cauchy}
The Cauchy transforms have the following $\tau$-function representation
 \begin{align}\label{tau3}
  \bar C^{(l)}_a=z^{-\nu_{1,a}(l-1)-1}\frac{\tau_{+a}^{(l+1)}(t+[z^{-1}]_a)}{\tau^{(l+1)}(t)},\\
  \label{tau4}
   C^{(l)}_b=z^{-\nu_{2,b}(l-1)-1}\frac{\bar \tau_{+b}^{(l+1)}(t-\overline{[z^{-1}]}_b)}{\tau^{(l)}(t)}.
\end{align}
\end{pro}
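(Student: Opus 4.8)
The plan is to obtain both identities from the determinantal representations of the second kind functions in Proposition~\ref{proposition: expressions for ct}, by reading the Miwa-shifted $\tau$-functions of Definition~\ref{def_tau1} as determinants of the moment matrix in which a \emph{single} weight has been deformed. I treat \eqref{tau3} in detail; the identity \eqref{tau4} is the column-transposed version of the same argument. First I would record the two objects to be matched. By Proposition~\ref{proposition: expressions for ct} together with $\tau^{(l+1)}=\det g^{[l+1]}$, one has $\tau^{(l+1)}\bar C_a^{(l)}=\det M$, where $M$ is the $(l+1)\times(l+1)$ matrix whose rows $0,\dots,l-1$ are $(g_{k,0},\dots,g_{k,l})$ and whose last row is $(\Gamma^{(l)}_{0,a},\dots,\Gamma^{(l)}_{l,a})$, with $\Gamma^{(l)}_{k,a}=\sum_{k'\geq l_{+a},\,a_1(k')=a}g_{k',k}\,z^{-k_1(k')-1}$. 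On the other hand, by \eqref{def_g+a} the matrix $\tau^{(l+1)}_{+a}=\det g^{[l+1]}_{+a}$ is the determinant of $g^{[l+1]}$ with its last row replaced by the row of index $l_{+a}$.

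Then I use the effect of the Miwa shift on the weights. By \eqref{vertex weight} the shift $t\mapsto t+[z^{-1}]_a$ replaces $w_{1,a}(x)$ by $(1-x/z)^{-1}w_{1,a}(x)=\sum_{n\geq 0}z^{-n}x^{n}w_{1,a}(x)$ and leaves every other weight unchanged. Since the row index $i$ of $g$ carries the weight $w_{1,a_1(i)}$, only the rows with $a_1(i)=a$ are deformed, and for such a row
\[
g_{i,k}(t+[z^{-1}]_a)=\sum_{n\geq 0}z^{-n}g_{i(n),k},
\]
where $i(n)$ denotes the $a$-component index with $k_1(i(n))=k_1(i)+n$ (so $i(0)=i$). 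Thus the deformed $a$-row is the geometric combination of the undeformed $a$-rows following it in the $k_1$-ordering. The $a$-rows sitting among $\{0,\dots,l-1\}$ carry consecutive labels $k_1=0,1,\dots,\nu_{1,a}(l-1)-1$, and the next $a$-row is exactly $l_{+a}$, with $k_1(l_{+a})=\nu_{1,a}(l-1)$; this is the combinatorial content of \eqref{i}--\eqref{iak} and \eqref{formulitas}--\eqref{ai}. Writing $R_p$ for the deformed $a$-row of label $k_1=p$ and $r_p$ for its undeformed counterpart, the displayed series yields the telescoping identity $R_p=r_p+z^{-1}R_{p+1}$, i.e. $r_p=R_p-z^{-1}R_{p+1}$.

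Applying this inside $\det g^{[l+1]}_{+a}(t+[z^{-1}]_a)$, I would subtract $z^{-1}\times(\text{next row})$ from each deformed block $a$-row, a determinant-preserving operation since every such next row is present in the matrix, the last one being precisely the deformed image $R_{\nu_{1,a}(l-1)}$ of row $l_{+a}$. This converts all the block rows back to the undeformed rows of $g$, while the final row remains $R_{\nu_{1,a}(l-1)}=z^{\nu_{1,a}(l-1)+1}(\Gamma^{(l)}_{0,a},\dots,\Gamma^{(l)}_{l,a})$ by the definition of $\Gamma^{(l)}_{k,a}$ and $k_1(l_{+a})=\nu_{1,a}(l-1)$. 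Hence $\det g^{[l+1]}_{+a}(t+[z^{-1}]_a)=z^{\nu_{1,a}(l-1)+1}\det M=z^{\nu_{1,a}(l-1)+1}\tau^{(l+1)}\bar C_a^{(l)}$, which is \eqref{tau3}. The identity \eqref{tau4} follows identically: one deforms $w_{2,b}$ through $t\mapsto t-\overline{[z^{-1}]}_b$ via \eqref{vertex dualweight}, runs the telescoping on the \emph{columns} of $\bar g^{[l+1]}_{+b}$ indexed by $a_2=b$, and uses the expression for $C_b^{(l)}$ whose normalization is $\det g^{[l]}=\tau^{(l)}$, producing the $\tau^{(l)}$ in the denominator and the exponent $\nu_{2,b}(l-1)+1$ from $k_2(\bar l_{+b})=\nu_{2,b}(l-1)$.

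The main obstacle is the index bookkeeping rather than any analytic difficulty: the argument closes only because the $a$-rows appearing in $g^{[l+1]}_{+a}$ form a single consecutive block $k_1=0,1,\dots,\nu_{1,a}(l-1)$, with no $a$-row strictly between $(l-1)_{-a}$ and $l_{+a}$. This is exactly what makes the telescoping terminate on the replaced last row and fixes the power of $z$, and it must be extracted carefully from the index identities \eqref{i}--\eqref{iak} and \eqref{formulitas}--\eqref{ai}; everything else is a determinant manipulation.
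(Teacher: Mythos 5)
Your proof is correct and follows essentially the same route as the paper's: the appendix proof also starts from the determinantal expressions of Proposition \ref{proposition: expressions for ct}, establishes that the Miwa shift turns each $a$-row (resp.\ $b$-column) of $g^{[l+1]}_{+a}$ (resp.\ $\bar g^{[l+1]}_{+b}$) into the geometric $z^{-1}$-combination of the subsequent ones, and then collapses all but the last via the wedge identity $\bigwedge_{j=1}^n\bigl(\sum_{i\geq 0}r_{j+i}z^{-i}\bigr)=r_1\wedge\dots\wedge r_{n-1}\wedge\bigl(\sum_{i\geq 0}r_{n+i}z^{-i}\bigr)$, which is exactly your telescoping $r_p=R_p-z^{-1}R_{p+1}$. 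The index bookkeeping you flag ($k_1(l_{+a})=\nu_{1,a}(l-1)$ and the consecutiveness of the $a$-rows) is handled in the paper by \eqref{formulitas} and \eqref{ai}, and your power of $z$ and normalizations match \eqref{tau3}--\eqref{tau4}.
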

\begin{proof}
  See Appendix \ref{II}
\end{proof}

We have the representation
 \begin{align*}
  C^{(\rI,p_2)}_{[\vec\nu_2+\vec e_{2,p_2};\vec \nu_1-\vec
  e_{1,p_1}],a}(z)&=\varepsilon_{1,1}(a,p_1)z^{-\nu_{1,a}-1+\delta_{a,p_1}}\frac{\tau_{[\vec \nu_1+\vec
  e_{1,a};\vec\nu_2+\vec e_{2,p_2}]}(t+[z^{-1}]_a)}{\tau_{[\vec \nu_1+\vec e_{1,p_1};\vec\nu_2+\vec
  e_{2,p_2}]}(t)},\\
   C^{(\rII,p_1)}_{[\vec\nu_1;\vec \nu_2],b}(z)&=\varepsilon_{2,2}(b,p_2)z^{-\nu_{2,b}-1}\frac{\tau_{[\vec \nu_1+\vec
   e_{1,p_1};\vec\nu_2+\vec e_{2,b}]}(t-\overline{[z^{-1}]}_b)}{\tau_{[\vec \nu_1;\vec\nu_2]}(t)}.
\end{align*}


Finally, we consider the $\tau$-function representation of the bilinear equation
\begin{pro}
  The $\tau$ functions fulfill the following bilinear relation
  \begin{multline}\label{bilinear2?}
\sum_{a=1}^{p_1}\oint_{z=\infty}
z^{\nu_{1,a}(k)-\nu_{1,a}'(l-1)-2}\tau^{(k)}_{\vec n_1,\vec n_2,-a}(t-[z^{-1}]_a)
\tau_{\vec n_1',\vec n_2',+a}^{(l+1)}(t'+[z^{-1}]_a)E_a(z)\d z\\=
\sum_{b=1}^{p_2}\oint_{z=\infty}z^{\nu_{2,b}'(l)-\nu_{2,b}(k-1)-2}\bar \tau_{\vec n_1,\vec
n_2,+b}^{(k+1)}(t-\overline{[z^{-1}]}_b)\bar
\tau_{\vec n_1',\vec n_2',-b}^{(l)}(t'+\overline{[z^{-1}]}_b)\bar E_b(z)\d z.
\end{multline}
\end{pro}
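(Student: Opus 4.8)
The plan is to obtain \eqref{bilinear2?} by feeding the $\tau$-function representations of Propositions \ref{taumopis} and \ref{proposition: tau cauchy} into the bilinear identity \eqref{bilinear2} already established in Theorem \ref{theorem: bilinear}, and then cancelling a common scalar factor. The whole argument is a substitution followed by one bookkeeping check, so there is no new analytic content beyond \eqref{bilinear2}; the reality/convergence hypotheses needed to make sense of the contour integrals are exactly those under which \eqref{bilinear2} was derived.

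First I would rewrite each of the four objects appearing in \eqref{bilinear2} using its $\tau$-representation, keeping careful track of the level indices and of whether the time argument is $t$ or $t'$. On the left-hand side, \eqref{taumops} gives $A^{(k)}_{\vec n_1,\vec n_2,a}(z,t)=z^{\nu_{1,a}(k)-1}\tau^{(k)}_{\vec n_1,\vec n_2,-a}(t-[z^{-1}]_a)/\tau^{(k)}(t)$, while \eqref{tau3} gives the companion $\bar C^{(l)}_{\vec n_1',\vec n_2',a}(z,t')=z^{-\nu_{1,a}'(l-1)-1}\tau^{(l+1)}_{\vec n_1',\vec n_2',+a}(t'+[z^{-1}]_a)/\tau^{(l+1)}(t')$. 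Multiplying these, the powers of $z$ add up to $\nu_{1,a}(k)-\nu_{1,a}'(l-1)-2$, exactly the exponent demanded in \eqref{bilinear2?}, and the two denominators produce the factor $1/(\tau^{(k)}(t)\tau^{(l+1)}(t'))$, which is independent of both $z$ and the summation index $a$.

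Symmetrically, on the right-hand side I would use \eqref{tau4} for $C^{(k)}_{\vec n_1,\vec n_2,b}(z,t)=z^{-\nu_{2,b}(k-1)-1}\bar\tau^{(k+1)}_{\vec n_1,\vec n_2,+b}(t-\overline{[z^{-1}]}_b)/\tau^{(k)}(t)$ and \eqref{taudualmops} for $\bar A^{(l)}_{\vec n_1',\vec n_2',b}(z,t')=z^{\nu_{2,b}'(l)-1}\bar\tau^{(l)}_{\vec n_1',\vec n_2',-b}(t'+\overline{[z^{-1}]}_b)/\tau^{(l+1)}(t')$. Their product carries the power $z^{\nu_{2,b}'(l)-\nu_{2,b}(k-1)-2}$ and, crucially, the very same denominator $1/(\tau^{(k)}(t)\tau^{(l+1)}(t'))$; here one uses the identity $\tau^{(k)}=\bar\tau^{(k)}=\det g^{[k]}$ from Definition \ref{def_tau1} so that the normalizations attached to $C_b$ and $A_a$ coincide. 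Since this scalar prefactor is common to both sides and sits outside the contour integrals and the finite sums, I would pull it out and cancel it, leaving precisely \eqref{bilinear2?}.

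The only point requiring genuine care, and the step I expect to be the main obstacle, is the denominator matching: one must verify that the normalizing $\tau$-functions attached to $A^{(k)}_a$ and $C^{(k)}_b$ are both $\tau^{(k)}(t)$, and that those attached to $\bar C^{(l)}_a$ and $\bar A^{(l)}_b$ are both $\tau^{(l+1)}(t')$, so that the prefactor is genuinely common and cancels cleanly. This is a consequence of the specific level shifts built into Propositions \ref{taumopis} and \ref{proposition: tau cauchy}, noting in particular that the Cauchy transform $\bar C^{(l)}_a$ and the dual polynomial $\bar A^{(l)}_b$ are both normalized at level $l+1$, together with $\tau^{(k)}=\bar\tau^{(k)}$. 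Once this alignment is confirmed the identity follows term by term without any further manipulation.
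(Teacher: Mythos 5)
Your proposal is correct and follows exactly the paper's own route: the paper's proof is the one-line instruction to combine the bilinear identity \eqref{bilinear2} with the $\tau$-representations \eqref{taumops}, \eqref{taudualmops}, \eqref{tau3} and \eqref{tau4}, which is precisely your substitution. Your additional check that both sides carry the common prefactor $1/(\tau^{(k)}(t)\tau^{(l+1)}(t'))$ is the right bookkeeping detail and is consistent with the level shifts in Propositions \ref{taumopis} and \ref{proposition: tau cauchy}.
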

\begin{proof}
Just consider \eqref{bilinear2} together with \eqref{taumops}, \eqref{taudualmops}, \eqref{tau3} and \eqref{tau4}.
\end{proof}
This bilinear relation can also be written as follows
  \begin{multline}\label{bilinear2bis}
\sum_{a=1}^{p_1}\varepsilon_{11}(p_1,a)\varepsilon_{11}'(p_1,a)\oint_{z=\infty}
z^{\nu_{1,a}-\nu_{1,a}'-\delta_{a,p_1}-2}
\tau_{[\vec \nu_1+\vec e_{1,p_1}-\vec e_{1,a};\vec \nu_2]}(t-[z^{-1}]_a)
\tau_{\vec \nu_1'+\vec e_{1,a};\vec \nu_2'+\vec e_{2,p_2}}^{(l+1)}(t'+[z^{-1}]_a)E_a(z)\d z\\=
\sum_{b=1}^{p_2}\varepsilon_{22}(p_2,b)\varepsilon_{22}'(p_2,b)\oint_{z=\infty}z^{\nu_{2,b}'+\delta_{b,p_2}-\nu_{2,b}-2}
\tau_{[\vec \nu_1+\vec e_{1,p_1};\vec \nu_2+\vec e_{2,b}]}(t-\overline{[z^{-1}]}_b)
\tau_{\vec \nu_1';\vec \nu_2'+\vec e_{2,p_2}-\vec e_{2,b}]}(t'+\overline{[z^{-1}]}_b)\bar E_b(z)\d z.
\end{multline}
That with the identification $m^*=\vec\nu_1+\vec e_{1,p_1}$, $n^*=\vec\nu_2$, $m=\vec\nu_1'$ and $n=\vec\nu_2'+\vec
e_{2,p_2}$,  up to signs, is the bilinear relation (41) in \cite{adler-vanmoerbeke 5}.

\section*{Acknowledgements}
MM thanks economical support from the Spanish Ministerio de Ciencia e Innovaci\'{o}n, research
project FIS2008-00200 and UF thanks economical support from the SFRH / BPD / 62947 / 2009, Funda\c{c}\~{a}o para a Ci\^{e}ncia e a Tecnologia of Portugal,  PT2009-0031, Acciones Integradas Portugal, Ministerio de Ciencia e Innovaci\'{o}n de Espa\~{n}a, and
MTM2009-12740-C03-01. Ministerio de Ciencia e Innovaci\'{o}n de Espa\~{n}a. MM reckons different and clarifying discussions with Dr. Mattia Caffasso, Prof. Pierre van Moerbeke,  Prof. Luis Mart\'{\i}nez Alonso and Prof. David G\'{o}mez-Ullate. The authors of this paper are in debt with Prof. Guillermo L\'{o}pez Lagomasino who carefully read the manuscript and whose suggestions improved the paper indeed.

\begin{appendices}

\section{Proofs}\label{II}

\paragraph{Proof Proposition \ref{proposition: expressions for linear forms}}
The orthogonality relations can be recast into two alternative forms
\begin{align}\label{S.1}
 \begin{pmatrix}
   S_{l,0}&S_{l,1}&\cdots&S_{l,l-1}
 \end{pmatrix} \begin{pmatrix}
    g_{0,0}&g_{0,1}&\cdots&g_{0,l-1}\\
    g_{1,0}&g_{1,1}&\cdots&g_{1,l-1}\\
    \vdots&\vdots&&\vdots\\
     g_{l-1,0}&g_{l-1,1}&\cdots&g_{l-1,l-1}
  \end{pmatrix}&=- \begin{pmatrix}
   g_{l,0}&g_{l,1}&\cdots&g_{l,l-1}
 \end{pmatrix},& l&\geq 1,\\
 \label{S.2}
 \begin{pmatrix}
 S_{l,0}&S_{l,1}&\cdots&S_{l,l-1}&S_{l,l}
 \end{pmatrix} \begin{pmatrix}
    g_{0,0}&g_{0,1}&\cdots&g_{0,l}\\
    g_{1,0}&g_{1,1}&\cdots&g_{1,l}\\
    \vdots&\vdots&&\vdots\\
     g_{l,0}&g_{l,1}&\cdots&g_{l,l}
  \end{pmatrix}&=\underbrace{\begin{pmatrix}
   0&0&\cdots&0& \bar S_{l,l}
    \end{pmatrix}}_{\text{$l+1$ components}},& l&\geq 0.
\end{align}
From \eqref{linear forms S} we get
\begin{align}
  Q^{(l)}&=\sum_{k=0}^l S_{l,k}\xi_{1}^{(k)}\notag
  \\&=\xi_{1}^{(l)}-\begin{pmatrix}
    g_{l,0}&g_{l,1}&\cdots&g_{l,l-1}
  \end{pmatrix}(g^{[l]})^{-1}\xi_1^{[l]}& \text{use \eqref{S.1}}\label{S.1.1}\\
  &=\bar S_{l,l}\begin{pmatrix}
    0 &0 &\cdots &0& 1
  \end{pmatrix}
  (g^{[l+1]})^{-1}\xi^{[l+1]}_1. &\text{use \eqref{S.2}}\label{S.1.2}
\end{align}
 Cramer's method solves \eqref{S.1} as follows
\begin{align}\label{cofactor}
  S_{l,i}&=\frac{1}{\det
  g^{[l]}}\sum_{j=0}^{l-1}g_{l,j}(-1)^{i+j+1}M^{(l)}_{i,j}=\frac{(-1)^{i+l}M^{(l+1)}_{i,l}}{\det g^{[l]}},
 \end{align}
where $M_{i,j}^{(l)}$ is the $(i,j)$-minor of the truncated moment matrix $g^{[l]}$ defined in \eqref{gl}.
Therefore,
\begin{align*}
  Q^{(l)}&=
  \frac{1}{\det g^{[l]}}\sum_{i=0}^l (-1)^{i+l} M_{i,l}^{(l+1)}  \xi_1^{(i)}\\&=
\frac{1}{\det g^{[l]}}\det
  \left(\begin{BMAT}{cccc|c}{cccc|c}
    g_{0,0}&g_{0,1}&\cdots&g_{0,l-1}&\xi_1^{(0)}\\
     g_{1,0}&g_{1,1}&\cdots&g_{1,l-1}&\xi_1^{(1)}\\
     \vdots &\vdots&            &\vdots&\vdots\\
        g_{l-1,0}&g_{l-1,1}&\cdots&g_{l-1,l-1}&\xi_1^{(l-1)}\\
          g_{l,0}&g_{l,1}&\cdots&g_{l,l-1}&\xi_1^{(l)}
\end{BMAT}\right),& l&\geq 1.
\end{align*}

The orthogonality relations for the dual system can be written also in two alternative forms
\begin{align}\label{dS.1}
    \begin{pmatrix}
    g_{0,0}&g_{0,1}&\cdots&g_{0,l-1}\\
    g_{1,0}&g_{1,1}&\cdots&g_{1,l-1}\\
    \vdots&\vdots&&\vdots\\
     g_{l-1,0}&g_{l-1,1}&\cdots&g_{l-1,l-1}
  \end{pmatrix}\begin{pmatrix}
    \bar S'_{0,l}\\\bar S'_{1,l}\\\vdots\\ \bar S_{l-1,l}
 \end{pmatrix}&=-(\bar S_{l,l})^{-1}\begin{pmatrix}
   g_{0,l}\\g_{1,l}\\\vdots\\g_{l-1,l}
 \end{pmatrix},& l&\geq 1,\\ \begin{pmatrix}
    g_{0,0}&g_{0,1}&\dots&g_{0,l}\\
    g_{1,0}&g_{1,1}&\dots&g_{1,l}\\
    \vdots&\vdots&&\vdots\\
     g_{l,0}&g_{l,1}&\dots&g_{l,l}
  \end{pmatrix}
  \begin{pmatrix}
  \bar S'_{0, l}\\
  \bar S'_{1, l} \\
  \vdots \\
  \bar S'_{l, l}
  \end{pmatrix}
  &=
  \begin{pmatrix}
  0 \\
  0 \\
  \vdots \\0\\
  1
  \end{pmatrix},& l&\geq 0.\label{dS.2}
\end{align}
As before,  \eqref{linear forms S} leads to the following expressions for the dual linear forms
\begin{align}
\bar  Q^{(l)}&=\sum_{k=0}^l \bar S'_{k,l}\xi_{2}^{(k)}
  \notag\\&=(\bar S_{l,l})^{-1}\Big(\xi_{2}^{(l)}-(\xi_2^{[l]})^\top(g^{[l]})^{-1}\begin{pmatrix}
    g_{0,l}\\g_{1,l}\\\vdots\\g_{l-1,l}
  \end{pmatrix}\Big)& \text{use \eqref{dS.1}}\label{dS1.1}\\
  &=(\xi^{[l+1]}_2)^\top
  (g^{[l+1]})^{-1}\begin{pmatrix}
    0 \\0\\\vdots \\0\\ 1
  \end{pmatrix}. &\text{use \eqref{dS.2}}\label{dS1.2}
\end{align}

 From \eqref{dS.2} we obtain
\begin{align}\label{s'}
    \bar S'_{j,l}=\big({g^{[l+1]}}^{-1}\big)_{j,l}=\frac{(-1)^{l+j}M^{(l+1)}_{l,j}}{\det(g^{[l+1]})}, \quad
    j=0,\dots,l,
\end{align}
and consequently
\begin{align*}
    \bar Q ^{(l)}&=\sum_{j=0}^{l} \bar S'_{j,l} \xi_2^{(j)}=\frac{1}{\det
    g^{[l+1]}}\sum_{j=0}^{l}(-1)^{l+j}M^{(l+1)}_{l,j}\xi_2^{(j)}
\\&=\frac{1}{\det g^{[l+1]}}\det
  \left(\begin{BMAT}{cccc|c}{cccc|c}
    g_{0,0}&g_{0,1}&\cdots&g_{0,l-1}& g_{0,l}\\
     g_{1,0}&g_{1,1}&\cdots&g_{1,l-1}& g_{1,l} \\
     \vdots &\vdots&        &\vdots&\vdots\\
        g_{l-1,0}&g_{l-1,1}&\cdots&g_{l-1,l-1}&g _{l-1,l}\\
        \xi_2^{(0)}& \xi_2^{(1)} & \cdots & \xi_2^{(l-1)} & \xi_2^{(l)}
\end{BMAT}\right),& l\geq 0.
\end{align*}

\paragraph{Proof Proposition \ref{proposition: expressions for ct}}
 We have
 \begin{align*}
 C_b^{(l)}=\frac{1}{\det g^{[l]}}
\sum_{k=0}^l(-1)^{k+l}M^{(l+1)}_{k,l}\sum_{k_2=\nu_{2,b}(l-1)}^{\infty}z^{-k_2-1}\int
x^{k_1(k)}w_{1,a_1(k)}(x)w_{2,b}(x)x^{k_2}\d \mu(x),
 \end{align*}
 which according to  \eqref{Gamma}  recasts into
\begin{align}\notag
 C_b^{(l)}&=\frac{1}{\det g^{[l]}}
\sum_{k=0}^l(-1)^{k+l}M^{(l+1)}_{k,l}\bar\Gamma_{k,b}^{(l)},\\
&=\frac{1}{\det g^{[l]}}\det
  \left(\begin{BMAT}{cccc|c}{cccc|c}
    g_{0,0}&g_{0,1}&\cdots&g_{0,l-1}&\bar\Gamma^{(l)}_{0,b}\\
     g_{1,0}&g_{1,1}&\cdots&g_{1,l-1}&\bar\Gamma^{(l)}_{0,b}\\
     \vdots &\vdots&            &\vdots&\vdots\\
        g_{l-1,0}&g_{l-1,1}&\cdots&g_{l-1,l-1}&\bar\Gamma^{(l)}_{l-1,b}\\
          g_{l,0}&g_{l,1}&\cdots&g_{l,l-1}&\bar\Gamma^{(l)}_{l,b},
\end{BMAT}\right),& l&\geq 1.&&\label{ctdet}
 \end{align}
We also obtain
\begin{align*}
\bar C^{(l)}_a=\frac{1}{\det g^{[l+1]}}\sum_{k=0}^l
(-1)^{l+k}M^{(l+1)}_{l,k}\sum_{k_1=\nu_{1,a}(l-1)}^{\infty}z^{-k_1-1}\int  x^{k_1}w_{1,a}(x)w_{2,a_2(k)}x^{k_2(k)}\d
\mu(x),
\end{align*}
which can be written as \eqref{Gamma}
\begin{align}\notag
 \bar C_a^{(l)}&=\frac{1}{\det g^{[l+1]}}
\sum_{k=0}^l(-1)^{k+l}M^{(l+1)}_{l,k}\Gamma_k^{(l)},\\
&=\frac{1}{\det g^{[l+1]}}\det
  \left(\begin{BMAT}{cccc|c}{cccc|c}
    g_{0,0}&g_{0,1}&\cdots&g_{0,l-1}&g_{0,l}\\
     g_{1,0}&g_{1,1}&\cdots&g_{1,l-1}&g_{1,l}\\
     \vdots &\vdots&            &\vdots&\vdots\\
        g_{l-1,0}&g_{l-1,1}&\cdots&g_{l-1,l-1}&g_{l-1,l}\\
          \Gamma^{(l)}_{0,a}& \Gamma^{(l)}_{1,a}&\cdots& \Gamma^{(l)}_{l-1,a}& \Gamma^{(l)}_{l,a}
\end{BMAT}\right),& l&\geq 1.&&\label{ctdet-dual}
 \end{align}

 \paragraph{Proof Proposition \ref{proposition: CD type relations}}
 From \eqref{mop-s} and \eqref{cauchy-S} we deduce that
  \begin{align*}
    (\bar\Cs_a(z))^\top\A_{a'}(z')&=(\chi_{1,a}^*(z))^\top\chi_{1,a'}(z')=\frac{\delta_{a,a'}}{z-z'},& |z'|<|z|,\\
     (\Cs_b(z))^\top\bar\A_{a'}(z')&=(\chi_{2,b}^*(z))^\top\chi_{2,b'}(z')=\frac{\delta_{b,b'}}{z-z'},& |z'|<|z|,\\
      (\bar\Cs_a(z))^\top\Cs_{b}(z')&=(\chi_{1,a}^*(z))^\top g\chi^*_{2,b}(z').
  \end{align*}
  The two first relations imply the corresponding equations in the Proposition. For the third we observe that from
  \eqref{compact.g} we get
  \begin{align*}
    (\chi_{1,a}^*(z))^\top g\chi^*_{2,b'}(z')&=\int     (\chi_{1,a}^*(z))^\top
    \xi_1(x)(\xi_2(x))^\top\chi^*_{2,b'}(z')\d\mu(x)\\
    &=\int     (\chi_{1,a}^*(z))^\top
    \chi_{1,a}(x)(\chi_{2,b}(x))^\top\chi^*_{2,b'}(z')w_{1,a}(x)w_{2,b}(x)\d\mu(x)\\
       &=\int     \frac{1}{(z-x)(z'-x)}w_{1,a}(x)w_{2,b}(x)\d\mu(x)\\
       &=-\frac{1}{z-z'}\int     \Big(\frac{1}{z-x}-\frac{1}{z'-x}\Big)w_{1,a}(x)w_{2,b}(x)\d\mu(x).
  \end{align*}

\paragraph{Proof Proposition \ref{associated forms as mop}}
Using Definition \ref{associated linear forms} for the linear forms $Q^{(l)}_{+a}$ and multiplying by
$(\xi_2^{[l]}(x))^{\top}$ we have
 \begin{align*}
 Q_{+a}^{(l)}(x) (\xi_2^{[l]}(x))^{\top}=\xi_1^{(l_{+a})}(x)(\xi^{[l]}_2(x))^{\top}-
       \begin{pmatrix}
    g_{l_{+a},0}&g_{l_{+a},1}&\cdots& g_{l_{+a},l-1}
  \end{pmatrix}(g^{[l]})^{-1}\xi_1^{[l]}(x)(\xi^{[l]}_2(x))^{\top},
 \end{align*}
 integrating both sides we get
 \begin{align*}
 \int  Q_{+a}^{(l)}(x) (\xi^{[l]}_2(x))^{\top} \d \mu(x)
  & =\int  \xi_1^{(l_{+a})}(x)(\xi^{[l]}_2(x))^{\top} \d \mu(x)\\&\quad\quad\quad\quad-
       \begin{pmatrix}
    g_{l_{+a},0}&\cdots& g_{l_{+a},l-1}
  \end{pmatrix}(g^{[l]})^{-1}\int  \xi_1^{[l]}(x)(\xi^{[l]}_2(x))^{\top} \d \mu(x)  \\
  & =\int  \xi_1^{(l_{+a})}(x)(\xi^{[l]}_2(x))^{\top} \d \mu(x)-
       \begin{pmatrix}
    g_{l_{+a},0}& g_{l_{+a},1} &\cdots& g_{l_{+a},l-1}
  \end{pmatrix}(g^{[l]})^{-1}g^{[l]} \\
  & = \begin{pmatrix}
    g_{l_{+a},0}&g_{l_{+a},1}&\cdots& g_{l_{+a},l-1}
  \end{pmatrix}-
  \begin{pmatrix}
    g_{l_{+a},0}&g_{l_{+a},1}&\cdots& g_{l_{+a},l-1}
  \end{pmatrix}\\
  & =0,
  \end{align*}
  that written componentwise gives the following orthogonality relations
  \begin{align*}
\int_{\R} Q^{(l)}_{+a}(x) w_{2,a_2(k)}(x) x^{k_2(k)} \d \mu(x),  \quad k=0,\dots,l-1,
\end{align*}
or equivalently
\begin{align*}
\int_{\R} Q^{(l)}_{+a}(x) w_{2,b}(x) x^k \d \mu(x)&=0,  &0&\leq k\leq \nu_{2,b}(l-1)-1,&b&=1,\dots,p_2.
\end{align*}
Notice that, $A^{(l)}_{+a,a}$ is monic and $\deg A^{(l)}_{+a,a}(x)=k_1(l_{+a})$ but $A^{(l)}_{+a,a'}$ with $a \neq a'$
satisfy $\deg A^{(l)}_{+a,a'} \leq k_1((l-1)_{-a'})$.
This means that the set of polynomials $A^{(l)}_{+a,a'}(x)$ have degrees determined by $\vec \nu_1(l-1)+\vec e_{1,a}$
and a normalization with respect to the $a$-th component of type  II; i.e,
$Q_{+a}^{(l)}=Q^{(\rII,a)}_{[\vec\nu_1(l-1)+\vec e_{1,a};\vec \nu_2(l-1)]}$.

In a similar way, the associated linear forms $Q^{(l)}_{-b}(x)$ solve a mixed multiple orthogonal problem that can be
obtained as follows. From Definition \ref{associated linear forms} and multiplying by $(\xi_2^{[l]}(x))^{\top}$ we
get
\begin{align*}
Q_{-b}^{(l)}(x)(\xi_2^{[l+1]}(x))^{\top}=e_{\bar
l_{-b}}^\top(g^{[l+1]})^{-1}\xi_1^{[l+1]}(x)(\xi_2^{[l+1]}(x))^{\top},
\end{align*}
integrating both sides
\begin{align*}
\int_{\R} Q_{-b}^{(l)}(x)(\xi_2^{[l+1]}(x))^{\top} \d \mu(x)=e_{\bar
l_{-b}}^\top(g^{[l+1]})^{-1}\int_{\R}\xi_1^{[l+1]}(x)(\xi_2^{[l+1]}(x))^{\top} \d \mu(x) = e_{\bar l_{-b}}^{\top},
\end{align*}
and written componentwise
\begin{align*}
\int_{\R} Q_{-b}^{(l)}(x) x^{k_2(k)}w_{2,a_2(k)}(x) \d \mu(x)= \delta_{k,\bar l_{-b}}, \quad k=0, \cdots, l,
\end{align*}
that is equivalent to
\begin{align*}
\int_{\R} Q^{(l)}_{-b}(x) w_{2,b}(x) x^k \d \mu(x)&=\delta_{k,\bar l_{-b}},  &0&\leq k\leq
\nu_{2,b}(l)-1,&b&=1,\dots,p_2.
\end{align*}
Hence, the set $A^{(l)}_{-b,a'}$ is a type I normalized to the $b$-th component solution for a mixed multiple
orthogonality problem; the degrees satisfy $\deg A^{(l)}_{-b,a'} \leq l_{-a'}$. Moreover, the fact that the last
orthogonality condition in the $b$-th component is missing gives the identification
$Q_{-b}^{(l)}=Q^{(\rI,b)}_{[\vec\nu_1(l);\vec \nu_2(l)-\vec e_{2,b}]}$.

Using Definition $\ref{associated linear forms}$ and multiplying by $\xi^{[l]}_1(x)$ we have
\begin{align*}
\bar Q_{+b}^{(l)}(x)\xi^{[l]}_1(x)&=\Big(\xi_1^{[l]}(x)\xi_2^{(\bar
l_{+b})}(x)-\xi^{[l]}_1(x)(\xi^{[l]}_{2}(x))^\top(g^{[l]})^{-1}
\begin{pmatrix}
    g_{0,\bar l_{+b}}\\g_{1,\bar l_{+b}}\\\vdots\\ g_{l-1,\bar l_{+b}}
  \end{pmatrix}\Big),
\end{align*}
and integrating both sides
\begin{align*}
\int _\R\bar Q_{+b}^{(l)}(x)\xi_1(x)^{[l]} \d \mu(x) &= \Big(\int  \xi^{[l]}_1(x)\xi_2^{(\bar l_{+b})}(x)\d
\mu(x)-\int  \xi^{[l]}_1(x)(\xi_{2}^{[l]}(x))^\top \d \mu(x)(g^{[l]})^{-1}
\begin{pmatrix}
    g_{0,\bar l_{+b}}\\g_{1,\bar l_{+b}}\\\vdots\\ g_{l-1,\bar l_{+b}}
  \end{pmatrix}\Big) \\
& =\Big(\int  \xi^{[l]}_1(x)\xi_2^{(\bar l_{+b})}(x)\d \mu(x)-(g^{[l]})(g^{[l]})^{-1}
\begin{pmatrix}
    g_{0,\bar l_{+b}}\\g_{1,\bar l_{+b}}\\\vdots\\ g_{l-1,\bar l_{+b}}
  \end{pmatrix}\Big)\\
& =  \begin{pmatrix}
    g_{0,\bar l_{+b}}\\g_{1,\bar l_{+b}}\\\vdots\\ g_{l-1,\bar l_{+b}}
  \end{pmatrix}
  -
  \begin{pmatrix}
    g_{0,\bar l_{+b}}\\g_{1,\bar l_{+b}}\\\vdots\\ g_{l-1,\bar l_{+b}}
  \end{pmatrix}\\
  &=0,
\end{align*}
that componentwise leads to the following orthogonality relations
\begin{align*}
\int_{\R} \bar Q^{(l)}_{+b}(x) w_{1,a_1(k)}(x) x^{k_1(k)} \d \mu(x)=0,  \quad k=0,\dots,l-1,
\end{align*}
or alternatively
\begin{align*}
 \int  \bar Q_{+b}^{(l)}(x)w_{1,a}(x)x^k\d \mu(x)&=0,&0&\leq k\leq \nu_{1,a}(l-1)-1,&a&=1,\dots,p_1.
\end{align*}
Notice that, $\bar A^{(l)}_{+b,b}$ is monic and $\deg \bar A^{(l)}_{+b,b}=k_2(\bar l_{+b})$ but $\bar A^{(l)}_{+b,b'}$
with $b \neq b'$ satisfy $\deg A^{(l)}_{+b,b'} \leq k_1(\overline{(l-1)}_{-b'})$.
This means that the polynomials $\bar A^{(l)}_{+b,b'}$ have degrees determined by $\vec \nu_2(l-1)+\vec e_{2,b}$ and a
normalization with respect to the $b$-th component of type  II; i.e,   $\bar Q_{+b}^{(l)}=\bar
Q^{(\rII,b)}_{[\vec\nu_2(l-1)+\vec e_{2,b};\vec \nu_1(l-1)]}$.

Finally, we obtain the orthogonality relations for the linear forms $\bar Q^{(l)}_{-a}(x)$. From the definition we
get
\begin{align*}
\xi_{1}^{[l+1]}(x)\bar Q^{(l)}_{-a}(x)=\xi_{1}^{[l+1]}(x)(\xi_{2}^{[l+1]}(x))^\top(g^{[l+1]})^{-1}e_{l_{-a}},
\end{align*}
and integrating both sides
\begin{align*}
\int_{\R} \xi_1^{[l+1]}(x) \bar Q_{-a}^{(l)}(x) \d \mu(x) =\Big(\int_{\R}\xi_{1}^{[l+1]}(x)(\xi_{2}^{[l+1]}(x))^\top
\d \mu(x) \Big) (g^{[l+1]})^{-1}e_{l_{-a}}=e_{l_{-a}},
\end{align*}
and componentwise that means
\begin{align*}
\int_{\R} \bar Q_{-a}^{(l)}(x) x^{k_1(k)}w_{1,a_1(k)} (x) \d \mu(x)= \delta_{k,l_{-a}}, \quad k=0, \cdots, l,
\end{align*}
or equivalently
\begin{align*}
\int_{\R} \bar Q_{-a}^{(l)}(x) x^{k}w_{1,a} (x) \d \mu(x)&=\delta_{k,l_{-a}}, &0&\leq k\leq
\nu_{1,a}(l)-1,&a&=1,\dots,p_1,
\end{align*}
so the set $\bar A^{(l)}_{-a,b'}$ is a type I normalized to the $a$-th component solution for a mixed multiple
orthogonality problem. The degrees satisfy $\deg A^{(l)}_{-a,b'} \leq l_{-b'}$; and therefore we conclude that $\bar
Q_{-b}^{(l)}=Q^{(\rI,b)}_{[\vec\nu_2(l);\vec \nu_1(l)-\vec e_{1,a}]}$.

\paragraph{Proof of Proposition \ref{proposition: symmetry}}
Taking $+$ and $-$ parts in \eqref{bigradedj} we obtain
\begin{align*}
\sum_{a=1}^{p_1}L_a^j=\sum_{a=1}^{p_1}(L_a^j)_++\sum_{a=1}^{p_1}(L_a^j)_-=\sum_{a=1}^{p_1}(L_a^j)_++\sum_{b=1}^{p_2}(\bar
L_b^j)_-=\sum_{a=1}^{p_1}B_{j,a}+\sum_{b=1}^{p_2} \bar B_{j,b}=\sum_{b=1}^{p_2}\bar L_b^j.
\end{align*}

Using Lax equations and observing that $L_aL_{a'}=L_{a'}L_a$ and $\bar L_b\bar L_{b'}=\bar L_{b'}\bar L_{b}$ we have
the following symmetries for the Lax operators
\begin{align*}
\big(\sum_{a=1}^{p_1}\frac{\partial}{\partial t_{j,a}}+\sum_{b=1}^{p_2}\frac{\partial}{\partial \bar
t_{j,b}}\big)L_{a'}=\Big[\sum_{a=1}^{p_1}B_{j,a}+\sum_{b=1}^{p_2} \bar
B_{j,b},L_{a'}\Big]=\sum_{a=1}^{p_1}[L_a^j,L_{a'}]=0, \\
\big(\sum_{a=1}^{p_1}\frac{\partial}{\partial t_{j,a}}+\sum_{b=1}^{p_2}\frac{\partial}{\partial \bar
t_{j,b}}\big)\bar
L_{b'}=\Big[\sum_{a=1}^{p_1}B_{j,a}+\sum_{b=1}^{p_2} \bar B_{j,b},\bar L_{b'}\Big]=\sum_{b=1}^{p_2}[\bar L_b^j,\bar L_{b'}]=0.
\end{align*}
From \eqref{flow.mop} we conclude that the multiple orthogonal polynomials and their duals are also invariant
\begin{align*}
  \big(\sum_{a=1}^{p_1}\frac{\partial}{\partial t_{j,a}}+\sum_{b=1}^{p_2}\frac{\partial}{\partial \bar
  t_{j,b}}\big)\A_{a'}&=\Big(\sum_{a=1}^{p_1}B_{j,a}+\sum_{b=1}^{p_2} \bar
  B_{j,b}-x^j\Big)\A_{a'}=\big(J^j-x^j\big)\A_{a'}=0,\\
    \big(\sum_{a=1}^{p_1}\frac{\partial}{\partial t_{j,a}}+\sum_{b=1}^{p_2}\frac{\partial}{\partial \bar
    t_{j,b}}\big)\bar\A_{b'}&=
-\Big(\sum_{a=1}^{p_1}B_{j,a}+\sum_{b=1}^{p_2} \bar
B_{j,b}-x^j\Big)^\top\bar\A_{b'}=-\big(J^j-x^j\big)^\top\bar\A_{a'}=0.
\end{align*}

\paragraph{Proof of Proposition \ref{proposition: bilinear fubini-cauchy}}
We just follow the following chain of identities
  \begin{align*}
    W_{\vec n_1,\vec n_2}(t,s)\pi_{\vec n_1',\vec n_1}^\top  (W_{\vec n_1',\vec n_2'}(t',s'))^{-1}&=
    W_{\vec n_1,\vec n_2}(t,s)\pi_{\vec n_1',\vec n_1}^\top g_{\vec n_1',\vec n_2'}(\bar W_{\vec n_1',\vec
    n_2'}(t',s'))^{-1} \\
&    \hspace*{-4cm}=S_{\vec n_1,\vec n_2}(t,s)    W_{0,\vec n_1}(t,s)\pi_{\vec n_1',\vec n_1}^\top\Big(\int
\xi_{\vec n_1'}(x)\xi_{\vec n_2'}^\top(x)\d \mu(x)\Big)(W_{0,\vec n_2'}(t',s'))^{-1}
    (\bar S_{\vec n_1',\vec n_2'}(t',s') )^{-1} \\
    &    \hspace*{-4cm}=S_{\vec n_1,\vec n_2}(t,s)    W_{0,\vec n_1}(t,s)\Big(\int  \xi_{\vec n_1}(x)\xi_{\vec
    n_2'}^\top(x)\d \mu(x)\Big)(W_{0,\vec n_2'}(t',s'))^{-1}
    (\bar S_{\vec n_1',\vec n_2'}(t',s') )^{-1} \\
     &    \hspace*{-4cm}=S_{\vec n_1,\vec n_2}(t,s)\Big(\int  \xi_{\vec n_1}(x,t,s)\xi_{\vec n_2'}^\top(x,t',s')\d
     \mu(x)\Big)
    (\bar S_{\vec n_1',\vec n_2'}(t',s') )^{-1} \\
    &    \hspace*{-4cm}=\int  (S_{\vec n_1,\vec n_2}(t,s)(\xi_{\vec n_1}(x,t,s))(\bar S_{\vec n_1',\vec
    n_2'}^\top(t',s')) ^{-1} \xi_{\vec n_2'}(x,t',s') )^\top \d \mu(x)\\
    &\hspace*{-4cm}=\int_{\R} Q_{\vec n_1,\vec n_2}(x,t,s)\bar Q_{\vec n_1',\vec n_2'}^\top(x,t',s')\d \mu(x),
      \end{align*}
      where $\xi_{\vec n_1}(x,t,s)$ and $\xi_{\vec n_2'}(x,t',s')$ represent the vectors of weighted monomials but
      with evolved weights.

      \paragraph{Proof of Proposition \ref{taumopis}}

      To find the $\tau$-function of the multiple orthogonal representation we first need two lemmas
\begin{lemma}\label{lemarows}
Let $R^{(j)}$ be the $j$-th row of $\tau^{(l)}(t)$ and $R^{(j)}_z$ the $j$-th row of $\tau^{(l)}(t-[z^{-1}]_a)$, then
\begin{equation}\label{eqrows}
    R^{(j)}_z=R^{(j)}-\delta_{a_1(j),a}z^{-1}R^{(j')},
\end{equation}
where $j'=j+1$ if $r_1(j)<n_{1,a}-1$, but $j'=j+(|\vec n_1|-n_{1,a})+1$ if $r_1(j)=n_{1,a}-1$. This is also valid for
$\tau^{(l)}_{-a}$ ,  $\tau^{(l)}_{+a,-a'}$ and for $\tau^{(l)}_{-b,-a'}$. \\

Let now be $C^{(j)}$ the j-th column of $\bar \tau^{(l)}$ and $C^{(j)}_z$ the $j$-th column of $\bar
\tau^{(l)}(t+\overline{[z^{-1}]}_b)$, then
\begin{equation}\label{eqcolumns}
    C^{(j)}_z=C^{(j)}-\delta_{a_2(j),b}z^{-1}C^{(j')},
\end{equation}
where  $j'=j+1$ if $r_2(l)<n_{2,b}-1$ but $j'=j+(|\vec n_2|-n_{2,b})+1$ if $r_2(j)=n_{2,b}-1$. This is also valid for
$\bar \tau^{(l)}_{-b}$, $\bar \tau^{(l)}_{+b,-b'}$ and for $\bar \tau^{(l)}_{-a,-b'}.$
\end{lemma}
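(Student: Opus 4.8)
The plan is to trace how a single Miwa shift acts on each matrix entry $g_{j,m}(t)$ of the deformed moment matrix and then read off the row (or column) relation. First I would recall from \eqref{vertex weight} that under $t\mapsto t-[z^{-1}]_a$ only the first weight in the $a$-th slot changes, namely $w_{1,a}(x,t-[z^{-1}]_a)=(1-x/z)\,w_{1,a}(x,t)$, while every other $w_{1,a'}$ with $a'\neq a$ and every $w_{2,b}$ is left untouched. Combining this with the explicit form \eqref{explicit g2} of the entries, $g_{j,m}(t)=\int x^{k_1(j)+k_2(m)}w_{1,a_1(j)}(x,t)w_{2,a_2(m)}(x,t)\,\d\mu(x)$, the entry is unchanged whenever $a_1(j)\neq a$, and when $a_1(j)=a$ the factor $(1-x/z)$ splits the integral into $g_{j,m}(t-[z^{-1}]_a)=g_{j,m}(t)-z^{-1}\int x^{k_1(j)+1+k_2(m)}w_{1,a}(x,t)w_{2,a_2(m)}(x,t)\,\d\mu(x)$.

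The crucial step is to recognise the second integral as another entry of the very same matrix. Since $a_1$ is unchanged and the $x$-power has been raised by one, I must solve $k_1(j')=k_1(j)+1$ with $a_1(j')=a$ for the row index $j'$ using the parametrisation \eqref{i}--\eqref{k}. Writing $j$ through $(q_1,a,r_1)$ with $k_1(j)=q_1 n_{1,a}+r_1$, incrementing $r_1$ keeps $(q_1,a)$ fixed and sends $j\mapsto j+1$ as long as $r_1<n_{1,a}-1$; when $r_1=n_{1,a}-1$ the slot wraps, so $q_1\mapsto q_1+1$ and $r_1\mapsto 0$, and a direct subtraction of the two instances of \eqref{i} yields $j'=j+(|\vec n_1|-n_{1,a})+1$. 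This is precisely the case split asserted in \eqref{eqrows}, so that $g_{j,m}(t-[z^{-1}]_a)=g_{j,m}(t)-\delta_{a_1(j),a}z^{-1}g_{j',m}(t)$, which is the claimed row relation $R^{(j)}_z=R^{(j)}-\delta_{a_1(j),a}z^{-1}R^{(j')}$. The dual statement \eqref{eqcolumns} is obtained verbatim by running the same computation on the columns: under $t\mapsto t+\overline{[z^{-1}]}_b$ only $w_{2,b}$ acquires the factor $(1-x/z)$ by \eqref{vertex dualweight}, the power $k_2(m)$ is raised by one, and the $\vec n_2$-analogue of the index count produces $m'$ with the stated two cases.

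Finally I would observe that each of $\tau^{(l)}_{-a}$, $\tau^{(l)}_{+a,-a'}$ and $\tau^{(l)}_{-b,-a'}$ (and their barred companions) is, up to a sign, a minor of the deformed moment matrix or of the bordered matrices $g^{[l+1]}_{+a}$, $\bar g^{[l+1]}_{+b}$ of \eqref{def_g+a}; every one of their rows (respectively columns) is literally a row (respectively column) of $g(t)$ --- including the substituted row $R^{(l_{+a})}$, for which $a_1(l_{+a})=a$ still holds by definition of $l_{+a}$ --- so the entrywise computation above applies without modification and propagates one row at a time. The only genuinely delicate point, and where the attention is needed, is the index bookkeeping that produces $j'$: one must verify that the wrap-around case $r_1(j)=n_{1,a}-1$ is matched correctly against the composition data, since an off-by-one there would corrupt the subsequent Miwa and $\tau$-function identities of Proposition \ref{taumopis}. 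Everything else reduces to multilinearity of the determinant and the elementary splitting of $(1-x/z)$.
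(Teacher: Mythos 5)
Your proposal is correct and follows the same route as the paper, whose proof of this lemma is just the one-line citation of \eqref{vertex weight} and \eqref{vertex dualweight}; you have simply written out the entrywise splitting of $(1-x/z)$ in $g_{j,m}$ and the index bookkeeping $k_1(j')=k_1(j)+1$ that the paper leaves implicit. The wrap-around computation $j'-j=|\vec n_1|-n_{1,a}+1$ checks out against \eqref{i}--\eqref{k}, so nothing is missing.
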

\begin{proof}
It follows directly from \eqref{vertex weight} and \eqref{vertex
dualweight}.
\end{proof}

Let us  recall the skew multi-linear character of  determinants and the consequent formulation in terms  of wedge
products of covectors. Observe that
\begin{lemma} \label{lemacovectors}
Given a set of covectors $\{r_1,\dots,r_n\}$ it can be  shown that
     \begin{align}\label{covectors}
       \bigwedge_{j=1}^n(zr_j-r_{j+1})=\sum_{j=1}^{n+1}(-1)^{n+1-j}z^{j-1} \;r_1\wedge r_2\wedge\dots\wedge \hat
       r_j\wedge\dots \wedge r_{n+1},
     \end{align}
     where the notation $ \hat r_j$ means that we have erased the covector $r_j$ in the wedge product
     $r_1\wedge\dots\wedge r_{n+1}$.
\end{lemma}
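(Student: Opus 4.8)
The plan is to prove Lemma~\ref{lemacovectors} as a purely multilinear-algebra identity, valid for any $n+1$ covectors $r_1,\dots,r_{n+1}$ (the displayed formula involves $r_{n+1}$, so the set is understood to be $\{r_1,\dots,r_{n+1}\}$). The starting point is that the exterior product is alternating and multilinear, so that expanding
\[
\bigwedge_{j=1}^n(zr_j-r_{j+1})
\]
by choosing, from each factor $j$, either the summand $zr_j$ or the summand $-r_{j+1}$, produces $2^n$ wedge monomials. Such a monomial vanishes unless its $n$ chosen indices are pairwise distinct, hence unless they form a set of the shape $\{1,\dots,n+1\}\setminus\{j_0\}$ for a unique omitted index $j_0\in\{1,\dots,n+1\}$.

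First I would show that for each prescribed omitted index $j_0$ there is exactly one surviving monomial. This is a matching argument on the ``staircase'': index $1$ is offered only by factor $1$ and index $n+1$ only by factor $n$, and propagating this constraint forces factor $j$ to contribute $r_j$ for $j<j_0$ and $r_{j+1}$ for $j\ge j_0$. Reading the contributions in the order of the factors $j=1,\dots,n$, the covectors emerge as $r_1,\dots,r_{j_0-1},r_{j_0+1},\dots,r_{n+1}$, i.e. already in increasing index order, so no reordering sign is introduced and the monomial is exactly $r_1\wedge\dots\wedge\hat r_{j_0}\wedge\dots\wedge r_{n+1}$. Its coefficient is the product of $z$ over the $j_0-1$ factors contributing $r_j$ and of $-1$ over the $n-j_0+1$ factors contributing $r_{j+1}$, namely $(-1)^{n+1-j_0}z^{j_0-1}$. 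Summing over $j_0$ yields the claimed right-hand side.

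Alternatively---and this is the cleaner write-up---I would argue by induction on $n$. The base case $n=1$ is the identity $zr_1-r_2=zr_1-r_2$. For the inductive step I split off the last factor,
\[
\bigwedge_{j=1}^n(zr_j-r_{j+1})=\Big(\bigwedge_{j=1}^{n-1}(zr_j-r_{j+1})\Big)\wedge(zr_n-r_{n+1}),
\]
apply the hypothesis (on the covectors $r_1,\dots,r_n$) to the bracket, and wedge each resulting term with $zr_n-r_{n+1}$. Every term of the bracket except the $k=n$ one still contains $r_n$, so wedging with $zr_n$ annihilates it; the surviving $zr_n$-contribution produces the top term $z^n\,r_1\wedge\dots\wedge r_n$, which is the $j_0=n+1$ term, while wedging the $k$-th bracket term with $-r_{n+1}$ reproduces precisely the $j_0=k$ term, the sign $(-1)^{n-k}\cdot(-1)=(-1)^{n+1-k}$ matching on the nose.

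The only delicate point is the sign bookkeeping: one must be sure that no hidden transposition sign is incurred, either from the order in which the covectors leave the factors (handled by the observation that they already appear in increasing order) or from the single adjacency produced by wedging with $-r_{n+1}$ in the inductive step (contributing exactly one factor of $-1$ since $r_{n+1}$ carries the largest index). Once this is verified the identity follows, and the lemma is then used, together with Lemma~\ref{lemarows}, to pass from the row/column Miwa-shift relations to the $\tau$-function representation of Proposition~\ref{taumopis}.
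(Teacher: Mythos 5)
Your proposal is correct, and your inductive argument is exactly the approach the paper takes (the paper's proof consists solely of the remark that the identity ``can be done directly by induction''); your write-up simply supplies the details the paper omits, and the sign bookkeeping in both your direct-expansion and inductive versions checks out. No gaps.
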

\begin{proof}
It can be done directly by induction.
\end{proof}

The proof of Proposition \ref{taumopis} relies on Lemma \ref{lemarows}, Lemma \ref{lemacovectors}, Corollary
\ref{cormops} and Proposition \ref{det-associated}. First let's focus on \eqref{taumops}; it is clear that
$z^{\nu_{1,a}(l)-1} \tau_{-a}^{(l)}(t-[z^{-1}]_a)$ expands in $z$ according to \eqref{eqrows} for $\tau ^{(l)}_{-a}$
and to \eqref{covectors}. Now $n=k_1(l_{-a})$ and the covectors  $r_j$ should be taken equal to those rows $R^{(j)}$
with $a_1(j)=a$. Observe that there are only $k_1( l_{-a})(=\nu_{1,a}(l)-1)$ rows which are non-trivially transformed.
In this form we get the identification of
     \eqref{detmops} with  \eqref{taumops}, where the terms corresponding to the wedge with one covector deleted
     corresponds to the minors
     $M_{j,l}^{[l+1]}$. Now, looking to \eqref{taumops+} and \eqref{taumops-} we expand again in $z$ and use the same
     technique based on \eqref{eqrows} for $\tau ^{(l)}_{+a,-a'}$ and $\tau ^{(l)}_{-b,-a}$ and \eqref{covectors}.
     These allow to link \eqref{detmops+} to \eqref{taumops+} and \eqref{detmops-} to \eqref{taumops-}.

To prove \eqref{taudualmops} we proceed similarly. Looking at \eqref{eqcolumns} for $\bar \tau^{(l)}_{-b}$ observe
that there are only $k_2(\bar l_{-b})(=\nu_{2,b}(l)-1)$ columns which are non-trivially transformed. Now, recalling
\eqref{dualmops.2} and using \eqref{covectors} but with $r_j$ being the columns $C^{(j)}$, such that $a_2(j)=b$, and
$n=k_2(\bar l_{-b})$, we get the desired result. Finally for $\eqref{taudualmops+}$ and $\eqref{taudualmops-}$ we
expand around $z$ to see the equivalence between \eqref{detdualmops+} and \eqref{taudualmops+} and the equivalence
between \eqref{detdualmops-} and \eqref{taudualmops-}.

\paragraph{Proof of Proposition \ref{proposition: tau cauchy}}
We need the following two lemmas:
\begin{lemma}
  Let $R^{(j)}$ be the $j$-th row of $g_{+a}^{[l+1]}$ and $R^{(j)}_z$ the $j$-th row of
  $g_{+a}^{[l+1]}(t+[z^{-1}]_a)$,   we get
\begin{equation}
    R^{(j)}_z=R^{(j)}+\delta_{a_1(j),a}\sum_{j'=1}^\infty z^{-k_1(j')}R^{(j+j')}\delta_{a_1(j+j'),a}.
\end{equation}
Let  $C^{(j)}$ be the $j$-th column of $\bar g_{+b}^{[l+1]}$ and
 $C^{(j)}_z$ the $j$-th column of $\bar g_{+b}^{[l+1]}(t-\overline{[z^{-1}]}_b)$, then
 \eqref{vertex dualweight} gives
\begin{equation}
    C^{(j)}_z=C^{(j)}+\delta_{a_2(j),b}\sum_{j'=1}^\infty z^{-k_2(j')}C^{(j+j')}\delta_{a_2(j+j'),b}.
\end{equation}
\end{lemma}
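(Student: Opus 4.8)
The content of this lemma is nothing but the geometric-series expansion already packaged in the weight transformations \eqref{vertex weight} and \eqref{vertex dualweight}, so the plan is to write each entry of the deformed matrix through \eqref{explicit g2}, insert the transformed weight, expand the factor $(1-x/z)^{-1}=\sum_{m\geq0}z^{-m}x^{m}$ under the integral sign, and recognise each resulting integral as an entry of the undeformed moment matrix whose block-internal degree has been advanced by $m$. No analytic input beyond the already-established transformation of the weights is needed.

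For the row identity I would start from $g_{i,j}=\int x^{k_1(i)+k_2(j)}w_{1,a_1(i)}(x)w_{2,a_2(j)}(x)\d \mu(x)$ and note that, by \eqref{vertex weight}, the shift $t\mapsto t+[z^{-1}]_a$ fixes every $w_{2,\cdot}$ and every $w_{1,a'}$ with $a'\neq a$, and multiplies $w_{1,a}$ by $(1-x/z)^{-1}$. Hence the $j$-th row is altered only when $a_1(j)=a$ — which is exactly the role of the prefactor $\delta_{a_1(j),a}$ — and in that case each of its entries acquires the factor $(1-x/z)^{-1}=\sum_{m\geq0}z^{-m}x^{m}$ inside the integral. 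The term $m=0$ returns $R^{(j)}$, while the term $m\geq1$ gives $z^{-m}\int x^{k_1(j)+m+k_2(\cdot)}w_{1,a}w_{2,\cdot}\d\mu$, which by \eqref{explicit g2} is the row of $g$ carrying block label $a$ and internal degree $k_1(j)+m$, i.e. the row $j+j'$ lying $m$ steps further along the $a$-block; this is why the summation runs with the projector $\delta_{a_1(j+j'),a}$. Collecting the series reproduces the displayed expression.

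The column identity is obtained verbatim after exchanging the two families of weights: by \eqref{vertex dualweight} the shift $t\mapsto t-\overline{[z^{-1}]}_b$ leaves $w_{1,\cdot}$ untouched and multiplies $w_{2,b}$ by $(1-x/z)^{-1}$, so only the columns with $a_2(j)=b$ are affected and the same geometric expansion advances such a column along the $b$-block, yielding the second formula with $k_2$ in place of $k_1$.

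The one point that must be handled with care — and the only genuine obstacle — is the index bookkeeping: one has to translate ``advance the internal degree by $m$ within the block $a$'' into the global relabelling $j\mapsto j+j'$ with $a_1(j+j')=a$, and simultaneously verify that the power of $z$ attached to each surviving row is the one recorded in the statement. This is entirely dictated by the explicit dictionary \eqref{i}, \eqref{k}, \eqref{iak} between the global index and the triple $(q,a,r)$, together with the periodicity \eqref{formulitas} of the degree sequence; carrying it out is what shows that the successive $a$-block rows beyond $R^{(j)}$ (resp. $b$-block columns beyond $C^{(j)}$) enter with precisely the stated weights.
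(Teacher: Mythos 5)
Your proof is correct and takes exactly the route the paper does: the paper's entire argument consists of inserting the geometric expansion $(1-x/z)^{-1}=\sum_{m\geq 0}x^{m}z^{-m}$ into the Miwa-shifted weight of \eqref{vertex weight} (resp. \eqref{vertex dualweight}) and reading off, via \eqref{explicit g2}, each term as a row (resp. column) of $g$ lying further along the $a$-block (resp. $b$-block). You are in fact more explicit than the paper about the one delicate point you rightly single out --- converting ``internal degree advanced by $m$'' into the global relabelling $j\mapsto j+j'$ with $a_1(j+j')=a$ through the dictionary \eqref{i}, \eqref{k}, \eqref{iak} --- which the published proof leaves entirely to the reader.
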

\begin{proof}
For the first equality insert the expansion
\begin{align*}
  \Big(1-\frac{x}{z}\Big)^{-1}=\sum_{k=0}^\infty\frac{x^k}{z^k}
\end{align*}
into \eqref{vertex dualweight}. The other equation is proven similarly.
\end{proof}

\begin{lemma}
The following  identity
\begin{align}\label{covectors.2}
  \bigwedge_{j=1}^n
  \Big(\sum_{i=0}^{\infty}r_{j+i}z^{-i}\Big)=r_1\wedge\dots\wedge r_{n-1}\wedge \Big(\sum_{i=0}^\infty
  r_{n+i}z^{-i}\Big)
\end{align}
holds.
\end{lemma}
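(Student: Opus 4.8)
The plan is to exploit the alternating and multilinear nature of the wedge product, reducing the identity to a sequence of elementary ``column'' operations of the kind that leave a determinant unchanged. Writing $R_j:=\sum_{i=0}^{\infty}r_{j+i}z^{-i}$ for the $j$-th factor, so that the left-hand side of \eqref{covectors.2} is $\bigwedge_{j=1}^{n}R_j$, the first thing I would record is the telescoping recursion
\begin{align*}
R_j-z^{-1}R_{j+1}=r_j,\qquad j\geq 1,
\end{align*}
which is immediate upon shifting the summation index by one in $z^{-1}R_{j+1}$. Equivalently $R_j=r_j+z^{-1}R_{j+1}$, so each $R_j$ differs from $r_j$ by a (formal-series) scalar multiple of the \emph{next} factor $R_{j+1}$.

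Next I would use that $\bigwedge$ is multilinear and alternating over the commutative ring of formal power series in $z^{-1}$, and therefore invariant under adding a scalar multiple of one factor to another. Processing the factors in increasing order $j=1,2,\dots,n-1$, at the $j$-th step I replace $R_j$ by $R_j-z^{-1}R_{j+1}=r_j$; since this subtracts a multiple of $R_{j+1}$, which has not yet been altered (we move strictly from left to right and only factors of smaller index have been changed), each step is a legitimate elementary operation and does not change the value of the wedge. After $n-1$ such steps the first $n-1$ factors have become $r_1,\dots,r_{n-1}$, while the last factor $R_n$ is left untouched, yielding exactly
\begin{align*}
\bigwedge_{j=1}^{n}R_j=r_1\wedge\cdots\wedge r_{n-1}\wedge R_n,
\end{align*}
which is \eqref{covectors.2}. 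A clean alternative is a short induction on $n$: the base $n=1$ is trivial, and the inductive step replaces $R_1$ by $R_1-z^{-1}R_2=r_1$ and then applies the hypothesis to the remaining wedge $R_2\wedge\cdots\wedge R_n$.

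I do not expect a genuine obstacle here; the only point requiring care is the bookkeeping of the order of operations, so that at each stage the factor being subtracted is still the full series $R_{j+1}$ rather than an already-reduced $r_{j+1}$. Working strictly left to right, as above, removes this worry. I would also note that all manipulations are purely formal in $z^{-1}$, so no convergence is invoked: the identity holds as an equality of covector-valued formal series, and can be specialized to whatever analytic domain is needed when it is applied in the proof of Proposition \ref{proposition: tau cauchy}.
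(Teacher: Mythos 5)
Your proof is correct and matches the paper's approach: the paper's entire proof is ``use induction in $n$,'' and your telescoping relation $R_j - z^{-1}R_{j+1} = r_j$ together with the alternating property is exactly the content of that induction, just spelled out as a left-to-right sequence of elementary operations. The remark that everything is formal in $z^{-1}$ is a worthwhile addition the paper leaves implicit.
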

\begin{proof}
Use induction in $n$.
\end{proof}
Finally Proposition \ref{proposition: tau cauchy} is proven using  \eqref{ctdet-dual} and  \eqref{ctdet}.

\section{Discrete flows associated with binary Darboux transformations}\label{III}

 When the supports of the measures are not bounded from below, (in which case  the new ``weights" \eqref{d.evolved} do
 not have in general a definite sign and therefore
should not be considered as such), there is an alternative form of constructing discrete flows which preserve the
positiveness/negativeness of the measures.  The
construction is based in the previous one, but now the shift is the composition of two consecutive shifts associated
with the pair $\lambda_a(n)$ and $\lambda_a(n+1)$, being complex numbers conjugate to each other; i.e., we consider
 \begin{definition}
We define a deformed moment matrix
     \begin{align}
   g(s)=D'_0(s)g(\bar D'_0(s))^{-1}.
 \end{align}
 with
 \begin{align}
 \begin{aligned}
   D'_0&:=\sum_{a=1}^{p_1}D'_{0,a}, \\ D'_{0,a}&:=\begin{cases}
     \prod_{n=1}^{s_a}\big(|\lambda_a(n)|^2\Pi_{1,a}-
   2\operatorname{Re}(\lambda_a(n))\Lambda_{1,a}+\Lambda^{2}_{1,a}\big), &s_a>0,\\
     \Pi_{1,a},& s_a=0,\\
       \prod_{n=1}^{|s_a|}\big(|\lambda_a(-n)|^2\Pi_{1,a}-
   2\operatorname{Re}(\lambda_a(-n))\Lambda_{1,a}+\Lambda^{2}_{1,a}\big)^{-1},& s_a<0,
   \end{cases}
   \end{aligned}
   \end{align}
   \begin{align}
   \begin{aligned}
   (\bar   D_0')^{-1}&:=   \sum_{b=1}^{p_2} \big((\bar   D_0(s)')^{-1}\big)_b, \\ \big((\bar   D_0(s)')^{-1}\big)_b
  & :=\begin{cases}
   \prod_{n=1}^{\bar s_b}(|\bar\lambda_b(n)|^2\Pi_{2,b}-2\operatorname{Re}(\bar\lambda_b(n))\Lambda_{2,b}^\top)
   +(\Lambda_{2,b}^\top)^{2} )\big) &\bar s_b>0,\\
     \Pi_{2,b},&\bar s_b=0,\\
  (\prod_{n=1}^{\bar s_b}\big(|\bar\lambda_b(-n)|^2\Pi_{2,b}-2\operatorname{Re}(\bar\lambda_b(-n))\Lambda_{2,b}^\top)
   +(\Lambda_{2,b}^\top)^{2} \big)^{-1},&\bar s_b<0.
   \end{cases}
\end{aligned}
 \end{align}
 \end{definition}
\begin{pro}
The previously defined deformed moment matrix corresponds to a moment matrix with the following  positive/negative evolved
weights
\begin{align}\label{d.evolved2}\begin{aligned}
  w_{1,a}(s,x)&=\Ds_{a}'(x,s_a)w_{1,a}(x),& \Ds'_a&:=\begin{cases}
     \prod_{n=1}^{s_a}|x-\lambda_a(n)|^2, &s_a>0,\\
     1,& s_a=0,\\
       \prod_{n=1}^{|s_a|}|x-\lambda_a(-n)|^{-2},& s_a<0,
   \end{cases}\\
   w_{2,b}(s,x)&=\bar \Ds'_b(x,\bar s_b)^{-1}w_{2,b}(x),& (\bar \Ds'_b)^{-1}&:=\begin{cases}
     \prod_{n=1}^{\bar s_b}|x-\bar \lambda_b(n)|^2, &\bar s_b>0,\\
 1,&\bar s_b=0,\\
       \prod_{n=1}^{|\bar s_b|}|x-\bar \lambda_b(-n)|^{-2},&\bar s_b<0.
   \end{cases}\end{aligned}
\end{align}
\end{pro}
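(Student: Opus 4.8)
The plan is to mimic the proof of the continuous deformation Theorem and of the corresponding single-step discrete Proposition, the only genuinely new ingredient being the recognition of the quadratic operator factors as a \emph{binary} Darboux step. The starting point is the factored form $g=\int \xi_1(x)\xi_2(x)^\top\d\mu(x)$ together with the eigenvalue property $\Lambda_{\ell,a}\chi_{\ell,a'}=\delta_{a,a'}z\chi_{\ell,a}$ and the projection identity $\Pi_{\ell,a}\chi_{\ell,a'}=\delta_{a,a'}\chi_{\ell,a}$, from which $\Pi_{\ell,a}$ and $\Lambda_{\ell,a}$ commute and $\Lambda_{\ell,a}^{2}\chi_{\ell,a'}=\delta_{a,a'}z^{2}\chi_{\ell,a}$.

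First I would compute the action of a single quadratic factor on the monomial strings. For $s_a>0$ each factor acts as
\begin{align*}
\big(|\lambda_a(n)|^2\Pi_{1,a}-2\operatorname{Re}(\lambda_a(n))\Lambda_{1,a}+\Lambda_{1,a}^{2}\big)\chi_{1,a'}
&=\delta_{a,a'}\big(|\lambda_a(n)|^2-2\operatorname{Re}(\lambda_a(n))z+z^2\big)\chi_{1,a}\\
&=\delta_{a,a'}\,|z-\lambda_a(n)|^2\,\chi_{1,a},
\end{align*}
using $|z-\lambda|^2=z^2-2\operatorname{Re}(\lambda)z+|\lambda|^2$. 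Iterating over $n=1,\dots,s_a$ gives $D'_{0,a}\chi_{1,a'}=\delta_{a,a'}\Ds'_a(z,s_a)\chi_{1,a}$, hence $D'_0\chi_{1,a'}=\Ds'_{a'}(z,s_{a'})\chi_{1,a'}$. For $s_a<0$ I would note that, since each factor $M$ is invertible in the relevant group and satisfies $M\chi_{1,a}=|z-\lambda_a(-n)|^2\chi_{1,a}$, its inverse satisfies $M^{-1}\chi_{1,a}=|z-\lambda_a(-n)|^{-2}\chi_{1,a}$ away from the zeros, so again $D'_0\chi_{1,a'}=\Ds'_{a'}\chi_{1,a'}$ with the reciprocal product; the case $s_a=0$ is trivial by $\Pi_{1,a}\chi_{1,a'}=\delta_{a,a'}\chi_{1,a'}$. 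Summing against the weights yields
\begin{align*}
D'_0\xi_1=\sum_{a'}\Ds'_{a'}(z,s_{a'})\chi_{1,a'}w_{1,a'}=\sum_{a'}\chi_{1,a'}\,w_{1,a'}(s,\cdot),
\end{align*}
which is precisely the weighted string built from the deformed weights $w_{1,a'}(s,x)=\Ds'_{a'}(x,s_{a'})w_{1,a'}(x)$.

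For the dual side I would handle the transpose carefully: writing $g(\bar D'_0)^{-1}=\int(D'_0\xi_1)\big((\bar D'_0)^{-\top}\xi_2\big)^\top\d\mu$, the matrix $(\bar D'_0)^{-\top}$ has blocks built from $\Lambda_{2,b}$ and $\Pi_{2,b}$ (the transposes of the $\Lambda_{2,b}^\top,\Pi_{2,b}$ appearing in $(\bar D'_0)^{-1}$, the factors within a fixed $b$ commuting). The same eigenvalue computation with $\Lambda_{2,b}\chi_{2,b'}=\delta_{b,b'}z\chi_{2,b}$ then gives $(\bar D'_0)^{-\top}\chi_{2,b'}=(\bar\Ds'_{b'})^{-1}(z,\bar s_{b'})\chi_{2,b'}$, so that $(\bar D'_0)^{-\top}\xi_2=\sum_{b'}\chi_{2,b'}w_{2,b'}(s,\cdot)$ is the deformed dual weighted string. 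Combining the two computations,
\begin{align*}
g(s)=D'_0\,g\,(\bar D'_0)^{-1}=\int \xi_1(s,x)\,\xi_2(s,x)^\top\d\mu(x),
\end{align*}
which is the moment matrix associated with the evolved weights \eqref{d.evolved2}; positivity/negativeness is immediate since $|x-\lambda|^2\geq 0$ preserves the sign of each weight.

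The routine part is the bookkeeping with $\Pi$, $\Lambda$ and their transposes; the one point deserving care — and the main potential obstacle — is the treatment of the inverse operators for negative flow indices: one must check that the factors $|\lambda|^2\Pi_{1,a}-2\operatorname{Re}(\lambda)\Lambda_{1,a}+\Lambda_{1,a}^2$ are genuinely invertible within the group of semi-infinite matrices where the factorization lives, and that their inverses act on $\chi_{1,a}$ as multiplication by $|z-\lambda|^{-2}$ (equivalently, that the series defining these inverses are well defined, at least formally), together with the analogous statement on the dual side. Everything else follows verbatim from the single-step discrete Proposition upon replacing the linear Darboux factor $x-\lambda$ by the quadratic $|x-\lambda|^2=(x-\lambda)(x-\bar\lambda)$.
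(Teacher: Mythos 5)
Your argument is correct and is essentially the route the paper takes (the paper actually omits the proof of this proposition, leaving it to the pattern of the continuous-deformation theorem and the single-step discrete case): the key identity $\bigl(|\lambda|^2\Pi_{1,a}-2\operatorname{Re}(\lambda)\Lambda_{1,a}+\Lambda_{1,a}^{2}\bigr)\chi_{1,a'}=\delta_{a,a'}(x-\lambda)(x-\bar\lambda)\chi_{1,a}$ applied to the Grammian form of $g$ is exactly the intended mechanism, and your transpose bookkeeping on the dual side is right. Your flagged concern about the inverses for negative flow indices is legitimate but harmless: restricted to $\Pi_{1,a}\R^\infty$ the factor is upper triangular with constant diagonal entry $|\lambda_a(-n)|^2$, hence invertible in $G_+$ whenever $\lambda_a(-n)\neq 0$, which is the same (unstated) convention already used in the single-step discrete flows.
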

Proceeding as in the previous case
\begin{definition}
We introduce
  \begin{align}
  \begin{aligned}
q'_a&:=
  \I-\Pi_{1,a}(\I-|\lambda_a(s_a+1)|^2)-2\operatorname{Re}(\lambda_a(s_a+1))
  \Lambda_{1,a}+\Lambda_{1,a}^{2},\\
\bar q'_b&:=
\I  -\Pi_{2,b}(\I-|\bar \lambda_b(\bar s_b+1)|^2)-2\operatorname{Re}(\bar\lambda_b(\bar
s_b+1))(\Lambda_{2,b}^\top)+(\Lambda_{2,b}^\top)^{2},
\end{aligned}
\end{align}
and
\begin{align}
\label{delta'}
\begin{aligned}
  \delta_a'&:=\I-C_{a}(\I-|\lambda_a(s_a+1)|^2)-2\operatorname{Re}(\lambda_a(s_a+1))
 L_{a}+L_a^2,\\
    \bar\delta_b'&:=\I-\bar C_{b}(\I-|\bar \lambda_b(\bar s_b+1)|^2)-2\operatorname{Re}(\bar\lambda_b(\bar s_b+1))\bar
    L_b+\bar L_b ^2.
    \end{aligned}
\end{align}
\end{definition}

 The wave and adjoint wave functions  now have the form
 \begin{align}
    \Psi_a^{(k)}(z,s)&=A^{(k)}_a(z,s)\Ds'_a(z,s_a),&
    ( \bar \Psi_b^*)^{(k)}(z,s)&=\bar A_b^{(k)}(z,s)\bar\Ds'_b(z,\bar s_b)^{-1},
\end{align}
and the expressions \eqref{linear.form.baker.1}-\eqref{cauchy.baker} still hold.

If we introduce $\omega_a'$ and $\bar\omega_b'$ as in \eqref{omega} but replacing $\delta$ by $\delta'$,  the
equations \eqref{d.wave}-\eqref{d.ZS1} hold
true by replacement of $\omega$ by $\omega'$. Now, the form $\omega'$ differs from \eqref{form.omega} as now we have
   \begin{align}\label{form.omega'}\begin{aligned}
                     \omega_a'&=\omega'_{a,0}\Lambda^{2(|\vec n_1|-n_{1,a}+1)}+\dots+\omega'_{a,2(|\vec
                     n_1|-n_{1,a}+1)},\\
             { \bar \omega_b}'&=\bar\omega'_{b,0}(\Lambda^\top)^{2(|\vec n_2|-n_{2,b}+1)}+\dots+\bar\omega'_{b,2(|\vec
             n_2|-n_{2,b}+1)}.
           \end{aligned}
               \end{align}

With the definition of
\begin{align}
\begin{aligned}
    \gamma_{a,a'}'(s,x)&:=  (1-\delta_{a,a'}(1-|x-\lambda_a(s_a+1)|^2),\\
\gamma_{b,b'}'(s,x)&:=  (1-\delta_{b,b'}(1-|x-\bar\lambda_b(\bar s_b+1)|^2),
\end{aligned}
\end{align}
we have that
\begin{pro}
 The present setting  \eqref{multi.variante} and \eqref{dual.multi.variante} are replaced by
\begin{align}
&\left\{\begin{aligned}
  (T_{a'} A_a^{(k)})\gamma'_{a,a'}&=\omega_{a',0}'A_a^{(k+2(|\vec n_1|-n_{1,a}+1))}+\dots+
  \omega_{a',2(|\vec n_1|-n_{1,a}+1)}(k) A_a^{(k)},\\
  \bar T_{b'}A_a^{(k)}&=\bar\omega_{b,0}'(k)A_a^{(k-2(|\vec n_2|-n_{2,b'}+1))}+\dots+\bar\omega_{b,2(|\vec
  n_2|-n_{2,b'}+1)}' A_a^{(k)},
\end{aligned}\right.\\
&\left\{\begin{aligned}
 \rho_{a',0}'(T_{a'} \bar A_b^{(k-2(|\vec n_1|-n_{1,a'}+1))})+\dots+ \rho_{a',2(|\vec n_1|-n_{1,a'}+1)}'(k) (T_{a'} \bar
 A_b^{(k)})&= \bar A_b^{(k)},\\
\Big( \bar\rho_{b',0}' (k)(\bar T_{b'} \bar A_b^{(k+2(|\vec n_2|-n_{2,b'}+1)})+
 \dots+\bar\rho'_{b',2(|\vec n_2|-n_{2,b'}+1)} (\bar T_{b'} \bar A_b^{(k)})\Big)\bar\gamma_{b,b'}'&= \bar A_b^{(k)}.
\end{aligned}\right.
\end{align}
\end{pro}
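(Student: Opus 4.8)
The plan is to replay, line for line, the proof of the Proposition establishing \eqref{multi.variante}--\eqref{dual.multi.variante}, feeding in the binary data in place of the linear one: the Darboux resolvents $\delta_a,\bar\delta_b$ of \eqref{delta} are replaced by the quadratic resolvents $\delta_a',\bar\delta_b'$ of \eqref{delta'}, the dressing factors $\omega_a,\bar\omega_b$ by $\omega_a',\bar\omega_b'$, and the band structure \eqref{form.omega} by the doubled-band structure \eqref{form.omega'}. Since the text already asserts that \eqref{d.wave}--\eqref{d.ZS1} survive this replacement verbatim, I would take as starting point the (primed) discrete auxiliary systems $T_{a'}W=\omega_{a'}'W$ and $\bar T_{b'}W=\bar\omega_{b'}'W$, together with their adjoint counterparts $(\omega_{a'}')^\top T_{a'}((\bar W^{-1})^\top)=(\bar W^{-1})^\top$ and $(\bar\omega_{b'}')^\top \bar T_{b'}((\bar W^{-1})^\top)=(\bar W^{-1})^\top$, obtained exactly as in the linear case by inverting and transposing the $\bar W$-equations.

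For the first two identities I would apply $T_{a'}$ and $\bar T_{b'}$ to $\Psi_a=W\chi_{1,a}$ (the $\chi_{1,a}$ factor being $s$-independent), getting $T_{a'}\Psi_a=\omega_{a'}'\Psi_a$ and $\bar T_{b'}\Psi_a=\bar\omega_{b'}'\Psi_a$. Writing these componentwise and substituting $\Psi_a^{(k)}(z,s)=A_a^{(k)}(z,s)\,\Ds_a'(z,s_a)$, the decisive point is the action of the shifts on the scalar weight factor: since $\Ds_a'$ depends only on $s_a$, one has $T_{a'}\Ds_a'=\Ds_a'\,|z-\lambda_a(s_a+1)|^{2\delta_{a,a'}}$, i.e. multiplication by $\gamma'_{a,a'}$, whereas $\bar T_{b'}\Ds_a'=\Ds_a'$. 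Dividing through by $\Ds_a'$ then leaves $\gamma'_{a,a'}$ on the left of the $T_{a'}$ relation and no extra factor in the $\bar T_{b'}$ relation. Reading off the nonzero entries of $\omega_{a'}'$ (upper band of width $2(|\vec n_1|-n_{1,a'}+1)$ carried by powers of $\Lambda$) and of $\bar\omega_{b'}'$ (lower band of width $2(|\vec n_2|-n_{2,b'}+1)$ carried by powers of $\Lambda^\top$) from \eqref{form.omega'} produces the two stated recursions, with coefficients $\omega_{a',j}'(k)$ and $\bar\omega_{b',j}'(k)$.

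For the dual relations I would start instead from the adjoint wave vector $\bar\Psi_b^*=(\bar W^{-1})^\top\chi_{2,b}$, whose components are $(\bar\Psi_b^*)^{(k)}=\bar A_b^{(k)}(z,s)\,\bar\Ds_b'(z,\bar s_b)^{-1}$. The two adjoint systems give $(\omega_{a'}')^\top T_{a'}(\bar\Psi_b^*)=\bar\Psi_b^*$ and $(\bar\omega_{b'}')^\top \bar T_{b'}(\bar\Psi_b^*)=\bar\Psi_b^*$. Because $\bar\Ds_b'^{-1}$ depends only on $\bar s_b$, one has $T_{a'}(\bar\Ds_b'^{-1})=\bar\Ds_b'^{-1}$, so clearing $\bar\Ds_b'^{-1}$ in the first relation yields precisely the third identity, the coefficients $\rho_{a',j}'$ being the diagonal entries of the transposed band of $(\omega_{a'}')^\top$ in \eqref{form.omega'}. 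On the other hand $\bar T_{b'}(\bar\Ds_b'^{-1})=\bar\Ds_b'^{-1}\,\bar\gamma'_{b,b'}$, since the shift appends one quadratic factor $|z-\bar\lambda_b(\bar s_b+1)|^{2\delta_{b,b'}}$ to the product defining $\bar\Ds_b'^{-1}$; clearing $\bar\Ds_b'^{-1}$ in the second relation therefore leaves $\bar\gamma'_{b,b'}$ multiplying the whole bracket, which is exactly the fourth identity, with coefficients $\bar\rho_{b',j}'$.

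The only genuinely new ingredient relative to the linear case is the doubled band \eqref{form.omega'}, so the main obstacle is to verify it: one must check that the $LU$ factorization $\delta_a'=\delta_{a,-}'^{-1}\delta_{a,+}'$ of the quadratic resolvent gives $\omega_a'=\delta_{a,+}'$ supported on exactly $2(|\vec n_1|-n_{1,a}+1)+1$ diagonals (and the analogous statement for $\bar\omega_b'$). This is the binary analog of Lemma \ref{lemon}, and I expect it to follow by the same degree/support bookkeeping applied now to $\delta_a'$: being quadratic in $L_a$ rather than linear, it shifts the staircase band by twice the amount, which accounts for the factor $2$ and for the replacement of the single-factor weight shifts $\gamma_{a,a'}$ by the modulus-squared factors $\gamma'_{a,a'}$. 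Once this band estimate is in hand, the remainder of the argument is the componentwise bookkeeping described above, formally identical to the linear-Darboux proof.
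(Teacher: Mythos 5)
Your proposal is correct and follows exactly the route the paper intends: the paper gives no explicit proof of this Proposition, deferring entirely to the linear-Darboux argument with $\delta,\omega,\Ds,\gamma$ replaced by their primed (quadratic) counterparts, which is precisely what you carry out, including the correct identification of the shift action $T_{a'}\Ds_a'=\gamma'_{a,a'}\Ds_a'$ and $\bar T_{b'}(\bar\Ds_b'^{-1})=\bar\gamma'_{b,b'}\bar\Ds_b'^{-1}$ and the doubled band width of $\omega_{a'}'$, $\bar\omega_{b'}'$ from \eqref{form.omega'}. Your only added caution, that the binary analogue of Lemma \ref{lemon} must be checked, is well placed (the paper simply asserts \eqref{form.omega'}), and your degree-doubling argument for it is the right one.
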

\end{appendices}


\begin{thebibliography}{99}

\bibitem{adler}M. Adler and P. van Moerbeke, \emph{Group factorization, moment matrices and Toda lattices}, Int. Math.
    Res. Notices \textbf{12} (1997) 556-572.

\bibitem{adler-van moerbeke} M. Adler and P. van Moerbeke, \emph{Generalized orthogonal polynomials, discrete KP and
    Riemann--Hilbert  problems}, Commun. Math. Phys. \textbf{207} (1999) 589-620.

\bibitem{adler-vanmoerbeke 0} M. Adler and P. van Moerbeke, \emph{Vertex operator solutions to the discrete KP
    hierarchy}, Commun. Math. Phys. \textbf{203} (1999) 185-210.

 \bibitem{adler-van moerbeke 1} M. Adler and P. van Moerbeke, \emph{The spectrum of coupled random matrices}, Ann.
     Math. \textbf{149} (1999) 921-976.

\bibitem{adler-van moerbeke 2} M. Adler and P. van Moerbeke, \emph{Darboux transforms on band matrices, weights and
associated polynomials}, Int. Math. Res. Not. \textbf{18} (2001) 935-984.

 \bibitem{adler-vanmoerbeke 5} M. Adler, P. van Moerbeke and P. Vanhaecke, \emph{Moment matrices and multi-component
     KP, with applications to random matrix theory}, Commun. Math. Phys. \textbf{286} (2009) 1-38.

\bibitem{cum} C. \'{A}lvarez-Fern\'{a}ndez, U. Fidalgo and M. Ma\~{n}as,
\emph{The multicomponent 2D Toda hierarchy: generalized matrix orthogonal polynomials, multiple
 orthogonal polynomials and Riemann--Hilbert problems}.
Inv. Prob. \textbf{26} (2010) 055009   (17 pp).

\bibitem{Ap} R. Apery. \emph{Irrationalite de $\zeta(2)$ et $\zeta(3)$}, Ast\`{e}risque \textbf{61} (1979) 11-13.

\bibitem{bergvelt} M. J. Bergvelt and A. P. E. ten Kroode, \emph{Partitions, vertex operators constructions and
    multi-component KP equations}, Pacific J. Math. \textbf{171} (1995) 23-88.

\bibitem{Be} F. Beukers, \emph{Pad\'{e} approximation in number theory,} Lec. Not. Math. \textbf{888}, Springer
    Verlag, Berlin, 1981, 90-99.

\bibitem{BleKui} P.M. Bleher and A.B.J. Kuijlaars, \emph{ Random matrices with external source and multiple orthogonal
    polynomials}, Int. Math. Res. Not. \textbf{2004} (2004), 109-129.

\bibitem{carlet} G. Carlet, \emph{The extended bigraded Toda hierarchy} J. Phys. A: Math.  Gen. \textbf{39} (2006)
    9411-9435.

\bibitem{Co} J. Coates, \emph{On the algebraic approximation of functions}, I, II, III. Indag. Math. \textbf{28}
    (1966) 421-461.

\bibitem{daems-kuijlaars} E. Daems and A. B. J. Kuijlaars,  \emph{Multiple orthogonal polynomials of mixed type and
    non-intersecting Brownian motions}, J. of Approx. Theory \textbf{146} (2007) 91-114.

\bibitem{daems-kuijlaars0} E. Daems and A. B. J. Kuijlaars,  \emph{A Christoffel--Darboux formula for multiple
    orthogonal polynomials}, J. of Approx. Theory \textbf{130} (2004) 188-200.

    \bibitem{date1}  E. Date, M. Jimbo, M. Kashiwara,  and T. Miwa, \emph{Operator approach to the
        Kadomtsev-Petviashvili equation. Transformation groups for soliton equations. III}  J. Phys. Soc. Jpn.
        \textbf{50},  (1981) 3806-3812.

\bibitem{date2}   E. Date, M. Jimbo, M. Kashiwara,  and T. Miwa, \emph{Transformation groups for soliton equations.
    Euclidean Lie algebras and reduction of the KP hierarchy},  Publ. Res. Inst. Math. Sci. \textbf{18} (1982)
    1077-1110.

\bibitem{date3}   E. Date, M. Jimbo, M. Kashiwara,  and T. Miwa, \emph{Transformation groups for soliton equations} in
    \emph{Nonlinear Integrable Systems-Classical Theory and Quantum Theory}  M. Jimbo and T. Miwa (eds.) World
    Scientific, Singapore, 1983, pp. 39-120.

    \bibitem{felipe} R. Felipe  F. Ongay,  \emph{Algebraic aspects of the discrete KP hierarchy}, Linear Algebra App.
        \textbf{338} (2001) 1-17.


\bibitem{LF2} U. Fidalgo Prieto and G. L\'{o}pez Lagomasino, \emph{Nikishin systems are perfect}, \texttt{arXiv:1001.0554}
    (2010).

\bibitem{fokas} A. S. Fokas, A. R. Its and A. V. Kitaev, \emph{The isomonodromy approach to matrix models in 2D quatum
    gravity}, Commun. Math. Phys. (1992) 395-430.

\bibitem{He} Ch. Hermite, \emph{Sur la fonction exponentielle}, C. R. Acad. Sci. Paris 77 (1873), 18-24, 74-79,
    226-233, 285-293; reprinted in his Oeuvres, Tome III, Gauthier-Villars, Paris, 1912, 150-181.

\bibitem{Ja} H. Jager, \emph{A simultaneous generalization of the Pad\'{e} table,  I-VI}, Indag. Math. \textbf{26} (1964),
    193-249.

\bibitem{kac}    V. G. Kac and J. W. van de Leur, \emph{The n-component KP hierarchy and representation theory}, J.
    Math. Phys. \textbf{44} (2003) 3245-3293.

\bibitem{arno} A. B. J. Kuijlaars,   \emph{ Multiple orthogonal polynomial ensembles}, \texttt{arXiv:0902.1058v1 [math.PR] } (2009).



\bibitem{macdonald}  I. G. Macdonald,  \emph{Symmetric Functions and Hall Polynomials}, Clarendon Press, Oxford, 1995.


\bibitem{Ma} K. Mahler, \emph{Perfect systems}, Compos. Math. \textbf{19} (1968), 95-166.

\bibitem{manas-martinez-alvarez} M. Ma\~{n}as, L. Mart\'{\i}nez Alonso and C. \'{A}lvarez-Fern\'{a}ndez, \emph{The multicomponent 2D
    Toda hierarchy: discrete flows and string equations}, Inv. Prob. \textbf{25} (2009) 065007 (31 pp).

\bibitem{manas-martinez} M. Ma\~{n}as and L. Mart\'{\i}nez Alonso, \emph{The multicomponent 2D Toda hierarchy: dispersionless
    limit}, Inv. Prob. \textbf{25}  (2009) 115020 (22 pp).


\bibitem{mulase} M. Mulase, \emph{Complete integrability of the Kadomtsev--Petviashvili equation}, Adv. Math.
    \textbf{54} (1984)
 57-66.

\bibitem{Nik}
 E. M. Nikishin,  \emph{On simultaneous Pad\'{e} approximants} Matem. Sb. {\bf 113} (1980), 499--519 (Russian); English
 translation in Math. USSR Sb.  {\bf 41} (1982), 409-425.

\bibitem{orlov} A. Yu. Orlov and  D. M. Scherbin, \emph{Fermionic representation for basic hypergeometric functions
    related to Schur polynomials}, \texttt{arXiv:nlin/0001001v4 [nlin.SI]}.

\bibitem{sato}  M. Sato, \emph{Soliton equations as dynamical systems on infinite dimensional Grassmann manifolds},
    Res. Inst. Math. Sci. Kokyuroku \textbf{439}, (1981) 30-46 .

\bibitem{ShTm}  J. A.  Shohat and J.D. Tamarkin, \emph{The problem of moments},   American Mathematical Society (1943).

\bibitem{simon} B. Simon, \emph{The Christoffel-Darboux Kernel},  Proceedings of Symposia in Pure Mathematics
    \textbf{79}:``Perspectives in Partial Differential Equations, Harmonic Analysis and Applications: A Volume in
    Honor of Vladimir G. Maz'ya's 70th Birthday'',  (2008) 295-336. \texttt{arXiv:0806.1528}

\bibitem{So}  V.N. Sorokin, \emph{On simultaneous approximation of several linear forms}, Vestnik Mosk Univ., Ser
    Matem \textbf{1} (1983) 44-47.

\bibitem{ueno-takasaki} K. Ueno and K. Takasaki, \emph{Toda lattice hierarchy}, in Group Representations and Systems
of Differential Equations, Adv. Stud. Pure Math. \textbf{4} (1984) 1-95.

\bibitem{stanley} R. P. Stanley, \emph{Enumerative combinatorics}, Cambridge University Press, Cambridge (1998).

\bibitem{assche} W. Van Assche, J. S. Geromino and A. B. J. Kuijlaars, \emph{Riemann--Hilbert problems for multiple
    orthogonal polynomials} in: Bustoz et al (eds.), Special Functions 2000: Current Perspectives and Future
    Directions, Kluwer Academic Publishers, Dordrecht, 2001, pp 23-59.





\end{thebibliography}
\end{document}